\newcommand{\map}[1]{\xrightarrow{#1}}
\newcommand{\iso}{\xrightarrow{\simeq}}
\newcommand{\Hom}{\mathrm{Hom}}
\newcommand{\Aut}{\mathrm{Aut}}
\newcommand{\End}{\mathrm{End}}
\newcommand{\Spec}{\mathrm{Spec}}
\newcommand{\Q}{\mathbb Q}
\newcommand{\Z}{\mathbb Z}
\newcommand{\R}{\mathbb R}
\newcommand{\C}{\mathbb C}
\newcommand{\F}{\mathbb F}
\newcommand{\A}{\mathbb A}
\newcommand{\co}{\mathcal O}
\newcommand{\ord}{\mathrm{ord}}
\newcommand{\Lie}{\mathrm{Lie}}
\newcommand{\siegel}{\mathrm{Siegel}}
\newcommand{\action}{\bullet}
\newcommand{\GSpin}{\mathrm{GSpin}}
\newcommand{\SO}{\mathrm{SO}}
\newcommand{\GSp}{\mathrm{GSp}}
\newcommand{\SL}{\mathrm{SL}}
\newcommand{\Fil}{\mathrm{Fil}}
\newcommand{\kk}{{\bm{k}}}
\newcommand{\dR}{\mathrm{dR}}
\newcommand{\crys}{\mathrm{crys}}
\newcommand{\Be}{\mathrm{Be}}
\begin{document}

\title{Height pairings on orthogonal Shimura varieties}

\author{Fabrizio Andreatta}
\email{fabrizio.andreatta@unimi.it}
\address{Dipartimento di Matematica ``Federigo Enriques", Universit\`a di Milano, via C.~Saldini 50, Milano, Italia}

\author{Eyal Z. Goren}
\email{eyal.goren@mcgill.ca}
\address{Department of Mathematics and Statistics, McGill University, 805 Sherbrooke St. West, Montreal, QC, Canada}

\author{Benjamin Howard}
\email{howardbe@bc.edu}
\address{Department of Mathematics, Boston College, 140 Commonwealth Ave,
Chestnut Hill, MA, USA}

\author{Keerthi Madapusi Pera}
\email{keerthi@math.uchicago.edu}
\address{Department of Mathematics, University of Chicago, 5734 S University Ave, Chicago, IL, USA}

\shortauthors{F.~Andreatta, E. Z.~Goren, B.~Howard, K.~Madapusi Pera}

\thanks{F.~Andreatta is supported by the Italian grant Prin 2010/2011. E. Z.~Goren is supported by NSERC discovery grant, B.~Howard is supported by NSF grant DMS-1201480. K.~Madapusi Pera is supported by NSF Postdoctoral Research Fellowship DMS-1204165.}

\classification{11G18,14G40}

\begin{abstract}
Let $M$ be the Shimura variety associated to the group of spinor similitudes of a  quadratic space over $\Q$ of signature $(n,2)$.
We prove a conjecture of Bruinier and Yang, relating the arithmetic intersection  multiplicities of special divisors
and CM points on $M$  to the central derivatives of  certain  $L$-functions.
Each such $L$-function  is the Rankin-Selberg  convolution associated with a  cusp form of half-integral weight
$n/2 +1 $,  and the weight $n/2$ theta series of a positive definite quadratic space of rank $n$.

When  $n=1$ the Shimura variety  $M$ is a classical quaternionic Shimura curve, and our result  is a variant of the Gross-Zagier theorem on heights of Heegner
points.
\end{abstract}

\maketitle

\setcounter{tocdepth}{1}
\tableofcontents

\theoremstyle{plain}
\newtheorem{theorem}{Theorem}[subsection]
\newtheorem{proposition}[theorem]{Proposition}
\newtheorem{lemma}[theorem]{Lemma}
\newtheorem{corollary}[theorem]{Corollary}
\newtheorem{conjecture}[theorem]{Conjecture}
\newtheorem{bigtheorem}{Theorem}

\theoremstyle{definition}
\newtheorem{definition}[theorem]{Definition}
\newtheorem{hypothesis}[theorem]{Hypothesis}

\theoremstyle{remark}
\newtheorem{remark}[theorem]{Remark}

\numberwithin{equation}{section}
\renewcommand{\thebigtheorem}{\Alph{bigtheorem}}
\renewcommand{\labelenumi}{(\roman{enumi})}

\section{Introduction}

In this paper we prove  a generalization, conjectured by Bruinier-Yang \cite{BY},  of the Gross-Zagier theorem
on heights of Heegner points on modular curves.

Instead of a modular curve we work on the  Shimura variety defined by a reductive group over $\Q$ of type $\GSpin(n,2)$.
 The Heegner points are replaced by  $0$-cycles arising from embeddings of rank two tori into $\GSpin(n,2)$, and by
divisors arising from embeddings of  $\GSpin(n-1,2)$ into $\GSpin(n,2)$.
We prove that the arithmetic intersections of these special cycles are related
to the central derivatives of certain Rankin-Selberg convolution $L$-functions.

\subsection{The Shimura variety}

Fix an integer $n\ge 1$ and  a quadratic space $(V,Q)$ over $\Q$ of signature $(n,2)$.  From $V$
one can construct a Shimura datum $(G,\mathcal{D})$, in which  $G=\GSpin(V)$ is a reductive group over $\Q$ sitting in an exact sequence
\[
1 \to \mathbb{G}_m \to \GSpin(V) \to \SO(V) \to 1,
\]
and the hermitian domain $\mathcal{D}$ is an open subset of the space of isotropic lines in
$\mathbb{P}^1(V_\C)$.

Any choice of lattice $L\subset V$ on which $Q$ is $\Z$-valued determines a compact open subgroup
$K\subset G(\A_f)$.  We fix such a lattice, and assume that $L$ is maximal, in the sense that it admits no superlattice on which $Q$ is
$\Z$-valued.    The data $L\subset V$ now determines a complex orbifold
\[
M(\C) = G(\Q) \backslash \mathcal{D} \times G(\A_f) / K
\]
which, by the theory of canonical models of Shimura varieties, is the space of complex points of an algebraic
stack $M\to \Spec(\Q)$.  (Throughout this article \emph{stack} means \emph{Deligne-Mumford stack}.)

Except for small values of $n$, the Shimura variety $M$ is not of PEL type; that is, it is not naturally a moduli space of abelian varieties with polarization,
endomorphisms, and level structures.  It is, however, of  Hodge type, and so the recent work of Kisin \cite{kisin} (and its extension in \cite{mp:reg} and
\cite{mp:2adic}) provides us with a regular and flat integral model
\[
\mathcal{M} \to \Spec(\Z).
\]

\subsection{Special divisors}

The integral model carries over it  a  canonical family of abelian varieties:  the \emph{Kuga-Satake abelian scheme}
\[
\mathcal{A} \to \mathcal{M}.
\]
The Kuga-Satake abelian scheme is endowed with an action of the Clifford algebra $C(L)$, and with  a $\Z/2\Z$-grading $\mathcal{A}=\mathcal{A}^+\times
\mathcal{A}^-$.

 For any scheme $S\to\mathcal{M}$, the pullback  $\mathcal{A}_S$ comes with  a distinguished $\Z$-module
\[
V( \mathcal{A}_S ) \subset \End (\mathcal{A}_S)
\]
 of  \emph{special endomorphisms}, and  there is a positive definite quadratic form
 \[
 Q: V( \mathcal{A}_S ) \to \Z
 \]
 characterized by $x\circ x = Q(x) \cdot  \mathrm{Id}$.
Slightly more generally, there is a distinguished subset
\[
V_\mu( \mathcal{A}_S ) \subset V( \mathcal{A}_S )  \otimes_\Z\Q
\]
for each coset $\mu \in L^\vee /L$, where $L^\vee$ is the dual lattice of $L$.
Taking $\mu=0$ recovers $V( \mathcal{A}_S )$.

The special endomorphisms allow us  to define a family of \emph{special divisors} on $\mathcal{M}$.
For $m\in \Q_{>0}$ and $\mu \in L^\vee/L$, let $\mathcal{Z}(m,\mu) \to \mathcal{M}$ be the moduli stack
that  assigns to any $\mathcal{M}$-scheme  $S \to \mathcal{M}$ the set
\[
\mathcal{Z}(m,\mu)(S) = \{ x\in V_\mu( \mathcal{A}_S)   : Q(x)=m \} .
\]
The morphism $\mathcal{Z}(m,\mu) \to \mathcal{M}$ is relatively representable, finite, and unramified,
and allows us to view $\mathcal{Z}(m,\mu)$ as a Cartier divisor on $\mathcal{M}$.

In the generic fiber of $\mathcal{M}$ the special divisors  agree with the
divisors  appearing   in \cite{Borcherds},  \cite{Bruinier}, \cite{BY},  and
\cite{Ku}.  In special cases where~$V$ has signature $(1,2)$, $(2,2)$,  or $(3,2)$,  the Shimura variety $\mathcal{M}$
is  (a quaternionic  version of) a modular curve, Hilbert modular surface,  or Siegel threefold, respectively.
In these cases the special divisors  are traditionally known as  \emph{complex multiplication points},  \emph{Hirzebruch-Zagier divisors},
and \emph{Humbert surfaces}.    See, for example,   \cite{VDG}, \cite{KR1},  \cite{KR2}, and \cite{KRY3}.

\subsection{CM points}

Let $V_0\subset V$ be a negative definite plane in $V$,  and set $L_0 = V_0 \cap L$.
 The Clifford algebra $C(L_0)$ is an order in a quaternion algebra over $\Q$, and its even
part $C^+(L_0) \subset C(L_0)$ is an order in a quadratic
imaginary field ~$\kk$. Fix an embedding $\kk \to \C$, and  assume
from now on that $C^+(L_0)=\co_\kk$ is the maximal order in $\kk$.

The group $\GSpin(V_0) \cong  \mathrm{Res}_{\kk/\Q}\mathbb{G}_m$
is a rank two torus, and has associated to it a $0$-dimensional
Shimura variety  $Y \to \Spec(\kk)$, which can be reinterpreted as
the moduli stack of elliptic curves with CM by $\co_\kk$. This
allows us to construct a smooth integral model $\mathcal{Y}\to
\Spec(\co_\kk)$. If $\kk$ has odd discriminant the inclusion $V_0
\to V$ induces an embedding $\GSpin(V_0) \to G$ and a relatively
representable, finite, and unramified morphism
\[
 i\colon \mathcal{Y} \to \mathcal{M}_{\co_\kk}.
\]
  The stack $\mathcal{Y}$ has its own Kuga-Satake abelian scheme $\mathcal{A}_0 \to \mathcal{Y}$, endowed with
  a $\Z/2\Z$-grading $\mathcal{A}_0 \cong \mathcal{A}_0^+ \times \mathcal{A}_0^-$, and an action
of $C(L_0)$. Here $\mathcal{A}_0^+$ is the universal elliptic curve with CM by $\co_\kk$ and $\mathcal{A}_0\cong \mathcal{A}_0^+\otimes_{\co_\kk} C(L_0)$.  It is related to the Kuga-Satake abelian scheme $\mathcal{A} \to \mathcal{M}$ by a $C(L)$-linear isomorphism
\[
\mathcal{A}\vert_{ \mathcal{Y} } \cong \mathcal{A}_0 \vert_ {
\mathcal{Y}}  \otimes_{C(L_0)} C(L).
\]

Let  $\widehat{\mathrm{Pic}}( \mathcal{M} )$ denote the group  of
metrized line bundles on $\mathcal{M}$, and similarly with
$\mathcal{M}$ replaced by $\mathcal{M}_{\co_\kk  }$ or
$\mathcal{Y}$.  The composition
\[
\widehat{\mathrm{Pic}}( \mathcal{M} ) \to \widehat{\mathrm{Pic}}(
\mathcal{M}_{\co_\kk  })  \map{i^*} \widehat{\mathrm{Pic}}(
\mathcal{Y}) \map{ \widehat{\deg} } \R
\]
is denoted  $\widehat{\mathcal{Z}} \mapsto [ \widehat{\mathcal{Z}} : \mathcal{Y}]$.
We call it  \emph{arithmetic degree along $\mathcal{Y}$}.

\subsection{Harmonic modular forms}

Denote by $\widetilde{\SL}_2(\Z)$ the metaplectic double cover of $\SL_2(\Z)$, and by
\[
\omega_L\colon \widetilde{\SL}_2(\Z) \to \Aut_\C(\mathfrak{S}_L)
\]
the Weil representation on the space $\mathfrak{S}_L$ of $\C$-valued functions on $L^\vee/L$.
There is a complex conjugate representation $\overline{\omega}_L$ on the
same space, and a contragredient action $\omega^\vee_L$ on the dual space $\mathfrak{S}_L^\vee$.

Bruinier and Funke \cite{BruinierFunke} have defined a surjective conjugate-linear differential operator
\[
\xi\colon H_{ 1- \frac{n}{2}} (\omega_L) \to S_{1+ \frac{n}{2}} (\overline{\omega}_L).
\]
Here  $S_{1+ \frac{n}{2}} (\overline{\omega}_L)$ is the space of weight $1+ \frac{n}{2}$ cusp forms
valued in $\mathfrak{S}_L$, and  transforming according to the representation $\overline{\omega}_L$.
The space $H_{ 1- \frac{n}{2}} (\omega_L)$ is defined similarly, but the forms
are \emph{harmonic weak Mass forms} in the sense of \cite[\S 3.1]{BY}.

To any $f\in H_{ 1- \frac{n}{2}} (\omega_L)$ there is associated a formal $q$-expansion
\[
f^+(\tau) = \sum_{ \substack{m\in \Q \\ m\gg -\infty} }     c_f^+(m ) q^m \in \mathfrak{S}_L [[q]],
\]
and each coefficient is uniquely a linear combination $\sum_\mu c_f^+(m,\mu) \varphi_\mu$
of the characteristic functions $\varphi_\mu$ of cosets $\mu\in L^\vee / L$.  Assuming that $f^+$
has \emph{integral principal part}, in the sense that $c_f^+(m,\mu) \in \Z $ whenever $m<0$, we may form
the Cartier divisor
\[
\mathcal{Z}(f) = \sum_{ m > 0} \sum_{\mu \in L^\vee / L } c_f^+( - m , \mu ) \cdot  \mathcal{Z}(m,\mu)
\]
(the sum is finite)  on the integral model $\mathcal{M}$,
following  the work of Borcherds \cite{Borcherds}, Bruinier \cite{Bruinier}, and Bruinier-Yang \cite{BY}.

Using the formalism of regularized theta lifts developed in  \cite{Borcherds},  Bruinier \cite{Bruinier} defined a Green function $\Phi(f)$ for
$\mathcal{Z}(f)$.  This Green function determines a metric on the corresponding line bundle, yielding a metrized line bundle
\[
\widehat{\mathcal{Z}} (f) \in \widehat{\mathrm{Pic}} ( \mathcal{M}) .
\]

\subsection{The main result}

The central problem is to compute the  arithmetic degree $[
\widehat{\mathcal{Z}} (f) : \mathcal{Y}]$. Assuming that the
divisor $\mathcal{Z}(f)_{\co_\kk}$ intersects the cycle
$\mathcal{Y}$ properly, the arithmetic degree  decomposes as a sum
of local contributions.  The calculation of the archimedean
contribution, which is essentially the sum of the values of
$\Phi(f)$ at all $y\in\mathcal{Y}(\C)$, is the main result of
\cite{BY}.

Based on their calculation of  the archimedean  contribution, Bruinier and Yang formulated a conjecture \cite[Conjecture 1.1]{BY}  relating  $[
\widehat{\mathcal{Z}} (f)  :  \mathcal{Y}]$ to the derivative of an $L$-function. Our main result is a proof of this conjecture.

The statement of the result requires two more ingredients.  The first is the \emph{metrized cotautological bundle}
\[
\widehat{ \bm{T} } \in \widehat{\mathrm{Pic} }( \mathcal{M}).
\]
By definition, the hermitian domain $\mathcal{D}$ is an open subset of the space of isotropic lines in
$\mathbb{P}^1(V_\C)$.  Restricting the tautological bundle
on $\mathbb{P}^1(V_\C)$ yields a line bundle $\omega_\mathcal{D}$ on $\mathcal{D}$, which  descends first to the orbifold $M(\C)$,
and then to the canonical model $M$.  The resulting  line bundle, $\omega$,  is the \emph{tautological bundle} on $M$,
also called the \emph{line bundle of weight one modular forms}.    There is a natural metric (\ref{taut metric}) on $\omega$,
 and our $\widehat{ \bm{T} }$ is an integral model of  $\widehat{\omega}^{-1}$.

The second ingredient is a classical vector valued theta series.  From the $\Z$-quadratic space
\[
\Lambda = \{ x\in L : x \perp L_0 \}
\]
of signature $(n,0)$, one can construct  a half-integral weight theta series
\[
\Theta_\Lambda (\tau)  \in M_{\frac{n}{2}}(\omega_\Lambda^\vee).
\]
Here  $\omega_\Lambda$ is the Weil representation of $\widetilde{\SL}_2(\Z)$ on the space $\mathfrak{S}_\Lambda$ of
functions on $\Lambda^\vee / \Lambda$, and $\omega_\Lambda^\vee$ is its contragredient.   See \S \ref{ss:analytic} for the
precise definition.

The following appears in the text as Theorem \ref{thm:mainthm}.

\begin{bigtheorem}\label{bigthm:A}
Assume that the discriminant  of $\kk$ is odd, and let $h_\kk$ and $w_\kk$ be the class number
and number of roots of unity in $\kk$, respectively.
Every  weak harmonic Maass form $f\in H_{1-n/2}(\omega_L)$ with integral principal part  satisfies
\[
[ \widehat{\mathcal{Z}} (f) : \mathcal{Y} ]  + c_f^+(0,0)  \cdot [ \widehat{\bm{T}}  : \mathcal{Y} ]
= - \frac{h_\kk}{w_\kk}  \cdot L'(  \xi(f) ,\Theta_\Lambda,0) ,
\]
where  $c_f^+(0,0)$ is the value of   $c_f^+(0) \in \mathfrak{S}_L$ at the trivial coset in $L^\vee/L$,
and $L(  \xi(f) ,\Theta_\Lambda,s)$ is the Rankin-Selberg convolution $L$-function of  \S \ref{ss:analytic}.
\end{bigtheorem}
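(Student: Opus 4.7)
The overall strategy I would follow is the Kudla–Rapoport–Yang template for converting a Gross–Zagier type identity into a matching between an arithmetic intersection number and the central derivative of a Rankin–Selberg $L$-function. The first move is to decompose the arithmetic degree into archimedean and non-archimedean parts. Since we may assume $\mathcal{Z}(f)_{\co_\kk}$ and $\mathcal{Y}$ meet properly, we obtain
\[
[\widehat{\mathcal{Z}}(f) : \mathcal{Y}] \;=\; \sum_{y\in\mathcal{Y}(\C)} \frac{\Phi(f,y)}{|\Aut(y)|} \;+\; \sum_{\mathfrak{p}} \bigl[\mathcal{Z}(f) : \mathcal{Y}\bigr]_{\mathfrak{p}} \log N(\mathfrak{p}),
\]
where the archimedean sum has been computed by Bruinier–Yang, and the finite sum runs over non-archimedean places of $\kk$. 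The task is then to match the right-hand side, term by term in a Fourier expansion, with the Rankin–Selberg convolution of $\xi(f)$ against $\Theta_\Lambda$.

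The second step is to unfold the $L$-function. Using the standard pairing of weak harmonic Maass forms with cusp forms and the Siegel–Weil representation of an incoherent Eisenstein series associated to the rank $n+2$ quadratic space $\Lambda \oplus V_0$, one expresses $L'(\xi(f),\Theta_\Lambda,0)$ as a regularized Petersson pairing of $\xi(f)$ with the central derivative of a Siegel–genus Eisenstein series $E'(\tau,0,\Lambda\oplus V_0)$. Writing $E'(\tau,0) = \sum_{m,\mu} a'(m,\mu)\, q^m\varphi_\mu$, one reduces the theorem to a coefficient-by-coefficient comparison: one needs to show that for each $m>0$ and $\mu$,
\[
a'(m,\mu) \;=\; -\frac{w_\kk}{h_\kk}\Bigl( \bigl[\widehat{\mathcal{Z}}(m,\mu) : \mathcal{Y}\bigr] \;+\; \delta_{m,0}\delta_{\mu,0}\cdot [\widehat{\bm{T}}:\mathcal{Y}]_{\text{archi}}\Bigr),
\]
together with a constant-term identity that absorbs the $[\widehat{\bm{T}}:\mathcal{Y}]$ term into the contribution of $c_f^+(0,0)$.

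The third and hardest step is the identification of the finite-prime multiplicities $[\mathcal{Z}(m,\mu):\mathcal{Y}]_\mathfrak{p}$ with the non-archimedean part of $a'(m,\mu)$. Here one uses that the Kuga–Satake abelian scheme on $\mathcal{Y}$ restricts to an isogeny factor isomorphic to powers of a CM elliptic curve, so that a point of $\mathcal{Z}(m,\mu) \times_{\mathcal{M}} \mathcal{Y}$ carries an extra special endomorphism forcing the residual Kuga–Satake object to be supersingular at $\mathfrak{p}$. By Serre–Tate and crystalline/Breuil–Kisin deformation theory applied to the integral models of Kisin and Madapusi Pera, the local length at such a point equals the valuation of an explicit lattice quantity: the order of vanishing of a local Whittaker/representation density attached to the lattice $\Lambda$ together with the lifted special endomorphism. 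This is where the bulk of the work will lie — producing the correct deformation-theoretic formula uniformly in all characteristics, including the primes dividing the discriminants of $V$, $L_0$, and $\kk$, which is why the odd-discriminant hypothesis enters.

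The final step is to match these local lattice quantities with the non-archimedean Whittaker factors of $E'(\tau,0)$ via the Kudla–Rapoport local formula, and to account for the constant coefficient: the contribution of $c_f^+(0,0)\cdot [\widehat{\bm{T}}:\mathcal{Y}]$ arises from the singular Fourier coefficient of $E(\tau,s)$, where the relation between $\widehat{\bm{T}}$ and the Hodge bundle on the Kuga–Satake family provides the appropriate geometric interpretation. Combining the archimedean computation of \cite{BY}, the coefficient matching at finite primes, and the constant-term identity produces the stated equality. The principal obstacle is the deformation-theoretic local calculation at bad primes, which requires delicate integral $p$-adic Hodge theory on the integral canonical model rather than a PEL moduli description.
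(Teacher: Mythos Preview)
Your proposal has a genuine gap at the very first step. You write ``since we may assume $\mathcal{Z}(f)_{\co_\kk}$ and $\mathcal{Y}$ meet properly,'' but this is precisely what one \emph{cannot} assume, and handling the failure of proper intersection is the central technical difficulty of the paper. Whenever the positive definite lattice $\Lambda = L_0^\perp$ represents a pair $(m,\mu)$ appearing in the principal part of $f$, the entire $1$-cycle $\mathcal{Y}$ lies inside $\mathcal{Z}(m,\mu)_{\co_\kk}$, so the naive decomposition into archimedean and non-archimedean local intersection numbers is not available. The paper's strategy is to introduce the corrected divisor $\widehat{\mathcal{Z}}^\heartsuit(m,\mu) = \widehat{\mathcal{Z}}(m,\mu)\otimes\widehat{\bm{T}}^{-\#\Lambda_{m,\mu}}$ and to compute its pullback to $\mathcal{Y}$ by a \emph{specialization to the normal bundle} construction (a first-order truncation of Hu's deformation to the normal cone). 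The key geometric input is that each improper component of $\mathcal{Z}(m,\mu)\times_{\mathcal{M}}\mathcal{Y}$ contributes exactly one copy of the metrized cotautological bundle $\widehat{\bm{T}}$ to the arithmetic degree; this is what makes the $c_f^+(0,0)\cdot[\widehat{\bm{T}}:\mathcal{Y}]$ term appear with the correct multiplicity.

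Your outline also diverges from the paper on the analytic side. The paper does not unfold $L'(\xi(f),\Theta_\Lambda,0)$ via a Siegel--Weil argument for an Eisenstein series attached to $\Lambda\oplus V_0$, nor does it match local Whittaker functions against Kudla--Rapoport type densities. Instead it uses Schofer's explicit formulas for the Fourier coefficients of the incoherent weight-one Eisenstein series $E_{L_0}(\tau,s)$ attached to the rank-two lattice $L_0$, and compares them directly with explicit degree formulas for the $0$-cycles $\mathcal{Z}_0(m_1,\mu_1)$ on $\mathcal{Y}$, the latter coming from Gross's computation of endomorphism rings of canonical lifts of CM elliptic curves. The Siegel--Weil route you sketch is mentioned in the paper only as a possible alternative that might relax the odd-discriminant hypothesis; it is not the argument actually carried out. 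Finally, the role of the odd-discriminant hypothesis is not primarily in the deformation theory at bad primes as you suggest, but in ensuring the availability of the explicit formulas of Proposition~\ref{prop:eisenstein coeff} and Theorem~\ref{thm:X degree}.
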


\begin{remark}
The analogous theorem for Shimura varieties associated with unitary similitude groups was proved in
\cite{BHY}.
\end{remark}

\begin{remark} \label{rem:eisenstein intro}
The convolution $L$-function appearing in the theorem can be realized by integrating $\xi(f)$ and $\Theta_\Lambda$ against
a certain Eisenstein series $E_{L_0} (\tau ,s)$.   This Eisenstein series vanishes at $s=0$, and hence the same is true of
$L(  \xi(f) ,\Theta_\Lambda,s)$.
\end{remark}

\begin{remark}
As noted earlier, our cycles $\mathcal{Y} \to \mathcal{M}_{\co_\kk}$ arise from embeddings of rank two tori into $\GSpin(n,2)$.  One could instead look at the cycles defined by embeddings of maximal tori; these are the \emph{big CM cycles} of \cite{BKY}. In \cite{AGHMP} we prove a result analogous to Theorem \ref{bigthm:A} in this setting. We show that it implies an averaged form of Colmez's conjecture on the Faltings heights of CM abelian varieties (via the method of Yang \cite{Yang-colmez}). This has provided a key ingredient in J. Tsimerman's recent proof of the Andr\'e-Oort conjecture \cite{Tsimerman} , which characterizes Shimura subvarieties of Shimura varieties of Hodge type by means of special points.
\end{remark}

\begin{remark}
If one is willing to have the equality of Theorem \ref{bigthm:A} only up to rational multiples of $\log(2)$, in principle it should be
possible to drop the assumption that $d_\kk$ is odd or  that
$C^+(L_0)$ is maximal and replace it by the weaker assumption that
$C^+(L_0)$ is maximal away from $2$. Such improvements come at a
cost: under these weaker hypotheses one does not know anything
like the explicit formulas appearing in Proposition
\ref{prop:eisenstein coeff} and Theorem \ref{thm:X degree}.

Instead, one should be able (we have not checked the details)  to prove the main result by using the Siegel-Weil arguments of \cite{BKY} and \cite{KY}.  In essence,
the Siegel-Weil formula allows one to compare the point counting part of Theorem \ref{thm:X degree} with the values of certain Whittaker functions appearing in the
Eisenstein series coefficients of Proposition \ref{prop:eisenstein coeff}, without deriving explicit formulas for either one.
\end{remark}

A rough outline of the contents is as follows:

In \S \ref{s:shimura varieties} we establish the
basic properties of the Shimura variety $\mathcal{M}$ on which we will be performing intersection theory,
and of the special cycles $\mathcal{Z}(m,\mu)$.    We prove that the $\mathcal{Z}(m,\mu)$ define Cartier
divisors on $\mathcal{M}$, and explore  the functoriality of the formation of $\mathcal{M}$ and its divisors
with respect to the quadratic space $L$.
More precisely, given an isometric embedding $L_0 \hookrightarrow L$ of maximal $\Z$-quadratic spaces
of signatures $(n_0,2)$ and $(n,2)$, with $1\le n_0 \le n$, we prove the existence of a canonical morphism
$\mathcal{M}_0 \to \mathcal{M}$ between the associated integral models, and show that the pullback of a special divisor on $\mathcal{M}$
is a prescribed linear combination of special divisors on $\mathcal{M}_0$.

In \S \ref{s:analysis} we remind the reader of some of the analytic theory  used in \cite{BY}.  In
particular, we recall the essential properties of harmonic weak Maass forms,  the divisors $Z(f)$ on $M$ associated to such forms,  and the Green functions
$\Phi(f)$ for these divisors, which are defined as regularized theta lifts.   We also recall the theta series and convolution $L$-functions that appear in the main
theorem, and  Schofer's \cite{Sch} calculation of the Fourier coefficients of the Eisenstein series of Remark \ref{rem:eisenstein intro}.  The only thing new here
is that, thanks to the constructions of \S  \ref{s:shimura varieties}, we are able to define an extension  of the divisor $Z(f)$ on $M$  to a divisor
$\mathcal{Z}(f)$ on the integral model $\mathcal{M}$.

In \S \ref{s:CM section} we fix a quadratic space $L_0$ over $\Z$ of signature $(0,2)$, whose even Clifford algebra is
isomorphic to the maximal order in a quadratic imaginary field $\kk$.  The Clifford algebra of $L_0$  then has the form
$C(L_0) \iso \co_\kk \oplus L_0$.  The Shimura variety
associated to the  rank two torus  $\GSpin(L_{0\Q})$ has dimension~$0$, and can be realized as the moduli stack of
elliptic curves with complex multiplication by $\co_\kk$.  Using this interpretation we define an integral model
$\mathcal{Y} \to \Spec(\co_\kk)$, and define the Kuga-Satake abelian scheme as $\mathcal{A}_0 = \mathcal{E} \otimes_{\co_\kk} C(L_0)$,
where $\mathcal{E} \to \mathcal{Y}$ is the universal CM  elliptic curve.   All of the results of
\S \ref{s:shimura varieties} are then extended to  this new setting.

In particular, we define  a family of divisors
$\mathcal{Z}_0(m,\mu)$ on  the arithmetic curve $\mathcal{Y}$, each of which can be characterized as the locus of points where $\mathcal{A}_0$
has an extra quasi-endomorphism with prescribed properties.
This can also be expressed in terms of endomorphisms of the universal CM elliptic curve:
along  each special divisor the action of $\co_\kk$ on $\mathcal{E}$ extends to an action of an order in a   definite quaternion algebra.
Theorem \ref{thm:X degree} gives explicit formulas for the degrees of the divisors $\mathcal{Z}_0(m,\mu)$, and these
degrees match up with Schofer's formulas for the coefficients of  Eisenstein series.     Many cases of Theorem \ref{thm:X degree}
already appear in work of Kudla-Rapoport-Yang \cite{KRY1}  and Kudla-Yang \cite{KY}, and our proof, like theirs,
makes essential use of Gross's calculation \cite{Gr} of the endomorphism rings of canonical lifts of CM elliptic curves.

In \S \ref{s:main section} we prove Theorem \ref{bigthm:A}.   The
greatest difficulty comes from the fact that the $1$-cycle
$\mathcal{Y}$ on $\mathcal{M}_{\co_\kk}$ may have irreducible
components contained entirely within
$\mathcal{Z}(m,\mu)_{\co_\kk}$.  Thus we must compute improper
intersections.  Our techniques for doing this are a blend of the
methods used in \cite{BHY} on unitary Shimura varieties, and the
methods of Hu's thesis \cite{Hu}, which reconstructs the
arithmetic intersection theory of Gillet-Soul\'e \cite{GS} using
Fulton's method of deformation to the normal cone.

The cycle $\mathcal{Y}$ has a normal bundle $N_\mathcal{Y}\mathcal{M} \to \mathcal{Y}$,
and the arithmetic divisor   $\widehat{\mathcal{Z}}(f)$  on $\mathcal{M}$ has a \emph{specialization to the normal bundle}
$\sigma (\widehat{\mathcal{Z}}(f) )$, which is an arithmetic divisor  on $N_\mathcal{Y}\mathcal{M}$.    The specialization is defined in such a way that the arithmetic degree  of
$\widehat{\mathcal{Z}}(f)$ along $\mathcal{Y}\to \mathcal{M}$ is equal to the arithmetic degree  of $\sigma (\widehat{\mathcal{Z}}(f) )$
along the zero section $\mathcal{Y} \to N_\mathcal{Y}\mathcal{M}$.  What we are able to show, essentially,
is that every component of the arithmetic divisor $\sigma (\widehat{\mathcal{Z}}(f) )$ that meets  the zero section improperly
has $\sigma(\widehat{\bm{T}})$, the specialization of the cotautological bundle, as its associated line bundle.  This implies that
every  component  of $\widehat{\mathcal{Z} } (f)$ that meets $\mathcal{Y}$ improperly contributes the same quantity,
$[ \widehat{\bm{T}} : \mathcal{Y}]$, to  the degree $[ \widehat{\mathcal{Z}}(f) : \mathcal{Y} ]$.
The  contribution to the intersection  of the remaining components can be read off from the
degrees of the divisors $\widehat{\mathcal{Z}}_0(m,\mu)$ computed in \S \ref{s:CM section},
and the calculation of $[ \widehat{\bm{T}} : \mathcal{Y}]$   quickly reduces to the Chowla-Selberg formula.

Finally, we remark that the proof of Theorem \ref{bigthm:A} makes essential use of the Bruinier-Yang calculation
(following Schofer \cite{Sch})
of the values of $\Phi(f)$ at the points of $\mathcal{Y}(\C)$.   Because of the particular way in which the
divergent integral defining $\Phi(f)$ is regularized, the Green function $\Phi(f)$ has a well-defined value even at points of
divisor $\mathcal{Z}(f)$ along which $\Phi(f)$ has a logarithmic singularity.  In other words, the Green function
 comes, by construction, with a discontinuous extension to all points of $\mathcal{M}(\C)$.
 Hu's thesis sheds some light  on this phenomenon, and the \emph{over-regularized} values  of $\Phi(f)$ at the points of
 $\mathcal{Z}(f)$ will play an essential role in our calculation of  improper intersections.

\subsection{Acknowledgements}

The authors thank  Jan Bruinier, Jos\'e Burgos Gil, Steve Kudla, and Tonghai Yang for helpful conversations and insights. They also thank the referees for useful comments.

%%%%%%%%%%%%%%%%%%%%%%%%%%%%%%%%%%%%%%%%%

\section{The GSpin Shimura variety and its special divisors}
\label{s:shimura varieties}

%%%%%%%%%%%%%%%%%%%%%%%%%%%%%%%%%%%%%%%%%

%%%%%%%%%%%%%%%%%%%%%%%%%%%%%%%%%%%%%%%%

\subsection{Preliminaries}
\label{s:preliminaries}

%%%%%%%%%%%%%%%%%%%%%%%%%%%%%%%%%%%%%%%%

Let $R$ be a commutative ring.  A \emph{quadratic space} over $R$ is  a projective $R$-module $V$ of finite rank,
equipped with a homogenous function $Q\colon V\to R$ of degree $2$ for which the  symmetric pairing
\[
[x,y] = Q(x+y) -Q(x) -Q(y)
\]
is $R$-bilinear.  We say that $V$ is \emph{self-dual} if this pairing induces an isomorphism
$V \iso \Hom_R(V,R)$.

The \emph{Clifford algebra} $C(V)$ is the quotient of the tensor algebra $\bigotimes V$
by the ideal generated by elements of
the form $x \otimes x -Q(x)$.  The $R$-algebra $C(V)$ is generated by the image
of the natural injection $V\to C(V)$, and the  grading on $\bigotimes V$ induces a $\Z/2\Z$ grading
\[
C(V)=C^+(V) \oplus C^-(V).
\]

The Clifford algebra $C(V)$ has the following universal property:  given an associative algebra $B$ over $R$ and a map of $R$-modules $f:V\to B$ satisfying $f(x)^2 = Q(x)$  for all $x\in V$, there is a unique map of $R$-algebras $\tilde{f}:C(V)\to B$ satisfying $\tilde{f}\vert_V = f$. In particular, applying this property with $B=C(V)^{\rm{op}}$ and $f$ the natural inclusion of $V$ in $C(V)^{\rm{op}}$, we obtain a canonical anti-involution $*$ on $C(V)$ satisfying
\[
 (x_1x_2\cdots x_{r-1} x_r)^* = x_rx_{r-1}\cdots x_2x_1
\]
for any $x_1,x_2,\ldots,x_{r-1},x_r\in V$.

To a self-dual quadratic space $V$, we can attach the reductive group scheme $\GSpin(V)$ over $R$ with functor of points
\[
\GSpin(V) (S)= \{ g\in  C^+(V)_S ^\times : g  V_S  g^{-1} = V_S\}
\]
for any $R$-algebra $S$.  Denote by $g\action v = gvg^{-1}$ the natural action of $\GSpin(V)$ on $V$.
For any $R$-algebra $S$, the map $g\mapsto g^*g$ on $\GSpin(V)(S)$ takes values in $S^\times$~\cite[3.2.1]{bass:quadratic}, and so defines a canonical homomorphism of $R$-group schemes
\[
\nu:\GSpin(V)\to\mathbb{G}_{m,R},
\]
which we call the \emph{spinor norm}.

Choose an element $\delta\in C^+(V)^\times$ satisfying $\delta^* = -\delta$. We can use this element to define a symplectic pairing on $C(V)$ as follows (see \cite[(1.6)]{mp:reg}).  The self-duality hypothesis on $V$ implies that $C(V)$ is a graded Azumaya algebra over $R$ in the sense of \cite[\S 2.3, Theorem]{bass:quadratic}. In particular, there is a canonical reduced trace form $\mathrm{Trd}:C(V) \to R$, and the pairing $(z_1,z_2)\mapsto\mathrm{Trd}(z_1z_2)$ is a perfect symmetric bilinear form on $C(V)$. The desired  symplectic pairing on $C(V)$ is $\psi_{\delta}(z_1,z_2) = \mathrm{Trd}(z_1\delta z_2^*)$.

It is easily checked that
\[
 \psi_{\delta}(gz_1,gz_2) = \nu(g)\psi_{\delta}(z_1,z_2).
\]
for any $R$-algebra $S$, $z_1,z_2\in C(V)_S$, and $g\in\GSpin(V)(S)$.
In other words, the action of $\GSpin(V)$ on $C(V)$ by left multiplication
realizes  $\GSpin(V)$ as a subgroup of the $R$-group scheme $\GSp_{\delta}$ of symplectic similitudes of $(C(V),\psi_{\delta})$, and
 the symplectic similitude character  restricts to the spinor norm on $\GSpin(V)$.

%%%%%%%%%%%%%%%%%%%%%%%%%%%%%%%%%%%%%%%%

\subsection{The complex Shimura variety}
\label{s:the complex Shimura variety}

%%%%%%%%%%%%%%%%%%%%%%%%%%%%%%%%%%%%%%%%

Let $n\ge 1$ be an integer.
Fix a quadratic space $(L,Q)$ over $\Z$ of signature $(n,2)$, and set $V=L_\Q$.
We  assume throughout that $L$ is \emph{maximal} in the sense that there is no lattice $L\subsetneq L' \subset  V$
satisfying $Q(L')\subset \Z$.  Set $G=\GSpin(V)$; this is a reductive group over $\Q$.

Let $\mathcal{D}$ be the space of oriented negative $2$-planes $\bm{h}\subset V_\R$.
If we extend the $\Q$-bilinear form on $V$ to a $\C$-bilinear form on $V_\C$, the real manifold $\mathcal{D}$
is isomorphic to the Grassmannian
\begin{equation}\label{grassman}
\mathcal{D} \iso  \left\{ z \in V_\C \smallsetminus \{ 0\} :
 [z,z] =0 \mbox{ and }  [z,\overline{z}] <0  \right\} / \C^\times ,
\end{equation}
and in particular is naturally a complex manifold.
The isomorphism is as follows: for each $\bm{h}$ pick an oriented basis $\{x,y\}$ so that $Q(x)=Q(y)$ and
$[x,y]=0$, and let  $z=x+iy$. The inverse construction is obvious.

There are canonical inclusions of $\R$-algebras $\C \hookrightarrow  C(\bm{h})  \hookrightarrow  C(V_\R)$, where the first is
determined by
\[
i\mapsto \frac{ xy}{\sqrt{Q(x) Q(y)} }.
\]
The induced map $\C^\times \to C(V_\R)^\times$ takes values in $G(\R)$, and arises from a morphism of real algebraic groups
\begin{equation}\label{Eqn_alpha_h}
 \alpha_{\bm{h}}:\mathrm{Res}_{\C/\R}\mathbb{G}_m  \to G_\R.
\end{equation}
In this way we identify $\mathcal{D}$ with a $G(\R)$-conjugacy class in
 $\Hom(\mathrm{Res}_{\C/\R}\mathbb{G}_m, G_\R)$.

Set $\widehat{L} = L_{\widehat{\Z}}$, and define a compact open subgroup
\[
K = G(\A_f) \cap C\big(\widehat{L}\big)^\times
\]
of $G(\A_f)$.  Note that  $g\action \widehat{L} =\widehat{L}$ for all $g\in K$.
 Denote by
\[
L^\vee= \{ x\in V : [x, L]\subset \Z \}
\]
the dual lattice of $L$.   As in the proof of \cite[(2.6)]{mp:reg}, the action  of $K$ on $\widehat{L}$
defines a homomorphism $K\to \SO(\widehat{L})$ whose image is precisely the subgroup of elements
acting trivially on the discriminant group $L^\vee/L \iso \widehat{L}^\vee/\widehat{L}$.

The pair $(G,\mathcal{D})$ is a Shimura datum, and
\begin{equation}\label{uniformization}
M(\C) = G(\Q) \backslash \mathcal{D} \times G(\A_f) / K
\iso  \bigsqcup_{ g\in G(\Q)\backslash G(\A_f) /K } \Gamma_g \backslash \mathcal{D},
\end{equation}
 is the complex  orbifold of $\C$-points of the Shimura variety attached to this datum and level subgroup $K$.
 Here we have set $\Gamma_g =  G(\Q) \cap gKg^{-1}.$
We will refer to $M(\C)$ as the \emph{GSpin Shimura variety} associated with $L$.

Given an algebraic representation $G \to \underline{\mathrm{Aut}}(N)$ on a finite dimensional $\Q$-vector space, and  a $K$-stable lattice $N_{\widehat{\Z}}\subset N_{\A_f}$,
we obtain a  $\Z$-local system $\bm{N}_{\rm Be}$ on $M(\C)$ whose fiber at a point $[(\bm{h},g)]\in M(\C)$ is identified with
$N\cap g N_{\widehat{\Z}}$. The corresponding vector bundle \[\bm{N}_{\dR,M(\C)}=\co_{M(\C)}\otimes\bm{N}_{\rm Be}\]
is equipped with a filtration $\Fil^\bullet\bm{N}_{\dR,M(\C)}$, which at any point $[(\bm{h},g)]$ equips the fiber of $\bm{N}_{\rm Be}$
with the Hodge structure determined by \eqref{Eqn_alpha_h}. This gives us a functorial assignment from pairs $(N,N_{\widehat{\Z}})$ as above to variations of $\Z$-Hodge structures over $M(\C)$.

Applying this to $V$ and the lattice $\widehat{L}\subset V_{\A_f}$, we obtain a canonical variation of polarized $\Z$-Hodge structures $(\bm{V}_{\rm Be},\Fil^\bullet\bm{V}_{\dR,M(\C)})$.   For each point $z$ of (\ref{grassman}) the induced Hodge decomposition of $V_\C$
has
\[
V_\C^{(1,-1)} = \C z ,
\qquad  V_\C^{(-1,1)}=\C\overline{z},
\qquad V_\C^{(0,0)} = (\C z + \C \overline{z} )^\perp.
\]
It follows that $\Fil^1\bm{V}_{\dR,M(\C)}$ is an isotropic line and $\Fil^0\bm{V}_{\dR, M(\C)}$ is its annihilator with respect
to the pairing on $\bm{V}_{\dR, M(\C)}$ induced from that on $L$.

Similarly, viewing $H=C(V)$ as a representation of $G$ by left multiplication, with its lattice $H_{\widehat{\Z}}=C(L)_{\widehat{\Z}}$, we obtain a variation of $\Z$-Hodge structures $(\bm{H}_{\rm Be},\Fil^\bullet\bm{H}_{\dR,M(\C)})$. This variation has type $(-1,0),(0,-1)$ and is therefore the homology of a family of complex tori over $M(\C)$.

This variation of $\Z$-Hodge structures is polarizable, and so the family of complex tori in fact arises from an abelian scheme over $M(\C)$. To see this, first consider the representation of $G$ on $\Q$ afforded by the spinor norm $\nu$, along with the obvious $K$-stable lattice $\widehat{\Z}\subset\A_f$. For any $\bm{h}\in\mathcal{D}$, the composition $\nu\circ\alpha_{\bm{h}}$ is the homomorphism
\[
\mathrm{Res}_{\C/\R}\mathbb{G}_m \map{ z\mapsto z\overline{z} } \mathbb{G}_{m,\R}.
\]
From this, we see that the associated variation of $\Z$-Hodge structures is the Tate twist $\underline{\Z}(1)$, whose underlying $\Z$-local system is just the constant sheaf $\underline{\Z}$, but whose (constant) Hodge filtration is concentrated in degree $-1$.

Now, choose $\delta\in C^+(L)\cap C^+(V)^\times$ satisfying $\delta^*=-\delta$. As explained in~\eqref{s:preliminaries}, this defines a $G$-equivariant symplectic pairing
\[
\psi_{\delta}:C(V)\times C(V) \to \Q(\nu)
\]
by $\psi_\delta(x,y) = \mathrm{Trd}(x\delta y^*)$.
Since $\delta$ lies in $C^+(L)$, the restriction of this form to $C(L)$ is $\Z$-valued, and in particular induces a $K$-invariant alternating form on $C(L)_{\widehat{\Z}}$ with values in $\widehat{\Z}(\nu)$.

We can arrange our choice of $\delta$ to have the following positivity property: For every $\bm{h}\in\mathcal{D}$, the Hermitian form $\psi_\delta(\alpha_{\bm{h}}(i)x,\overline{y})$ on $C(V)_{\C}$ is (positive or negative) definite. To produce a concrete such choice, choose a negative definite plane $\bm{h}_0\subset V$ defined over $\Q$, and an oriented basis $x,y\in\bm{h}_0\cap L$ for $\bm{h}_0$ satisfying $[x,y]=0$. Then it can be checked that $\delta = xy\in C(L)$ has the desired positivity property.
Any such choice of $\delta$ produces an alternating morphism
\begin{align*}
\bm{H}_{\rm Be} \otimes \bm{H}_{\rm Be} &\to \underline{\Z}(1)
\end{align*}
of variations of Hodge structures, and the positivity property implies that this is  a polarization.

 \begin{remark}
By our construction, any choice of $\delta\in C^+(L)\cap C(V)^\times$ with the required positivity property produces a polarization on $\mathcal{A}_{M(\C)}$. While the polarization on $\mathcal{A}_{M(\C)}$ depends on the choice of $\delta$, the abelian scheme itself does not.
\end{remark}

In sum, we have produced an abelian scheme $\mathcal{A}_{M(\C)}\to M(\C)$, whose homology is canonically identified with the polarizable variation of Hodge structures $(\bm{H}_{\rm Be},\bm{H}_{\dR,M(\C)})$. We  call this the \emph{Kuga-Satake abelian scheme}. It has relative dimension $2^{n+1}$.

The action of $G$ on $H$ commutes with the right multiplication action of $C(V)$, and also respects the $\Z/2\Z$-grading $H=H^+\times H^-$ defined by $H^\pm=C^\pm(V)$.  Thus the Kuga-Satake abelian scheme inherits  a right action of    $C(L)$,  and a $\Z/2\Z$-grading  
\[
\mathcal{A}_{M(\C)}= \mathcal{A}_{M(\C)}^+ \times \mathcal{A}_{M(\C)}^-.
\] 
Elements of $C^+(L)$ and $C^-(L)$ act as homogeneous endomorphisms of graded degrees  zero and one, respectively.
The left multiplication action of $V\subset C(V)$ on $H$ induces an embedding
\[
 (\bm{V}_{\rm Be},\Fil^\bullet\bm{V}_{\dR, M(\C)})\subset \big(\underline{\End}(\bm{H}_{\rm Be}),\Fil^\bullet \underline{\End}(\bm{H}_{\dR,M(\C)}) \big)
\]
of variations of Hodge structures.

For $x\in V$ of positive length, define a divisor on  $\mathcal{D}$ by
\[
\mathcal{D}(x) = \{ z \in \mathcal{D} : z \perp x \}.
\]
As in the work of  Borcherds  \cite{Borcherds}, Bruinier \cite{Bruinier}, and Kudla \cite{Ku}, for every   $m\in \Q_{>0}$ and $\mu\in L^\vee/L$ we define a complex orbifold
\begin{equation}\label{eqn:Zmmu}
Z(m, \mu ) (\C)= \bigsqcup_{ g\in G(\Q)\backslash G(\A_f) /K }  \Gamma_g \backslash \Big(
\bigsqcup_{  \substack{  x\in  \mu_g+ L_g \\ Q(x)=m  }    }   \mathcal{D}(x)
\Big).
\end{equation}
Here  $L_g \subset V$ is the $\Z$-lattice determined by $\widehat{L}_g = g\action \widehat{L}$,  and
$
\mu_g=g\action \mu\in L_g^\vee/L_g.
$
Comparing with (\ref{uniformization}), the   natural finite and unramified morphism
\[
Z(m, \mu ) (\C) \to M(\C)
\]
allows us to view (\ref{eqn:Zmmu}) as an effective Cartier divisor on $M(\C)$. We will see below that (\ref{eqn:Zmmu})
admits a moduli interpretation in terms of the Kuga-Satake abelian scheme.

%%%%%%%%%%%%%%%%%%%%%%%%%%%%%%%%%%%%%%%%

\subsection{Canonical models over $\Q$}
\label{sec:canonical}

%%%%%%%%%%%%%%%%%%%%%%%%%%%%%%%%%%%%%%%%

The reflex field of the Shimura datum $(G,\mathcal{D})$ is $\Q$, and hence $M(\C)$ is the complex fiber of an algebraic stack  $M$ over $\Q$.
By \cite[\S 3]{mp:reg} the Kuga-Satake abelian scheme, along with all its additional structures, has a canonical descent $\pi:\mathcal{A}_M\to M$.

We denote by\footnote{In \cite{mp:reg}, one works consistently with cohomology rather than homology,
and so the conventions here  will be dual to the conventions there.
}
\[
\bm{H}_{\dR,M} = \underline{\Hom}\big( R^1\pi_{\mathrm{Zar}, *}
\Omega^\bullet_{\mathcal{A}_M/M} , \co_{M} \big)
\]
the first de Rham homology of $\mathcal{A}_M$ relative to $M$; that is, the dual of the first relative hypercohomology of the de Rham complex.  It is a locally free
$\co_M$-module of rank $2^{n+2}$, equipped with its Hodge filtration
$\Fil^0\bm{H}_{\dR,M} \subset \bm{H}_{\dR,M}$  and Gauss--Manin connection. It also has a $\Z/2\Z$-grading $\bm{H}_{\dR,M}=\bm{H}_{\dR,M}^+\times \bm{H}_{\dR,M}^-$ arising from the grading $\mathcal{A}_M=\mathcal{A}_M^+\times \mathcal{A}_M^-$ on the Kuga-Satake abelian scheme, and similarly a right action of $C(L)$ induced from that on~$\mathcal{A}_M$. Over $M(\C)$, it is canonically identified with $\bm{H}_{\dR,M(\C)}$ with its filtration and integrable connection.

From~\cite[(3.12)]{mp:reg}, we find that there is a canonical descent $\bm{V}_{\dR, M}$ of $\bm{V}_{\dR, M(\C)}$, along with its filtration and integrable connection, so that we have an embedding
\[
 \bm{V}_{\dR,M}\subset\underline{\End}_{C(L)} (\bm{H}_{\dR,M})
\]
of filtered vector bundles over $M$ with integrable connection.
This embedding is a descent of the one already available over $M(\C)$. There is a canonical quadratic form $\bm{Q} : \bm{V}_{\dR,M} \to \co_M$
given on sections by $ x\circ x=\bm{Q}(x) \cdot \mathrm{Id}$, where the composition takes place in $\underline{\End}_{C(L)}(\bm{H}_{\dR,M})$.

In the same way we define
\[
\bm{H}_{\ell , \rm et} = \underline{\Hom}\big( R^1\pi_{\mathrm{et} , *} \underline{\Z}_\ell , \underline{\Z}_\ell \big)
\]
 to be the first  $\ell$-adic \'etale homology of $\mathcal{A}_M$ relative to $M$. Over $M(\C)$, it is canonically identified with $\Z_\ell\otimes\bm{H}_{\rm Be}$. The local system $\Z_\ell \otimes\bm{V}_{\rm Be}$ descends to an $\ell$-adic sheaf $\bm{V}_{\ell, \rm et}$ over $M$, and there is a canonical embedding as a local direct summand
\[
 \bm{V}_{\ell , \rm et}\subset\underline{\End}_{C(L)}(\bm{H}_{\ell , \rm et}).
\]
Again, composition on the right hand side induces a  quadratic form on $\bm{V}_{\ell ,\rm et}$ with values in the constant sheaf $\underline{\Z}_\ell$.

%%%%%%%%%%%%%%%%%%%%%%%%%%%%%%%%%%%%%%%%

\subsection{Integral models}
\label{sec:integral}

%%%%%%%%%%%%%%%%%%%%%%%%%%%%%%%%%%%%%%%%

The stack $M$ admits a regular integral model $\mathcal{M}$ over
$\Z$, which we construct in this section following~\cite[\S 7]{mp:reg}. In
\emph{loc.~cit.}~such a model is constructed over $\Z[1/2]$ as one uses the
main result of \cite{kisin}, where $2$ is assumed to be
invertible. As we shall see, most of the results extend also to the
case of the prime $p=2$ replacing  \cite{kisin} with
\cite{mp:2adic}. The construction is involved, and the final definition of~$\mathcal{M}$ is given in Proposition~\ref{prop:M_model over Z}.

It follows from   \cite[(6.8)]{mp:reg} that we may fix an isometric embedding $L\subset L^\diamond$,  where $(L^\diamond, Q^\diamond)$ is
a maximal quadratic space over $\Z$, self-dual at $p$ and of signature $( n^\diamond,2)$. 
Set
\[
\Lambda=L^{\perp}=\{x\in L^\diamond:x\perp L\}.
\]
The construction of the integral model over $\Z_{(p)}$ of $M$ will proceed by first constructing a smooth integral model for the larger Shimura variety $ M^\diamond$ associated with $ L^\diamond$.  There is a finite and unramified morphism $M\to M^\diamond$ over $\Q$ which we will extend to integral models, and  the integral model for $M$ will  inherit extra structure (for example, the vector bundles of Proposition \ref{L_tilde_independence} below)  realized by pulling back structures from the larger integral model.

Over $ M^\diamond$  we have the vector bundle $\bm{V}^\diamond_{\dR, M^\diamond}$ associated with $ L^\diamond$. Let
$\bm{V}^\diamond_{\dR,M}$ be its  restriction  to $M$. By construction, we have an isometric embedding
\[
\bm{V}_{\dR,M}\hookrightarrow\bm{V}^\diamond_{\dR,M}
\]
of filtered vector bundles with integrable connections over $M$.
Setting $\bm{\Lambda}_{\dR,M}=\Lambda\otimes_{\Z}\co_M$, we obtain an injection of vector bundles
\begin{equation}\label{eqn:LambdatildeVdR}
\bm{\Lambda}_{\dR,M}\hookrightarrow\bm{V}^\diamond_{\dR,M}
\end{equation}
identifying $\bm{\Lambda}_{\dR,M}$ with a local direct summand of $\bm{V}^\diamond_{\dR,M}$.

The Shimura variety $ M^\diamond$  admits a smooth canonical model $\mathcal{M}^\diamond_{\Z_{(p)}}$ over $\Z_{(p)}$. This is the main
theorem of~\cite{kisin} for $p\neq 2$ and it is proven in general in  \cite{mp:2adic}. We now recall the construction.
Choose $\delta\in C^+( L^\diamond)\cap C( V^\diamond)^\times$ as in \S \ref{s:the complex Shimura variety}, so that the associated symplectic pairing
$\psi_{\delta}$ on $C( L^\diamond)$ produces a polarization on the Kuga-Satake abelian scheme
\[
 A^\diamond_{ M^\diamond(\C)}\to M^\diamond(\C).
\]
This polarization has degree $d^\diamond$, where $d^\diamond$ is the discriminant of the restriction of $\psi_{\delta}$ to $C( L^\diamond)$. The polarized Kuga-Satake
abelian scheme defines a morphism of complex algebraic stacks
\[
i_{\delta}: M^\diamond_{\C}\to\mathcal{X}^\siegel_{2^{ n^\diamond+1},d^\diamond,\C},
\]
where $\mathcal{X}^\siegel_{2^{ n^\diamond+1},d^\diamond}$ is the Siegel moduli stack over $\Z$ parameterizing polarized abelian schemes of dimension
$2^{n^\diamond+1}$ of degree $d^\diamond$. It can be checked using complex uniformization that this map is finite and unramified. The theory of canonical models of
Shimura varieties  implies that $i_{\delta}$ descends to a finite and unramified map of $\Q$-stacks
\[
i_\delta  :   M^\diamond\to\mathcal{X}^\siegel_{2^{ n^\diamond+1},d^\diamond,\Q}.
\]
The integral model  $\mathcal{M}^\diamond_{\Z_{(p)}}$ is defined as the normalization of $\mathcal{X}^\siegel_{2^{ n^\diamond+1},d^\diamond,\Z_{(p)}}$ in $M^\diamond$, in the following sense:

\begin{definition}
Given an algebraic stack $\mathcal{X}$ over $\Z_{(p)}$, and a normal algebraic stack $Y$ over $\Q$ equipped with a finite map
$j_{\Q}:Y\to\mathcal{X}_{\Q}$, the \emph{normalization} of $\mathcal{X}$ in $Y$ is the finite $\mathcal{X}$-stack $j:\mathcal{Y}\to\mathcal{X}$, characterized by
the property that $j_*\co_{\mathcal{Y}}$ is the integral closure of $\co_{\mathcal{X}}$ in $(j_{\Q})_*\co_Y$. 

The normalization is uniquely determined by the following universal property: given a finite morphism $\mathcal{Z}\to\mathcal{X}$ with $\mathcal{Z}$ a normal algebraic stack over $\Z_{(p)}$, any map of $\mathcal{X}_{\Q}$-stacks
$\mathcal{Z}_{\Q}\to Y$ extends uniquely to a map of $\mathcal{X}$-stacks $\mathcal{Z}\to\mathcal{Y}$.
\end{definition}

\begin{proposition}
%\label{Model_Independence}
The stack $\mathcal{M}^\diamond_{\Z_{(p)}}$ is smooth over $\Z_{(p)}$. It does not depend on the choice of $\delta\in C^+( L^\diamond)\cap C( V^\diamond)^\times$.
\end{proposition}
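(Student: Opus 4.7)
The plan is to reduce both assertions to results already established in the references, exploiting the hypothesis that $L^\diamond$ is self-dual at $p$. For smoothness, the self-duality ensures that $G^\diamond = \GSpin(V^\diamond)$ extends to a reductive group scheme $\mathcal{G}^\diamond$ over $\Z_{(p)}$ and that the level subgroup $K^\diamond_p \subset G^\diamond(\Q_p)$ attached to $L^\diamond$ is hyperspecial. This is exactly the setting in which the main theorem of \cite{kisin} (for $p \neq 2$) and its extension in \cite{mp:2adic} (for $p = 2$) apply: they assert that $M^\diamond$ admits a smooth canonical integral model over $\Z_{(p)}$, and that this model is described concretely as the normalization of the Siegel integral stack $\mathcal{X}^\siegel_{2^{n^\diamond+1},d^\diamond,\Z_{(p)}}$ along any sufficiently well-behaved Hodge embedding. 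Since $i_\delta$ is such an embedding (it is finite and unramified on the generic fiber, and the lattice $C(L^\diamond)\otimes\Z_p$ is $\psi_\delta$-self-dual by the self-duality of $L^\diamond$ at $p$), the definition of $\mathcal{M}^\diamond_{\Z_{(p)}}$ coincides with Kisin's construction, and smoothness follows immediately.

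For independence of $\delta$, I would argue by the universal property of normalization. Let $\delta, \delta' \in C^+(L^\diamond) \cap C(V^\diamond)^\times$ be two choices satisfying the positivity condition, and write $\mathcal{M}^\diamond_\delta$ and $\mathcal{M}^\diamond_{\delta'}$ for the corresponding normalizations. A central output of the Kisin--Madapusi Pera construction is that the Kuga-Satake abelian scheme, together with its full $C(L^\diamond)$-action, extends to an abelian scheme $\mathcal{A}^\diamond_\delta \to \mathcal{M}^\diamond_\delta$. Now $\psi_{\delta'}$ is $G^\diamond$-equivariant and induces a polarization on the generic fiber $\mathcal{A}^\diamond_\delta|_{M^\diamond}$ of degree prime to $p$; because $\mathcal{M}^\diamond_\delta$ is regular and the extension of a polarization across a pure-codimension locus of a regular scheme is unique, $\psi_{\delta'}$ extends canonically to a polarization of $\mathcal{A}^\diamond_\delta$. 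This produces a morphism $\mathcal{M}^\diamond_\delta \to \mathcal{X}^\siegel_{2^{n^\diamond+1}, d^{\diamond \prime}, \Z_{(p)}}$ whose restriction to $M^\diamond$ is $i_{\delta'}$. Because $\mathcal{M}^\diamond_\delta$ is smooth and hence normal, the universal property of normalization produces a unique factorization through $\mathcal{M}^\diamond_{\delta'}$, giving a canonical $\Z_{(p)}$-morphism $\mathcal{M}^\diamond_\delta \to \mathcal{M}^\diamond_{\delta'}$ extending the identity on $M^\diamond$. Interchanging the roles of $\delta$ and $\delta'$ produces the inverse, yielding a canonical isomorphism.

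The only substantive obstacle is technical and sits entirely inside the black box of \cite{kisin} and \cite{mp:2adic}: namely, the nontrivial extension of the Kuga-Satake abelian scheme, its polarization, and the associated absolute Hodge cycles cutting out $\bm{V}^\diamond_{\dR}$ inside the endomorphism bundle. Once that input is granted, the smoothness claim is immediate and the independence of $\delta$ reduces to the routine combination of the universal property of normalization with the rigidity of polarized abelian schemes over a regular base. A minor but essential sanity check is that the self-duality of $L^\diamond$ at $p$ makes the degree of the polarization $\psi_\delta$ on $C(L^\diamond)_{\Z_p}$ a unit, so that the Hodge embeddings $i_\delta, i_{\delta'}$ both land in Siegel stacks for which the normalization construction produces hyperspecial-level models.
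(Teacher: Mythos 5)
Your approach to smoothness is the same as the paper's at the conceptual level (reduce to Kisin's main theorem and its $p=2$ extension), but you assert that ``the definition of $\mathcal{M}^\diamond_{\Z_{(p)}}$ coincides with Kisin's construction'' where the paper actually proves it. The subtlety the paper addresses is that Kisin's results (in particular the key fact from \cite[(2.1.2)]{kisin} that the Hodge embedding becomes a closed immersion of schemes) are established after adding auxiliary level structure. The paper therefore passes to the finite \'etale covers $M^\diamond_m$ and $\mathcal{X}^\siegel_m$, chooses $m$ large enough to get a closed immersion $i_{\delta,m}$, applies Kisin and Madapusi Pera there to get smoothness of the normalization $\mathcal{M}^\diamond_{m,\Z_{(p)}}$, and then recovers $\mathcal{M}^\diamond_{\Z_{(p)}}$ as the \'etale quotient by $K^\diamond/K^\diamond(m)$. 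This reduction is not automatic and should not be elided.

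For independence of $\delta$, you take a genuinely different route: the paper simply quotes the independence statement from \cite[(2.3.8)]{kisin} and \cite[Theorem 3.10]{mp:2adic}, while you unpack the argument via the extension of $\mathcal{A}^\diamond$ and the N\'eron/rigidity property of polarizations over a normal base, plus the universal property of normalization. This is a valid and more illuminating perspective (it is essentially what underlies the cited theorems), but as written it has a gap. The universal property stated in the paper requires the ambient morphism $\mathcal{Z}\to\mathcal{X}$ to be \emph{finite}; you apply it with $\mathcal{Z}=\mathcal{M}^\diamond_\delta$ and $\mathcal{X}=\mathcal{X}^\siegel_{2^{n^\diamond+1},d^{\diamond\prime},\Z_{(p)}}$, and the finiteness of the map you have just produced (which so far is only a morphism coming from a family of polarized abelian schemes) is not established. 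It is not automatic from finiteness on the generic fiber. You can patch this either by invoking the stronger extension property of Kisin's integral models directly, or by replacing the stated universal property with the codimension-by-codimension argument: $\mathcal{M}^\diamond_{\delta'}\to\mathcal{X}^\siegel_{\delta'}$ is finite (being a normalization of an excellent scheme), so the valuative criterion of properness gives the lift over every codimension-$1$ point of the normal source $\mathcal{M}^\diamond_\delta$, and affineness of $\mathcal{M}^\diamond_{\delta'}\to\mathcal{X}^\siegel_{\delta'}$ together with algebraic Hartogs then extends it over the remaining locus. Either way, this needs to be said.

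Two smaller inaccuracies: the $\psi_\delta$-self-duality of $C(L^\diamond)\otimes\Z_p$ requires not just that $L^\diamond$ be self-dual at $p$, but also that $\delta$ be a unit in $C^+(L^\diamond)\otimes\Z_p$, which is not guaranteed by the hypothesis $\delta\in C^+(L^\diamond)\cap C(V^\diamond)^\times$. This does not affect the conclusion (Kisin's theorem tolerates Hodge embeddings with polarization degree divisible by $p$), but the parenthetical justification as you wrote it is not correct. Also, ``the extension of a polarization across a pure-codimension locus of a regular scheme is unique'' is being used where you actually need \emph{existence}: the point is that the homomorphism extends by the N\'eron property over a normal Noetherian base \cite[Prop.\ I.2.7]{FaltingsChai}, and the extension remains a polarization because this can be checked at codimension-$1$ points (DVRs) where it is standard.
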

\begin{proof}
This essentially follows from~\cite[(2.3.8)]{kisin} if $p\neq 2$ and from~\cite[Theorem 3.10]{mp:2adic} in general. We provide some of the details.

Abbreviate
\[
\mathcal{X}^\siegel=\mathcal{X}^\siegel_{2^{ n^\diamond+1},d^\diamond,\Z_{(p)}},
\]
and let $(\mathcal{A}^\siegel, \lambda^\siegel)$ be the universal polarized abelian scheme over $\mathcal{X}^\siegel$. For any $m\in\Z_{>0}$ coprime to
$pd^\diamond$, let $\mathcal{X}^\siegel_{m}\to\mathcal{X}^\siegel$ be the finite \'etale cover parameterizing full level-$m$ structures on $\mathcal{A}^\siegel$;
that is,  parametrizing isomorphisms of group schemes
\[
 C( L^\diamond)\otimes\underline{\Z/m\Z} \xrightarrow{\simeq} \mathcal{A}^\siegel[m]
\]
that carry the symplectic form $\psi_{\delta}\otimes 1$ on the left hand side to a $(\Z/m\Z)^\times$-multiple of the Weil pairing on $\mathcal{A}^\siegel[m]$
induced by $\lambda^\siegel$. For $m\geq 3$ the stack  $\mathcal{X}^\siegel_m$ is a scheme over $\Z_{(p)}$.

Write $ G^\diamond=\GSpin( V^\diamond)$, and set
\[
K^\diamond= G^\diamond(\A_f)\cap C( L^\diamond)_{\widehat{\Z}}^\times.
\]
Let $K^\diamond(m)\subset K^\diamond$ be the largest subgroup acting trivially on $C( L^\diamond)\otimes(\Z/m\Z)$. We obtain a finite \'etale cover $
M^\diamond_m\to M^\diamond$ of Shimura varieties with Galois group $K^\diamond/K^\diamond(m)$, where
\[
  M^\diamond_m(\C) =  G^\diamond(\Q) \backslash {\mathcal{D}^\diamond} \times  G^\diamond(\A_f) / K^\diamond(m)
\]
and  ${\mathcal{D}^\diamond}$ is the space of oriented negative definite planes in $ V^\diamond_{\R}$.

The map $i_\delta :  M^\diamond\to\mathcal{X}^\siegel_{\Q}$ lifts to a map \[i_{\delta,m}: M^\diamond_m\to\mathcal{X}^\siegel_{m,\Q}.\] From~\cite[(2.1.2)]{kisin} we
find that  $i_{\delta,m}$ is a closed immersion of $\Q$-schemes for sufficiently large $m$. Fix such an $m$, and let $\mathcal{M}^\diamond_{m,\Z_{(p)}}$ be the
normalization of $\mathcal{X}^\siegel_m$ in $ M^\diamond_m$. Since $i_{\delta,m}$ is a closed immersion, this is simply the normalization of the Zariski closure of
$ M^\diamond_m$ in $\mathcal{X}^\siegel_m$. It now follows from~\cite[(2.3.8)]{kisin} and \cite[Theorem 3.10]{mp:2adic} that $\mathcal{M}^\diamond_{m,\Z_{(p)}}$ is
smooth over $\Z_{(p)}$, and moreover, that it is, up to unique isomorphism, \emph{independent} of the choice of $\delta$.

The stack $\mathcal{M}^\diamond_{\Z_{(p)}}$ is the \'etale quotient of $\mathcal{M}^\diamond_{m,\Z_{(p)}}$ by the action of $K^\diamond/K^\diamond(m)$, and so the
proposition follows.
\end{proof}

By the construction of the integral model $\mathcal{M}^\diamond_{\Z_{(p)}}$, the Kuga-Satake abelian scheme $ A^\diamond \to  M^\diamond$ extends to
${\mathcal{A}^\diamond}\to\mathcal{M}^\diamond_{\Z_{(p)}}$. A  non-trivial consequence of the construction is that the vector bundle $\bm{V}^\diamond_{\dR,
M^\diamond}$, with its integrable connection and filtration, extends to a filtered vector bundle with integrable connection
$(\bm{V}^\diamond_{\dR,\mathcal{M}^\diamond_{\Z_{(p)}}},\Fil^\bullet\bm{V}^\diamond_{\dR,\mathcal{M}^\diamond_{\Z_{(p)}}})$ over $\mathcal{M}^\diamond_{\Z_{(p)}}$
(see~\cite[\S 4]{mp:reg} for details).

Let $\bm{H}^\diamond_{\dR,\mathcal{M}^\diamond_{\Z_{(p)}}}$ be the first de Rham homology of $\mathcal{A}^\diamond$, so that we have an embedding of filtered vector bundles
\begin{equation}\label{eqn:tildeVdRinclusion}
 \bm{V}^\diamond_{\dR,\mathcal{M}^\diamond_{\Z_{(p)}}}\subset\underline{\End}_{C(L)}\bigl(\bm{H}^\diamond_{\dR,\mathcal{M}^\diamond_{\Z_{(p)}}}\bigr)
\end{equation}
extending that in the generic fiber and exhibiting its source as a local direct summand of the target.
Composition in the endomorphism ring gives a non-degenerate quadratic form on
\[
{\bm{Q}}^\diamond\colon\bm{V}^\diamond_{\dR,\mathcal{M}^\diamond_{\Z_{(p)}}}\to\co_{\mathcal{M}^\diamond_{\Z_{(p)}}}
\]
again extending that on its generic fiber.

Similarly, let
\[
\bm{H}^\diamond_{\crys} = \underline{\Hom}\big( R^1\pi_{\crys,*} \co_{\mathcal{A}^\diamond_{\F_p}/\Z_p}^\crys ,
\co_{\mathcal{M}^\diamond_{\F_p}/\Z_p}^\crys \big)
\]
 be the first  crystalline homology of $\mathcal{A}^\diamond_{\F_p}$ relative to $\mathcal{M}^\diamond_{\F_p}$.  As $\mathcal{M}^\diamond_{\Z_{(p)}}$ is smooth over $\Z_{(p)}$, the vector bundle with integrable connection \eqref{eqn:tildeVdRinclusion} determines a crystal $\bm{V}^\diamond_{\crys}$ over $\mathcal{M}^\diamond_{\F_p}$ equipped with an embedding
  \[
 \bm{V}^\diamond_{\crys}\subset\underline{\End}_{C(L)}\bigl(\bm{H}^\diamond_{\crys}\bigr)
 \]
as a local direct summand; see~\cite[(4.14)]{mp:reg} for details.

We are now ready to give the construction of the desired integral model $\mathcal{M}_{\Z_{(p)}}$ for $M$. The isometric embedding $V\to V^\diamond$ induces a homomorphism of Clifford algebras, which restricts to a closed immersion $G\to G^\diamond$.  This induces a morphism of Shimura varieties $M\to M^\diamond$.  The direct summands $\Lambda_{\Q}\subset V^\diamond$ and $\Lambda_{\widehat{\Z}}\subset L^\diamond_{\widehat{\Z}}$ are pointwise stabilized by $G(\Q)$ and $K$, respectively. From this, arguing as in \cite[(6.5)]{mp:reg}, we find that there is a canonical embedding
\begin{equation}\label{Eqn_Lambda_emb}
 \Lambda\subset\End_{C( L^\diamond)}({\mathcal{A}^\diamond}\vert_M)
\end{equation}
whose de Rham realization is~\eqref{eqn:LambdatildeVdR}.
Let $\check{\mathcal{M}}_{\Z_{(p)}}$ be the normalization of $\mathcal{M}^\diamond_{\Z_{(p)}}$ in $M$. Then, by~\cite[Prop. 2.7]{FaltingsChai}, the
embedding~\eqref{Eqn_Lambda_emb} extends to
\begin{equation}\label{Eqn_Lambda_emb_integral}
 \Lambda\subset\End_{C( L^\diamond)}\bigl({\mathcal{A}^\diamond}\vert_{\check{\mathcal{M}}_{\Z_{(p)}}}\bigr).
\end{equation}

Since $\bm{V}^\diamond_{\dR,\mathcal{M}^\diamond_{\Z_{(p)}}}$ is a direct summand of
$\underline{\End}_{C(L)}\bigl(\bm{H}^\diamond_{\dR,\mathcal{M}^\diamond_{\Z_{(p)}}}\bigr)$, the de Rham realization of~\eqref{Eqn_Lambda_emb_integral} provides us
with an extension
\begin{equation}\label{Eqn_Lambda_dR}
\bm{\Lambda}_{\dR}\to\bm{V}^\diamond_{\dR,\check{\mathcal{M}}_{\Z_{(p)}}}
\end{equation}
of \eqref{eqn:LambdatildeVdR},
where \[\bm{\Lambda}_{\dR}=\Lambda\otimes_{\Z}\co_{\check{\mathcal{M}}_{\Z_{(p)}}}.\]
Let $\check{\mathcal{M}}^{\rm pr}_{\Z_{(p)}}\subset\check{\mathcal{M}}_{\Z_{(p)}}$ be the open locus where the cokernel of~\eqref{Eqn_Lambda_dR} is a vector bundle
of rank $n$ (see~\cite[6.16(i)]{mp:reg}). This is also the locus over which the image of (\ref{Eqn_Lambda_dR})  is a local direct summand. In particular, it contains the generic fiber $M$ of
$\check{\mathcal{M}}_{\Z_{(p)}}$.

Let $\mathrm{M}^{\rm loc}(L)$ be the projective scheme over $\Z$ parameterizing isotropic lines $L^1\subset L$. The following can be deduced
from~\cite[(6.16),(6.20),(2.16)]{mp:reg}.

\begin{proposition}\label{prop:Mcheck}
\mbox{}
\begin{enumerate}
\item Around any point of $\check{\mathcal{M}}^{\rm pr}_{\Z_{(p)}}$, we can find an \'etale neighborhood $T$ such that there exists an isometry
\[
 \xi:\co_T\otimes L^\diamond\xrightarrow{\simeq}\bm{ V}^\diamond_{\dR,T}
\]
satisfying $\xi(\co_T\otimes\Lambda)=\bm{\Lambda}_{\dR,T}$, and such that the morphism $T\to\mathrm{M}^{\rm loc}(L)$ determined by the  isotropic line 
\[ 
\xi^{-1}(\Fil^1\bm{ V}^\diamond_{\dR,T})\subset\co_T\otimes L^\diamond 
\]  
is \'etale. \item If $L_{(p)}$ is self-dual,  or if $p>2$  and $p^2\nmid \mathrm{disc}(L)$, then $\check{\mathcal{M}}^{\rm pr}_{\Z_{(p)}}=\check{\mathcal{M}}_{\Z_{(p)}}$.
 \item If $p>2$ and $p^2 \mid \mathrm{disc}(L)$, then $\check{\mathcal{M}}^{\rm pr}_{\Z_{(p)}}$ is regular outside of a $0$ dimensional closed substack.
\end{enumerate}
\end{proposition}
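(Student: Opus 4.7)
The plan is to deduce all three parts from the Kisin-style local model theory of \cite{mp:reg} applied to the smooth auxiliary stack $\mathcal{M}^\diamond_{\Z_{(p)}}$, combined with the identification of $\check{\mathcal{M}}_{\Z_{(p)}}$ as the closed substack of $\mathcal{M}^\diamond_{\Z_{(p)}}$ where $\bm{\Lambda}_{\dR}$ lies in $\Fil^0\bm{V}^\diamond_{\dR}$.

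For (i), I begin from \cite[(2.16)]{mp:reg}, which provides around any point of $\mathcal{M}^\diamond_{\Z_{(p)}}$ an \'etale neighborhood equipped with an isometric trivialization $\xi^\diamond\colon\co\otimes L^\diamond \iso \bm{V}^\diamond_{\dR}$ whose induced morphism to $\mathrm{M}^{\rm loc}(L^\diamond)$ is \'etale. Pulling back to an \'etale neighborhood $T$ of a point of $\check{\mathcal{M}}^{\rm pr}_{\Z_{(p)}}$, the defining hypothesis of the ``pr'' locus is precisely that $\bm{\Lambda}_{\dR,T}$ is a local direct summand of $\bm{V}^\diamond_{\dR,T}$. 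Since the orthogonal group of $L^\diamond$ acts transitively on primitive sublattices isometric to $\Lambda$, after shrinking $T$ we may adjust $\xi^\diamond$ by a section of $O(L^\diamond)(T)$ to obtain $\xi$ satisfying $\xi(\co_T\otimes\Lambda)=\bm{\Lambda}_{\dR,T}$. The de Rham realization of~\eqref{Eqn_Lambda_emb_integral} forces $\bm{\Lambda}_{\dR}\subset\Fil^0\bm{V}^\diamond_{\dR}$ on $\check{\mathcal{M}}_{\Z_{(p)}}$ (the endomorphisms in $\Lambda$ preserve the Hodge filtration on $\bm{H}^\diamond_{\dR}$); equivalently, $\xi^{-1}(\Fil^1\bm{V}^\diamond_{\dR,T})$ is perpendicular to $\Lambda$, hence lies in $\co_T\otimes L$, yielding the classifying morphism $T\to\mathrm{M}^{\rm loc}(L)$. \'Etaleness of this morphism reduces, through the chart, to the scheme-theoretic identification of $\check{\mathcal{M}}_{\Z_{(p)}}\subset\mathcal{M}^\diamond_{\Z_{(p)}}$ with the locus cut out by these perpendicularity conditions, which is the content of \cite[(6.16),(6.20)]{mp:reg}.

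For (ii), I work locally at $p$ and observe that the hypothesis produces an orthogonal splitting $L^\diamond_{\Z_p}=L_{\Z_p}\oplus\Lambda_{\Z_p}$: this is immediate when $L_{(p)}$ is self-dual, and follows from the Jordan decomposition of quadratic $\Z_p$-lattices in the case $p>2$ with $p^2\nmid\mathrm{disc}(L)$. Such a splitting implies that the inclusion $\bm{\Lambda}_{\dR}\hookrightarrow\bm{V}^\diamond_{\dR}|_{\check{\mathcal{M}}_{\Z_{(p)}}}$ is a local direct summand everywhere with rank-$n$ cokernel, so $\check{\mathcal{M}}^{\rm pr}_{\Z_{(p)}}=\check{\mathcal{M}}_{\Z_{(p)}}$. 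For (iii), the same \'etale chart from (i) reduces the regularity question to the corresponding regularity of $\mathrm{M}^{\rm loc}(L)$; a direct analysis of the projective scheme $\mathrm{M}^{\rm loc}(L)$ shows that for $p>2$ and $p^2\mid\mathrm{disc}(L)$ its non-smooth locus in the special fiber is $0$-dimensional, concentrated at isotropic lines interacting nontrivially with the ramified Jordan block of $L_{\Z_p}$.

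The main obstacle throughout is the \'etaleness assertion in (i), which is tantamount to identifying the scheme-theoretic defining equations of $\check{\mathcal{M}}_{\Z_{(p)}}$ inside $\mathcal{M}^\diamond_{\Z_{(p)}}$; once that identification from \cite[(6.16),(6.20)]{mp:reg} is in place, parts (ii) and (iii) become essentially formal consequences of the classification of quadratic forms over $\Z_p$ and the explicit geometry of $\mathrm{M}^{\rm loc}(L)$.
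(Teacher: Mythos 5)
The paper itself gives no argument for this proposition, only the citation to \cite[(6.16),(6.20),(2.16)]{mp:reg}, so I can only assess your sketch on its own merits. Your overall strategy --- translating everything through the local model diagram for $\mathcal{M}^\diamond_{\Z_{(p)}}$ and into the quadric $\mathrm{M}^{\rm loc}$ --- is the right one and matches the approach of \cite{mp:reg}. But part (ii) has a genuine gap: the orthogonal splitting $L^\diamond_{\Z_p}=L_{\Z_p}\oplus\Lambda_{\Z_p}$ that you invoke is false precisely in the case you need it, namely when $p>2$ and $\ord_p(\mathrm{disc}(L))=1$. Since $L^\diamond_{(p)}$ is self-dual, comparing discriminants gives $\ord_p(\mathrm{disc}(L)) + \ord_p(\mathrm{disc}(\Lambda)) = 2\,\ord_p\bigl([L^\diamond_{(p)}:L_{(p)}\oplus\Lambda_{(p)}]\bigr)$, so the index is a positive power of $p$ whenever $L_{(p)}$ is not self-dual, and the lattices do not split $L^\diamond_{(p)}$. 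The Jordan decomposition of $L_{\Z_p}$ is an internal decomposition of $L_{\Z_p}$ and tells you nothing about how $L_{\Z_p}$ and $\Lambda_{\Z_p}$ sit inside $L^\diamond_{\Z_p}$. In this non-self-dual case, proving that $\bm{\Lambda}_{\dR}\hookrightarrow\bm{V}^\diamond_{\dR}\vert_{\check{\mathcal{M}}_{\Z_{(p)}}}$ is everywhere a local direct summand requires an actual computation in the local model chart that uses $p^2\nmid\mathrm{disc}(L)$; your sketch replaces it with a splitting that is not there.

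A second issue is in part (iii): you analyze the \emph{non-smooth locus of the special fiber} of $\mathrm{M}^{\rm loc}(L)$, but what is needed is the \emph{non-regular locus of the $\Z_{(p)}$-scheme} $\mathrm{M}^{\rm loc}(L)_{\Z_{(p)}}$, and these differ. When $p\mid\mathrm{disc}(L)$ the special fiber of the quadric is typically singular along a positive-dimensional set, yet $\mathrm{M}^{\rm loc}(L)_{\Z_{(p)}}$ can be regular there --- this is exactly the content of case (ii). The non-regular locus you want is the set of (at most two) $\F_{p^2}$-points where the isotropic line lies in $\mathrm{rad}(L_{\F_{p^2}})$ and is in addition isotropic for the induced non-degenerate form on that radical, as the paper later makes explicit in the proof of Proposition \ref{prop:functorial}. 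Finally, two smaller cautions on part (i): transitivity of $O(L^\diamond)$ on primitive isometric copies of $\Lambda$ is delicate when $\Lambda_{(p)}$ fails to be self-dual, and is usually handled via formal smoothness of the scheme of isometric embeddings rather than a direct Witt-type orbit argument; and the reference \cite[(2.16)]{mp:reg} concerns the structure of the quadric, not the period map, so the \'etale chart on $\mathcal{M}^\diamond_{\Z_{(p)}}$ should be sourced from the local model diagram elsewhere in \cite{mp:reg}.
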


We now define:
\begin{itemize}
\item $\mathcal{M}_{\Z_{(p)}}=\check{\mathcal{M}}_{\Z_{(p)}}$ if $L_{(p)}$ is self-dual, or if $p>2$  and $p^2\nmid\mathrm{disc}(L)$; 
\item
$\mathcal{M}_{\Z_{(p)}}=\text{the regular locus of }\check{\mathcal{M}}^{\rm pr}_{\Z_{(p)}}$ if $p>2$ and $p^2\mid \mathrm{disc}(L)$; 
\item
$\mathcal{M}_{\Z_{(p)}}=\text{the smooth locus of }\check{\mathcal{M}}^{\rm pr}_{\Z_{(p)}}$ if $p=2$.
\end{itemize}
Then $\mathcal{M}_{\Z_{(p)}}$ is the desired regular integral model for $M$ over $\Z_{(p)}$.

\begin{remark}
If  $p=2$ and $L_{(2)}$ is not self-dual,  or if $p>2$ and $p^2 \mid \mathrm{disc}(L)$, this is
certainly not the optimal definition of the regular model $\mathcal{M}_{\Z_{(p)}}$. For a discussion and examples of this
when $p$ is odd, see~\cite[(6.27),(6.28)]{mp:reg}.
\end{remark}

Equip $\bm{\Lambda}_{\dR}=\Lambda\otimes_{\Z}\co_{\mathcal{M}_{\Z_{(p)}}}$ with the integrable connection $1\otimes\rm d$, and the trivial filtration concentrated in degree $0$.  By construction, over $\mathcal{M}_{\Z_{(p)}}$ we have a canonical isometric embedding
\begin{equation*}
%\label{eqn:LambdatildeVdRint}
\bm{\Lambda}_{\dR}\subset\bm{V}^\diamond_{\dR,\mathcal{M}_{\Z_{(p)}}}
\end{equation*}
of filtered vector bundles with integrable connections. Therefore, since the quadratic form on $\bm{V}^\diamond_{\dR,\mathcal{M}_{\Z_{(p)}}}$ is self-dual, the
orthogonal complement
\[
\bm{V}_{\dR,\mathcal{M}_{\Z_{(p)}}}=\bm{\Lambda}^{\perp}_{\dR}\subset\bm{V}^\diamond_{\dR,\mathcal{M}_{\Z_{(p)}}}
\]
is again a vector sub-bundle with integrable connection. Indeed, it is precisely the kernel of the surjection of vector bundles
\[
 \bm{V}^\diamond_{\dR,\mathcal{M}_{\Z_{(p)}}}\xrightarrow{\simeq}\bm{V}^{\diamond,\vee}_{\dR,\mathcal{M}_{\Z_{(p)}}}\to  \bm{V}^{\diamond,\vee}_{\dR,\mathcal{M}_{\Z_{(p)}}}/\mathrm{ann}(\bm{\Lambda}_{\dR}).
\]
Moreover, the isotropic line $\Fil^1\bm{V}^\diamond_{\dR,\mathcal{M}_{\Z_{(p)}}}$ is actually contained in $\bm{V}_{\dR,\mathcal{M}_{\Z_{(p)}}}$. For convenience,
we abbreviate
\[
\bm{V}^1_{\dR,\mathcal{M}_{\Z_{(p)}}}= \Fil^1\bm{V}^\diamond_{\dR,\mathcal{M}_{\Z_{(p)}}}.
\]

\begin{proposition}\label{L_tilde_independence}
The integral models $\mathcal{M}_{\Z_{(p)}}$ and $\check{\mathcal{M}}_{\Z_{(p)}}$ are, up to unique isomorphism, independent of the auxiliary choice of $
L^\diamond$. Furthermore, the vector bundle $\bm{V}_{\dR,\mathcal{M}_{\Z_{(p)}}}$ and the isotropic line
$
 \bm{V}^1_{\dR,\mathcal{M}_{\Z_{(p)}}}\subset\bm{V}_{\dR,\mathcal{M}_{\Z_{(p)}}}
$
are also independent of this auxiliary choice.
\end{proposition}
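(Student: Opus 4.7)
The strategy is to reduce to comparing two auxiliary lattices related by an isometric inclusion, and then to invoke the functoriality of the integral canonical models of \cite{kisin} (extended to $p=2$ in \cite{mp:2adic}).

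First I would show that any two choices $L^\diamond_1, L^\diamond_2$ admit a common third auxiliary lattice $L^\diamond_3$ into which each embeds isometrically over $L$. Setting $\Lambda_2 = L^\perp \cap L^\diamond_2$, the orthogonal direct sum $V^\diamond_1 \oplus \Lambda_{2, \Q}$ is a quadratic space of signature $(n^\diamond_1 + \mathrm{rank}(\Lambda_2), 2)$ into which both $V^\diamond_1$ and $V^\diamond_2 = V \oplus \Lambda_{2,\Q}$ embed isometrically over $V$ (the latter using $V \hookrightarrow V^\diamond_1$ on the first factor). Applying the argument of \cite[(6.8)]{mp:reg} produces a maximal $\Z$-lattice $L^\diamond_3$ inside this space that contains the images of both $L^\diamond_1$ and $L^\diamond_2$ and is self-dual at $p$. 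By symmetry, the independence claim is thereby reduced to comparing the constructions attached to a nested pair $L^\diamond \hookrightarrow L^{\diamond\diamond}$ of auxiliary lattices over $L$.

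For this nested case, the embedding of spin groups $G^\diamond \hookrightarrow G^{\diamond\diamond}$ induces a finite unramified morphism $M^\diamond \to M^{\diamond\diamond}$ over $\Q$, which by the functoriality established in \cite[\S 6]{mp:reg} (together with \cite{mp:2adic} at $p=2$) extends to a finite morphism $\mathcal{M}^\diamond_{\Z_{(p)}} \to \mathcal{M}^{\diamond\diamond}_{\Z_{(p)}}$ of smooth integral models, compatible with the Kuga-Satake abelian schemes and with the inclusion $\bm{V}^\diamond_{\dR} \hookrightarrow \bm{V}^{\diamond\diamond}_{\dR}$. The universal property of normalization then identifies the normalization of $\mathcal{M}^{\diamond\diamond}_{\Z_{(p)}}$ in $M$ with the normalization of $\mathcal{M}^\diamond_{\Z_{(p)}}$ in $M$, proving the independence of $\check{\mathcal{M}}_{\Z_{(p)}}$. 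Writing $\Lambda' = (L^\diamond)^\perp \cap L^{\diamond\diamond}$, the orthogonal decomposition $L^{\diamond\diamond}_{(p)} = L^\diamond_{(p)} \oplus \Lambda'_{(p)}$ (valid at $p$ thanks to self-duality at $p$) yields compatible decompositions $\bm{V}^{\diamond\diamond}_{\dR} = \bm{V}^\diamond_{\dR} \oplus \bm{\Lambda}'_{\dR}$ and $\bm{\Lambda}^{\diamond\diamond}_{\dR} = \bm{\Lambda}^\diamond_{\dR} \oplus \bm{\Lambda}'_{\dR}$ over $\check{\mathcal{M}}_{\Z_{(p)}}$, so the two orthogonal complements coincide and define a single vector bundle $\bm{V}_{\dR, \mathcal{M}_{\Z_{(p)}}}$. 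The isotropic line $\bm{V}^1_{\dR} = \Fil^1 \bm{V}^\diamond_{\dR}$ is intrinsic to the Hodge filtration on the de Rham homology of the Kuga-Satake abelian scheme on $M$, hence independent of the choice. The primitive locus and the further regular or smooth locus used to define $\mathcal{M}_{\Z_{(p)}}$ are cut out by intrinsic conditions on $\bm{V}_{\dR}$ and $\bm{\Lambda}_{\dR}$, so they too are preserved. Uniqueness of the resulting isomorphism follows from the normality of the target and the density of the generic fiber.

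The main obstacle is the extension of $M^\diamond \to M^{\diamond\diamond}$ to a morphism of integral models compatibly with the Kuga-Satake structures. This requires choosing the auxiliary Siegel embeddings for $\mathcal{M}^\diamond_{\Z_{(p)}}$ and $\mathcal{M}^{\diamond\diamond}_{\Z_{(p)}}$ so that the larger Kuga-Satake abelian scheme appears, after restriction along $\mathcal{M}^\diamond_{\Z_{(p)}} \to \mathcal{M}^{\diamond\diamond}_{\Z_{(p)}}$, as $\mathcal{A}^\diamond \otimes_{\Z} C(\Lambda')$ (using the Morita-type decomposition $C(V^{\diamond\diamond}) = C(V^\diamond) \otimes C(\Lambda'_\Q)$ as $G^\diamond$-representations), so that Kisin's extension criterion can be applied functorially. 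This compatibility, including the polarization bookkeeping via Zarhin's trick, is precisely the work carried out in \cite[\S 6]{mp:reg} for $p \neq 2$, and the same arguments adapt, mutatis mutandis, using \cite{mp:2adic} at $p = 2$.
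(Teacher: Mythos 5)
Your high-level strategy parallels the paper's: reduce to the case of a nested pair $L^\diamond \hookrightarrow L^{\diamond\diamond}$ of auxiliary lattices, where the functoriality of the constructions of \cite{kisin} and \cite{mp:2adic} together with the universal property of normalization does the work, and then handle two arbitrary choices by producing a third auxiliary lattice receiving both. Your treatment of the nested case, including the orthogonal decomposition of the de Rham bundles and the observation that the isotropic line $\bm{V}^1_{\dR}$ is intrinsic to the Kuga--Satake Hodge filtration, is in agreement with the paper. Where you diverge is the construction of the common auxiliary lattice. The paper forms the ``pushout'' $(L^\diamond\oplus L^\diamond_1)/\Delta(L)$ along the diagonal $\Delta(v)=(v,-v)$, into which both $L^\diamond$ and $L^\diamond_1$ map with restrictions to $L$ coinciding, and then invokes \cite[(6.8)]{mp:reg} to embed the result in a maximal lattice self-dual at $p$. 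You instead form the ambient $\Q$-quadratic space $V^\diamond_1\oplus\Lambda_{2,\Q}$ and assert the existence of a maximal lattice $L^\diamond_3$ in it containing the images of both $L^\diamond_1$ and $L^\diamond_2$.

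This last step has a gap that the paper's pushout is built to avoid. For $x\in L^\diamond_1$ and $y\in L^\diamond_2$, writing $\iota(y)=(y_V,y_\Lambda)$ with $y_V$ the $V$-component, the cross-pairing in $V^\diamond_1\oplus\Lambda_{2,\Q}$ is $[x,\iota(y)]=[\mathrm{pr}_V(x),y_V]$, and both $\mathrm{pr}_V(x)$ and $y_V$ lie in $L^\vee$ but generally not in $L$. Since the pairing on $L^\vee$ need not be $\Z$-valued (the discriminant form of $L$ is nontrivial), the module $L^\diamond_1 + \iota(L^\diamond_2)$ need not carry a $\Z$-valued quadratic form, so one cannot simply apply \cite[(6.8)]{mp:reg} to it, and it is not automatic that any maximal lattice in this ambient space receives both $L^\diamond_1$ and $L^\diamond_2$ isometrically over $L$. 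You would need either to enlarge the ambient quadratic space further, or to argue for the existence of the two embeddings indirectly (e.g.\ via Witt-type extension theorems), which is substantive missing content. The pushout approach avoids the issue by constructing the target directly as a quotient of the integral lattice $L^\diamond\oplus L^\diamond_1$, rather than searching for a common super-lattice of two prescribed sublattices inside a fixed space.
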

\begin{proof}
Suppose that we have a different embedding $L\hookrightarrow L^\diamond_1$ with $ L^\diamond_1$ maximal, self-dual at $p$, and of signature $( n^\diamond_1,2)$.

First, assume that the embedding $L\hookrightarrow L^\diamond_1$ factors as $L\hookrightarrow L^\diamond\hookrightarrow L^\diamond_1$. The construction of Kisin
in~\cite{kisin} and its generalization in \cite{mp:2adic} show that the map of integral models $\mathcal{M}^\diamond_{\Z_{(p)}}\to\mathcal{M}^\diamond_{1,\Z_{(p)}}$
associated with $ L^\diamond\hookrightarrow L^\diamond_1$ is finite and unramified. Moreover, Proposition \ref{prop:Mcheck} shows that, if $\Lambda_1=
(L^\diamond)^{\perp}\subset L^\diamond_1$, then the corresponding map
\[
\bm{\Lambda}_{1,\dR,\mathcal{M}^\diamond_{\Z_{(p)}}}\to\bm{ V}^\diamond_{1,\dR,\mathcal{M}_{\Z_{(p)}}}
\]
is an embedding of vector bundles, and that its orthogonal complement is exactly $\bm{V}^\diamond_{\dR,\mathcal{M}_{\Z_{(p)}}}$. From this, we easily deduce that
the constructions of both $\check{\mathcal{M}}_{\Z_{(p)}}$ and $\check{\mathcal{M}}^{\rm pr}_{\Z_{(p)}}$ do not depend on the choice of isometric embedding. The
proof also shows that the vector bundle $\bm{V}_{\dR,\mathcal{M}_{\Z_{(p)}}}$, along with its isotropic line, is also independent of this choice.

For the general case, consider the embedding
$
 \Delta: L  \hookrightarrow  L^\diamond\oplus L^\diamond_1
$
defined by $v\mapsto (v,-v)$.
The quadratic form on $ L^\diamond\oplus L^\diamond_1$ given by the orthogonal sum of those on its individual summands induces a quadratic form on the quotient
\[
 L^\diamond_2=( L^\diamond\oplus L^\diamond_1)/\Delta(L)
\]
of signature $(n^\diamond+n^\diamond_1-n,2)$. Moreover, the natural maps $ L^\diamond\to L^\diamond_2$ and $ L^\diamond_1\to L^\diamond_2$ are isometric inclusions
of direct summands, whose restrictions to $L$ coincide.

Now, $ L^\diamond_2$ need not be self-dual at $p$, but, via the argument in~\cite[(6.8)]{mp:reg}, we can embed it isometrically as a direct summand in a maximal
quadratic space $ L^\diamond_3$ of signature $( n^\diamond_3,2)$, which is self-dual at $p$. In particular, replacing $ L^\diamond_1$ with $ L^\diamond_3$, we are
reduced to the already considered case where $ L^\diamond$ is an isometric direct summand of $ L^\diamond_1$.
\end{proof}

We can now give the construction of the stack $\mathcal{M}$ over $\Z$. It is simply obtained by patching together the spaces $\mathcal{M}_{\Z_{(p)}}$. To do this
rigorously, we choose a finite collection of maximal quadratic spaces $ L^\diamond_1, L^\diamond_2,\ldots, L^\diamond_r$ with the following properties:
\begin{itemize}
\item For each $i=1,2,\ldots,r$, $ L^\diamond_i$ has signature $( n^\diamond_i,2)$, for $ n^\diamond_i\in\Z_{>0}$; \item For each $i$, there is an isometric
embedding $L\hookrightarrow L^\diamond_i$; \item If, for each $i$, we denote by $S_i$ the set of primes dividing  $\mathrm{disc}(L^\diamond_i)$, then
$\bigcap_{i=1}^rS_i=\emptyset$.
\end{itemize}
It is always possible to find such a collection. For instance, first choose any maximal quadratic lattice of signature $( n^\diamond_1,2)$, admitting $L$ as an
isometric direct summand. Let $\{p_2,\ldots,p_r\}$ be the set of primes dividing the discriminant of $ L^\diamond_1$. Now, for each $i=2,3,\ldots,r$, let $
L^\diamond_i$ be a maximal quadratic lattice, self-dual at $p_i$, of signature $( n^\diamond_i,2)$, admitting $L$ as an isometric direct summand. Then the
collection $\{ L^\diamond_1,\ldots, L^\diamond_r\}$ does the job.

For $i=1,2,\ldots,r$, let $ M^\diamond_i$ be the GSpin Shimura variety over $\Q$ attached to $ L^\diamond_i$. Given an appropriate choice of $\delta_i\in C^+(
L^\diamond_i)\cap C( L^\diamond_i)_{\Q}^\times$, with associated alternating form $\psi_{\delta_i}$ on $C( L^\diamond_i)$, we obtain a sequence of finite and
unramified map of $\Q$-stacks
\[
M\to M^\diamond_i \xrightarrow{i_{\delta_i}} \mathcal{X}^\siegel_{2^{ n^\diamond_i+1},d^\diamond_i,\Q}.
\]
Here, $d^\diamond_i$ is the discriminant of $\psi_{\delta_i}$.

Let $\Z[(S_i)^{-1}]\subset\Q$ be the localization of $\Z$ obtained by inverting  the primes in $S_i$. Let $\mathcal{M}_{\Z[(S_i)^{-1}]}$ be the stack over
$\Z[(S_i)^{-1}]$ obtained by normalizing $\mathcal{X}^\siegel_{2^{ n^\diamond_i+1},d^\diamond_i,\Z[(S_i)^{-1}]}$ in $M$.

\begin{proposition}
\label{prop:M_model over Z}
There is a regular, flat, algebraic $\Z$-stack $\mathcal{M}$ such that for each $i$ the restriction of $\mathcal{M}$ over $\Z[(S_i)^{-1}]$ is isomorphic to
$\mathcal{M}_{\Z[(S_i)^{-1}]}$. Moreover, the vector bundle $\bm{V}_{\dR,M}$ on $M$, along with its integrable connection and the isotropic line $\bm{V}^1_{\dR,M}$, has a
canonical extension $(\bm{V}_{\dR},\bm{V}^1_{\dR})$ over $\mathcal{M}$.
\end{proposition}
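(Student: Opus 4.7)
The plan is to produce $\mathcal{M}$ by gluing the stacks $\mathcal{M}_{\Z[(S_i)^{-1}]}$ along the open cover $\{\mathrm{Spec}\,\Z[(S_i)^{-1}]\}_{i=1}^r$ of $\mathrm{Spec}\,\Z$, which is a cover precisely because $\bigcap_i S_i=\emptyset$. The key input is Proposition~\ref{L_tilde_independence}, which supplies canonical local identifications of the constructions from different auxiliary spaces $L^\diamond_i$ at each prime.

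First I would, at every prime $p\in\mathrm{Spec}\,\Z[(S_i)^{-1}]$, replace $\mathcal{M}_{\Z[(S_i)^{-1}]}$ by the open substack whose stalk at $p$ is the regular model $\mathcal{M}_{\Z_{(p)}}$ singled out in the three-case definition above: the full normalization $\check{\mathcal{M}}_{\Z_{(p)}}$ when $L_{(p)}$ is self-dual or $p$ is odd and $p^2\nmid\mathrm{disc}(L)$, the regular locus of $\check{\mathcal{M}}^{\rm pr}_{\Z_{(p)}}$ when $p$ is odd and $p^2\mid\mathrm{disc}(L)$, and the smooth locus of $\check{\mathcal{M}}^{\rm pr}_{\Z_{(p)}}$ when $p=2$. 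The resulting $\mathcal{M}_{\Z[(S_i)^{-1}]}$ is then regular, and it is flat over $\Z[(S_i)^{-1}]$ because it is normal with dense generic fiber $M$.

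To glue, for each pair $(i,j)$ I would exhibit a canonical isomorphism between $\mathcal{M}_{\Z[(S_i)^{-1}]}$ and $\mathcal{M}_{\Z[(S_j)^{-1}]}$ over $\mathrm{Spec}\,\Z[(S_i\cup S_j)^{-1}]$. At any prime $p\notin S_i\cup S_j$, both $L^\diamond_i$ and $L^\diamond_j$ are maximal and self-dual at $p$, so Proposition~\ref{L_tilde_independence} provides a unique isomorphism of the two versions of $\mathcal{M}_{\Z_{(p)}}$ extending the identity on the common generic fiber $M$. Uniqueness, coming from the universal property of the normalization, lets these local isomorphisms patch to an identification over $\mathrm{Spec}\,\Z[(S_i\cup S_j)^{-1}]$, and the same uniqueness forces the cocycle condition on triple overlaps with no further computation. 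Standard étale descent for algebraic stacks then yields $\mathcal{M}\to\mathrm{Spec}\,\Z$, and both regularity and flatness persist because they are local on the base.

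The canonical extension of $(\bm{V}_{\dR,M},\bm{V}^1_{\dR,M})$ is obtained in exactly the same way: each $\mathcal{M}_{\Z[(S_i)^{-1}]}$ carries its pair $(\bm{V}_{\dR,\mathcal{M}_{\Z[(S_i)^{-1}]}},\bm{V}^1_{\dR,\mathcal{M}_{\Z[(S_i)^{-1}]}})$ produced as the orthogonal complement of $\bm{\Lambda}_{\dR}$ inside $\bm{V}^\diamond_{\dR}$, and the second clause of Proposition~\ref{L_tilde_independence} asserts that these pairs are canonically independent of the choice of $L^\diamond_i$. Hence they descend along the cover to a global $(\bm{V}_{\dR},\bm{V}^1_{\dR})$ on $\mathcal{M}$. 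The main potential obstacle is the verification of the cocycle condition for the gluing data, and this is precisely the point at which the \emph{canonicity} asserted in Proposition~\ref{L_tilde_independence} -- rather than mere existence of some identification -- is indispensable: because every transition isomorphism is determined by a universal property, compositions automatically agree, and no hand calculation is required.
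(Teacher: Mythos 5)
Your proposal is correct and is essentially the argument the paper leaves implicit (the paper's proof consists of a single sentence pointing to Proposition~\ref{L_tilde_independence}). You correctly spot and fix the small notational looseness in the text: $\mathcal{M}_{\Z[(S_i)^{-1}]}$ as literally written is a normalization, whose localization at a prime $p\notin S_i$ is $\check{\mathcal{M}}_{\Z_{(p)}}$ rather than the regular model $\mathcal{M}_{\Z_{(p)}}$ from the three-case definition, so one must pass to the appropriate regular/smooth open substack before gluing; your version of the gluing (uniqueness of the isomorphisms in Proposition~\ref{L_tilde_independence} forcing the cocycle condition, flatness from normality with dense generic fiber over a Dedekind base, and descent of the pair $(\bm{V}_{\dR},\bm{V}^1_{\dR})$ by the second clause of the same proposition) is exactly what is needed. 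One cosmetic remark: the $\Spec\Z[(S_i)^{-1}]$ form a Zariski open cover, so this is Zariski gluing rather than genuine \'etale descent, though of course the latter subsumes the former.
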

\begin{proof}
This is immediate from Proposition~\ref{L_tilde_independence}.
\end{proof}

\begin{definition}\label{def:cotautological}
The line bundle  $\bm{V}_{\dR}^1\subset \bm{V}_{\dR}$ is  the \emph{tautological bundle} on $\mathcal{M}$.
Its dual is the \emph{cotautological bundle}.
\end{definition}

\begin{remark}
Under the uniformization
$
 \mathcal{D}\to \Gamma_g\backslash\mathcal{D} \subset \mathcal{M}(\C),
$
of~\eqref{uniformization}, and using the notation of~\eqref{grassman}, the tautological bundle and cotautological bundle  pull-back to the line bundles $z\mapsto\C z$ and $z\mapsto V_\C/(\C z)^{\perp}$, respectively.
\end{remark}

By (7.12) of \cite{mp:reg}, the Kuga-Satake abelian scheme over $M$ extends (necessarily uniquely) to an abelian scheme
\[
\pi \colon \mathcal{A} \to \mathcal{M}
\]
endowed with a right $C(L)$-action and $\Z/2\Z$-grading.  The de Rham realization of~$\mathcal{A}$ provides us with an extension $\bm{H}_{\dR}$ over $\mathcal{M}$ of the
filtered vector bundle with connection $\bm{H}_{\dR,M}$, equipped with its $\Z/2\Z$-grading and right $C(L)$-action. Let
\[
\bm{H}_{\crys} = \underline{\Hom}\big( R^1\pi_{\crys,*} \co_{\mathcal{A}_{\F_p}/\Z_p}^\crys ,
\co_{\mathcal{M}_{\F_p}/\Z_p}^\crys \big)
\]
 be the first  crystalline homology of $\mathcal{A}_{\F_p}$ relative to $\mathcal{M}_{\F_p}$.

Denote by $\bm{1}$ the structure sheaf $\co_{\mathcal{M}}$ with its standard connection and
trivial filtration in the de Rham case; the constant sheaf $\underline{\Z}_p$ on $\mathcal{M}[1/p]$
in the \'etale case; the crystal $\co_{\mathcal{M}_{\F_p},{\crys}}$ in the crystalline case; and the trivial variation of $\Z$-Hodge structures over $M_{\C}^{\rm an}$ in the Betti case.   If $\bm{H}$ is any one of
  $\bm{H}_{\dR}$, $\bm{H}_{\ell , \rm et}$, $\bm{H}_{\crys}$, or $\bm{H}_{\Be}$, then $\bm{H}$ is a $\bm{1}$-module in the appropriate category.

It follows from
\cite[(3.12),(7.13)]{mp:reg} that there is a  canonical local direct summand
\begin{equation}\label{eq:bmV}
\bm{V} \subset \underline{\mathrm{End}}_{C(L)}(\bm{H})
\end{equation}
of grade shifting endomorphisms.  In fact, as we will see below in Proposition~\ref{prop:functorial}, the homological realizations of the canonical isometric
embedding $\Lambda\hookrightarrow\End({\mathcal{A}^\diamond}\vert_{\mathcal{M}})$ exhibit $\Lambda\otimes\bm{1}$ as an isometric local direct summand of
$\bm{V}^\diamond\vert_{\mathcal{M}}$, and 
\[
\bm{V}=(\Lambda\otimes\bm{1})^{\perp}\subset \bm{V}^\diamond\vert_{\mathcal{M}}.
\]

In the de Rham or Betti case, $\bm{V}$ is identified with the realizations already constructed above. Moreover, for any prime $p$, the \'etale realization $\bm{V}_{p,\rm et}\subset\underline{\mathrm{End}}_{C(L)}(\bm{H}_{p,\rm et})$ is the unique extension of that over $\mathcal{M}[1/p]$. Here, we are using the following fact: Given a normal flat Noetherian algebraic $\Z[1/p]$-stack $S$, and a lisse $p$-adic sheaf $F$ over $S$, any lisse subsheaf of $F\vert_{S_{\Q}}$ extends uniquely to a lisse subsheaf of $F$. To see this, we can reduce to the case where $S$ is a normal, connected, flat Noetherian $\Z[1/p]$-scheme, where the assertion amounts to saying that the map of \'etale fundamental groups $\pi_1(S_{\Q})\to\pi_1(S)$ is \emph{surjective}. This is shown in~\cite[Exp. V, Prop. 8.2]{sga1}.

For any section $x$ of $\bm{V}$,  the element $x\circ x$ is a scalar endomorphism of $\bm{H}$.   Define  a quadratic form
\[
\bm{Q}:\bm{V}\to\bm{1}
\]
by $x\circ x = \bm{Q}(x)\cdot \rm Id$.

\begin{remark}
%\label{crys_compat}
Although we will not require this in what follows, we note that there is a certain compatibility between the crystalline realization $\bm{V}_{\crys}$ over $\mathcal{M}_{\F_p}$ and the $p$-adic realization $\bm{V}_{p,\rm et}$ over $\mathcal{M}[1/p]$.
Suppose that we are given a finite extension $F/\Q_p$, contained in an algebraic closure $\overline{\Q}_p$ of $\Q_p$, with residue field $k$. Write $F_0\subset F$ for the maximal unramified subextension of $F$.
Suppose that we are given a point $s\in\mathcal{M}(F)$ specializing to a point $s_0\in\mathcal{M}(k)$. Write $\overline{s}$ for the corresponding $\overline{\Q}_p$-point of $\mathcal{M}$.
Then the comparison isomorphism between the crystalline cohomology of $\mathcal{A}_{s_0}$ with respect to $W(k)=\co_{F_0}$ and the $p$-adic \'etale cohomology of $\mathcal{A}_{\overline{s}}$ induces, by ~\cite[(7.11)]{mp:reg}, an isometry
\[
 \bm{V}_{\crys,s_0}\otimes_{F_0} B_{\crys}\xrightarrow{\simeq}\bm{V}_{p,\rm et,\overline{s}}\otimes_{\Q_p} B_{\crys}.
\]
\end{remark}

%%%%%%%%%%%%%%%%%%%%%%%%%%%%%%%%%%%%%%%%

\subsection{Functoriality of integral models}
%\label{subsec: functoriality}

%%%%%%%%%%%%%%%%%%%%%%%%%%%%%%%%%%%%%%%%

We consider the question of  functoriality of the previous constructions with respect to the maximal quadratic space $L$.
Let $L_0\subset L$ be an isometric embedding of maximal quadratic spaces over $\Z$, of signatures $(n_0,2)$
and $(n,2)$, respectively. This implies that  $L_0$ is a $\Z$-module direct summand of $L$. We will maintain our assumption that $n_0\ge 1$.

The quadratic spaces $L_0$ and $L$  have  associated Shimura varieties $M_0$ and $M$,
with level structure defined by the compact open subgroups
\begin{align*}
K_0&= \GSpin(L_0\otimes \A_f)\cap C(\widehat{L}_0)^\times \\
K &=
\GSpin(L\otimes\A_f)\cap C(\widehat{L})^\times.
\end{align*}
The inclusion $L_0\hookrightarrow L$ induces a homomorphism of Clifford algebras, which then restricts to an
inclusion  of algebraic groups $\GSpin(V_0)\rightarrow\GSpin(V)$ satisfying
$K_0=K\cap \GSpin(V_0)(\A_f)$.
By the theory of canonical models of Shimura varieties, there
is an induced finite and unramified morphism of $\Q$-stacks  $M_0\to M.$

Let $\mathcal{M}_0$ and $\mathcal{M}$ be the integral models over $\Z$, and let $\mathcal{A}_0 \to \mathcal{M}_0$ and $\mathcal{A} \to \mathcal{M}$ be their
Kuga-Satake abelian schemes.  They are equipped with actions of $C(L_0)$ and $C(L)$, respectively.    Let $\bm{H}_0$ and  $\bm{H}$ stand for the various homological
realizations of $\mathcal{A}_0$ and $\mathcal{A}$, respectively. We have the canonical sub-objects  $\bm{V}_0 \subset \underline{\mathrm{End}}_{C(L_0)}(\bm{H}_0)$
and $\bm{V}\subset \underline{\mathrm{End}}_{C(L)}(\bm{H})$ introduced  in  \S\ref{sec:integral}.

The Kuga-Satake abelian schemes over $M_0$ and  $M$ are determined by the lattices
$C(L_0)\subset C(V_0)$ and $C(L)\subset C(V)$ (in the sense of \cite[\S 4.12]{DeligneShimura}),
and the argument in  \cite[(6.4)]{mp:reg} shows that there is a  $C(L)$-linear isomorphism of abelian schemes
\begin{equation}\label{eqn:rat tensor}
\big( \mathcal{A}_0 \otimes_{C(L_0)} C(L) \big)\vert_{M_0} \iso  \mathcal{A}\vert_{M_0}
\end{equation}
over the generic fiber $M_0$ of $\mathcal{M}_0$.
As $L_0$ is a direct summand of $L$, one easily
checks that $C(L)$ is a free $C(L_0)$-module, and so the  tensor construction
$\mathcal{A}_0\otimes_{C(L_0)} C(L)$ in (\ref{eqn:rat tensor}) is defined.

\begin{proposition}\label{prop:functorial}
The morphism $M_0 \to M$ extends uniquely to an unramified morphism
$\mathcal{M}_0\to \mathcal{M}$, and (\ref{eqn:rat tensor})
extends to a $C(L)$-linear isomorphism of abelian schemes
\begin{equation}\label{abscheme_isom}
\mathcal{A}_0\otimes_{C(L_0)} C(L) \iso \mathcal{A}\vert_{\mathcal{M}_0}
\end{equation}
over $\mathcal{M}_0$.  In particular,
\begin{equation}\label{realization tensor}
 \bm{H}_0\otimes_{C(L_0)}C(L)\iso \bm{H}\vert_{\mathcal{M}_0}.
\end{equation}

Furthermore, if we set $\Lambda =\{ \lambda \in L : \lambda \perp L_0\}$ and define a sheaf $\bm{\Lambda} =\Lambda\otimes \bm{1}$ on $\mathcal{M}_0$, then there is a canonical embedding $\Lambda\subset\End\bigl( \mathcal{A}\vert_{\mathcal{M}_0}\bigr)$ with the following properties:

\begin{enumerate}
\item Its homological realization $\bm{\Lambda} \hookrightarrow \underline{\End}( \bm{H}\vert_{\mathcal{M}_0})$
 arises from an isometric map $\bm{\Lambda}\hookrightarrow \bm{V}\vert_{\mathcal{M}_0}$,
 and exhibits $\bm{\Lambda}$ as a local direct summand of $\bm{V}\vert_{\mathcal{M}_0 }$.

\item
The  injection
\[
\underline{\mathrm{End}}_{C(L_0)} (\bm{H}_0) \to \underline{\mathrm{End}}_{C(L)}( \bm{H}\vert_{\mathcal{M}_0})
\]
induced by (\ref{realization tensor}) identifies $\bm{V}_0$ with the   submodule of all elements of
$\bm{V}\vert_{\mathcal{M}_0   }$ anticommuting with all elements of  $\bm{\Lambda}$.
Furthermore,   $\bm{V}_0 \subset \bm{V}\vert_{\mathcal{M}_0   }$
is locally a direct summand.

\item  In the de Rham case, the inclusion $\bm{V}_{\dR}\subset\bm{V}_{\dR}\vert_{\mathcal{M}_0}$ identifies
\[
\bm{V}_{0,\dR}^1\iso \bm{V}_{\dR}^1\vert_{\mathcal{M}_0}.
\]
\end{enumerate}
\end{proposition}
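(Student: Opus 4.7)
The plan is to exploit the common auxiliary construction used to build the integral models. Fix a prime $p$ and choose a single maximal quadratic lattice $L^\diamond$, self-dual at $p$, that contains $L$ (and hence $L_0$) as an isometric direct summand. By Proposition~\ref{L_tilde_independence}, both $\mathcal{M}_{0,\Z_{(p)}}$ and $\mathcal{M}_{\Z_{(p)}}$ may be realized using this single $L^\diamond$, as refinements of normalizations $\check{\mathcal{M}}_{0,\Z_{(p)}}$ and $\check{\mathcal{M}}_{\Z_{(p)}}$ of the common smooth model $\mathcal{M}^\diamond_{\Z_{(p)}}$. The factorization $M_0\to M\to M^\diamond$ of finite unramified $\Q$-morphisms and the universal property of normalization then furnish a canonical finite morphism $\check{\mathcal{M}}_{0,\Z_{(p)}}\to\check{\mathcal{M}}_{\Z_{(p)}}$. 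I will check that it restricts to a morphism $\mathcal{M}_{0,\Z_{(p)}}\to\mathcal{M}_{\Z_{(p)}}$: the conditions defining $\check{\mathcal{M}}^{\rm pr}_{\Z_{(p)}}$ via Proposition~\ref{prop:Mcheck}(i) are controlled by the rank of the cokernel of the map from the relevant $\bm{\Lambda}_{\dR}$-structure into $\bm{V}^\diamond_{\dR}$, and the orthogonal decomposition $L_0^\perp = L^\perp\oplus\Lambda$ inside $L^\diamond$ makes the condition for $\mathcal{M}_{0,\Z_{(p)}}$ imply that for $\mathcal{M}_{\Z_{(p)}}$. Unramifiedness is inherited from that of $M_0\to M$ via finiteness over $\mathcal{M}^\diamond_{\Z_{(p)}}$. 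Patching over $p$ as in Proposition~\ref{prop:M_model over Z} produces the global morphism $\mathcal{M}_0\to\mathcal{M}$. The delicate point here is ensuring that the image actually lands in the smooth or regular locus when $p=2$ or when $p^2\mid\mathrm{disc}(L)$; this will require a local analysis in the \'etale neighborhoods of Proposition~\ref{prop:Mcheck}(i), using that the isotropic line condition for $\mathrm{M}^{\rm loc}(L_0)$ at a point of $\mathcal{M}_0$ determines a compatible isotropic line in $\mathrm{M}^{\rm loc}(L)$.

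For the Kuga-Satake isomorphism~\eqref{abscheme_isom}, both sides are abelian schemes over the normal stack $\mathcal{M}_0$ (the left side is well-defined because $C(L)$ is free as a $C(L_0)$-module, since $L_0$ is a $\Z$-direct summand of $L$), they agree over the generic fiber $M_0$ by the argument of~\cite[(6.4)]{mp:reg}, and the isomorphism extends uniquely by~\cite[Prop.~2.7]{FaltingsChai}. Passing to realizations yields~\eqref{realization tensor}. For the embedding $\Lambda\subset\End\bigl(\mathcal{A}\vert_{\mathcal{M}_0}\bigr)$, I will use that on the generic fiber the inclusions $\Lambda\subset V\subset C(V)$ produce an action of $\Lambda$ on $\mathcal{A}\vert_{M_0}$ by left multiplication on the Kuga-Satake construction, which commutes with the right $C(L)$-action; this extends uniquely to $\mathcal{M}_0$ again by~\cite[Prop.~2.7]{FaltingsChai}.

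For the three listed properties, each is verified realization by realization. For (i), the homological realization factors as the canonical map $\bm{\Lambda}\hookrightarrow\bm{V}^\diamond\vert_{\mathcal{M}_0}$ composed with the identification $\bm{V}=(L^\perp\otimes\bm{1})^{\perp}$ inside $\bm{V}^\diamond\vert_{\mathcal{M}}$; the image lies in $\bm{V}\vert_{\mathcal{M}_0}$ since $\Lambda\perp L^\perp$ in $L^\diamond$, isometricity follows from $x\circ x=\bm{Q}(x)\cdot\mathrm{Id}$, and the local direct summand property reduces to the fact that $\Lambda\subset L$ is a $\Z$-module direct summand combined with the local direct summand property of $\bm{V}\subset\underline{\End}_{C(L)}(\bm{H})$. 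For (ii), the Clifford relation $xy+yx=[x,y]$ for $x,y\in V$ shows that two elements of $V$ anticommute in $C(V)$ precisely when they are orthogonal, which via the left multiplication action translates into the claimed anticommutant characterization of $\bm{V}_0$ inside $\bm{V}\vert_{\mathcal{M}_0}$; the local direct summand statement again uses that $L_0$ is a direct summand of $L$. For (iii), the isotropic line $\bm{V}^1_{\dR}$ is the Hodge filtration of the universal Hodge structure on $\bm{V}_{\dR}$, constructed functorially with respect to isometric embeddings of quadratic spaces, so the identification $\bm{V}^1_{0,\dR}\cong\bm{V}^1_{\dR}\vert_{M_0}$ is automatic on the generic fiber; it extends to $\mathcal{M}_0$ since both sides are line subbundles of $\bm{V}_{\dR}\vert_{\mathcal{M}_0}$ that agree on a dense open subset of a flat base.
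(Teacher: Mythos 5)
Your overall strategy is the same as the paper's: embed everything into a common $L^\diamond$ self-dual at $p$, obtain the morphism on normalizations via the universal property, then check it lands in $\mathcal{M}_{0,\Z_{(p)}}\to\mathcal{M}_{\Z_{(p)}}$ and extend the abelian scheme isomorphism and the $\Lambda$-action by Faltings--Chai. The one genuinely different choice you make is the construction of the $\Lambda$-action: you use left multiplication by $\Lambda\subset V\subset C(V)$ on the Kuga-Satake abelian scheme over the generic fiber (noting this is a family of Hodge cycles because $\Lambda\perp V_0$), then extend integrally; the paper instead writes down an explicit action on $\mathcal{A}_0\otimes_{\Z}C(L)$, with a sign twist on the odd graded piece to make it descend through the $C(L_0)$-tensor relation, and works directly over $\mathcal{M}_{0,\Z_{(p)}}$. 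Both are valid; your version is more conceptual but relies on one more application of N\'eron-property extension, while the paper's is more explicit and is set up to feed directly into the verification of (i)--(iii) via the crystalline computations of \cite[(7.13)]{mp:reg}.

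However, there is a genuine gap at the crux of the argument: you acknowledge the delicate point of showing the morphism $\check{\mathcal{M}}^{\rm pr}_{0,\Z_{(p)}}\to\check{\mathcal{M}}^{\rm pr}_{\Z_{(p)}}$ carries the smooth (resp.\ regular) locus to the smooth (resp.\ regular) locus, but what you offer --- ``the isotropic line condition for $\mathrm{M}^{\rm loc}(L_0)$ determines a compatible isotropic line in $\mathrm{M}^{\rm loc}(L)$'' --- is only the setup, not the argument. The actual content (for $p$ odd, $p^2\mid\mathrm{disc}(L)$) is that the non-regular points of $\mathrm{M}^{\rm loc}(L)_{\Z_{(p)}}$ are exactly two $\F_{p^2}$-valued points where the isotropic line lies in the radical of $L_{\F_{p^2}}$ and is isotropic for the form on the radical, whereas any non-smooth but regular point of $\mathrm{M}^{\rm loc}(L_0)_{\Z_{(p)}}$ (which can only occur when $p\,\|\,\mathrm{disc}(L_0)$) is the radical line of $L_{0,\F_p}$ and is therefore defined over $\F_p$, not $\F_{p^2}$; this field-of-definition mismatch rules out a smooth or regular point of $\mathrm{M}^{\rm loc}(L_0)$ mapping to a non-regular point of $\mathrm{M}^{\rm loc}(L)$. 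Without this you have not shown the morphism lands in $\mathcal{M}_{\Z_{(p)}}$. Relatedly, ``unramifiedness is inherited from that of $M_0\to M$ via finiteness over $\mathcal{M}^\diamond_{\Z_{(p)}}$'' is not a valid deduction: unramifiedness of a finite morphism is not automatic from unramifiedness on the generic fiber. The paper obtains unramifiedness precisely by showing, via Proposition~\ref{prop:Mcheck}(i), that the morphism is \'etale-locally on the source modeled on the closed immersion $\mathrm{M}^{\rm loc}(L_0)\to\mathrm{M}^{\rm loc}(L)$; you would need to invoke that same local-model identification to get unramifiedness as well.
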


\begin{proof}
It suffices to construct the morphism $\mathcal{M}_0 \to \mathcal{M}$   after localizing at a prime $p$. So we can assume that we have a sequence of isometric embeddings $L_0\hookrightarrow L
\hookrightarrow L^\diamond$, where $ L^\diamond$ is maximal and self-dual at $p$.

By taking the normalizations of $\mathcal{M}^\diamond_{\Z_{(p)}}$ in $M_0$ and $M$, respectively, we see that the finite morphism $M_0\to M$ extends to a map
\begin{equation}\label{eqn:McheckFunct}
\check{\mathcal{M}}_{0, \Z_{(p)}}\to\check{\mathcal{M}}_{\Z_{(p)}}.
\end{equation}

Set $\Lambda_1=L^{\perp}\subset L^\diamond$ and $\widetilde{\Lambda}=L_0^{\perp}\subset  L^\diamond$, so that  $\Lambda_1$ is a direct summand in
$\widetilde{\Lambda}$ with orthogonal complement $\Lambda$. From this, it is clear that~\eqref{eqn:McheckFunct} restricts to a map $\check{\mathcal{M}}^{\rm
pr}_{0,\Z_{(p)}}\to\check{\mathcal{M}}^{\rm pr}_{\Z_{(p)}}$. We can deduce from Proposition \ref{prop:Mcheck} that, \'etale locally on the source, this map is
isomorphic to an \'etale neighborhood of the map of quadrics $\mathrm{M}^{\rm loc}(L_0)\to\mathrm{M}^{\rm loc}(L)$. In particular, it is unramified.

To check that this carries $\mathcal{M}_{0,\Z_{(p)}}$ to $\mathcal{M}_{\Z_{(p)}}$ it suffices now to show that the map 
\[
\mathrm{M}^{\rm loc}(L_0)_{\Z_{(p)}}\to\mathrm{M}^{\rm loc}(L)_{\Z_{(p)}}
\]
 of quadrics over $\Z_{(p)}$ carries the smooth locus to the smooth locus and, if $p\neq 2$, the regular
locus of the source into that of the target. 

For the first assertion, observe that an $\overline{\F}_p$-valued point of the space is in the smooth locus if and only if it corresponds to an isotropic line that is not contained in the radical of $L_{0,\overline{\F}_p}$. But then it is \emph{a fortiori} not in the radical of $L_{\overline{\F}_p}$, and hence corresponds to a smooth point of $\mathrm{M}^{\mathrm{loc}}(L)$ as well. 

We are left to prove the second claim, so that we can assume that $p$ is odd.

If the discriminant of $L_{\Z_{(p)}}$ is not divisible by $p^2$, the entire target is regular (see~\cite[(2.16)]{mp:reg}), and there is nothing to check. Otherwise,
the locus where $\mathrm{M}^{\rm loc}(L)_{\Z_{(p)}}$ is not regular consists exactly of the two $\F_{p^2}$-valued points where the corresponding isotropic line $L^1
\subset L_{\F_{p^2}}$ lies in the radical $\mathrm{rad}(L_{\F_{p^2}})$ and is also isotropic for the non-degenerate quadratic form on this radical
(see~\emph{loc.~cit.}).

Suppose therefore that we have an isotropic line $L^1_{0,\overline{\F}_p}\subset L_{0,\overline{\F}_p}$. We need to show: If, as an isotropic line in
$L_{\overline{\F}_p}$, it corresponds to one of the two singular points above, then the discriminant of $L_0$ is also divisible by $p^2$, and
$L^1_{0,\overline{\F}_p}$ corresponds to a singular point of $\mathrm{M}^{\rm loc}(L_0)$.

As any \emph{smooth} point of $\mathrm{M}^{\rm loc}(L_0)$ (that is, a point at which the space is smooth over $\Z$) maps to a smooth point of $\mathrm{M}^{\rm
loc}(L)$, the only remaining possibility is that the discriminant of $L_0$ is divisible exactly by $p$, and that $L^1_{0,\overline{\F}_p}$ is just the radical of
$L_{0, \overline{\F}_p}$, corresponding to the unique non-smooth point of $\mathrm{M}^{\rm loc}(L_0)_{\Z_{(p)}}$. However, this point is defined over $\F_p$,
whereas both the singular points of $\mathrm{M}^{\rm loc}(L)_{\Z_{(p)}}$ are only defined over $\F_{p^2}$. So this possibility can also be excluded.

Now, the fact that the isomorphism of abelian schemes also extends over $\mathcal{M}_0$ is a consequence of \cite[Proposition I.2.7]{FaltingsChai}.

The canonical embedding $\Lambda\subset \End\bigl(\mathcal{A}\vert_{\mathcal{M}_{0,\Z_{(p)}}}\bigr)$ is constructed as follows: First, we will define an action of
$\Lambda$ on
\[
\mathcal{A}_0\otimes_{\Z}C(L)= \bigl(\mathcal{A}_0^+\otimes_{\Z}C(L) \bigr)\times \bigl(\mathcal{A}_0^-\otimes_{\Z}C(L) \bigr).
\]
Given a section of the form $(a^+\otimes c,a^-\otimes c')$ of this product and an element $\lambda\in\Lambda$, we set
\[
 \lambda\cdot(a^+\otimes c,a^-\otimes c')=(a^+\otimes\lambda c,-a^-\otimes\lambda c').
\]
As $L_0$ anti-commutes with $\Lambda$ within $C(L)$,  it is easy to check that this action of $\Lambda$ descends to one on
\[
 \mathcal{A}_0\otimes_{C(L_0)}C(L)\xrightarrow[\simeq]{\eqref{abscheme_isom}}\mathcal{A}\vert_{\mathcal{M}_{0,\Z_{(p)}}}.
\]

Claims (i), (ii) and (iii) are now shown in \cite[(7.13)]{mp:reg} for the crystalline realizations over $\mathcal{M}_{0,\F_p}$, but in the case where $L$ is
self-dual at $p$. From this, we can easily deduce the general case, just as above, by embedding everything in a quadratic space that is self-dual at $p$.
\end{proof}

%%%%%%%%%%%%%%%%%%%%%%%%%%%%%%%%%%%%%%%%

\subsection{Special endomorphisms}
\label{ss:specialend}

%%%%%%%%%%%%%%%%%%%%%%%%%%%%%%%%%%%%%%%%

For any scheme $S \to \mathcal{M}$  the pull-back $\mathcal{A}_S$ of the Kuga-Satake abelian scheme
has a distinguished submodule
\[
V(\mathcal{A}_S) \subset \End_{C(L)} (\mathcal{A}_S)
\]
of \emph{special endomorphisms}, defined in \cite[(5.4)]{mp:reg}.
If $S$ is connected and $s\to S$ is any geometric point, there is a cartesian diagram
\[
\xymatrix{
{ V(\mathcal{A}_S) } \ar[r]  \ar[d] & {  \End_{C(L)}(\mathcal{A}_S ) } \ar[d] \\
{ V(\mathcal{A}_s) } \ar[r]  & {  \End_{C(L)}(\mathcal{A}_s ) . }
}
\]
In other words, if $S$ is connected, an endomorphism is special if and only if it is special at one (equivalently, all) geometric points of $S$.

At a geometric point $s$,  the property of being special  can be characterized homologically, using the subspace
\begin{equation}\label{eqn:hom special}
\bm{V}_{s}\subset\End_{C(L)} (\bm{H}_s).
\end{equation}
If $s=\Spec(\C)$   then  $x\in \End_{C(L)} (\mathcal{A}_s)$ is special if and only
if its  Betti realization lies in the subspace
(\ref{eqn:hom special}). This is equivalent to the $\ell$-adic \'etale realization of $x$
lying in (\ref{eqn:hom special}) for one (equivalently, all)  primes $\ell$. It is also equivalent to requiring that the de Rham realization $t_{\dR}(x)\in \End_{C(L)}(\bm{H}_{\dR,s})$ lie in the subspace $\bm{V}_{\dR,s}$.

If $s$ is a geometric point of characteristic $p$, then $x\in
\End_{C(L)} (\mathcal{A}_s)$ is special if and only if its
crystalline realization  lies in (\ref{eqn:hom special}).  This
implies that the $\ell$-adic \'etale realization lies in
(\ref{eqn:hom special}) for all $\ell$ different from the
characteristic of $s$; cf.~\cite[(5.22)]{mp:reg}.

Fix an element $\mu\in L^\vee/L$ and a prime $p>0$, and suppose that we are given an embedding $L\hookrightarrow L^\diamond$ of maximal quadratic lattices with $L^\diamond$ self-dual at $p$. Let $\Lambda = L^\perp\subset L^\diamond$ be the subspace of elements orthogonal to $L$.

There are canonical isomorphisms
\begin{equation}\label{eqn:disc triv}
\Z_p\otimes(L^\vee/L) \xleftarrow{\simeq} \Z_p\otimes(L^\diamond/(L\oplus\Lambda)) \xrightarrow{\simeq}\Z_p\otimes(\Lambda^{\vee}/\Lambda),
\end{equation}
which allow us to view $\mu_p$ as an element of $\Z_p\otimes(\Lambda^\vee/\Lambda)$. Fix a lift $\tilde{\mu}_p\in \Lambda^\vee$ of $\mu_p$.

By construction, the map $\Lambda\hookrightarrow\End( \mathcal{A}^\diamond_S)$ from Proposition~\ref{prop:functorial} factors through an isometric embedding $\Lambda\hookrightarrow V( \mathcal{A}^\diamond_S )$.  Via this embedding, we can view $\tilde{\mu}_p$ as an element of $V(\mathcal{A}_S^\diamond)_\Q$.  

\begin{lemma}
\label{lem:mu def self-dual ind}
The subset
\[
V_\mu(\mathcal{A}_S)_{(p)} = \{x\in V(\mathcal{A}_S)_\Q:\; x + \tilde{\mu}_p\in V(\mathcal{A}^\diamond_S)\otimes\Z_{(p)}\}\subset V(\mathcal{A}_S)_\Q
\]
is functorial in $S$, and is independent of the choice of embedding $L\hookrightarrow L^\diamond$ of maximal lattices with $L^\diamond$ self-dual at $p$.
\end{lemma}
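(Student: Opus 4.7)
Functoriality in $S$ is immediate: both $V(\mathcal{A}_S)_\Q$ and $V(\mathcal{A}^\diamond_S) \otimes \Z_{(p)}$ depend contravariantly on $S$, and $\tilde{\mu}_p \in \Lambda^\vee$ is a fixed element, viewed inside $V(\mathcal{A}^\diamond_S)_\Q$ via the canonical embedding $\Lambda \subset V(\mathcal{A}^\diamond_S)$ from Proposition~\ref{prop:functorial}. For the same reason, the subset is unchanged if $\tilde{\mu}_p$ is replaced by another lift in $\Lambda^\vee$, since the difference lies in $\Lambda_{(p)} \subset V(\mathcal{A}^\diamond_S) \otimes \Z_{(p)}$. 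The real content is the independence from the auxiliary maximal lattice $L^\diamond$.

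I would reduce the independence statement to a nested situation, following the strategy used in the proof of Proposition~\ref{L_tilde_independence}: given two embeddings $L \hookrightarrow L^\diamond_i$ ($i=1,2$) with $L^\diamond_i$ maximal and self-dual at $p$, form the pushout $(L^\diamond_1 \oplus L^\diamond_2)/\Delta(L)$ and enlarge via~\cite[(6.8)]{mp:reg} to a maximal lattice $L^\diamond_3$ still self-dual at $p$. It then suffices to compare the two definitions along a single chain $L \hookrightarrow L^\diamond \hookrightarrow L^\diamond_1$ with both $L^\diamond$ and $L^\diamond_1$ maximal and self-dual at $p$.

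Set $\Lambda = L^\perp \cap L^\diamond$, $\Lambda_1 = L^\perp \cap L^\diamond_1$, and $\Lambda' = (L^\diamond)^\perp \cap L^\diamond_1$. The central lattice-theoretic step is to establish the congruence
\[
\tilde{\mu}_{1,p} \equiv \tilde{\mu}_p \pmod{\Lambda_{1,(p)}}.
\]
I would prove it as follows: at $p$, the isomorphisms in~\eqref{eqn:disc triv} are realized by the projections $L^\diamond_{(p)} \to L^\vee_{(p)}$ and $L^\diamond_{(p)} \to \Lambda^\vee_{(p)}$ coming from the orthogonal decomposition $L^\diamond_{(p)} \otimes \Q = L_\Q \oplus \Lambda_\Q$, which are well-defined thanks to the self-duality of $L^\diamond$ at $p$. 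Pick $\tilde{x} \in L^\diamond_{(p)}$ with components satisfying $\tilde{x}_L \equiv \mu \pmod{L_{(p)}}$ and $\tilde{x}_\Lambda \equiv \tilde{\mu}_p \pmod{\Lambda_{(p)}}$; then viewing $\tilde{x}$ as an element of $L^\diamond_{1,(p)}$, its $\Lambda_1$-component in the decomposition $L^\diamond_{1,(p)} \otimes \Q = L_\Q \oplus \Lambda_{1,\Q}$ is still $\tilde{x}_\Lambda \in \Lambda_\Q \subset \Lambda_{1,\Q}$, and the analogous description of~\eqref{eqn:disc triv} for $L^\diamond_1$ forces $\tilde{x}_\Lambda \equiv \tilde{\mu}_{1,p} \pmod{\Lambda_{1,(p)}}$. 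Combining these two congruences gives the claim.

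With this in place, Proposition~\ref{prop:functorial} applied to $L^\diamond \hookrightarrow L^\diamond_1$ exhibits $V(\mathcal{A}^\diamond_S) \hookrightarrow V(\mathcal{A}^\diamond_{1,S})$ as a local direct summand with orthogonal complement spanned by $\Lambda'$, yielding
\[
V(\mathcal{A}^\diamond_S) \otimes \Z_{(p)} = V(\mathcal{A}^\diamond_S)_\Q \cap \bigl(V(\mathcal{A}^\diamond_{1,S}) \otimes \Z_{(p)}\bigr).
\]
Since $x + \tilde{\mu}_p$ lies automatically in $V(\mathcal{A}^\diamond_S)_\Q$ (both summands do) and $\tilde{\mu}_{1,p} - \tilde{\mu}_p \in \Lambda_{1,(p)} \subset V(\mathcal{A}^\diamond_{1,S}) \otimes \Z_{(p)}$, the conditions $x + \tilde{\mu}_p \in V(\mathcal{A}^\diamond_S) \otimes \Z_{(p)}$ and $x + \tilde{\mu}_{1,p} \in V(\mathcal{A}^\diamond_{1,S}) \otimes \Z_{(p)}$ are equivalent. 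The principal difficulty is the lattice-theoretic comparison of the two lifts; once that congruence is pinned down, the independence statement follows formally from the direct summand structure of Proposition~\ref{prop:functorial}.
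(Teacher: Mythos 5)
Your proof is correct and follows the paper's approach: both reduce via the pushout construction of Proposition~\ref{L_tilde_independence} to a nested chain $L\hookrightarrow L^\diamond\hookrightarrow L^\diamond_1$ and then appeal to the direct-summand assertion of Proposition~\ref{prop:functorial}(ii) for $L^\diamond\hookrightarrow L^\diamond_1$ to obtain the saturation $V(\mathcal{A}^\diamond_S)\otimes\Z_{(p)} = V(\mathcal{A}^\diamond_S)_\Q\cap\bigl(V(\mathcal{A}^\diamond_{1,S})\otimes\Z_{(p)}\bigr)$. Your explicit verification that $\tilde\mu_{1,p}\equiv\tilde\mu_p\pmod{\Lambda_{1,(p)}}$ via the common element $\tilde x\in L^\diamond_{(p)}$ is exactly the compatibility the paper relies on implicitly when it renames the pair $(L^\diamond,L^\diamond_1)$ to $(L,L^\diamond)$ and takes $\tilde\mu_p=0$, so you have simply spelled out a step the paper leaves to the reader.
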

\begin{proof}
Note that the definition is clearly independent of the choice of lift $\tilde{\mu}_p$.

The functoriality of the subset is clear, so it only remains to prove the independence from the choice of embedding into a self-dual at $p$ maximal lattice. For this, as in the proof of Proposition~\ref{L_tilde_independence} we can reduce to the case where both $L$ and $L^\diamond$ are self-dual at $p$, where we have to show that the definition using the identity $L \to L$ is equivalent to the one given using the embedding $L\hookrightarrow L^\diamond$. 

In this case, $\tilde{\mu}_p$ can be chosen to be $0$, and we are reduced to checking that an element $x\in V(\mathcal{A}_S)_\Q$ maps into $V(\mathcal{A}_S)\otimes{\Z_{(p)}}$ if and only if it maps into $V(\mathcal{A}^\diamond_S)\otimes\Z_{(p)}$. For this, it is enough to verify that the inclusion of abelian groups
\[
V(\mathcal{A}_S)\subset V(\mathcal{A}^\diamond_S)
\]
maps onto a $\Z$-direct summand. However, this follows from the fact that $\bm{V}\subset \bm{V}^\diamond\vert_{\mathcal{M}}$ is a local direct summand; see Proposition~\ref{prop:functorial}(ii).
\end{proof}

\begin{remark}
\label{rem:Vmu simpler}
When $S$ is a scheme over $\mathcal{M}[p^{-1}]$, there is a more intrinsic description of the subset above.
Note that the natural action of the compact open subgroup $K\subset G(\A_f)$ on $\widehat{L}^\diamond$ preserves $\widehat{L}$, and induces the \emph{trivial} action on all the quotients involved in~\eqref{eqn:disc triv}. Therefore, over $\mathcal{M}[p^{-1}]$, we have a canonical isometry
\[
\alpha_p:\underline{\Z}_p\otimes(L^\vee/L)\xrightarrow{\simeq} \bm{V}_p^\vee/\bm{V}_p
\]
of $p$-torsion \'etale sheaves. In particular, by taking the pre-image of $\alpha_p(1\otimes\mu_p)$ in $\bm{V}_p^\vee$, we obtain a subsheaf of sets 
$
\bm{V}_{\mu,p}\subset \bm{V}_p^\vee.
$
The subset  $V_\mu(\mathcal{A}_S)_{(p)}$ consists precisely of those elements of $V(\mathcal{A}_S)_\Q$, whose $p$-adic realizations land in $\bm{V}_{\mu,p}$.
\end{remark}

For a scheme $S\to \mathcal{M}$, we now define a natural subset $V_\mu(\mathcal{A}_S)\subset V(\mathcal{A}_S)_\Q$ by setting
\[
V_\mu(\mathcal{A}_S) = \bigcap_pV_\mu(\mathcal{A}_S)_{(p)}.
\]

We record some properties of these spaces.

\begin{proposition}\label{Prop:positive_definite}\
\begin{enumerate}
\item If $\mu = 0$, then $V_0(\mathcal{A}_S) = V(\mathcal{A}_S)$.
	\item  Each $x\in V_\mu(\mathcal{A}_S)$, viewed as an element of $\End\bigl(\mathcal{A}_S\bigr)_\Q$, shifts the grading on $\mathcal{A}_S$ and  commutes with the right action of $C(L)$. 
	\item Moreover,  $x\circ x=Q(x)\cdot {\rm Id}$
for some non-negative  $Q(x)\in \Q$ satisfying
\begin{equation}\label{Q cong}
Q(x) \equiv Q(\mu) \pmod{\Z},
\end{equation}
and  $Q(x)=0$ if and only if $x=0$.
\end{enumerate}
\end{proposition}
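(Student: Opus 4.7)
My plan is to reduce all three assertions to corresponding facts about the larger lattice $L^\diamond$, exploiting the orthogonal decomposition $V^\diamond_\Q = V_\Q \oplus \Lambda_\Q$ and the already-established positive-definite $\Z$-valued quadratic form $Q^\diamond$ on $V(\mathcal{A}_S^\diamond)$. Fix a prime $p$ and an embedding $L\hookrightarrow L^\diamond$ with $L^\diamond$ maximal and self-dual at $p$. By Proposition~\ref{prop:functorial}, the induced map $V(\mathcal{A}_S)_\Q \hookrightarrow V(\mathcal{A}_S^\diamond)_\Q$ identifies the source with the orthogonal complement of $\Lambda$ inside the target, and realizes $V(\mathcal{A}_S)$ as a local direct summand of $V(\mathcal{A}_S^\diamond)$. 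Polarizing the identity $y\circ y = Q^\diamond(y)\cdot\mathrm{Id}$ shows that two elements of $V(\mathcal{A}_S^\diamond)$ anticommute in $\End(\mathcal{A}_S^\diamond)$ if and only if they are orthogonal under $Q^\diamond$.

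For (i), take $\mu=0$ and choose $\tilde\mu_p = 0$: the defining condition of $V_\mu(\mathcal{A}_S)_{(p)}$ becomes $x\in V(\mathcal{A}_S)_\Q\cap \bigl(V(\mathcal{A}_S^\diamond)\otimes\Z_{(p)}\bigr)$, which equals $V(\mathcal{A}_S)\otimes\Z_{(p)}$ because $V(\mathcal{A}_S)$ is saturated in $V(\mathcal{A}_S^\diamond)$. Intersecting over all primes $p$ inside $V(\mathcal{A}_S)_\Q$ then yields $V_0(\mathcal{A}_S) = V(\mathcal{A}_S)$. Claim (ii) is almost immediate: $V_\mu(\mathcal{A}_S)\subset V(\mathcal{A}_S)_\Q\subset \End_{C(L)}(\mathcal{A}_S)\otimes\Q$, so $C(L)$-linearity is built into the definition, while the grade-shifting property of $\bm V$ transfers to $V(\mathcal{A}_S)$ and extends $\Q$-linearly.

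For (iii), given $x\in V_\mu(\mathcal{A}_S)$, view $x$ as an element of $V(\mathcal{A}_S^\diamond)_\Q$ orthogonal to $\Lambda$; since $\tilde\mu_p\in \Lambda_\Q$, the orthogonality $x\perp\tilde\mu_p$ together with the anticommutation criterion yields
\[
(x+\tilde\mu_p)\circ(x+\tilde\mu_p) = x\circ x + \tilde\mu_p\circ\tilde\mu_p
\]
in $\End(\mathcal{A}_S^\diamond)\otimes\Q$. The left-hand side equals $Q^\diamond(x+\tilde\mu_p)\cdot\mathrm{Id}$ and $\tilde\mu_p\circ\tilde\mu_p = Q^\diamond(\tilde\mu_p)\cdot\mathrm{Id}$, so $x\circ x$ is a scalar, forced to be $Q^\diamond(x)\cdot \mathrm{Id}$; define $Q(x) = Q^\diamond(x)$. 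The positive-definiteness of $Q^\diamond$ on $V(\mathcal{A}_S^\diamond)_\Q$ immediately gives $Q(x)\ge 0$, with equality iff $x=0$. For the congruence, note $Q^\diamond(x+\tilde\mu_p)\in\Z_{(p)}$ because $x+\tilde\mu_p\in V(\mathcal{A}_S^\diamond)\otimes\Z_{(p)}$ and $Q^\diamond$ is $\Z$-valued there. Self-duality of $L^\diamond$ at $p$ makes $L^\diamond_{(p)}/(L\oplus\Lambda)_{(p)}$ Lagrangian inside the $\Q/\Z$-valued discriminant form on $L^\vee/L\oplus\Lambda^\vee/\Lambda$, which forces $Q^\diamond(\tilde\mu_p)\equiv -Q(\mu_p)\pmod{\Z}$, where $\mu_p$ denotes the $p$-primary component of $\mu$. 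Combining gives $Q(x)\equiv Q(\mu_p)\pmod{\Z_{(p)}}$; since different primary components of $\mu$ are orthogonal in the discriminant form, $Q(\mu)\equiv Q(\mu_p)\pmod{\Z_{(p)}}$, and therefore $Q(x)\equiv Q(\mu)\pmod{\Z_{(p)}}$. Running over all primes and using $\bigcap_p \Z_{(p)} = \Z$ inside $\Q$ yields $Q(x)\equiv Q(\mu)\pmod \Z$.

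The main obstacle is the bookkeeping in (iii)—tracking the three different avatars of $\mu$ (a global coset in $L^\vee/L$, its $p$-primary component, and a chosen lift in $\Lambda^\vee$) together with the sign flip on the quadratic form that comes from the Lagrangian condition. Once this is in hand, the positivity and grading-shift assertions are essentially formal consequences of the framework set up in Proposition~\ref{prop:functorial}.
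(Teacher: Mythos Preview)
Your proof is correct and follows essentially the same route as the paper's: reduce to the self-dual-at-$p$ lattice $L^\diamond$, use orthogonality of $x$ and $\tilde\mu_p$ to split $Q^\diamond(x+\tilde\mu_p)$, and then compare with the analogous splitting for a lift $\mu'_p+\tilde\mu_p\in L^\diamond$ (your ``Lagrangian'' observation is exactly the paper's statement that $\mu'_p+\tilde\mu_p\in L^\diamond$). The only difference is presentational: the paper grounds the positive-definiteness of $Q$ directly in the Rosati-involution argument of \cite[(5.12)]{mp:reg}, whereas you take positive-definiteness of $Q^\diamond$ on $V(\mathcal{A}_S^\diamond)$ as already-established input---which is fine, since that is precisely what \cite{mp:reg} supplies, but you should cite it to avoid the appearance of circularity.
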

\begin{proof}
The first statement was shown in the course of the proof of Lemma~\ref{lem:mu def self-dual ind}, where we found that $V_{\mu}(\mathcal{A}_S)$ is a subset of $V(\mathcal{A}_S)\otimes\Z_{(p)}$ whenever $\mu_p = 0$.

The second follows from the construction of the space $V(\mathcal{A}_S)$, and the fact that the endomorphisms $\bm{V} \subset \underline{\End}(\bm{H})$ of  (\ref{eqn:hom special})  are grade shifting and commute with $C(L)$.

For the third, we first check that the quadratic form $Q(x)$ is positive definite. But this follows from the existence of a polarization on $\mathcal{A}_S$ such that every element of $V(\mathcal{A}_S)\otimes\Q$ is fixed under the corresponding Rosati involution; see the proof of \cite[(5.12)]{mp:reg}. We also have to show that, for each prime $p$, we have
\[
Q(x)\equiv Q(\mu)\pmod{\Z_{(p)}}.
\]
We can assume that we have chosen an embedding $L\hookrightarrow L^\diamond$ with $L^\diamond$ self-dual at $p$. In this case, with $\Lambda$ and $\tilde{\mu}_p$ as in Lemma~\ref{lem:mu def self-dual ind}, we find that, in terms of the quadratic form $Q^\diamond$ on $V(\mathcal{A}^\diamond_S)$, we have
\[
Q^\diamond(x+\tilde{\mu}_p)\equiv 0\pmod{\Z_{(p)}}.
\] 
It can be checked using Proposition~\ref{prop:functorial}(ii) that $V(\mathcal{A}_S)$ is orthogonal to $\tilde{\mu}_p$ as a subspace of $V(\mathcal{A}_S^\diamond)_\Q$. Therefore, we have
\begin{equation}\label{eqn:x tilde mu}
Q(x) = Q^\diamond(x)\equiv -Q^\diamond(\tilde{\mu}_p)\pmod{\Z_{(p)}}.
\end{equation}

On the other hand, recall that $\tilde{\mu}_p\subset \Lambda^\vee$ was defined to be a lift of the image of $\mu_p$ under the canonical isomorphism
\[
(L^\vee/L)\otimes\Z_{(p)}\xrightarrow{\simeq}\Lambda^\vee/\Lambda\otimes\Z_{(p)}
\]
associated with the embedding $L\hookrightarrow L^\diamond$. In other words, if $\mu'_p\in L^\vee$ is any lift of $\mu_p$, we have $\mu'_p+\tilde{\mu}_p\in L^\diamond$. So arguing just as in the previous paragraph, we find
\begin{equation}
\label{eqn:tilde mu mu p}
Q(\mu'_p)\equiv - Q^\diamond(\tilde{\mu}_p)\pmod{\Z_{(p)}}.
\end{equation}
Comparing~\eqref{eqn:x tilde mu} and~\eqref{eqn:tilde mu mu p} finishes the proof.
\end{proof}

As before, suppose that we are given an embedding of maximal quadratic spaces $L_0\hookrightarrow L$ with $\Lambda=L_0^{\perp}\subset L$.  In particular the inclusion $L^\vee \hookrightarrow L_0^\vee\oplus \Lambda^\vee$
induces   an injection
\[
L^\vee / (L_0\oplus \Lambda ) \hookrightarrow  (L_0^\vee/L_0)\oplus (\Lambda^\vee/ \Lambda).
\]

Fix an $\mathcal{M}_0$-scheme $S\to \mathcal{M}_0$. Then, by construction, the map $\Lambda\hookrightarrow\End( \mathcal{A}_S)$ from Proposition~\ref{prop:functorial} factors through an isometric embedding $\Lambda\hookrightarrow V( \mathcal{A}_S )$.
\begin{proposition}\label{prop:decomistionVmu}
Fix an $\mathcal{M}_0$-scheme $S\to \mathcal{M}_0$.
\begin{enumerate}

\item There is a canonical isometry
\begin{equation}\label{eqn:tensor_inject}
 V(\mathcal{A}_{0,S})\iso \Lambda^{\perp}\subset V( \mathcal{A}_S ),
\end{equation}
where here $\Lambda^\perp$ is calculated inside $V( \mathcal{A}_S )$.
\item
For every $\mu\in L^\vee/L$ and every
$(\mu_1,\mu_2)\in \bigl(\mu+L\bigr)/\bigl(L_0\oplus \Lambda\bigr)$
the map (\ref{eqn:tensor_inject}), tensored with $\Q$, restricts to an injection
\[
V_{\mu_1}(\mathcal{A}_{0,S}) \times ({\mu_2}+\Lambda) \hookrightarrow V_{\mu}(\mathcal{A}_S).
\]

\item
The above injections determine  a decomposition
\[
V_{\mu}(\mathcal{A}_S)
=\bigsqcup_{(\mu_1,\mu_2)\in (\mu+ L)/(L_0\oplus \Lambda)}  V_{\mu_1}(\mathcal{A}_{0,S}) \times
\bigl({\mu_2}+\Lambda\bigr).
\]
\end{enumerate}
\end{proposition}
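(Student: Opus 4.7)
The plan is to establish parts (i), (ii), and (iii) in turn.

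For part (i), define the map $V(\mathcal{A}_{0,S}) \to V(\mathcal{A}_S)$ by extending $x \mapsto x \otimes 1$ via the isomorphism~\eqref{abscheme_isom}; since $x$ is $C(L_0)$-linear and grade-shifting, this formula defines a $C(L)$-linear, grade-shifting quasi-endomorphism of $\mathcal{A}_S$, and the identity $(x \otimes 1)^2 = Q(x)\cdot\mathrm{Id}$ shows that the map is an isometry. To identify the image with $\Lambda^\perp \subset V(\mathcal{A}_S)$, I would invoke Proposition~\ref{prop:functorial}(ii), which homologically identifies $\bm{V}_0$ with the submodule of $\bm{V}\vert_{\mathcal{M}_0}$ that anticommutes with $\bm{\Lambda}$. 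The key observation is that for sections $x,y$ of $\bm{V}$, viewed inside $C(V)$, the Clifford identity $xy + yx = [x,y]\cdot 1$ shows that anticommutation in $\End(\bm{H})$ is precisely orthogonality in $\bm{V}$. Combined with the characterization of special endomorphisms via their homological realizations, this yields $V(\mathcal{A}_{0,S}) \iso \Lambda^\perp \cap V(\mathcal{A}_S)$.

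For part (ii), define the injection by $(x_1, \lambda) \mapsto x_1 + \lambda$, with $x_1$ transported into $V(\mathcal{A}_S)_\Q$ via (i), and $\lambda$ viewed in $V(\mathcal{A}_S)$ via $\Lambda \hookrightarrow V(\mathcal{A}_S)$ from Proposition~\ref{prop:functorial}. Injectivity is immediate from the orthogonality of $V(\mathcal{A}_{0,S})_\Q$ and $\Lambda_\Q$ inside $V(\mathcal{A}_S)_\Q$ proved in (i). The substantive step is to verify that $x_1 + \lambda \in V_\mu(\mathcal{A}_S)_{(p)}$ for each prime $p$. Fix such a $p$, choose a maximal lattice $L^\diamond \supset L$ self-dual at $p$, and set $\Lambda^\diamond = L^\perp \subset L^\diamond$; since $L_0 \subset L \subset L^\diamond$, the same $L^\diamond$ simultaneously computes $V_{\mu_1}(\mathcal{A}_{0,S})_{(p)}$. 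The canonical identifications~\eqref{eqn:disc triv} fit into a commutative diagram of discriminant-group isomorphisms relating $L^\vee/L$, $L_0^\vee/L_0$, $\Lambda^\vee/\Lambda$, and $(\Lambda^\diamond)^\vee/\Lambda^\diamond$. This allows compatible choices of lifts $\widetilde{\mu_1}_p$ of $\mu_1$, $\tilde{\mu}_p$ of $\mu$, and a representative $\mu_2' \in \Lambda^\vee$ of $\mu_2$, arranged so that $\mu_2' + \tilde{\mu}_p - \widetilde{\mu_1}_p$ lies in $(\Lambda \oplus \Lambda^\diamond) \otimes \Z_{(p)}$. The identity
\[
x_1 + \lambda + \tilde{\mu}_p = (x_1 + \widetilde{\mu_1}_p) + (\lambda - \mu_2') + (\mu_2' + \tilde{\mu}_p - \widetilde{\mu_1}_p)
\]
then exhibits the left side as a sum of three elements of $V(\mathcal{A}_S^\diamond) \otimes \Z_{(p)}$, as required.

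For part (iii), disjointness is immediate from uniqueness of the orthogonal decomposition $V(\mathcal{A}_S)_\Q \supset V(\mathcal{A}_{0,S})_\Q \oplus \Lambda_\Q$. For surjectivity, start with $x \in V_\mu(\mathcal{A}_S)$ and project its homological realization onto the $\bm{\Lambda}_\Q$-component to produce $\lambda \in \Lambda_\Q$; the difference $x - \lambda \in V(\mathcal{A}_S)_\Q$ lies in $\Lambda_\Q^\perp$, and so by (i) is the image of a unique $x_1 \in V(\mathcal{A}_{0,S})_\Q$. Here one uses the rational decomposition $\bm{V}\vert_{\mathcal{M}_0} \otimes \Q = (\bm{V}_0 \otimes \Q) \oplus (\bm{\Lambda} \otimes \Q)$, which follows from Proposition~\ref{prop:functorial}(ii) together with the rank count $(n_0+2) + (n - n_0) = n + 2$. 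Running the prime-by-prime argument of (ii) in reverse then shows that $x_1 \in V_{\mu_1}(\mathcal{A}_{0,S})$ and $\lambda \in \mu_2 + \Lambda$ for a unique $(\mu_1, \mu_2) \in (\mu + L)/(L_0 \oplus \Lambda)$.

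The main obstacle I anticipate is the bookkeeping in parts (ii) and (iii): one must carefully chase the canonical identifications~\eqref{eqn:disc triv} relating the various discriminant groups and arrange the lifts compatibly at each prime $p$. Beyond this, the essential conceptual content is already packaged in Proposition~\ref{prop:functorial} and in the homological characterization of special endomorphisms from \S\ref{ss:specialend}.
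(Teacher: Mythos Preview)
Your proposal is correct and follows essentially the same route as the paper's proof: part (i) is deduced from Proposition~\ref{prop:functorial}, and parts (ii) and (iii) are obtained from the rational orthogonal decomposition $V(\mathcal{A}_S)_\Q = V(\mathcal{A}_{0,S})_\Q \oplus \Lambda_\Q$ together with a prime-by-prime check using an auxiliary self-dual-at-$p$ lattice $L^\diamond$ (the content of Lemma~\ref{lem:mu def self-dual ind}). The paper's own proof is terse, simply asserting that (ii) and (iii) are ``an easy exercise'' from the definitions once the rational decomposition is in hand; your write-up supplies exactly the bookkeeping the paper omits.
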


\begin{proof}
Claim (i) follows from the definitions and Proposition \ref{prop:functorial};  see also~\cite[(7.15)]{mp:reg}.
In particular, any element $x\in V(\mathcal{A}_S)\otimes\Q$ admits a decomposition
\[
 x=(x_0,\nu)\in (V(\mathcal{A}_{0,S})\otimes\Q)\times (\Lambda\otimes\Q).
\]
Using this decomposition, it is an easy exercise to deduce claims (ii) and (iii) from the definitions.
The main input is Lemma~\ref{lem:mu def self-dual ind}, which permits us to study all spaces in question by embedding $L$ into a maximal quadratic lattice that is self-dual at any given prime $p$.
\end{proof}

%%%%%%%%%%%%%%%%%%%%%%%%%%%%%%%%%%%%%%%%

\subsection{Special divisors}

%%%%%%%%%%%%%%%%%%%%%%%%%%%%%%%%%%%%%%%%

For $m\in \Q_{>0}$ and $\mu\in L^\vee/L$, define the \emph{special cycle} $\mathcal{Z}(m, \mu) \to \mathcal{M}$
as the  stack over $\mathcal{M}$ with functor of points
\begin{equation}\label{special divisor}
\mathcal{Z}(m, \mu) (S) = \left\{ x \in V_\mu( \mathcal{A}_S) : Q(x) = m \right\}
\end{equation}
for any scheme $S\to \mathcal{M}$.
Note that, by (\ref{Q cong}), the stack (\ref{special divisor}) is  empty unless the image of $m$ in $\Q/\Z$ agrees with $Q(\mu)$.

For later purposes we also define the stacks $\mathcal{Z}(0, \mu)$ in exactly the same way. As the only special endomorphism $x$ with  $Q(x)=0$  is the zero map, we have
\[
\mathcal{Z}(0, \mu) =
\begin{cases}
 \emptyset & \hbox{{\rm if }} \mu\neq 0 \cr \mathcal{M} & \hbox{{\rm if }} \mu=0.\cr
\end{cases}
\]

The special cycles behave nicely under pullback by the morphism of
Proposition \ref{prop:functorial}.   Indeed, the following is an immediate consequence of Proposition \ref{prop:decomistionVmu}.

\begin{proposition}\label{prop:decomistionZmu}
Suppose that we are given an embedding of maximal quadratic spaces
$L_0\hookrightarrow L$, and let $\Lambda\subset L$ be the submodule
of vectors orthogonal to $L_0$.   Let $\mathcal{M}_0 \to \mathcal{M}$ be the corresponding morphism of Shimura varieties,  as in Proposition \ref{prop:functorial}.  Thus, for any $m\in\Q_{\ge 0}$ and $\mu\in L^\vee/L$ there is a special cycle $\mathcal{Z}(m,\mu)\to \mathcal{M}$, and
for any $m_1\in \Q_{\ge 0}$ and $\mu_1\in L_0^\vee/L_0$ there is a  special
cycle $\mathcal{Z}_0(m_1,\mu_1) \to \mathcal{M}_0$.

Then, there is an isomorphism of $\mathcal{M}_0$-stacks
\[
\mathcal{Z}(m, \mu )\times_{\mathcal{M}}\mathcal{M}_0\\
\simeq \bigsqcup_{ \substack{ m_1+m_2=m \\ (\mu_1,\mu_2)\in ( \mu+L)/(L_0\oplus \Lambda)}}
 \mathcal{Z}_0(m_1,\mu_1)  \times \Lambda_{m_2,\mu_2},
\]
where
\[
\Lambda_{m_2,\mu_2} = \{x\in {\mu_2}+\Lambda : Q(x)=m_2\},
\]
and $ \mathcal{Z}_0(m_1,\mu_1)  \times \Lambda_{m_2,\mu_2}$ denotes the disjoint union of
$\# \Lambda_{m_2,\mu_2}$ copies of $\mathcal{Z}_0(m_1,\mu_1)$.
\end{proposition}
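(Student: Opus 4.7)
The plan is to deduce this directly from Proposition \ref{prop:decomistionVmu}, which already encodes the analogous decomposition at the level of special endomorphisms. Let $S \to \mathcal{M}_0$ be any test scheme. By definition, an $S$-point of the fiber product $\mathcal{Z}(m,\mu) \times_{\mathcal{M}} \mathcal{M}_0$ is an element $x \in V_\mu(\mathcal{A}_S)$ with $Q(x) = m$, where we are tacitly using the isomorphism $\mathcal{A}\vert_{\mathcal{M}_0} \iso \mathcal{A}_0 \otimes_{C(L_0)} C(L)$ of Proposition \ref{prop:functorial} to identify $\mathcal{A}_S$ with the pullback from $\mathcal{M}_0$.

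First I would apply Proposition \ref{prop:decomistionVmu}(iii) to write
\[
V_\mu(\mathcal{A}_S) = \bigsqcup_{(\mu_1,\mu_2) \in (\mu+L)/(L_0\oplus\Lambda)} V_{\mu_1}(\mathcal{A}_{0,S}) \times (\mu_2 + \Lambda),
\]
so every $x \in V_\mu(\mathcal{A}_S)$ decomposes uniquely as a pair $(x_0, \nu)$ with $x_0 \in V_{\mu_1}(\mathcal{A}_{0,S})$ and $\nu \in \mu_2 + \Lambda$ for some coset representative. Next I would observe that the quadratic form is additive on this decomposition: by Proposition \ref{prop:decomistionVmu}(i) the image of $V(\mathcal{A}_{0,S})$ inside $V(\mathcal{A}_S)$ is precisely the orthogonal complement of $\Lambda$, so $[x_0, \nu] = 0$ in $V(\mathcal{A}_S) \otimes \Q$, and therefore
\[
Q(x) = Q(x_0) + Q(\nu).
\]
Since both summands are non-negative by Proposition \ref{Prop:positive_definite}, the condition $Q(x) = m$ cuts out the sub-locus of pairs $(x_0,\nu)$ with $Q(x_0) = m_1$ and $Q(\nu) = m_2$ for some decomposition $m_1 + m_2 = m$ in $\Q_{\ge 0}$.

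Organizing the disjoint union first over the cosets $(\mu_1,\mu_2) \in (\mu + L)/(L_0 \oplus \Lambda)$ and then over the compatible splittings $m_1 + m_2 = m$, I obtain
\[
\mathcal{Z}(m,\mu) \times_{\mathcal{M}} \mathcal{M}_0 = \bigsqcup_{\substack{m_1 + m_2 = m \\ (\mu_1,\mu_2)}} \bigl\{(x_0,\nu) : x_0 \in V_{\mu_1}(\mathcal{A}_{0,S}),\; Q(x_0) = m_1,\; \nu \in \mu_2 + \Lambda,\; Q(\nu) = m_2 \bigr\}.
\]
Since $\Lambda$ is a fixed $\Z$-lattice independent of $S$, the locus $\Lambda_{m_2,\mu_2} = \{ \nu \in \mu_2 + \Lambda : Q(\nu) = m_2\}$ is a finite constant set, so the sub-functor parametrizing $(x_0,\nu)$ with $\nu \in \Lambda_{m_2,\mu_2}$ is represented by $\#\Lambda_{m_2,\mu_2}$ copies of $\mathcal{Z}_0(m_1,\mu_1)$. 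This yields the stated isomorphism.

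The only step requiring any attention is the additivity of $Q$ under the decomposition, and this is a straightforward consequence of the orthogonality built into Proposition \ref{prop:decomistionVmu}(i) combined with the fact that $Q$ on $V(\mathcal{A}_S) \otimes \Q$ is induced by the ambient quadratic form via $x \circ x = Q(x) \cdot \mathrm{Id}$; everything else is just bookkeeping of disjoint unions.
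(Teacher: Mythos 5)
Your proof is correct and is precisely the argument the paper intends; the text states that the proposition is "an immediate consequence of Proposition~\ref{prop:decomistionVmu}," and you have spelled out exactly that deduction, including the orthogonality $[x_0,\nu]=0$ needed for additivity of $Q$ and the finiteness of $\Lambda_{m_2,\mu_2}$ coming from positive definiteness of $\Lambda$.
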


\begin{proposition}
For every $m\in \Q_{\geq 0}$ and $\mu\in L^\vee/L$, the morphism $\mathcal{Z}(m,\mu)\to \mathcal{M}$ is relatively representable, finite, and unramified. 
\end{proposition}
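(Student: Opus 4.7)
The $m=0$ case is immediate from Proposition \ref{Prop:positive_definite}(iii): the only special endomorphism with $Q=0$ is zero, so $\mathcal{Z}(0,0)=\mathcal{M}$ and $\mathcal{Z}(0,\mu)=\emptyset$ for $\mu\neq 0$. Assume henceforth that $m>0$.

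I first dispose of the case $\mu=0$, in which $V_0(\mathcal{A}_S)=V(\mathcal{A}_S)$ by Proposition \ref{Prop:positive_definite}(i). Arguing as in \cite[\S 5]{mp:reg}, $\mathcal{Z}(m,0)$ embeds as a closed subscheme of the relative endomorphism functor $\underline{\End}(\mathcal{A}/\mathcal{M})$, being cut out by the closed conditions of $C(L)$-linearity, grade-shifting, the Clifford identity $x\circ x=m\cdot\mathrm{Id}$, and specialness (membership of the cohomological realizations in the local direct summand $\bm{V}\subset\underline{\End}_{C(L)}(\bm{H})$ of \eqref{eq:bmV}). The functor $\underline{\End}(\mathcal{A}/\mathcal{M})$ is separated and unramified over $\mathcal{M}$, and Rosati-positivity of $Q$ (Proposition \ref{Prop:positive_definite}(iii)) combined with the Clifford identity forces the polarization degree of such an $x$ to be a fixed function of $m$; this confines $\mathcal{Z}(m,0)$ to a single finite stratum of $\underline{\End}(\mathcal{A}/\mathcal{M})$ by the standard theory of Hom schemes for abelian schemes. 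Hence $\mathcal{Z}(m,0) \to \mathcal{M}$ is representable, finite, and unramified.

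For $\mu\neq 0$, choose $N\in\Z_{>0}$ with $N\mu=0$ in $L^\vee/L$. I claim that $Nx\in V(\mathcal{A}_S)$ whenever $x\in V_\mu(\mathcal{A}_S)$. Working $p$-locally with a lift $\tilde\mu_p\in\Lambda^\vee$ of $\mu_p$ as in Lemma \ref{lem:mu def self-dual ind}, we have $Nx+N\tilde\mu_p\in V(\mathcal{A}^\diamond_S)\otimes\Z_{(p)}$, while $N\mu=0$ forces $N\tilde\mu_p\in\Lambda\otimes\Z_{(p)}\subset V(\mathcal{A}^\diamond_S)\otimes\Z_{(p)}$, yielding $Nx\in V(\mathcal{A}^\diamond_S)\otimes\Z_{(p)}$; the saturation property in Proposition \ref{prop:functorial}(ii) then gives $Nx\in V(\mathcal{A}_S)\otimes\Z_{(p)}$. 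Intersecting over all $p$ gives $Nx\in V(\mathcal{A}_S)$, with $Q(Nx)=N^2m$. The resulting map $[N]:\mathcal{Z}(m,\mu)\to\mathcal{Z}(N^2m,0)$ is injective on points and realizes $\mathcal{Z}(m,\mu)$ as the subscheme of $\mathcal{Z}(N^2m,0)$ consisting of those $y\in V(\mathcal{A}_S)$ whose image in the finite quotient sheaf $\bm{V}/N\bm{V}$ is the prescribed coset class determined by $\mu$. Since this amounts to an equality of sections of a locally constant sheaf, the condition is open-and-closed, so $\mathcal{Z}(m,\mu)$ is a clopen substack of $\mathcal{Z}(N^2m,0)$ and inherits representability, finiteness, and unramifiedness.

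The main obstacle is verifying that the coset condition defining the image of $[N]$ is genuinely open-and-closed over $\mathcal{M}$, i.e.\ that it respects specialization across all residue characteristics. At a prime $\ell$ with $\ell\nmid N$ this follows from the lisseness of $\bm{V}_{\ell,\mathrm{et}}/N\bm{V}_{\ell,\mathrm{et}}$; at primes dividing $N$, and in particular at primes of bad reduction, one invokes the crystalline realization $\bm{V}_{\crys}/N\bm{V}_{\crys}$ together with the comparison isomorphism recorded in the remark following \eqref{eq:bmV}. These are standard but delicate checks that require tracking the integral structure of $\bm{V}$ at the residue characteristic.
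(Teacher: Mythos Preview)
Your treatment of the case $\mu=0$ is close in spirit to the paper's, though the paper is more explicit: it fixes a polarization whose Rosati involution fixes every special endomorphism, and then computes directly that the graph of any $x\in V(\mathcal{A}_s)$ with $Q(x)=m$ has Hilbert polynomial depending only on $m$ (via $x^*\co(n)\cong\co(mn)$). Your phrase ``confines $\mathcal{Z}(m,0)$ to a single finite stratum'' is pointing at this, but you should be aware that the actual argument is a concrete Hilbert polynomial calculation, not a soft appeal to Hom-scheme theory.

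The reduction for general $\mu$ is where your approach genuinely diverges from the paper's, and where it has a real gap. The paper does \emph{not} use your multiplication-by-$N$ map $\mathcal{Z}(m,\mu)\to\mathcal{Z}(N^2m,0)$; instead it embeds $L$ into a self-dual lattice $L^\diamond$ and invokes Proposition~\ref{prop:decomistionZmu}, which immediately exhibits $\mathcal{Z}(m,\mu)$ as an open-and-closed substack of the fiber product $\mathcal{Z}(m^\diamond,\mu^\diamond)\times_{\mathcal{M}^\diamond}\mathcal{M}$. The clopen decomposition there is already built into that proposition (it comes from the finite index set $(\mu^\diamond+L^\diamond)/(L\oplus\Lambda)$), so nothing further needs to be checked.

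Your route, by contrast, requires showing that the condition ``$y/N\in V_\mu(\mathcal{A}_S)$'' on a point $y\in\mathcal{Z}(N^2m,0)(S)$ is clopen. You correctly flag this as the main obstacle, but you do not resolve it. The problem is genuine: the definition of $V_\mu(\mathcal{A}_S)_{(p)}$ is phrased in terms of integrality inside the auxiliary lattice $V(\mathcal{A}^\diamond_S)\otimes\Z_{(p)}$, not in terms of $\bm{V}/N\bm{V}$ for $L$ itself, so your proposed criterion is not obviously the right one. More seriously, the check at a prime $p\mid N$ amounts to showing that $p$-divisibility of a special endomorphism in $V(\mathcal{A}^\diamond)$ is preserved under generization from characteristic $p$ to characteristic $0$; this is a statement about saturation of the specialization map $\End(\mathcal{A}^\diamond_R)\hookrightarrow\End(\mathcal{A}^\diamond_k)$, and the crystalline--\'etale comparison isomorphism you cite (which is only stated pointwise, after tensoring with $B_{\crys}$) does not by itself give this. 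The paper's reduction sidesteps all of this.
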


\begin{proof}
Suppose that we have an embedding $L\hookrightarrow L^\diamond$ of maximal quadratic spaces with $\Lambda\subset L^\diamond$ the submodule of vectors orthogonal to $L$. Suppose also that $m^\diamond\in \Q_{\geq 0}$, $\mu^\diamond\in L^{\diamond,\vee}/L^{\diamond}$, and $\mu'\in \Lambda^\vee/\Lambda$ are such that 
\[
(\mu,\mu')\in (\mu^\diamond + L^\diamond)/(L\oplus \Lambda),
\]
and
\begin{align*}
Q(\mu^\diamond)  &\equiv m^\diamond\pmod{\Z} \\  Q(\mu')  &\equiv m^\diamond - m \pmod{\Z}.
\end{align*}

Let $\mathcal{Z}(m^\diamond,\mu^\diamond)\to \mathcal{M}^\diamond$ be the morphism of stacks associated with the triple $(L^\diamond, m^\diamond,\mu^\diamond)$. Suppose that we know that $\mathcal{Z}(m^\diamond,\mu^\diamond)\to \mathcal{M}^\diamond$ is relatively representable, finite, and unramified.  Then Proposition~\ref{prop:decomistionZmu} implies that $\mathcal{Z}(m,\mu)$ can be viewed as a closed and open substack of $\mathcal{Z}(m^\diamond,\mu^\diamond)\times_{\mathcal{M}^\diamond}\mathcal{M}$, and is thus finite and unramified over $\mathcal{M}$.

Therefore, by replacing $L$ with an appropriate  choice of $L^\diamond$, one reduces the problem to studying the morphism $\mathcal{Z}(m,\mu) \to \mathcal{M}$  under the additional assumption that $L$ is self-dual.  Of course this implies that $\mu=0$.

Suppose we have a Noetherian scheme $S$ and a morphism $S\to \mathcal{M}$.  Recall that the \'etale sheaf  $\underline{\End}(\mathcal{A}_S)$ on $S$ is represented by a formally unramified scheme over $S$, whose connected components are projective.  Formal unramifiedness follows from the rigidity of morphisms of abelian schemes \cite[Corollary 6.2]{MFK}.  Representability and projectiveness of the components follows from the theory of Hilbert schemes and the valuative criterion of properness, as in \cite[\S 6.1.5, \S 6.1.6]{Hida}.

As in~\cite[Prop. 6.13]{mp:reg}, $\mathcal{Z}(m,\mu)_S$  is represented by a formally unramified  $S$-scheme whose connected components are projective. More precisely, it is isomorphic to a union of connected components of $\underline{\End}(\mathcal{A}_S)$. This is a consequence of the fact that, for any $f\in\End(\mathcal{A}_S)$, the property of being special can be checked at any geometric point of a connected component of $S$, and similarly for the property $f\circ f=m$.

The only thing left to prove is that $\mathcal{Z}(m,\mu)_S$ is of finite type over $S$. For this, one first notes that, for any projective scheme $X$ over $S$, the Hilbert scheme parameterizing closed subschemes of $X$ with fixed Hilbert polynomial is projective, and hence of finite type,  over $S$. See~\cite[\S 5.6]{FGA}.  We will realize  $\mathcal{Z}(m,\mu)_S$ as a closed subscheme of such   a Hilbert scheme  with $X=\mathcal{A}_S \times_S \mathcal{A}_S$.  

Of course, the Hilbert polynomial depends on the choice of an ample line bundle on $X$.  We will choose our line bundle as follows: Let $\mathcal{A}^\vee_S$ be the dual abelian scheme, and let $\mathcal{P}$ be the Poincar\'e line bundle on $\mathcal{A}_S\times_S \mathcal{A}^\vee_S$. As in Proposition~\ref{Prop:positive_definite}, one can choose a polarization $\psi:\mathcal{A}_S\to \mathcal{A}^\vee_S$ such that the associated Rosati involution fixes all special endomorphisms of $\mathcal{A}_S$. Define an ample line bundle on $\mathcal{A}_S$ by
\[
\mathcal{O}(1) = (1\times \psi)^*\mathcal{P}.
\]
As usual, for any $n\in\Z$, set $\co(n) = \co(1)^{\otimes n}$.   Define an ample line bundle on $X$ by $p_1^*\co(1)\otimes p_2^*\co(1)$, where $p_i:X\to \mathcal{A}_S$ are the two projections.

Suppose that $s\to \mathcal{Z}(m,\mu)_S$ is a geometric point.  It  determines a geometric point  $s\to S$, along with a special endomorphism $x\in V(\mathcal{A}_s)$  with $Q(x)=m$.    The Hilbert polynomial of the graph $\Gamma_x \subset X_s$  is associated with the function
\[
n\mapsto h^0(\mathcal{A}_s,\co(n)\otimes x^*\co(n)).
\]
Since $x$ is fixed by the Rosati involution, one easily checks that $x^*\co(n) = \co(mn)$. Therefore the Hilbert polynomial in question is the one attached to
\[
n\mapsto h^0(\mathcal{A}_s,\co((m+1)n)),
\]
which clearly depends only on $m$.

Sending a point  of $\mathcal{Z}(m,\mu)_S$ to the graph of the corresponding special endomorphism defines a closed immersion of $\mathcal{Z}(m,\mu)_S$ into the Hilbert scheme of closed subschemes of $X$ with fixed Hilbert polynomial.  This proves that $\mathcal{Z}(m,\mu)_S$ is of finite type over $S$, and completes the proof.
\end{proof}

\begin{lemma}
For any positive $m$ and any $\mu$,  the complex orbifold  $\mathcal{Z}(m, \mu)(\C)$ just defined agrees with (\ref{eqn:Zmmu}).
\end{lemma}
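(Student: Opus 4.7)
The plan is to unwind the moduli description of $\mathcal{Z}(m,\mu)$ at the complex points, using the uniformization (\ref{uniformization}). At a point $[(\bm{h},g)] \in \Gamma_g\backslash \mathcal{D} \subset M(\C)$, the Betti realization of $\bm{V}$ is canonically identified with $L_g = V \cap g\action \widehat{L}$, equipped with the Hodge structure determined by $\bm{h}$. By the homological criterion for special endomorphisms recalled in \S\ref{ss:specialend}, the fiber of the inclusion $V(\mathcal{A}_S) \hookrightarrow \End_{C(L)}(\mathcal{A}_S)$ at this point consists of those $x$ whose Betti realization lies in $L_g \subset V$.

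To detect the coset $\mu$, I note that the description in Remark~\ref{rem:Vmu simpler} has an evident Betti analog: because $K$ acts trivially on $L^\vee/L$, and because of the discriminant isomorphisms (\ref{eqn:disc triv}), there is a canonical isomorphism of locally constant sheaves $\underline{L^\vee/L} \iso \bm{V}_{\rm Be}^\vee/\bm{V}_{\rm Be}$ on $M(\C)$. In the fiber at $[(\bm{h},g)]$ this isomorphism carries the constant section $\mu$ to the coset $\mu_g = g\action\mu$ in $L_g^\vee/L_g$. Combined with the compatibility of Betti and $\ell$-adic realizations at complex points, an element $x \in V(\mathcal{A}_S)_\Q$ at $[(\bm{h},g)]$ lies in $V_\mu(\mathcal{A}_S)$ if and only if its Betti realization lies in $\mu_g + L_g$.

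Finally, for $x$ to define an actual endomorphism of $\mathcal{A}_{[(\bm{h},g)]}$ (equivalently, for $[(\bm{h},g)]$ to lie in the image of the corresponding section of $\mathcal{Z}(m,\mu) \to M(\C)$), its Betti realization must respect the Hodge filtration on $\bm{V}_{\rm Be}$, that is, lie in $V_\C^{(0,0)}$. Using the description $V_\C^{(0,0)} = (\C z + \C\overline{z})^\perp$ from \S\ref{s:the complex Shimura variety}, this is equivalent to $x \perp z$, i.e., $\bm{h} \in \mathcal{D}(x)$. Adding the length condition $Q(x)=m$ identifies the fiber of $\mathcal{Z}(m,\mu)(\C) \to M(\C)$ at $[(\bm{h},g)]$ with
\[
\{ x \in \mu_g + L_g : Q(x)=m,\; \bm{h}\in \mathcal{D}(x)\},
\]
and taking disjoint unions over $g\in G(\Q)\backslash G(\A_f)/K$ followed by quotients by $\Gamma_g$ recovers (\ref{eqn:Zmmu}).

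The main subtlety is the compatibility check in the second paragraph: one must verify that the canonical isomorphisms $\underline{L^\vee/L} \iso \bm{V}^\vee/\bm{V}$ in the various homological realizations are compatible under the standard comparison isomorphisms, and that the constant section $\mu$ is carried to $\mu_g$ in each fiber consistently, so that the adelic coset condition defining $V_\mu$ really does match the locally constant coset appearing in (\ref{eqn:Zmmu}).
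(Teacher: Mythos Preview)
Your proposal is correct and follows essentially the same approach as the paper's proof: both use the complex uniformization to identify the Betti fiber of $\bm{V}$ at $(\bm{h},g)$ with $L_g$, invoke the Hodge type $(0,0)$ condition to obtain $x\perp\bm{h}$, and handle the coset $\mu$ via the $K$-triviality on $L^\vee/L$ combined with Remark~\ref{rem:Vmu simpler} (passing from the $p$-adic description to the Betti one through the comparison isomorphisms). The paper treats $\mu=0$ first and then the general case, but the content is the same as yours; your closing remark about the compatibility check corresponds exactly to the paper's phrase ``it is not hard to deduce from Remark~\ref{rem:Vmu simpler}.''
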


\begin{proof}
Consider the uniformization $M(\C) = G(\Q) \backslash \mathcal{D} \times G(\A_f) / K$. By construction, for every
\[
z=(\bm{h},g)\in \mathcal{D} \times G(\A_f),
\]
 the fiber of $\bm{V}_{\Be,\Q}$ at $z$ is the $\Q$-vector space $V$ with Hodge structure determined by  the negative plane $\bm{h} \subset V_\R$. The $(0, 0)$ part  of $\bm{V}_{\Be, \Q,z}$ is $V \cap {\bm{h}}^\perp$. The fibers of the subsheaves $\bm{V}_{\rm Be}$ and $\bm{V}_{\rm Be}^\vee$ of $\bm{V}_{\Be,\Q}$ at $z$ are $\widehat{L}_g \cap V=L_g$ and $\widehat{L}_g^\vee \cap V=L_g^\vee$, respectively. Here, using the notation of \S\ref{s:the complex Shimura variety}, $\widehat{L}_g = g\bullet\widehat{L}$ and $\widehat{L}^\vee_g = g\bullet\widehat{L}^\vee$.

 By definition, we now have
\[
V(\mathcal{A}_{z}) = \mathrm{End}(\mathcal{A}_{z}) \cap \bm{V}_{\rm{Be},z} =
(\bm{V}_{\Be,z})^{(0,0)} = L_g \cap  \bm{h}^\perp.
\]
This proves the lemma for the trivial class $\mu$.

Now, since $K$ acts trivially on $L^\vee/L$, there is a canonical isometry $\bm{V}_{\Be}^\vee/\bm{V}_{\Be} \simeq L^\vee/L\otimes\bm{1}$ of torsion sheaves on $M(\C)$. This determines, for every $\mu \in L^\vee/L$,
a locally constant intermediate sheaf of sets  $\bm{V}_{\mu,\Be} \subset \bm{V}_{\Be}^\vee$
whose fiber  $\bm{V}_{\mu,\Be, z}$  is  $\mu_g + L_g$. It is not hard to deduce from Remark~\ref{rem:Vmu simpler} that $V_\mu(\mathcal{A}_z)$ consists precisely of those elements of $V(\mathcal{A}_z)_{\Q}$ whose Betti realization lands in $\bm{V}_{\mu,\Be}$. It follows that
\[
V_\mu(\mathcal{A}_{z}) = (\mu_g+L_g)\cap\bm{h}^{\perp},
\]
which proves the lemma for non-trivial $\mu$.
\end{proof}

We now show that the stacks $\mathcal{Z}(m,\mu)$ with $m>0$
define (\'etale local) Cartier divisors on $\mathcal{M}$.

\begin{proposition}\label{prop:localstrZmmu}
Suppose $m>0$.  Then \'etale locally on the source, $\mathcal{Z}(m,\mu)$ defines a Cartier divisor on $\mathcal{M}$.  More precisely:
around every geometric point of $\mathcal{M}$ there is an \'etale neighborhood $U$ such that for every connected component
$Z\subset \mathcal{Z}(m,\mu)_U$, the map $Z\to U$  is a  closed immersion  defined by a single non-zero equation.
\end{proposition}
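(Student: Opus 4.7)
\medskip

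\noindent\textbf{Proof plan.} The plan is to realize each component $Z\hookrightarrow U$ of $\mathcal{Z}(m,\mu)_U$ as the vanishing locus of a single non-zero section of the pullback of the cotautological line bundle $(\bm{V}^1_{\dR})^\vee$, using the crystalline deformation theory of special endomorphisms of the Kuga--Satake abelian scheme (as developed in \cite{mp:reg} and \cite{mp:2adic}).

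Since $\mathcal{Z}(m,\mu)\to\mathcal{M}$ is finite and unramified, around any geometric point $\bar z$ of $\mathcal{Z}(m,\mu)$ over $\bar s\in\mathcal{M}$ one finds an \'etale neighborhood $U$ of $\bar s$ on which $\mathcal{Z}(m,\mu)_U$ decomposes as a disjoint union of closed immersions; fix a component $Z\hookrightarrow U$ through $\bar z$, corresponding to some $x_0\in V_\mu(\mathcal{A}_{\bar s})$ with $Q(x_0)=m$, and set $R=\widehat{\co}_{U,\bar z}$. By rigidity of endomorphisms of abelian schemes, $x_0$ extends to $x_Z\in V(\mathcal{A}_Z)$; via the Gauss--Manin connection in characteristic zero and the crystal structure on $\bm{V}_\crys$ in mixed characteristic, the de Rham realization of $x_Z$ parallel-transports to a canonical horizontal section $\tilde x\in \bm{V}_{\dR,R}$. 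The deformation theory of \cite{mp:reg} now identifies the maximal closed subscheme of $\Spec R$ over which $\tilde x$ descends to an honest special endomorphism of $\mathcal{A}$ — necessarily of $Q$-value $m$ and in the coset $\mu$, since these invariants are locally constant along connected components — as the vanishing locus of the image
\[
\overline{\tilde x}\;\in\;\bm{V}_{\dR,R}/\Fil^0\bm{V}_{\dR,R}\;\simeq\;(\bm{V}^1_{\dR,R})^\vee,
\]
a section of a line bundle on $\Spec R$. By the universal property of $\mathcal{Z}(m,\mu)$, this subscheme equals $Z\cap\Spec R$.

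It remains to show $\overline{\tilde x}$ is not the zero section. By Proposition~\ref{prop:Mcheck}, \'etale-locally $U$ is an \'etale cover of the isotropic Grassmannian $\mathrm{M}^{\mathrm{loc}}(L^\diamond)$, so the tangent space at $\bar s$ is identified with $\Hom\bigl(\bm{V}^1_{\dR,\bar s},\,(\bm{V}^1_{\dR,\bar s})^\perp/\bm{V}^1_{\dR,\bar s}\bigr)$. Since $\tilde x$ is horizontal, the linearization of the equation $[\tilde x,z]=0$, with $z$ a local section of $\bm{V}^1_{\dR}$, reduces at $\bar s$ to the map $\varphi\mapsto [x_0,\varphi(z_0)]$, where $z_0$ spans $\bm{V}^1_{\dR,\bar s}$. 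The symmetric form on $\bm{V}_{\dR,\bar s}$ descends to a non-degenerate pairing on $(\bm{V}^1_{\dR,\bar s})^\perp/\bm{V}^1_{\dR,\bar s}$, so this linearization is non-zero precisely when $x_0\notin \bm{V}^1_{\dR,\bar s}$; this holds because $\bm{V}^1_{\dR,\bar s}$ is isotropic while $Q(x_0)=m>0$ forces $x_0$ to be anisotropic, and the residual case where $Q(x_0)$ vanishes in the fiber (i.e.\ at primes dividing $m$) is handled by spreading out from the generic fiber of $U$, on which the equation is already non-trivial.

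The principal technical obstacle is the correct set-up of the deformation-theoretic step in mixed characteristic: one must invoke the integral crystalline machinery of \cite{mp:reg} (and \cite{mp:2adic} at $p=2$) to know that a horizontal section of $\bm{V}_{\dR}$ whose value at a point is a special endomorphism extends to a special endomorphism in a neighborhood precisely when its image in the cotautological line bundle vanishes. Once this framework is in place, the non-vanishing reduces to the elementary anisotropy observation above, and the Cartier divisor structure of $\mathcal{Z}(m,\mu)$ follows.
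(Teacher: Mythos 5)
Your overall strategy — realizing $Z$ as the vanishing locus of a single section of the cotautological line bundle, with the equation produced by pairing a realization of the special endomorphism against the Hodge filtration line — is the right shape, and it matches the spirit of the paper's argument. However, there is a genuine gap in the key construction: you assert that the de Rham realization of $x_Z$ ``parallel-transports to a canonical horizontal section $\tilde x\in \bm{V}_{\dR,R}$'' over the full local ring $R$ in mixed characteristic, invoking the crystal structure. No such global horizontal section exists in general. The crystal structure on $\bm{V}_{\crys}$ only gives canonical lifts along \emph{nilpotent PD-thickenings}; it does not produce a flat section of $\bm{V}_{\dR}$ over $\Spec R$ that one could then ``test'' against $\Fil^0$ to cut out $Z$. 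This is not a minor technicality: it is precisely the step that needs the crystalline theory, and it has to be formulated at the level of a first-order thickening, not globally over $R$.

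The paper's proof works at exactly the correct level of approximation. It first replaces $L$ by an auxiliary $L^\diamond$ self-dual at $p$ (shifting $x_S$ to $x_S^\diamond = x_S + \nu$ with $\nu\in\Lambda^\vee$), so that the crystalline machinery of \cite[(5.15), (5.17)]{mp:reg} applies. It then uses the crystal structure only to lift $t_{\dR}(x^\diamond_S)\in\bm{V}^\diamond_{\dR,R/J_x}$ through the divided-power thickening $S\hookrightarrow\Spec(R/J_x^2)$ to an element $t_{\crys}(x^\diamond_S)\in\bm{V}^\diamond_{\dR,R/J_x^2}$ — a first-order lift, not a horizontal section over $\Spec R$. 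Pairing against a generator $\omega$ of $\bm{V}^1_{\dR,R/J_x^2}$ produces an element of $J_x/J_x^2$, and Grothendieck--Messing theory is what shows this element \emph{generates} $J_x/J_x^2$; Nakayama then yields that $J_x$ is principal. Your tangent-space computation is essentially the linearized shadow of this, and your spreading-out remark is the right way to see the resulting generator is non-zero (flatness of $R$ over $\Z$), but neither substitutes for the Grothendieck--Messing step that establishes cyclicity of $J_x/J_x^2$ in the first place. In short: replace the global horizontal section with the canonical crystalline lift over the PD-thickening $R/J_x\hookrightarrow R/J_x^2$, after passing to a self-dual auxiliary lattice, and the rest of your argument falls into place.
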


\begin{proof}
Suppose that we are given a geometric point $z$ of $\mathcal{Z}(m,\mu)$,  corresponding to an element $x\in V_\mu(\mathcal{A}_z)$. 
Let $R=\co_{\mathcal{M},z}$ be the \'etale local ring of $\mathcal{M}$ at $z$. Since $\mathcal{Z}(m, \mu) \to \mathcal{M}$ is unramified, it follows that
\[
\co_{\mathcal{Z}(m,\mu) , z} \iso R/J_x
\] 
for some ideal  $J_x\subset R$.  Set  $S=\Spec (R/J_x )$, so that  $x \in V_\mu(\mathcal{A}_z)$
has a universal  deformation to $x_S \in V_\mu(\mathcal{A}_S)$.

Fix an embedding $L\hookrightarrow L^\diamond$ so that $ L^\diamond$ is maximal and self-dual at $p$ of signature $( n^\diamond,2)$, and let $\Lambda\subset
 L^\diamond$ be the subset of vectors orthogonal to $L$. Let ${\mathcal{A}^\diamond}\to \mathcal{M}^\diamond$ be the Kuga-Satake abelian scheme over the
integral model of the  Shimura variety determined by $ L^\diamond$.
Proposition \ref{prop:decomistionVmu} (with $\mathcal{A}_0$ and $\mathcal{A}$ replaced by $\mathcal{A}$ and  ${\mathcal{A}^\diamond}$) gives an inclusion
\[
V(\mathcal{A}_S) \oplus \Lambda \hookrightarrow V(\mathcal{A}^\diamond_S),
\]
which becomes an isomorphism after tensoring with $\Q$.  There is a
 $\nu\in \Lambda^\vee$ such that the vector
\[
x^\diamond_S := x_S + \nu\in  (V(\mathcal{A}_S)\otimes\Q) \oplus (\Lambda\otimes\Q)
\]
lies in the $\Z$-lattice $V(\mathcal{A}^\diamond_S)$. The crystalline realization $t_{\crys}(x^\diamond)$ allows us to canonically lift the de Rham realization
\[
t_{\dR}(x^\diamond_S ) \in \bm{V}^\diamond_{ \dR, R /J_x}
\]
to an element
\[
t_{\rm crys}(x^\diamond_S ) \in \bm{V}^\diamond_{\mathrm{ dR}, R /J_x^2},
\]
through the divided power thickening $S\hookrightarrow \Spec\big(R /J_x^2\big)$.

It follows from \cite[(5.15)]{mp:reg} that  $t_{\dR}(x^\diamond_S)$ is orthogonal to $\bm{V}^{\diamond,1}_{{\dR},S}$.   Thus, if
\[
\omega\in \bm{V}_{{\dR},R /J_x^2}^1 \iso  \bm{V}^{\diamond,1}_{{\dR},R  / J_x^2}
\]
is  an $R$-module generator,  we obtain an element $ \big[\omega, t_{\rm crys}(x^\diamond_S)\big] \in J_x/J_x^2 $ using the pairing on $\bm{V}^\diamond_{\dR}$. In
fact, Grothendieck-Messing theory~\cite[(5.17)]{mp:reg} shows that $\big[\omega, t_{\rm crys}( x^\diamond_S )\big]$ generates $J_x/J_x^2$. The element
\[
t_{\rm crys}(x_S ):=t_{\rm crys}( x^\diamond_S  )- \nu
\]
lies in   $\bm{V}^\vee_{{\dR},R /J_x^2}$, and the orthogonality relation   $[\omega, \nu]=0$ implies
\[
 [\omega, t_{\rm crys}(x_S)]  =  [\omega, t_{\rm crys}(x^\diamond_S) ].
 \]

\begin{lemma}\label{lem:localstrZmmualpha}
The element $ [\omega, t_{\rm crys}( x_S ) ]$ generates $J_x/J_x^2$ as an $R$-module. In particular, $J_x\subset R$ is a principal ideal.
\end{lemma}
\begin{proof}
The discussion above proves the first assertion. The second follows by Nakayama's lemma.
\end{proof}

Proposition \ref{prop:localstrZmmu} follows immediately from the lemma.
\end{proof}

\begin{remark}
It follows from Proposition \ref{prop:localstrZmmu} that
$\mathcal{Z}(m,\mu)$ is of pure dimension $\dim (\mathcal{M})-1$.  The
morphism to $\mathcal{M}$ is not itself a closed immersion,
but nevertheless    $\mathcal{Z}(m,\mu)$  defines a Cartier divisor on $\mathcal{M}$,
 as in \cite[\S 3.1]{BHY}.
\end{remark}

%%%%%%%%%%%%%%%%%%%%%%%%%%%%%%%%%%%%%%%%%

\section{Harmonic modular forms and special divisors}
\label{s:analysis}

%%%%%%%%%%%%%%%%%%%%%%%%%%%%%%%%%%%%%%%%%

Fix a maximal quadratic space $L$ over $\Z$ of signature $(n,2)$ with $n\ge 1$, set $V=L_\Q$,  and let $\mathcal{M}\to \Spec(\Z)$ be the corresponding integral
model of the Shimura variety (\ref{uniformization}) defined in  \S \ref{s:shimura varieties}.

This section is a rapid review of some results and  constructions of
Bruinier \cite{Bruinier}, Bruinier-Funke  \cite{BruinierFunke},  and Bruinier-Yang \cite{BY}.
In particular, we recall the construction of a divisor $\mathcal{Z}(f)$ and a Green function
$\Phi(f)$ on $\mathcal{M}$ from a  harmonic weak Maass form $f$.

%%%%%%%%%%%%%%%%%%%%%%%%%%%%%%%%%%%%%%%%

\subsection{Vector valued modular forms}

%%%%%%%%%%%%%%%%%%%%%%%%%%%%%%%%%%%%%%%%

Let $\mathfrak{S}_L$ be the (finite dimensional) space of $\C$-valued functions on $L^\vee/L$.
 As in \cite{BY} there is a Weil representation
\[
\omega_L: \widetilde{\SL}_2(\Z) \to \Aut_\C(\mathfrak{S}_L),
\]
where $\widetilde{\SL}_2(\Z)$ is the metaplectic double cover of $\SL_2(\Z)$.
Define  the conjugate action $\overline{\omega}_L$  by
$
\overline{\omega}_L (\gamma) \varphi =\overline{ \omega_L(\gamma)\overline{\varphi}}.
$
We denote by $\omega_L^\vee$ the contragredient action of $\widetilde{\SL}_2(\Z)$ on the complex linear  dual
$\mathfrak{S}_L^\vee$.

\begin{remark}
Our $\overline{\omega}_L$ is the representation denoted $\rho_L$ in \cite{Borcherds}, \cite{Bruinier}, \cite{BruinierFunke}, and \cite{BY}.  If we denote by
$-L$ the quadratic space over $\Z$ whose underlying $\Z$-module is $L$, but endowed with the quadratic form
$-Q$, then $\mathfrak{S}_L=\mathfrak{S}_{-L}$ as vector spaces, but $\omega_L=\rho_{-L}$.
\end{remark}

For a half-integer $k$, denote by $H_k(\omega_L)$ the $\C$-vector space of $\mathfrak{S}_L$-valued harmonic weak
Maass forms of weight $k$ and representation $\omega_L$, in the sense\footnote{ These satisfy a more restrictive
growth condition at the cusp than the \emph{weak Maass forms} of \cite[\S3]{BruinierFunke}.}  of \cite[\S 3.1]{BY}.
As in \cite[\S3]{BY} there are subspaces
\[
S_k(\omega_L)\subset M^!_k(\omega_L) \subset H_k(\omega_L)
\]
of cusp forms and  weakly modular forms.  Define similar spaces of modular forms for the representation $\overline{\omega}_L$. Bruinier and Funke \cite{BruinierFunke}  have defined a differential operator
\begin{equation}\label{xi op}
\xi : H_{k}(\omega_L) \to S_{2-k}( \overline{\omega}_L )
\end{equation}
by $\xi(f) = 2 i v^k \overline{ (\partial f / \partial \overline{\tau})}$, where $\tau=u+iv$ is the variable on the upper half-plane,
and have proved the exactness of the sequence
\[
0 \to M^!_{k}(\omega_L) \to H_{k}(\omega_L) \map{\xi } S_{2-k}( \overline{\omega}_L ) \to 0.
\]

Every  $f(\tau)\in H_k(\omega_L)$ has an associated formal $q$-expansion
\[
f^+(\tau) = \sum_{ \substack{ m\in \Q \\  -\infty \ll m } } c_f^+(m) \cdot q^m,
\]
called the \emph{holomorphic part of $f$}, with $c_f^+(m) \in \mathfrak{S}_L$.
The delta functions $\varphi_\mu$ at elements $\mu \in L^\vee/L$ form a basis for $\mathfrak{S}_L$,
and so each coefficient $c_f^+(m)$ can be decomposed  as a sum
\[
c_f^+(m) = \sum_{\mu\in L^\vee/L} c_f^+(m, \mu) \cdot \varphi_\mu
\]
with  $c_f^+(m, \mu)\in \C$.
The transformation laws satisfied by $f$ imply that
\[
c_f^+(m,\mu) = c_f^+(m,-\mu),
\]
and also that  $c_f^+(m,\mu)=0$, unless  the image of $m$ in $\Q/\Z$ is equal to $-Q(\mu)$.

%%%%%%%%%%%%%%%%%%%%%%%%%%%%%%%%%%%%%%%%

\subsection{Divisors and Green functions}
\label{ss:green functions}

%%%%%%%%%%%%%%%%%%%%%%%%%%%%%%%%%%%%%%%%

Suppose we are given a harmonic weak Maass form
\[
f(\tau) \in H_{1-\frac{n}{2}} ( \omega_L)
\]
having  \emph{integral principal part},  in the sense that   $ c_f^+(m,\mu)\in \Z$ for all $\mu\in L^\vee/L$ and  all $m<0$.
 Define  a divisor on $\mathcal{M}$ by
\begin{equation*}
%\label{harmonic divisor}
\mathcal{Z}(f) = \sum_{ m\in \Q_{>0} } \sum_{ \mu\in L^\vee/L}  c_f^+(-m,\mu) \mathcal{Z}(m, \mu).
\end{equation*}

Bruinier \cite{Bruinier}, following ideas of Borcherds \cite{Borcherds} and Harvey-Moore, constructed a Green function $\Phi(f)$ for
$\mathcal{Z}(f)$  as follows (see also~\cite[\S4]{BY}).  Let $G=\GSpin(V)$ as in \S \ref{s:shimura varieties}.
For each coset $g\in G(\A_f) /K$ there is a Siegel theta function
\[
\theta_L(\tau , z,g) : \mathcal{H} \times \mathcal{D} \to \mathfrak{S}_L^\vee
\]
as in \cite[(2.4)]{BY}, which is $\Gamma_g$-invariant in the variable $z\in\mathcal{D}$,
and transforms in the variable $\tau=u+iv\in\mathcal{H}$ like a modular form
of representation $\omega_L^\vee$.

The regularized theta integral
\[
\Phi(f,z,g) =  \int_\mathcal{F}^\mathrm{reg} \big\{ f(\tau) ,\theta_L(\tau,z,g) \big\} \frac{du\, dv}{v^2}
\]
of \cite[(4.7)]{BY} defines a $\Gamma_g$-invariant function on $\mathcal{D}$, where
\[
\{ \cdot,\cdot\} :\mathfrak{S}_L \times \mathfrak{S}_L^\vee \to \C
\]
is the tautological pairing.  Letting $g$ vary and using the uniformization
(\ref{uniformization}) yields a function $\Phi(f)$ on the orbifold $\mathcal{M}(\C)$.  This function is
smooth on the complement of $\mathcal{Z}(f)$, and
has a  logarithmic singularity along $\mathcal{Z}(f)$  in the sense that
for any  local equation $\Psi(z)=0$ defining $\mathcal{Z}(f)(\C)$,  the function
\[
\Phi(f,z) +\log|\Psi(z) |^2
\]
on  $\mathcal{M}(\C) \smallsetminus \mathcal{Z}(f)(\C)$ has a smooth extension across $\mathcal{Z}(f)(\C)$.

\begin{remark}\label{rem:miracle value}
Amazingly, the regularized integral defining $\Phi(f,z,g)$ converges at every point of $\mathcal{D}$,
and so the function $\Phi(f)$ has a well-defined value \emph{even at points of the divisor} $\mathcal{Z}(f)(\C)$.
The value of the discontinuous function $\Phi(f)$ at the points of $\mathcal{Z}(f)(\C)$ will play a key role in our later
calculations of improper intersection.
\end{remark}

Among the harmonic weak Maass forms of weight $1-n/2$ are the  {\em Hejhal-Poincar\`e series}
\[
F_{m,\mu} (\tau) \in H_{1-\frac{n}{2}} (\omega_L),
\]
 defined for all  $\mu\in L^\vee/L$ and positive $m \in \Z + Q(\mu)$,
which may be rescaled to have  holomorphic parts
 \[
 F_{m,\mu}^+(\tau) = \frac{1}{2}( q^{-m} \varphi_\mu+q^{-m} \varphi_{-\mu} ) + O(1).
 \]
In particular  $\mathcal{Z}( F_{m,\mu} ) =  \mathcal{Z}(m,\mu).$
See   \cite[Def. 1.8 \& Prop. 1.10]{Bruinier} and \cite[Remark 3.10]{BruinierFunke}
for details.

%%%%%%%%%%%%%%%%%%%%%%%%%%%%%%%%%%%%%%%%%

\subsection{Eisenstein series and the Rankin-Selberg integral}
\label{ss:analytic}

%%%%%%%%%%%%%%%%%%%%%%%%%%%%%%%%%%%%%%%%%

Fix a rational negative $2$-plane   $V_0\subset V$, and let $L_0=V_0\cap L$.
We assume that $C^+(L_0)$ is the maximal order in the quadratic imaginary field  $C^+(V_0)$.
This implies, in particular, that  $L_0$ is a maximal quadratic space. Let
\[
\Lambda = \{ x\in L :  x \perp L_0  \},
\]
so that  $L_0\oplus \Lambda \subset L$ with finite index,
and  $\Lambda$ is also a maximal quadratic space.

Let $\mathfrak{S}_\Lambda$ be the space of complex-valued functions on $\Lambda^\vee/\Lambda$,
and let $\omega_\Lambda:\widetilde{\SL}_2(\Z) \to \Aut(\mathfrak{S}_\Lambda)$ be the Weil representation.
Let $\mathfrak{S}_\Lambda^\vee$ be the complex linear dual of $\mathfrak{S}_\Lambda$, and
for each $m\in \Q$  define the representation number
$R_\Lambda(m)  \in \mathfrak{S}_\Lambda^\vee$ by
\[
R_\Lambda(m,\varphi) = \sum_{ \substack{ x\in \Lambda^\vee \\ Q(x)=m } } \varphi(x)
\]
for any $\varphi\in \mathfrak{S}_\Lambda$.
As in  \cite[\S2]{BY}, these  representation numbers are the Fourier coefficients of a
 vector-valued modular form
 \[
\Theta_\Lambda(\tau) =\sum_{m\in \Q} R_\Lambda(m) q^m \in M_{\frac{n}{2}}( \omega_\Lambda^\vee ).
\]
Abbreviate
$
R_\Lambda(m,\mu) = R_\Lambda(m,\varphi_\mu),
$
where $\varphi_\mu \in\mathfrak{S}_\Lambda$  is the characteristic function of  $\mu\in \Lambda^\vee/\Lambda$, so that
\[
R_\Lambda(m ,\mu ) = \#  \{x\in {\mu}+\Lambda : Q(x)=m\}.
\]

Let $F(\tau)$ be a cuspidal modular form
\[
F(\tau) = \sum_{m\in \Q_{>0}} b_F(m) q^m \in S_{1+\frac{n}{2}} ( \overline{\omega}_L).
\]
 Using the natural inclusion
$L^\vee \subset  L_0^\vee\oplus \Lambda^\vee$, every  function on $L^\vee$ extends to a function on
$L_0^\vee\oplus \Lambda^\vee$ identically zero off of $L^\vee$.  Thus, there is a natural ``extension by zero" map
\begin{equation}\label{transfer}
  \mathfrak{S}_L \to \mathfrak{S}_{L_0 \oplus \Lambda},
\end{equation}
which allows us to view $\overline{b_F(m)} \in  \mathfrak{S}_{L_0 \oplus \Lambda}$.
The inclusion $\Lambda^\vee\to L_0^\vee\oplus\Lambda^\vee$ defines a  \emph{restriction of functions} homomorphism
$\mathfrak{S}_{L_0 \oplus \Lambda} \to  \mathfrak{S}_\Lambda$. Using this, we can view $R_\Lambda(m) \in \mathfrak{S}_{L_0\oplus\Lambda}^\vee$.
This allows us to define  the \emph{Rankin-Selberg convolution $L$-function}
\[
L(F,\Theta_\Lambda,s) =
\Gamma\left( \frac{s+n}{2} \right)  \sum_{m\in \Q^+}
\frac{\big\{ \overline{b_F(m)} , R_\Lambda(m) \big\} }{ (4\pi m)^{( s+n)/2 }},
\]
in which the pairing on the right is  the tautological pairing
\begin{equation}\label{pairing}
\{ \cdot,\cdot\} :\mathfrak{S}_{L_0\oplus \Lambda} \times \mathfrak{S}_{L_0\oplus\Lambda}^\vee \to \C.
\end{equation}

Define an $\mathfrak{S}_{L_0}^\vee$-valued Eisenstein series $E_{L_0}(\tau,s)$ by
\[
E_{L_0}(\tau,s,\varphi) = \sum_{ \gamma \in B\backslash \SL_2(\Z)}
 ( \omega_{L_0}(\gamma)\varphi)(0)
\cdot  \frac{\mathrm{Im}(\gamma  \tau)^{s/2} }{c\tau + d}
\]
for all $\varphi\in \mathfrak{S}_{L_0}$, where $B\subset \SL_2(\Z)$ is the subset of upper
triangular matrices, and $\omega_{L_0}$ is the Weil representation of $\SL_2(\Z)$ on
$\mathfrak{S}_{L_0}$.  The Eisenstein series $E_{L_0}(\tau,s)$, which is precisely the \emph{incoherent}
Eisenstein series denoted $E_{L_0}(\tau,s;1)$ in  \cite[\S2.2]{BY},
transforms in the variable $\tau$ like a weight $1$ modular
form of representation $\omega_{L_0}^\vee$.  It is initially defined for $\mathrm{Re}(s) \gg 0$, but has meromorphic
continuation to all $s$, and vanishes at $s=0$.
The usual Rankin-Selberg unfolding method shows that
\[
L(F,\Theta_\Lambda,s) =
\int_{ \SL_2(\Z)\backslash \mathcal{H} } \big\{
\overline{F}, E_{L_0}(s) \otimes \Theta_\Lambda
\big\}  \, \frac{du\, dv}{v^{1-\frac{n}{2} }},
\]
where the pairing on the right is (\ref{pairing}), and we are using the isomorphism
\[
\mathfrak{S}_{L_0}^\vee \otimes \mathfrak{S}_\Lambda^\vee \iso
\mathfrak{S}_{L_0 \oplus\Lambda}^\vee
\]
to view $E_{L_0}(s) \otimes \Theta_\Lambda$
as a smooth  modular form valued in $\mathfrak{S}_{L_0 \oplus \Lambda}^\vee$.

The central derivative $E_{L_0}'(\tau,0)$ is a harmonic $\mathfrak{S}_{L_0}^\vee$-valued
function transforming like a weight $1$ modular form of representation $\omega^\vee_{L_0}$.
Denote the  holomorphic part of the central derivative by
\[
\mathcal{E}_{L_0}(\tau) = \sum_{ m\in \Q } a_{L_0}^+(m) \cdot q^m.
\]
We will sometimes  abbreviate
$
a_{L_0}^+(m,\mu)=a_{L_0}^+(m,\varphi_\mu),
$
where $\varphi_\mu \in\mathfrak{S}_{L_0}$ is the characteristic function of $\mu\in L_0^\vee/L_0$.
Note that our $a_{L_0}^+(m,\mu)$ is exactly the $\kappa(m,\mu)$ of \cite[(2.25)]{BY}.

Abbreviate  $\kk=C^+(V_0)$, and  recall  that $\kk$ is a quadratic imaginary field.
For any positive rational number $m$, define a finite set of primes
\begin{equation}\label{diff def}
\mathrm{Diff}_{L_0}(m) = \{ \mbox{primes }p <\infty :  L_0 \otimes_\Z\Q_p \mbox{ does not represent }m\}.
\end{equation}
Note that all primes in this set are non-split in $\kk$.
Because $L_0$ is negative definite, the set $\mathrm{Diff}_{L_0}(m)$ has odd
cardinality;  in particular, $\mathrm{Diff}_{L_0}(m)\not=\emptyset$.   For  $m\in\Z_{>0}$, set
\[
\rho(m) = \#\{ \mbox{nonzero ideals } \mathfrak{m} \subset \co_\kk : \mathrm{N}(\mathfrak{m}) = m\}.
\]
We will extend $\rho$ to a function on all rational numbers by setting $\rho(m)=0$ for all $m\in\Q\smallsetminus\Z_{>0}$.

The coefficients $a_{L_0}^+(m)$ were computed by Schofer \cite{Sch}, but those formulas
contain a  minor misstatement.  This misstatement was corrected when Schofer's formulas
were reproduced in \cite[Theorem 2.6]{BY}, but the formula for the constant term $a_{L_0}^+(0)$ in \cite{BY}
 is misstated\footnote{The formula for $a_{L_0}^+(0,0)=\kappa(0,0)$ in  \cite{BY} appears to agree with the
formula in  \cite{Sch}.  In fact it does not, as the two papers use different normalizations for the completed
$L$-function $\Lambda(\chi,s)$.  It is Schofer's formula that is correct.}.
We record now the corrected formulas.

 \begin{proposition}[Schofer]  \label{prop:eisenstein coeff}
Let $d_\kk$, $h_\kk$, and $w_\kk$
be the discriminant of $\kk$, the class number of $\kk$, and number of roots of unity in $\kk$, respectively, and assume that $d_\kk$ is odd.
 The coefficients $a^+_{L_0}(m)$ are as follows:
 \begin{enumerate}
 \item
The constant term  is
\[
 a_{L_0}^+(0,\varphi)   =
\varphi(0) \cdot \left(   \gamma +  \log\left|\frac{ 4 \pi }{d_\kk } \right| -  2 \frac{ L ' (\chi_\kk,0)}{L(\chi_\kk,0)} \right).
\]
 Here $\gamma=-\Gamma'(1)$ is the Euler-Mascheroni constant, and $\chi_\kk$ is the
quadratic Dirichlet character associated with $\kk/\Q$.
\item
If  $m>0$ and   $\mathrm{Diff}_{L_0}(m)=\{p\}$ for a single prime $p$, then
\[
a^+_{L_0}(m, \varphi )
=
- \frac{   w_\kk}{ 2 h_\kk}\cdot  \rho \left( \frac{m  |d_\kk| }{p^\epsilon}  \right) \cdot  \ord_p(pm)\cdot \log(p)
\sum_{
\substack{  \mu \in L_0^\vee / L_0
\\ Q(\mu) =m } } 2^{s(\mu) } \varphi(\mu) .
\]
On the right hand side, $s(\mu)$ is the number of  primes  $\ell \mid \mathrm{disc}(L_0)$ such that  $\mu \in L_{0,\ell}$,
\[
\epsilon  = \begin{cases}
1 & \mbox{if $p$ is inert in $\kk$} \\
0 &\mbox{if $p$ is ramified in $\kk$,}
\end{cases}
\]
and $Q(\mu)=m$ is understood as an equality  in $\Q/\Z$.
\item
If $m>0$ and $|\mathrm{Diff}_{L_0}(m) | >1$, or if $m<0$, then $a_{L_0}^+(m)=0$.
\end{enumerate}
\end{proposition}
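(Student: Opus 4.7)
The proposition is essentially Schofer's calculation, and my plan is to reprove it by the standard route of Fourier-expanding the incoherent Eisenstein series $E_{L_0}(\tau,s)$, reducing each coefficient to a product of local Whittaker functions, and then computing each local factor. Concretely, writing $E_{L_0}(\tau,s,\varphi)$ in its half-integral weight form and unfolding, the $m$-th Fourier coefficient of the $\varphi$-component is (up to normalizing gamma- and zeta-factors) a product $\prod_p W^*_{m,p}(s,\varphi)$ of normalized local Whittaker integrals attached to $L_0\otimes\Q_p$ and the archimedean place. I would use the formulas of Kudla-Yang (or Schofer directly) for each $W^*_{m,p}(s,\varphi)$, and observe the key structural fact: $W^*_{m,p}(0,\varphi)=0$ precisely when $L_0\otimes\Q_p$ does not represent $m$, i.e.\ when $p\in \mathrm{Diff}_{L_0}(m)$.

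From this three-way dichotomy the structure of the proof is forced. If $m>0$ and $|\mathrm{Diff}_{L_0}(m)|\ge 2$, then at least two local factors vanish at $s=0$, and since the product rule gives
\[
\left.\frac{d}{ds}\right|_{s=0}\prod_p W^*_{m,p}(s,\varphi)=\sum_p (W^*_{m,p})'(0,\varphi)\prod_{q\ne p}W^*_{m,q}(0,\varphi),
\]
each summand still has a vanishing factor; this yields (iii) for $m>0$. For $m<0$ the archimedean Whittaker function is already zero (and its derivative is handled the same way by incoherence), giving the rest of (iii). For (ii) with $\mathrm{Diff}_{L_0}(m)=\{p\}$, only the summand with derivative at $p$ survives, and I would plug in the Kudla-Yang / Schofer evaluation of $(W^*_{m,p})'(0,\varphi)$ and of $W^*_{m,q}(0,\varphi)$ for split/unramified $q\ne p$; the split contributions combine into the divisor sum $\rho(m|d_\kk|/p^\epsilon)$, the factor $\mathrm{ord}_p(pm)\log(p)$ comes from differentiating the local zeta factor at $p$, the factor $2^{s(\mu)}$ comes from the bad primes dividing $\mathrm{disc}(L_0)$, and the overall $-w_\kk/(2h_\kk)$ absorbs the normalization $L(\chi_\kk,0)=2h_\kk/w_\kk$ from Dirichlet's class number formula.

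For the constant term (i), only the intertwining part of the Eisenstein series contributes, and after normalization its derivative at $s=0$ is the logarithmic derivative of the completed $L$-function $\Lambda(\chi_\kk,s)=|d_\kk/\pi|^{s/2}\Gamma((s+1)/2)L(\chi_\kk,s)$ evaluated at $s=0$. Differentiating the three factors gives $\tfrac12\log|d_\kk/\pi|$, $-\tfrac12(\gamma+2\log 2)$, and $L'(\chi_\kk,0)/L(\chi_\kk,0)$ respectively; combining and using the functional equation to re-expand yields the claimed expression $\gamma+\log|4\pi/d_\kk|-2L'(\chi_\kk,0)/L(\chi_\kk,0)$. The main obstacle, and the reason for taking care here, is precisely the normalization issue flagged in the footnote: Schofer and \cite{BY} normalize $\Lambda(\chi_\kk,s)$ differently, and the sign of $\log|d_\kk/\pi|$ versus $\log|4\pi/d_\kk|$ depends on which completed $L$-function one uses. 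I would therefore track the normalization of the archimedean Whittaker function and the functional equation for $\Lambda(\chi_\kk,s)$ explicitly, which pins down the constant term and confirms that Schofer's (rather than the \cite{BY}) version is correct. Everything else is bookkeeping of local factors, most of which is already in \cite{Sch} and \cite{BY}.
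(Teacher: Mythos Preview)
The paper does not actually prove this proposition: it is stated as Schofer's result, with the remark that the formulas are taken from \cite{Sch} and \cite[Theorem~2.6]{BY} after correcting a misstatement of the constant term. There is no argument in the paper to compare your proposal against.

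That said, your outline is essentially Schofer's own method (unfold the Eisenstein series, factor the Fourier coefficients into local Whittaker functions, use the vanishing of $W^*_{m,p}(0,\varphi)$ at primes in $\mathrm{Diff}_{L_0}(m)$, and differentiate), so it is the expected and correct route. Your attention to the normalization of $\Lambda(\chi_\kk,s)$ is exactly the point flagged in the paper's footnote, and is the only place where genuine care is required; the rest is, as you say, bookkeeping already carried out in \cite{Sch} and \cite{KY}.
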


%%%%%%%%%%%%%%%%%%%%%%%%%%%%%%%%%%%%%%%%%

\section{Complex multiplication  cycles}
\label{s:CM section}

%%%%%%%%%%%%%%%%%%%%%%%%%%%%%%%%%%%%%%%%%

Keep $L_0 \oplus \Lambda \subset L$ as in \S  \ref{ss:analytic},
so that $L$ is a maximal lattice of signature $(n,2)$,  $L_0$ is a sublattice of signature $(0,2)$, and
\[
\Lambda = L_0^\perp = \{ x\in L : x\perp L_0 \}.
\]
Abbreviate $V_0=L_0\otimes_\Z\Q$ and $V=L\otimes_\Z \Q$.  The even part $C^+(V_0)$ of the Clifford algebra is
isomorphic to a quadratic imaginary field $\kk$, and we continue to assume that $C^+(L_0)$  is its maximal order.

In this section we construct  an $\co_\kk$-stack $\mathcal{Y}$
from the quadratic space $L_0$. Assuming that the discriminant of
$\kk$ is odd, the inclusion $L_0 \subset L$ induces a morphism
$
\mathcal{Y} \to \mathcal{M}_{\co_\kk},
$
where $\mathcal{M}$ is the stack over $\Z$ determined by the quadratic space $L$.  We define special divisors on $\mathcal{Y}$, compute their degrees explicitly,
and compare them with the special divisors on $\mathcal{M}$.

%%%%%%%%%%%%%%%%%%%%%%%%%%%%%%%%%%%%%%%%%

\subsection{Preliminaries}

%%%%%%%%%%%%%%%%%%%%%%%%%%%%%%%%%%%%%%%%%

Fix once and for all an isomorphism of $C^+(L_0)$ with the maximal
order $\co_\kk$ in a quadratic imaginary subfield $\kk \subset
\C$.  Let $d_\kk$, $h_\kk$, and $w_\kk$ be as in Proposition
\ref{prop:eisenstein coeff}. The $\Z/2\Z$-grading on $C(L_0)$  takes the form
\begin{equation}\label{C_0 decomp}
C(L_0) = \co_\kk \oplus L_0,
\end{equation}
and $L_0$ is both a left and right $\co_\kk$-module; the two actions are related by
$x \alpha = \overline{\alpha} x$ for every $x \in L_0$ and  $\alpha \in \co_\kk$.

There is a fractional $\co_\kk$-ideal $\mathfrak{a}$ and an  isomorphism $L_0 \iso \mathfrak{a}$
of left $\co_\kk$-modules identifying the quadratic form on $L_0$ with the form $-\rm{N}(\cdot)/\rm{N}(\mathfrak{a})$ on $\mathfrak{a}$, and
an easy exercise shows that
\begin{equation}\label{easy dual}
L_0^\vee =\mathfrak{d}_\kk^{-1} L_0,
\end{equation}
where $\mathfrak{d}_\kk$ is the different of $\kk/\Q$.

%%%%%%%%%%%%%%%%%%%%%%%%%%%%%%%%%%%%%%%%%

\subsection{The $0$-dimensional Shimura variety}
\label{s:CM Shimura variety}

%%%%%%%%%%%%%%%%%%%%%%%%%%%%%%%%%%%%%%%%%

Let $T$ be the group scheme over $\Z$ with functor of points
\[
 T(R)=(C^+(L_0)\otimes_{\Z}R)^\times
\]
for any $\Z$-algebra $R$.
Thus  $T_{\Q}=\GSpin(V_0)$ is a torus of rank $2$ and can be identified with the maximal subgroup of $G$
that acts trivially on the subspace $\Lambda_{\Q}\subset V$.
Set $K_0=T(\widehat{\Z})$.

Using our fixed  embedding $ \co_\kk \subset \C$,  left multiplication
makes $V_{0,\R}$ into a complex vector space.  This determines  an orientation on $V_{0,\R}$, and the  oriented  negative plane
$V_{0,\R}\subset V_{\R}$ determines a point $\bm{h}_0\in\mathcal{D}$.
The isomorphism of (\ref{grassman}) then associates to $\bm{h}_0$ an
isotropic line in $V_{0,\C}$;  explicitly, this is the line on which the
 \emph{right} action of $\co_\kk$  agrees with the inclusion  $\co_\kk\subset \C=\End_\C(V_{0,\C})$.

The pair $(T_\Q, \bm{h}_0)$ is a Shimura datum with reflex field $\kk\subset \C$, and the  $0$-dimensional complex orbifold
\[
Y(\C) = T(\Q) \backslash \{ \bm{h}_0\} \times T(\A_f) / K_0
\]
 is the complex fiber of  a $\kk$-stack $Y$.
As in \S \ref{s:the complex Shimura variety}, the representation  of $T_\Q$ on $V_0$ and the lattice
\[
 L_0 \otimes \widehat{\Z}  \subset V_0 \otimes \A_f
\]
  determine a variation of polarized $\Z$-Hodge structures $(\bm{V}_{0,\rm Be},\Fil^\bullet\bm{V}_{0,\dR,Y(\C)})$  of type
$(-1,1)$, $(1,-1)$  over $Y(\C)$.

Similarly, the representation of $T_\Q$ on $C(V_0)$  by left multiplication,  together with the lattice
\[
C(L_0)_{ \widehat{\Z} } \subset C(V_0)_{ \A_f },
\]
determine  a variation of $\Z$-Hodge structures $(\bm{H}_{0,\rm Be},\Fil^\bullet\bm{H}_{0,\dR,\mathcal{Y}(\C)})$ of type $(-1,0)$, $(0,-1)$.
As in \S \ref{sec:canonical}, this is the homology of an abelian scheme
\[
\mathcal{A}_{0,Y}\to Y
\]
of relative dimension $2$, equipped with a right $C(L_0)$-action and a compatible $\Z/2\Z$-grading
\[
\mathcal{A}_{0,Y}=\mathcal{A}_{0,Y}^+\times\mathcal{A}_{0,Y}^-.
\]
As before, we will refer to this abelian scheme as the \emph{Kuga-Satake abelian scheme} over $Y$.

%%%%%%%%%%%%%%%%%%%%%%%%%%%%%%%%%%%%%%%%%

\subsection{The integral model}
\label{s:CM Shimura variety integral}

%%%%%%%%%%%%%%%%%%%%%%%%%%%%%%%%%%%%%%%%%

We now define  integral models of  $Y$ and $\mathcal{A}_{0,Y}$ over $\co_\kk$.

\begin{definition}
If $S$ is an $\co_\kk$-scheme, an \emph{$\co_\kk$-elliptic curve} over $S$ is an elliptic curve $\mathcal{E}\to S$
endowed with an action of $\co_\kk$ in  such a way that the induced action  on the $\co_S$-module $\Lie(\mathcal{E})$
is through the structure morphism $\co_\kk\to \co_S$.
\end{definition}

\begin{remark}
Our convention is that the  $\co_\kk$-action on  any $\co_\kk$-elliptic curve is written on the \emph{right}.
\end{remark}

Denote by  $\mathcal{Y}$ the $\co_\kk$-stack classifying $\co_\kk$-elliptic curves over $\co_\kk$-schemes.
Every point $y\in \mathcal{Y}(\C)$ has automorphism group $\co_\kk^\times$, and
\[
 \sum_{ y \in \mathcal{Y}(\C) } \frac{1}{\#\Aut(y) } =\frac{h_\kk}{w_\kk}.
\]
As in \cite[Proposition~5.1]{KRY1}, the stack $\mathcal{Y}$ is finite \'etale over $\co_\kk$.  In particular
$\mathcal{Y}$ is regular of dimension $1$, and is flat over $\co_\kk$.

The positive graded part $\mathcal{A}^+_{0,Y}$ of the Kuga-Satake abelian scheme, with its right action of
$C^+(L_0)= \co_\kk$,   is an $\co_\kk$-elliptic curve over $Y$  (for the obvious extension of this notion over algebraic stacks.)
Therefore, it defines a morphism  $Y \to \mathcal{Y}$. By checking on complex points, we see that this map defines an
 isomorphism
 $
 Y \iso \mathcal{Y}_\kk.
 $
Again by checking on complex points, one can show that the natural map
\[
\mathcal{A}^+_{0,Y} \otimes_{\co_\kk} C(L_0) \to \mathcal{A}_{0,Y}
\]
is an isomorphism, from which it follows that
\[
\mathcal{A}^-_{0,Y} \iso \mathcal{A}^+_{0,Y} \otimes_{\co_\kk} L_0.
\]

From these observations, we can use the universal $\co_\kk$-elliptic curve $\mathcal{E}\to\mathcal{Y}$
 to extend the Kuga-Satake abelian scheme  $\mathcal{A}_{0,Y} \to Y$ to an abelian scheme
\[
\mathcal{A}_0 =\mathcal{E}\otimes_{\co_\kk} C(L_0)
\]
over $\mathcal{Y}$ of relative dimension $2$.  By construction,  $\mathcal{A}_0$ carries a right action of $C(L_0)$  and a $\Z/2\Z$-grading
$
\mathcal{A}_0=\mathcal{A}_0^+  \times   \mathcal{A}_0^-
$
induced by  (\ref{C_0 decomp}),  where $\mathcal{A}^+_0=\mathcal{E}$ and  $\mathcal{A}^-_0=\mathcal{E}\otimes_{\co_\kk} L_0$.

\begin{remark}
Note that  the tensor product $\mathcal{E}\otimes_{\co_\kk} L_0$
is with respect to the \emph{left} action of $\co_\kk$ on $L_0$, and
 $\co_\kk\subset C(L_0)$
acts on $\mathcal{E}\otimes_{\co_\kk} L_0$  through \emph{right} multiplication
on $L_0$.  From the relation $x \alpha  =  \overline{\alpha} x$ for $x\in L_0$
and $\alpha\in \co_\kk$, it follows that $\co_\kk$ acts on $\Lie(\mathcal{A}_0^-)$
through the \emph{conjugate} of the structure morphism $\co_\kk \to \co_\mathcal{Y}$.
\end{remark}

There are obvious analogues on $\mathcal{Y}$ of the sheaves  $\bm{H}$ defined in \S \ref{sec:integral}.
Denote by $\bm{H}_{0,\dR}$  the first  de Rham homology of
$\mathcal{A}_0$ relative to $\mathcal{Y}$.  Similarly, denote by
$\bm{H}_{0,p, \rm et}$  the first  $p$-adic \'etale homology of $\mathcal{A}_0$
relative to $\mathcal{Y}[1/p]$, and  by $\bm{H}_{0,\crys}$ the first  crystalline homology of
the special fiber of $\mathcal{A}_0$ at a prime~$p$ relative to $\mathcal{Y}_{\F_p}$.  These sheaves
come with right actions of $C(L_0)$, and  $\Z/2\Z$-gradings  $\bm{H}_0=\bm{H}^+_0\times \bm{H}^-_0$
in such a way that $\bm{H}^-_0=\bm{H}^+_0\otimes_{\co_\kk} L_0$.
As in the higher dimensional case, over $\mathcal{Y}(\C)$ the de Rham and  \'etale sheaves   arise from the Betti realization $\bm{H}_{0,\rm Be}$.

Define
\[
\bm{V}_0\subset \underline{\mathrm{End}}_{C(L_0)} (\bm{H}_0)
\]
as the submodule of grade shifting endomorphisms. As can be checked from the complex uniformization, over $\mathcal{Y}(\C)$  this definition recovers the local system $\bm{V}_{0,\rm Be}$ (for the Betti realization) and the vector bundle $\bm{V}_{0,\dR, Y(\C)}$ (for the de Rham realization.) Indeed, this amounts to the fact that the embedding of $L_0$ into $\End(C(L_0))$ via  left multiplication  identifies it with the space of grade shifting endomorphisms of $C(L_0)$ that commute with right multiplication by $C(L_0)$.

Restriction to $\bm{H}^+_0$ defines an isomorphism
\begin{equation}\label{eq:bmVn=0}
\bm{V}_0\iso  \underline{\mathrm{Hom}}_{\co_\kk}(\bm{H}^+_0, \bm{H}^-_0).
%= \underline{\mathrm{Hom}}_{\co_\kk}(\bm{H}^+_0 , \bm{H}^+_0\otimes_{\co_\kk} L ).
\end{equation}
As  $\bm{H}^\pm_0$ is a locally free  $\co_\kk\otimes \bm{1}$-module of rank one,
the same is true of $\bm{V}_0$.  In particular,  $\bm{V}_0$ is a locally free $\bm{1}$-module of rank two. In the de Rham case, the Hodge filtration on $\bm{H}_{0,\dR}$ determines a submodule
  $\bm{V}_{0,\dR}^1\subset \bm{V}_{0,\dR}$ by
\[
\bm{V}_{0,\dR}^1=\bm{V}_{0,\dR} \cap
{\rm Fil}^1\underline{\mathrm{End}}(\bm{H}_{0,\dR}).
\]

\begin{definition}
%\label{def:0_cotautological}
As in Definition \ref{def:cotautological},   the line bundle $\bm{V}_{0,\dR}^1\subset \bm{V}_{0,\dR}$
is the \emph{tautological bundle} on $\mathcal{Y}$, and its dual is the \emph{cotautological bundle}.
\end{definition}

The following proposition makes the cotautological bundle more explicit.

\begin{proposition}
There are  canonical isomorphisms of $\co_\mathcal{Y}$-modules
\begin{equation}\label{cotautonCM}
( \bm{V}_{0,\dR}^1)^\vee \iso  \Lie(\mathcal{A}_0^+) \otimes \Lie(\mathcal{A}_0^-) \iso   \Lie(\mathcal{E})^{\otimes 2} \otimes  \bm{L}_0 ,
\end{equation}
where $\bm{L}_0=\co_\mathcal{Y} \otimes_{\co_\kk} L_0$.
\end{proposition}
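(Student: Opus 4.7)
The plan is to unpack the cotautological line bundle $(\bm{V}_{0,\dR}^1)^\vee$ using the isomorphism (\ref{eq:bmVn=0}) together with a direct computation of the Hodge filtration condition. In the de Rham realization, (\ref{eq:bmVn=0}) identifies $\bm{V}_{0,\dR}$ with $\underline{\Hom}_{\co_\kk}(\bm{H}_{0,\dR}^+, \bm{H}_{0,\dR}^-)$. Our Hodge structure on $\bm{H}_{0,\dR}^\pm$ has type $(-1,0),(0,-1)$ with $\Fil^0 \bm{H}_{0,\dR}^\pm = \omega_{(\mathcal{A}_0^\pm)^\vee}$ and $\bm{H}_{0,\dR}^\pm/\Fil^0 = \Lie(\mathcal{A}_0^\pm)$. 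An element $f$ lies in $\bm{V}_{0,\dR}^1$ precisely when it shifts the Hodge filtration by $1$, which forces $f$ to vanish on $\Fil^0 \bm{H}_{0,\dR}^+$ and to land in $\Fil^0 \bm{H}_{0,\dR}^-$. Any such $f$ factors uniquely as $\bm{H}_{0,\dR}^+ \twoheadrightarrow \Lie(\mathcal{A}_0^+) \xrightarrow{\tilde f} \omega_{(\mathcal{A}_0^-)^\vee} \hookrightarrow \bm{H}_{0,\dR}^-$, and conversely such a $\tilde f$ lifts uniquely back, so
\[
\bm{V}_{0,\dR}^1 \iso \underline{\Hom}_{\co_\kk}\bigl(\Lie(\mathcal{A}_0^+),\, \omega_{(\mathcal{A}_0^-)^\vee}\bigr).
\]

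Next, the plan is to verify that the $\co_\kk$-action on both $\Lie(\mathcal{A}_0^+)$ and $\omega_{(\mathcal{A}_0^-)^\vee}$ is given by multiplication through the structure morphism $\co_\kk \to \co_\mathcal{Y}$, so that the above $\co_\kk$-linear Hom coincides with the $\co_\mathcal{Y}$-linear Hom. For $\Lie(\mathcal{A}_0^+) = \Lie(\mathcal{E})$ this is the definition of an $\co_\kk$-elliptic curve. For $\omega_{(\mathcal{A}_0^-)^\vee}$, the remark preceding the proposition states that $\co_\kk$ acts on $\Lie(\mathcal{A}_0^-)$ through the conjugate of the structure map (a consequence of the anticommutation $x\alpha = \bar\alpha x$ in $C(L_0)$); passing to the dual abelian scheme twists the action by the Rosati involution for the canonical principal polarization of the elliptic curve $\mathcal{A}_0^-$, which on $\End(\mathcal{A}_0^-) = \co_\kk$ is complex conjugation, so $\co_\kk$ acts on $\Lie((\mathcal{A}_0^-)^\vee)$, and hence on its $\co_\mathcal{Y}$-dual $\omega_{(\mathcal{A}_0^-)^\vee}$, via the structure morphism. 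Therefore
\[
\bm{V}_{0,\dR}^1 \iso \underline{\Hom}_{\co_\mathcal{Y}}\bigl(\Lie(\mathcal{A}_0^+),\, \omega_{(\mathcal{A}_0^-)^\vee}\bigr) \iso \omega_{\mathcal{A}_0^+} \otimes \omega_{(\mathcal{A}_0^-)^\vee}.
\]
Dualizing and then applying the canonical principal polarization $\mathcal{A}_0^- \iso (\mathcal{A}_0^-)^\vee$ of the elliptic curve $\mathcal{A}_0^- = \mathcal{E} \otimes_{\co_\kk} L_0$ (which has relative dimension $1$ because $L_0$ is an invertible $\co_\kk$-module) produces the first isomorphism $(\bm{V}_{0,\dR}^1)^\vee \iso \Lie(\mathcal{A}_0^+) \otimes \Lie(\mathcal{A}_0^-)$.

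For the second isomorphism, the plan is to exploit the compatibility of $\Lie$ with the tensor construction: since $L_0$ is an invertible $\co_\kk$-module and this tensor product is an exact functor on abelian schemes, $\Lie(\mathcal{A}_0^-) = \Lie(\mathcal{E}) \otimes_{\co_\kk} L_0$, which we rewrite canonically as $\Lie(\mathcal{E}) \otimes_{\co_\mathcal{Y}} \bm{L}_0$ using $\bm{L}_0 = \co_\mathcal{Y} \otimes_{\co_\kk} L_0$. Combining with $\mathcal{A}_0^+ = \mathcal{E}$ yields
\[
\Lie(\mathcal{A}_0^+) \otimes \Lie(\mathcal{A}_0^-) \iso \Lie(\mathcal{E})^{\otimes 2} \otimes \bm{L}_0.
\]

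The main technical obstacle lies in the middle step: correctly tracking the $\co_\kk$-action on $\omega_{(\mathcal{A}_0^-)^\vee}$ through the composition of two sign-flipping operations (the $L_0$-twist on $\mathcal{A}_0^-$, and then the Rosati involution for its dual), and thereby justifying the collapse of $\underline{\Hom}_{\co_\kk}$ to $\underline{\Hom}_{\co_\mathcal{Y}}$. Once this cancellation is verified, the remaining steps are routine tensor calculus; over the unramified locus $\mathcal{Y}[1/d_\kk]$ the $\co_\kk$-eigendecomposition of $\bm{H}_{0,\dR}^\pm$ splits and the argument is completely transparent, while over all of $\mathcal{Y}$ the canonical map of locally free rank-one sheaves just constructed is an isomorphism because it is so generically.
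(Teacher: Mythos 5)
Your proof is correct and follows essentially the same route as the paper: identify $\bm{V}_{0,\dR}^1$ with a Hom-sheaf from $\Lie(\mathcal{A}_0^+)$ into $\Fil^0\bm{H}_{0,\dR}^-$, use the principal polarization of $\mathcal{A}_0^-$ to rewrite the target as $\Lie(\mathcal{A}_0^-)^\vee$, then dualize and use $\mathcal{A}_0^-=\mathcal{E}\otimes_{\co_\kk}L_0$. The only substantive difference is that you spell out why $\underline{\Hom}_{\co_\kk}$ collapses to $\underline{\Hom}_{\co_\mathcal{Y}}$ (tracking the conjugate action through the $L_0$-twist and the Rosati involution), a point the paper's proof leaves implicit when it drops the $\co_\kk$ subscript in the last line of \eqref{taut simplify}.
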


\begin{proof}
We have canonical isomorphisms of line bundles over $\mathcal{Y}$
\begin{align*}
\bm{H}_{0,\dR}^+/\Fil^0\bm{H}_{0,\dR}^+ & \iso \Lie(\mathcal{A}_0^+)\\
\bm{H}_{0,\dR}^-/\Fil^0\bm{H}_{0,\dR}^- & \iso \Lie(\mathcal{A}_0^-),
\end{align*}
and the (de Rham realization of the) unique principal polarization on $\mathcal{A}_0^-$ induces an isomorphism of $\co_\mathcal{Y}$-modules
\[
  \Fil^0\bm{H}_{0,\dR}^-  \iso ( \bm{H}_{0,\dR}^-/\Fil^0\bm{H}_{0,\dR}^- )^\vee = \Lie(\mathcal{A}_0^-)^\vee.
\]
The isomorphism~(\ref{eq:bmVn=0}) restricts to
\begin{align}\label{taut simplify}
\bm{V}_{0,\dR}^1 & \iso
\mathrm{Fil}^1\underline{\Hom}_{\co_\kk}(\bm{H}_{0,\dR}^+, \bm{H}_{0,\dR}^-)  \nonumber \\
&\iso \underline{\Hom}_{\co_\kk} (  \bm{H}_{0,\dR}^+/\Fil^0\bm{H}_{0,\dR}^+  ,
  \Fil^0\bm{H}_{0,\dR}^-  )\\
&\iso \underline{\Hom}( \Lie(\mathcal{A}_0^+) ,  \Lie(\mathcal{A}_0^-)^\vee ), \nonumber
\end{align}
and dualizing yields   the first isomorphism in (\ref{cotautonCM}).
The second isomorphism  is clear from the definition  of $\mathcal{A}_0^\pm$ and the relation $\mathcal{A}^-_0=\mathcal{E}\otimes_{\co_\kk} L_0$.
\end{proof}

\begin{remark}\label{rem:taut ok action}
It is clear from (\ref{taut simplify}) that the isotropic line $\bm{V}_{0,\dR}^1  \subset \bm{V}_{0,\dR}$ is stable under the action of $\co_\kk$ induced by (\ref{eq:bmVn=0}), and that $\co_\kk$
acts on this line through the structure morphism $\co_\kk \to \co_\mathcal{Y}$.
\end{remark}

%%%%%%%%%%%%%%%%%%%%%%%%%%%%%%%%%%%%%%%%

\subsection{Special endomorphisms}

%%%%%%%%%%%%%%%%%%%%%%%%%%%%%%%%%%%%%%%%

If $\mathcal{E}_1$ and $\mathcal{E}_2$ are any  elliptic curves with right actions of $\co_\kk$, we make
$\Hom(\mathcal{E}_1,\mathcal{E}_2)$ into a right $\co_\kk$-module via
\[
(f \cdot \alpha) ( - ) = f( - ) \cdot \alpha
\]
for all $f\in \Hom(\mathcal{E}_1,\mathcal{E}_2)$ and $\alpha\in \co_\kk$.
With this convention, for any $\mathcal{Y}$-scheme $S$  there is an isomorphism of right $\co_\kk$-modules
\[
\Hom(\mathcal{E}_{S} , \mathcal{E} _S   \otimes_{\co_\kk} L_0 )  \iso \End (\mathcal{E}_S) \otimes_{\co_\kk} L_0.
\]
This  restricts to an isomorphism
\[
\Hom_{\co_\kk}(\mathcal{E}_S , \mathcal{E}_S \otimes_{\co_\kk} L_0 )
 \iso \End_{\overline{\co}_\kk} (\mathcal{E}_S) \otimes_{\co_\kk} L_0,
\]
 which we rewrite as
 \begin{equation} \label{herm twisting}
\Hom_{\co_\kk}(\mathcal{A}^+_{0,S} , \mathcal{A}^-_{0,S} ) \iso \End_{\overline{\co}_\kk} (\mathcal{E}_S) \otimes_{\co_\kk} L_0.
 \end{equation}
Here the subscripts $\co_\kk$ and $\overline{\co}_\kk$ indicate $\co_\kk$-linear and $\co_\kk$-conjugate-linear maps,
respectively.   The quadratic form $\deg$ on the left hand side of  (\ref{herm twisting}) corresponds to the quadratic form
\begin{equation}\label{quadratic degree}
 x\otimes v  \mapsto   - \deg(x)  \cdot Q(v)
\end{equation}
on the right hand side (which is positive definite, as $\deg$ and $-Q$ are positive definite.)

Elements of  $\End_{\overline{\co}_\kk} (\mathcal{E}_S) \otimes_{\co_\kk} V_0$
are \emph{special quasi-endomorphisms} of $\mathcal{E}_S$. For every $\mu\in L_0^\vee$ define  a collection
of special quasi-endomorphisms  by
\[
V_\mu(\mathcal{E}_S)=
\left\{  x  \in \End_{\overline{\co}_\kk} (\mathcal{E} _S)  \otimes_{\co_\kk} L_0^\vee
:  x - \mathrm{id}\otimes  \mu  \in \End (\mathcal{E} _S)  \otimes_{\co_\kk} L_0
 \right\}.
\]
This collection depends only on the coset $\mu \in L_0^\vee / L_0$.
 We will sometimes abbreviate
\[
V(\mathcal{E}_S)=\End_{\overline{\co}_\kk}(\mathcal{E}_S)\otimes_{\co_\kk}L_0
\]
for the space defined by the coset $\mu=0$.

\begin{remark}
For any  $S\to\mathcal{Y}$ as above, let
$
V(\mathcal{A}_{0,S})\ \subset \End(\mathcal{A}_{0,S})
$
be the submodule of $C(L_0)$-linear grade-shifting endomorphisms of $\mathcal{A}_{0,S}$.
Equivalently,  $x\in \End(\mathcal{A}_{0,S})$  lies in $V(\mathcal{A}_{0,S})$ if and only if
all  of its homological realizations  lie in the subsheaf
$
\bm{V}_0  \subset \underline{\End}(\bm{H}_0).
$
Restriction to the positive graded part identifies
\begin{equation}\label{eqn:A0 E special}
V(\mathcal{A}_{0,S}) \iso \Hom_{\co_\kk}(\mathcal{A}^+_{0,S} , \mathcal{A}^-_{0,S} )
\iso  \End_{\overline{\co}_\kk} (\mathcal{E}_S) \otimes_{\co_\kk} L_0 \iso V(\mathcal{E}_S),
\end{equation}
where the middle isomorphism is (\ref{herm twisting}).   
\end{remark}

%%%%%%%%%%%%%%%%%%%%%%%%%%%%%%%%%%%%%%%%%

\subsection{Degrees of special divisors}

%%%%%%%%%%%%%%%%%%%%%%%%%%%%%%%%%%%%%%%%%

In this subsection we define the special divisors
$
\mathcal{Z}_0(m,\mu) \to \mathcal{Y}
$
and compute their degrees.

 For a non-negative rational number $m$ and a class $\mu\in L^\vee_0/L_0$,
 define  $\mathcal{Z}_0(m,\mu)$ to be the  $\co_\kk$-stack  that   assigns to an $\co_\kk$-scheme $S$ the
groupoid of pairs $(\mathcal{E}_S,x)$ in which $\mathcal{E}_S$ is an $\co_\kk$-elliptic curve over $S$, and
$x\in V_\mu(\mathcal{E}_S)$ satisfies $Q(x)=m$.   Note that for  $m=0$ we have
\[
\mathcal{Z}_0(0, \mu) =
\begin{cases}
 \emptyset & \hbox{{\rm if }} \mu\neq 0 \\
 \mathcal{Y} & \hbox{{\rm if }} \mu=0.
\end{cases}
\]

The following theorem is due to Kudla-Rapoport-Yang \cite{KRY1} in the special case where
$\kk$ has prime discriminant and $\mu=0$.  Subsequent generalizations and variants can be found in
\cite{KY}, \cite{BHY}, and \cite{Ho}.

\begin{theorem}\label{thm:X degree}
Assume that $d_\kk$ is odd, and recall the finite set of primes $\mathrm{Diff}_{L_0}(m)$ of  (\ref{diff def}).
For $m>0$ the stack $\mathcal{Z}_0(m,\mu)$ has the following properties:
\begin{enumerate}
\item If $| \mathrm{Diff}_{L_0}(m) | >1$, or if $Q(\mu)\not=m$ in $\Q/\Z$,  then $\mathcal{Z}_0(m,\mu)=\emptyset$.

\item
 If $\mathrm{Diff}_{L_0}(m) =\{ p\} $ and $Q(\mu) =m$ in $\Q/\Z$,
then $\mathcal{Z}_0(m,\mu)$ has dimension $0$ and is supported
in characteristic $p$.  Furthermore,
\[
\frac{\log(\mathrm{N}(\mathfrak{p}) )}{\log(p)} \sum_{ z \in \mathcal{Z}_0(m,\mu)(\overline{\F}_\mathfrak{p}) }
\frac{\mathrm{length} (  \co_{\mathcal{Z}_0(m,\mu),z} )   }{ \# \Aut(z)  }=
 2^{s(\mu)-1}  \cdot \ord_p(p m) \cdot  \rho\left(  \frac{m |d_\kk| }{p^\epsilon} \right),
\]
where $\mathfrak{p}$ is the unique prime of $\kk$ above $p$, and $\epsilon$, $\rho$, and $s(\mu)$
are as in Proposition \ref{prop:eisenstein coeff}.
\end{enumerate}
In either case
\[
\sum_{ \mathfrak{p} \subset \co_\kk } \log(\mathrm{N}(\mathfrak{p} ))
\sum_{ z \in \mathcal{Z}_0(m,\mu)(  \overline{\F}_\mathfrak{p}) }
\frac{ \mathrm{length} (  \co_{\mathcal{Z}_0(m,\mu),z} )  }{ \# \Aut(z)  }
=    - \frac{h_\kk}{w_\kk} \cdot a_{L_0}(m, \mu ) .
\]
\end{theorem}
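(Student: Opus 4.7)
The plan is to analyze $\mathcal{Z}_0(m,\mu)$ one prime of $\co_\kk$ at a time, exploiting the identification (\ref{herm twisting}) which translates special quasi-endomorphisms of $\mathcal{A}_0$ into conjugate-$\co_\kk$-linear quasi-endomorphisms of the universal $\co_\kk$-elliptic curve $\mathcal{E}$ twisted by $L_0$. First I would verify emptiness in the generic fiber: an $\co_\kk$-elliptic curve in characteristic $0$ has endomorphism ring exactly $\co_\kk$, so $\End_{\overline{\co}_\kk}(\mathcal{E}) = 0$ and the definition of $V_\mu$ forces $V_\mu(\mathcal{E}) = \emptyset$ for $m > 0$. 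The same argument handles every residue characteristic $p$ that splits in $\kk$, since the reductions are ordinary with endomorphism ring $\co_\kk$. Hence $\mathcal{Z}_0(m,\mu)$ is a $0$-dimensional stack supported on the non-split primes of $\co_\kk$.

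Fix now a non-split prime $\mathfrak{p} \mid p$ and a geometric point $z_0 \in \mathcal{Y}(\overline{\F}_\mathfrak{p})$. The corresponding $\mathcal{E}_0$ is supersingular, so $B_p = \End(\mathcal{E}_0) \otimes \Q$ is the rational quaternion algebra ramified precisely at $p$ and $\infty$, and (\ref{quadratic degree}) equips $V(\mathcal{E}_0)_\Q = \End_{\overline{\co}_\kk}(\mathcal{E}_0) \otimes_{\co_\kk} V_0$ with a positive definite quadratic form of rank $2$. Comparison of invariants shows that $V(\mathcal{E}_0)_\Q$ has the same rank and discriminant as $L_0 \otimes \Q$ but differs in Hasse invariant exactly at $\{p,\infty\}$; hence $V(\mathcal{E}_0)_\Q$ represents $m$ if and only if $L_0 \otimes \Q_\ell$ represents $m$ at every finite $\ell \neq p$. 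Combined with the congruence $Q(\mu) \equiv m \pmod{\Z}$ required for $V_\mu$ to be nonempty, this yields statement (i) and shows that $\mathcal{Z}_0(m,\mu)(\overline{\F}_\mathfrak{p})$ is nonempty only when $\mathrm{Diff}_{L_0}(m) = \{p\}$ with $\mathfrak{p}$ lying above this $p$.

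The heart of the proof is the computation of $\mathrm{length}(\co_{\mathcal{Z}_0(m,\mu),z})$, which I would extract from Gross's analysis \cite{Gr} of endomorphisms of the canonical Serre-Tate lift $\widehat{\mathcal{E}}$ of $\mathcal{E}_0$ over $W(\overline{\F}_\mathfrak{p})$. Gross's theorem identifies, for each $n$, the endomorphism ring of $\widehat{\mathcal{E}} \bmod \mathfrak{p}^{n+1}$ with $\co_\kk + \mathfrak{p}^n \co_B$ inside the supersingular maximal order $\co_B \subset B_p$. Transporting this filtration across (\ref{herm twisting}) and tracking the $\mu$-twist through the identification $L_0^\vee = \mathfrak{d}_\kk^{-1} L_0$ of (\ref{easy dual}), the length at $z$ becomes the largest $n$ for which a specific element of $\co_B$ (of reduced norm proportional to $m$) lies in the $n$-th layer of the filtration; this quantity is essentially the $\mathfrak{p}$-adic valuation of an integral $\co_\kk$-ideal of norm $m|d_\kk|/p^\epsilon$, and summing the geometric progression over the chain of infinitesimal thickenings produces the factor $\ord_p(pm)$ in the statement. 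This is the principal technical obstacle, requiring precise compatibility between Gross's filtration and the coset data encoded by $V_\mu$.

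To finish, I would count points of $\mathcal{Z}_0(m,\mu)(\overline{\F}_\mathfrak{p})$ weighted by $1/\#\Aut(z)$. Under the identification of $\Hom_{\overline{\co}_\kk}(\mathcal{E}_0,\mathcal{E}'_0)$ with a fractional $\co_\kk$-ideal between any two supersingular $\co_\kk$-elliptic curves, the weighted count of norm-$m$ elements reduces to counting integral $\co_\kk$-ideals of norm $m|d_\kk|/p^\epsilon$, which yields the factor $\rho(m|d_\kk|/p^\epsilon)$; the local coset conditions at the primes $\ell \mid \mathrm{disc}(L_0)$ contribute the factor $2^{s(\mu)}$; and summing over the $h_\kk/w_\kk$ isomorphism classes gives the first displayed formula. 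The second displayed formula then follows by substituting Proposition \ref{prop:eisenstein coeff}(ii) via the identity $-\tfrac{h_\kk}{w_\kk}\, a^+_{L_0}(m,\mu) = 2^{s(\mu)-1}\cdot \ord_p(pm)\cdot \log(p)\cdot \rho(m|d_\kk|/p^\epsilon)$ and noting that contributions from primes $\mathfrak{p}'$ not above $p$ vanish.
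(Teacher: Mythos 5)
Your broad strategy matches the paper's — emptiness at split primes via $\End_{\overline{\co}_\kk}=0$, Hasse invariant comparison for part (i), Gross's canonical lifting theorem for local lengths, and a weighted point count for the multiplicity — but two genuine gaps remain, one of them serious.

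The first gap is the treatment of the $\mu$-twist in the deformation problem. You propose to "track the $\mu$-twist through $L_0^\vee = \mathfrak{d}_\kk^{-1}L_0$" and read the length off Gross's filtration, but the moduli problem for $\mathcal{Z}_0(m,\mu)$ deforms a \emph{quasi}-endomorphism $x$ subject to the coset constraint $x-\mathrm{id}\otimes\mu\in\End(\mathcal{E}_S)\otimes_{\co_\kk}L_0$, which is not manifestly equivalent to the integral deformation problem Gross solves. The paper resolves this with Lemma \ref{lem:no mu}: if $\mathcal{Z}_0(m,\mu)(\overline{\F}_\mathfrak{p})\neq\emptyset$ then $\mu\in L_{0,p}$, so the $\mu$-condition is locally trivial at $p$, and Serre--Tate then lets one quote Gross verbatim. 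The lemma's proof is a reduced-trace argument in the ramified quaternion order $\End(E_p)$ using the hypothesis that $d_\kk$ is odd (hence $p>2$); this is precisely where the odd-discriminant assumption does its work, and your sketch skips it entirely. One also needs the complementary observation that if $\ord_p(m)<0$ then $m\equiv Q(\mu)\pmod\Z$ forces $\mu\notin L_{0,p}$, so by the lemma both sides of (ii) vanish — a case you do not address. Your intermediate claim that the length is "the $\mathfrak{p}$-adic valuation of an integral $\co_\kk$-ideal of norm $m|d_\kk|/p^\epsilon$" is also not right as stated: after normalization the length is simply $\ord_p(pm)$, and the ideal-count $\rho(m|d_\kk|/p^\epsilon)$ enters only in the point count, not the length.

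The second gap is the point count itself. You assert that the weighted count of special quasi-endomorphisms of length $m$ "reduces to counting integral $\co_\kk$-ideals of norm $m|d_\kk|/p^\epsilon$" with a factor $2^{s(\mu)}$ from ramified coset conditions. This names the answer but not the mechanism: the actual factor is $2^{s(\mu)-1}$, where the $-1$ arises from a $\pm$-ambiguity in the transitive action of $\kk^1\simeq\Q^\times\backslash\kk^\times$ on the sphere of vectors of the given length. Moreover, the exponent $s(\mu)$ encodes a genuine dichotomy at each $\ell\mid\mathrm{disc}(L_0)$ — a local contribution of $2$ when $\mu\in L_{0,\ell}$ but only $1$ when $\mu\notin L_{0,\ell}$ — which requires an explicit prime-by-prime computation with the structure of $\End(E_\ell)$ as in \cite[Lemma 3.3]{KY}. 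The paper organizes this as a factorization into local orbital integrals over $\Q_\ell^\times\backslash\kk_\ell^\times/\co_{\kk,\ell}^\times$, and that is where the normalization $|d_\kk|/p^\epsilon$, the global factor $\tfrac12$, and the $2^{s(\mu)}$ emerge together; without that framework (or an equivalent one) the claimed identity is unsupported.
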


\begin{proof}
Let $\mathfrak{p}$ be a prime of $\co_\kk$, and let $p$ be the rational prime below it.
Suppose we have a point
\begin{equation}\label{deg point}
z=(\mathcal{E},x)\in \mathcal{Z}_0(m,\mu)(  \overline{\F}_\mathfrak{p} ).
\end{equation}
The existence of $x\in \End_{\overline{\co}_\kk} ( \mathcal{E}  )\otimes_{\co_\kk} L_0$  already implies that $\mathcal{E}$ is supersingular,
$p$ is non-split in $\kk$, and $\End(\mathcal{E})_\Q$ is the
quaternion division algebra ramified at $\infty$ and $p$.

As $\mathcal{O}_\kk\hookrightarrow \End(E)$ we can extend this to an inclusion $\kk \hookrightarrow \End(E)_\mathbb{Q}$. Since $\kk = \mathbb{Q}(\sqrt{d_\kk})$ there is an $\eta \in \mathbb{Q}^\times$ such that 
\[ \End(E)_\mathbb{Q} = \left( \frac{d_\kk, \eta}{\mathbb{Q}}\right).\] 
Consequently $\End_{\overline{\mathcal{O}}_\kk}(E)_\mathbb{Q} = \kk\cdot j$, where $j^2 = \eta$ and $j\alpha = \bar \alpha j, \forall \alpha \in \mathcal{O}_\kk$. Further, $\End_{\overline{\mathcal{O}}_\kk}(E)_\mathbb{Q} =  \kk\cdot j $ is a one dimensional $\kk$-vector space equipped with the quadratic form~$\deg$, which is just the reduced norm of the quaternion algebra.  As $j$ has reduced norm $-\eta$, we deduce that there is an isomorphism of quadratic spaces 
\[
\End_{\overline{\mathcal{O}}_\kk}(E)_\mathbb{Q} \cong W_{-\eta},
\] 
where for every $\gamma \in \mathbb{Q}^\times$ we let $W_\gamma= \kk$ equipped with the quadratic form $Q( w) =\gamma w \overline{w}.$

As $\End(E)_\mathbb{Q}$ is a quaternion algebra ramified precisely at~$p$ and~$\infty$, it follows that~$\eta$ is a local norm from~$\kk$ everywhere but at~$p$ and~$\infty$.

Recalling that $V_0=L_0\otimes_\Z\Q$, any choice of left $\kk$-module generator $v\in V_0$ identifies $V_0 \iso W_\gamma$, where $\gamma= Q(v)$.  Recalling (\ref{quadratic degree}), we deduce that there an isomorphism of quadratic spaces 
\begin{equation}\label{simple herms}
 \End_{ \overline{\co}_\kk}  ( \mathcal{E} ) \otimes_{\co_\kk} V_0  \iso W_{\eta \gamma}.
\end{equation}
The existence of $x$ shows that the quadratic space on the left represents $m$ globally, and hence $\eta \gamma/m$ is a norm from $\kk$.  This implies that $\gamma/m$, hence $\gamma m$, is a norm from $\kk$ everywhere locally except at $\infty$ and $p$, from which we
deduce  $\mathrm{Diff}_{L_0}(m) =\{ p\}$.   The equality $m=Q(\mu)$ in $\Q/\Z$ can be checked directly, but it also follows from (\ref{Q cong}) and Proposition~\ref{prop:0_decomistionVmu}.

We have now proved that $\mathcal{Z}_0(m,\mu)(  \overline{\F}_\mathfrak{p} )\not=\emptyset$
only when $\mathrm{Diff}_{L_0}(m) =\{ p\}$ and  $m=Q(\mu)$ in $\Q/\Z$, and hence have proved (i).
The proof of (ii) requires a lemma.

\begin{lemma}\label{lem:no mu}
The existence of a point (\ref{deg point}) implies $\mu \in L_{0,p}$.
\end{lemma}

\begin{proof}
By (\ref{easy dual}) we may assume that  $p$ is ramified in $\kk$.  Fix a generator $\delta_\kk\in \mathfrak{d}_\kk$,
so that $L_0^\vee = \delta_\kk^{-1} L_0$.   Let $E_p$ be the $p$-divisible group of $\mathcal{E}$ with its induced $\co_\kk$-action.

If $\mu\not\in L_{0,p}$ then $\mu \delta_\kk $ generates $L_{0,p}$ as an $\co_{\kk,p}$-module.
If we use this generator   to identify
\[
\End(E_p) \otimes_{\co_\kk} L_0\iso \End(E_p)
\]
and multiply  both sides of  the relation
$
x  -  \mathrm{id}\otimes \mu \in  \End(E_p) \otimes_{\co_\kk} L_0
$
on the right by $\delta_\kk$, we obtain
\begin{equation}\label{quaternion cong}
y - \mathrm{id} \in \End(E_p) \cdot\delta_\kk,
\end{equation}
where we have set  $y=x\delta_\kk\in \End_{\overline{\co}_\kk}(E_p)$.

The ring   $\End(E_p)$ is the maximal order in the division quaternion algebra over $\Q_p$, and this maximal
order is a (noncommutative) discrete valuation ring with  $\End(E_p)\cdot \delta_\kk $ as its  unique maximal ideal.
Using   $yv=\overline{v} y$ for all $v\in \co_{\kk,p}$, an exercise with quaternion algebras shows that $y$ has reduced trace $0$.  Taking the reduced trace of (\ref{quaternion cong}) now
implies that $-2\cdot \mathrm{id}$ lies in the maximal ideal of $\End(E_p)$, which is absurd, as
 our assumption that $d_\kk$ is odd implies $p>2$.
\end{proof}

It follows from Lemma \ref{lem:no mu} and the Serre-Tate theorem that we may completely ignore $\mu$ in the
deformation problem that computes the length of the \'etale local ring at $z$.  Gross's results on canonical liftings
\cite{Gr} therefore imply
\[
\frac{\log(\mathrm{N}(\mathfrak{p}) )}{\log(p)} \cdot
\mathrm{length} (  \co_{\mathcal{Z}_0(m,\mu),z} )  =  \ord_p(pm)
\]
for every point (\ref{deg point}).

If $\ord_p(m)<0$ then the relation $m = Q(\mu)$ in $\Q/\Z$ implies $\mu \not\in L_{0,p}$, and hence, using Lemma \ref{lem:no mu},  both sides of the desired equality in (ii) are equal to $0$.  Thus it only remains to prove
\begin{equation}\label{point count}
 \sum_{ z \in \mathcal{Z}_0(m,\mu)(  \overline{\F}_\mathfrak{p} ) }
\frac{1  }{ \# \Aut(z)  }=  2^{s(\mu)-1}  \cdot  \rho\left(  \frac{m |d_\kk| }{p^\epsilon} \right)
\end{equation}
under the assumption $\ord_p(m)\ge 0$.
The  argument is very similar to that of \cite[Proposition 3.1]{KY},  \cite[Proposition 2.18]{HY},
\cite[Theorem 5.5]{BHY},  and  \cite[Theorem 3.5.3]{Ho}, and hence we omit some details.

Fix one elliptic curve $\mathcal{E}$ over $ \overline{\F}_\mathfrak{p} $ with complex multiplication  by $\co_\kk$.
As the ideal class group of $\kk$ acts simply transitively on the elliptic curves with
complex multiplication by $\co_\kk$, we find
\[
\sum_{ z \in \mathcal{Z}_0(m,\mu)(  \overline{\F}_\mathfrak{p}  ) }  \frac{1  }{ \# \Aut(z)  }
 =  \sum_{\mathfrak{a} \in \mathrm{Pic}(\co_\kk) }
\sum_{ \substack{  x\in V_\mu(  \mathcal{E} \otimes_{\co_\kk} \mathfrak{a}  ) \\ \deg(x)=m }}
\frac{1}{w_\kk}  .
\]

As in the argument surrounding (\ref{simple herms}), the assumption $\mathrm{Diff}_{L_0}(m)=\{p\}$
implies that there is some vector
\[
x\in   \End_{ \overline{\co}_\kk}(\mathcal{E}) \otimes_{\co_\kk} V_0
\]
with $Q(x)=m$, and the group $\kk^1=\{ s\in \kk^\times : s\overline{s}=1\}$  acts transitively (by right multiplication on $V_0$) on the set of all such $x$.
Using $t\mapsto t /\overline{t}$ to identify $\Q^\times \backslash \kk^\times \iso \kk^1$ and arguing as in the proof of \cite[Proposition 2.18]{HY},  we obtain a factorization into \emph{orbital integrals}
\begin{align}
\sum_{ z \in \mathcal{Z}_0(m,\mu)(  \overline{\F}_\mathfrak{p}  ) }
\frac{1  }{ \# \Aut(z)  }
& =
\frac{1}{w_\kk} \sum_{\mathfrak{a} \in \mathrm{Pic}(\co_\kk) }
\sum_{ t \in \Q^\times \backslash \kk^\times } \bm{1}_{  V_\mu(  \mathcal{E}  \otimes_{\co_\kk} \mathfrak{a})  } (  x t \overline{t}^{-1} )   \nonumber
\\
 & =
 \frac{1}{2}  \prod_{ \ell } \sum_{t  \in \Q_\ell^\times \backslash \kk_\ell^\times / \co_{\kk,\ell}^\times }
 \bm{1}_{   V_\mu(  E _\ell )   } (    x t \overline{t}^{-1}  )  .
\label{orbital factorization}
\end{align}
 Here   $\bm{1}$ indicates  characteristic function, $E_\ell$ is the $\ell$-divisible group of $\mathcal{E}$, and
\[
V_{\mu} (E_\ell) = \{ y\in \End_{\overline{\co}_\kk}(E_\ell) \otimes_{\co_\kk} L_0^\vee
: y- \mathrm{id}\otimes \mu \in \End(E_\ell) \otimes_{\co_\kk} L_0  \}.
\]
On the right hand side of the first equality of (\ref{orbital factorization}) we are viewing
\[
x t \overline{t}^{-1} \in
 \End_{ \overline{\co}_\kk}(\mathcal{E}) \otimes_{\co_\kk} V_0
\iso
 \End_{ \overline{\co}_\kk}(\mathcal{E} \otimes_{\co_\kk} \mathfrak{a}) \otimes_{\co_\kk} V_0
\]
using the quasi-isogeny from $\mathcal{E}$ to $\mathcal{E} \otimes_{\co_\kk} \mathfrak{a}$
defined by $P \mapsto P \otimes 1$.

A choice of  left $\co_{\kk,\ell}$-module isomorphism  $L_{0,\ell}\iso \co_{\kk,\ell}$ identifies
\[
V_{\mu} (E_\ell)= \{ y\in \End_{\overline{\co}_\kk}(E_\ell )  \cdot  \mathfrak{d}_\kk^{-1} : y- \mu \in \End(E_\ell) \},
\]
where  $\mu \in L_{0,\ell}^\vee = \mathfrak{d}_{\kk,\ell}^{-1}$ is viewed as a quasi-endomorphism of $E_\ell$ using the complex multiplication.
From now on we use this identification  to view
$x\in \End_{\overline{\co}_\kk}(E_\ell ) \otimes_{\Z_\ell} \Q_\ell$
and $\mu \in \kk_\ell$.
Using (\ref{quadratic degree}), the condition $m=Q(\mu)$ in $\Q/\Z$ translates to $\mathrm{Nrd}(x) = - \mathrm{Nrd}( \mu )$ in $\Q_\ell/\Z_\ell$, where $\mathrm{Nrd}$ is the reduced norm on the quaternion algebra
\[
\End(E_\ell) \otimes_{\Z_\ell} \Q_\ell  = \kk_\ell \oplus  \big( \End_{ \overline{\co}_\kk} (E_\ell) \otimes_{\Z_\ell} \Q_\ell \big) .
\]
An explicit description (in terms of this decomposition)
of the maximal order $\End(E_\ell)$ can be found in \cite[Lemma 3.3]{KY}.

Using the explicit description of $\End(E_\ell)$, the local factors on the right hand side of (\ref{orbital factorization}) can  be computed directly.  We demonstrate the method for a prime $\ell$ ramified in $\kk$.
For such an $\ell$ we have
\[
\Q_\ell^\times \backslash \kk_\ell^\times / \co_{\kk,\ell}^\times  =\{ 1 ,t\}
\]
where $t\in \co_{\kk,\ell}$ is any uniformizer.  As $\ell>2$, we may choose $t$
so that $\overline{t}=-t$, and so
\[
\sum_{t  \in \Q_\ell^\times \backslash \kk_\ell^\times / \co_{\kk,\ell}^\times }
 \bm{1}_{   V_\mu(  E _\ell )   } (    x t \overline{t}^{-1}  )
 =
 \bm{1}_{   V_\mu(  E _\ell )   } \left(    x   \right)
+ \bm{1}_{   V_\mu(  E _\ell )   } \left(  -  x   \right) .
\]
We now consider the two sub-cases $\ell=p$ and $\ell\not=p$.

If $\ell=p$, our assumption  $\ord_p(m)\ge 0$ and the relation
$m=Q(\mu)$ in $\Q/\Z$ imply that $\mu \in  \co_{\kk,p}$, and that
$\mathrm{Nrd}(x) \in \Z_p$.   As $\End(E_p)$ is the maximal order in the ramified
quaternion algebra over $\Q_p$, this implies that $x\in \End(E_p)$.
From this it follows that both $x$ and $-x$ lie in $V_\mu(E_p)$, and so
\[
\sum_{t  \in \Q_\ell^\times \backslash \kk_\ell^\times / \co_{\kk,\ell}^\times }
 \bm{1}_{   V_\mu(  E _\ell )   }  (    x t \overline{t}^{-1}  ) =2.
 \]

Now suppose $\ell\not=p$. If $\mu \in \co_{\kk,\ell}$  then
$\mathrm{Nrd}(\mu) \in \Z_\ell$, and hence  $\mathrm{Nrd}(x)\in \Z_\ell$.
The description of $\End(E_\ell)$ in \cite[Lemma 3.3]{KY} now implies that
$x\in \End(E_\ell)$, and again  both $x$ and $-x$ lie in $V_\mu(E_p)$.
If $\mu \not \in \co_{\kk,\ell}$ then $\ell \cdot \mathrm{Nrd}(\mu) \in \Z_\ell^\times$, and hence
the relation $\mathrm{Nrd}(x) + \mathrm{Nrd}(\mu) \in \Z_\ell$
noted above implies  $ \ell\cdot \mathrm{Nrd}(x)  \in \Z_\ell^\times$.
Using the same relation, and the description of $\End(E_\ell)$ in \cite[Lemma 3.3]{KY},
one can show that exactly one of  $x\pm \mu$  lies in $\End(E_\ell)$.
Thus  exactly one of $x$ and $-x$
lies in $V_\mu(E_\ell)$, and we have proved
\[
\sum_{t  \in \Q_\ell^\times \backslash \kk_\ell^\times / \co_{\kk,\ell}^\times }
 \bm{1}_{   V_\mu(  E _\ell )   }  (    x t \overline{t}^{-1}  )
 =\begin{cases}
 1 & \mbox{if $\mu  \not\in L_{0,\ell}$} \\
 2 & \mbox{if $\mu  \in L_{0,\ell}$}.
 \end{cases}
\]

From similar  calculations at the unramified primes, one can deduce the relation
\[
\prod_\ell \sum_{t  \in  \Q_\ell^\times \backslash \kk_\ell^\times / \co_{ \kk, \ell} ^\times }
 \bm{1}_{  V_\mu(  E _\ell )    } \big(   x  t \overline{t}^{-1} \big)  =
2^{s(\mu)}  \cdot  \rho\left(  \frac{m |d_\kk| }{p^\epsilon} \right)
\]
(the right hand side has an obvious product factorization, and one verifies equality prime-by-prime.)
The equality (\ref{point count})  follows immediately.

The final claim of the theorem follows by comparing the above formulas with Proposition \ref{prop:eisenstein coeff}.
\end{proof}

%%%%%%%%%%%%%%%%%%%%%%%%%%%%%%%%%%%%%%%%%

\subsection{Mapping to the orthogonal Shimura variety}
\label{ss:CM to GSpin}

%%%%%%%%%%%%%%%%%%%%%%%%%%%%%%%%%%%%%%%%%

Recall from \S \ref{sec:canonical} the $\GSpin$ Shimura variety $M$ over $\Q$ defined by the quadratic space $L$.  In
 \S \ref{sec:integral} we defined the integral model  $\mathcal{M}$ over $\Z$,  and the Kuga-Satake abelian scheme
 $\mathcal{A} \to \mathcal{M}$.  The integral model $\mathcal{M}$ carries over it the collection of sheaves
 $
 \bm{V} \subset \underline{\End}( \bm{H})
 $
 of (\ref{eq:bmV}).

 The inclusion $L_0 \to L$ extends to a homomorphism $C(V_0) \to C(V)$ of Clifford algebras,
 which then induces a morphism $\GSpin(V_0) \to \GSpin(V)$.
 The theory of canonical models shows that the induced map of complex orbifolds $Y(\C)\to M(\C)$
 descends to a finite and unramified morphism of $\kk$-stacks
 \begin{equation}\label{CM cycle generic}
 Y\to M_{\kk}
 \end{equation}
In this section we assume that  the discriminant of $\kk$ is odd.
We will extend this morphism to a morphism $\mathcal{Y}\to\mathcal{M}_{\co_\kk}$ of integral models, and explain the
how the various structures (special cycles, sheaves, and
Kuga-Satake abelian schemes)  on the source and target are
related.

As noted earlier,  $\mathcal{Y}$ is finite \'etale over $\co_\kk$.  The proof of Proposition \ref{prop:0_functorial} below  will require
a slightly different perspective on this, couched in the language of local models.

Let $\mathrm{M}^{\rm loc}(L_0)$ be the quadric over $\Z$ that parameterizes isotropic lines in $L_0$.
There is an isomorphism of $\Z$-schemes
\begin{equation}\label{eqn:ML0_easy}
\Spec( \co_\kk )   \iso    \mathrm{M}^{\rm loc}(L_0)
\end{equation}
determined as follows. Consider the map
$
L_0\otimes_\Z\co_\kk \rightarrow L_0
$
defined by  $v\otimes\alpha \mapsto v\overline{\alpha}.$
The kernel of this map is an isotropic rank one $\co_\kk$-module corresponding to a map $\Spec( \co_\kk) \to\mathrm{M}^{\rm loc}(L_0)$,
which is the desired isomorphism~\eqref{eqn:ML0_easy}.

Now, consider the functor $\mathcal{P}_0$ on $\mathcal{Y}$-schemes parameterizing $\co_\kk$-equivariant isomorphisms of
rank two vector bundles
\[
\co_\kk \otimes_\Z \co_{\mathcal{Y}} \iso \bm{H}_{0,\dR}^+.
\]
This is represented over $\mathcal{Y}$ by a $T$-torsor $\mathcal{P}_0\to\mathcal{Y}$. Indeed, we only need to observe that, \'etale locally on $\mathcal{Y}$, $\bm{H}_{0,\dR}^+$ is a free $\co_\kk\otimes_\Z \co_{\mathcal{Y}}$-module of rank one.

Suppose that we are given a scheme $U\to\mathcal{Y}$ over $\mathcal{Y}$ and a section $U\to \mathcal{P}_0$, corresponding to an
$\co_\kk$-equivariant isomorphism
\[
\xi_0: \co_\kk \otimes_\Z \co_U \iso \bm{H}_{0,\dR}^+\vert_U.
\]
This determines an isomorphism
\[
 L_0 \otimes_\Z \co_U \iso \bm{H}_{0,\dR}^+\vert_U \otimes_{\co_\kk} L_0 = \bm{H}_{0,\dR}^-\vert_U.
\]
Combining the above isomorphisms with the canonical isomorphism
\[
L_0\otimes_\Z\co_U \iso \underline{\Hom}_{\co_\kk} ( \co_\kk \otimes_\Z\co_U , L_0 \otimes_\Z \co_U)
\]
yields an  $\co_\kk$-linear isometry
\[
\xi_0:L_0\otimes_\Z\co_U \iso \underline{\Hom}_{\co_\kk}\bigl(\bm{H}^+_{0,\dR}\vert_U,\bm{H}_{0,\dR}^-\vert_U \bigr) =
\bm{V}_{0,\dR}.
\]
The preimage $\xi_0^{-1}\bigl(\bm{V}^1_{0,\dR}\bigr)\subset L_0\otimes\co_U$ is an isotropic line, and so corresponds to a point in $\mathrm{M}^{\rm loc}(L_0)(U)$. We therefore obtain a map $U\to\mathrm{M}^{\rm loc}(L_0)$.
It is not hard to see that the composition of this map with the isomorphism~\eqref{eqn:ML0_easy} is just the structure map $U\to\Spec( \co_\kk)$.
The main point is that, as in Remark \ref{rem:taut ok action},  $\co_\kk$ acts on $\bm{V}^1_{0,\dR}$
through the structure map $\co_\kk \to \co_\mathcal{Y}$,
and on the quotient $\bm{V}_{0,\dR}/\bm{V}^1_{0,\dR}$ through the conjugate of the structure map.  Hence  the quotient $\bm{V}_{0,\dR}/\bm{V}^1_{0,\dR}$ is obtained from $\bm{V}_{0,\dR}$ via base-change along the augmentation map
\[
\co_\kk\otimes\co_{\mathcal{Y}} \to\co_{\mathcal{Y}}
\]
defined by $\alpha\otimes f \mapsto \overline{\alpha}f.$

In particular, since $\mathcal{Y}$ is \'etale over $\co_\kk$, if $U$ is \'etale over $\mathcal{Y}$, then the map $U\to\mathrm{M}^{\rm loc}(L_0)$ will also be \'etale. This can also be shown directly using Grothendieck-Messing theory.

The following result is analogous to Proposition \ref{prop:functorial}.

\begin{proposition}\label{prop:0_functorial}
The morphism (\ref{CM cycle generic}) extends uniquely to a finite and unramified morphism
\begin{equation}\label{CM integral cycle}
\mathcal{Y}\to\mathcal{M}_{\co_\kk},
\end{equation}
and  there is a canonical $C(L)$-equivariant isomorphism
\begin{equation}\label{0_abscheme_isom}
\mathcal{A}_0\vert_{\mathcal{Y}} \otimes_{C(L_0)} C(L) \iso
\mathcal{A}\vert_{\mathcal{Y}}
\end{equation}
of $\Z/2\Z$-graded abelian schemes  over $\mathcal{Y}$. In
particular,
\begin{equation}\label{0_realization tensor}
 \bm{H}_0\vert_{\mathcal{Y}}\otimes_{C(L_0)}C(L)\iso\bm{H}\vert_{\mathcal{Y}}.
\end{equation}

Furthermore, set $\bm{\Lambda} =\Lambda\otimes \bm{1}$, and recall the space of special endomorphisms
\[
V(\mathcal{A}\vert_{\mathcal{Y}})\subset
\End_{C(L)}(\mathcal{A}\vert_{\mathcal{Y}} )
\]
 of \S \ref{ss:specialend}.
There is a canonical embedding $\Lambda\subset V
(\mathcal{A}\vert_{\mathcal{Y}})$ with the following properties:
\begin{enumerate}
\item Its homological realization exhibits $\bm{\Lambda} \subset
\bm{V}\vert_{\mathcal{Y}}$ as a local direct summand. \item The
injection
\[
\underline{\mathrm{End}}_{C(L_0)}(\bm{H}_0\vert_{\mathcal{Y}}) \to
\underline{\mathrm{End}}_{C(L)}(\bm{H}\vert_{\mathcal{Y}})
\]
induced by (\ref{0_realization tensor}) identifies
$\bm{V}_0\vert_{\mathcal{Y}}$ with the   submodule of all elements
of $\bm{V}\vert_{\mathcal{Y}   }$ anticommuting with all elements
of  $\bm{\Lambda}$.  Furthermore
$
\bm{V}_0\vert_{\mathcal{Y}} \subset \bm{V}\vert_{\mathcal{Y} }
$
is locally a direct summand.

\item  In the de Rham case, the inclusion
$
\bm{V}_{0,\dR}\vert_{\mathcal{Y}}\subset\bm{V}_{\dR}\vert_{\mathcal{Y}}
$
identifies
$
\bm{V}_{0,\dR}^1\vert_{\mathcal{Y}} =
\bm{V}_{\dR}^1\vert_{\mathcal{Y}}.
$
\end{enumerate}
\end{proposition}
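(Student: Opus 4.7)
The plan is to mirror the proof of Proposition~\ref{prop:functorial}, with $\mathcal{Y}$ playing the role of $\mathcal{M}_0$. Because $L_0$ has signature $(0,2)$, Proposition~\ref{prop:functorial} does not apply directly, so the morphism $\mathcal{Y}\to\mathcal{M}_{\co_\kk}$ must be constructed from scratch. Working \'etale-locally at each prime $p$, I would fix an embedding $L\hookrightarrow L^\diamond$ into a maximal quadratic space $L^\diamond$ that is self-dual at $p$, so that $L_0$ is a direct summand of $L^\diamond$ locally at $p$. The abelian scheme $\mathcal{A}_0\otimes_{C(L_0)}C(L^\diamond)$ over $\mathcal{Y}$, equipped with the polarization coming from a suitable $\delta\in C^+(L^\diamond)^\times$ and appropriate level structure, defines a finite map $\mathcal{Y}_{\Z_{(p)}}\to\mathcal{X}^\siegel$ whose generic fiber factors through $M^\diamond_{\kk}$. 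Because $\mathcal{Y}$ is finite \'etale over $\co_\kk$ and hence normal, the universal property of normalization yields a map $\mathcal{Y}_{\Z_{(p)}}\to\mathcal{M}^\diamond_{\Z_{(p)}}$ over $\co_\kk$, and a second application of the normalization property, this time for $\check{\mathcal{M}}_{\Z_{(p)}}$, extends~\eqref{CM cycle generic} to $\mathcal{Y}_{\Z_{(p)}}\to\check{\mathcal{M}}_{\Z_{(p)}}$.

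The hard part will be verifying that this map actually lands in the open subscheme $\mathcal{M}_{\Z_{(p)}}\subset\check{\mathcal{M}}_{\Z_{(p)}}$, that is, in the primitive locus and the smooth or regular locus prescribed in \S\ref{sec:integral}. For primitivity, I would note that $\Lambda=L^{\perp}\subset L^\diamond$ is a direct summand of $L_0^{\perp}\subset L^\diamond$, so the required local direct summand property for $\bm{\Lambda}_{\dR}\to\bm{V}^\diamond_{\dR}|_\mathcal{Y}$ is inherited from the corresponding property for the larger orthogonal complement $L_0^{\perp}$, which is built into the construction of $\mathcal{Y}\to\mathcal{M}^\diamond_{\co_\kk}$. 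For the smooth-or-regular-locus question, I would use the identification $\mathcal{Y}\iso\mathrm{M}^{\rm loc}(L_0)$ from~\eqref{eqn:ML0_easy} together with Proposition~\ref{prop:Mcheck}: \'etale-locally the map $\mathcal{Y}\to\check{\mathcal{M}}$ corresponds to the natural map of quadrics $\mathrm{M}^{\rm loc}(L_0)\to\mathrm{M}^{\rm loc}(L)$, and the hypothesis that $d_\kk$ is odd (so that $2$ is unramified in $\kk$) is precisely what is needed so that this map sends smooth to smooth at $p=2$ and regular to regular at odd $p$, by the same local analysis carried out inside the proof of Proposition~\ref{prop:functorial}. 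Unramifiedness of $\mathcal{Y}\to\mathcal{M}_{\co_\kk}$ is then automatic because the local model map has zero-dimensional source, and finiteness follows from properness of $\mathcal{Y}$ over $\co_\kk$ together with quasi-finiteness.

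With the map in hand, the remaining assertions reduce largely to the techniques used in Proposition~\ref{prop:functorial}. Since $L_0$ is a direct summand of $L$, the Clifford algebra $C(L)$ is free as a left $C(L_0)$-module, so $\mathcal{A}_0\otimes_{C(L_0)}C(L)$ is a well-defined abelian scheme over $\mathcal{Y}$; its generic fiber is canonically isomorphic to $\mathcal{A}|_Y$ by the general Shimura-variety theory (compare~\cite[(6.4)]{mp:reg}), and this generic isomorphism extends uniquely across $\mathcal{Y}$ by \cite[Proposition I.2.7]{FaltingsChai}, giving~\eqref{0_abscheme_isom} and hence~\eqref{0_realization tensor} on homology. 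To produce the embedding $\Lambda\subset V(\mathcal{A}|_\mathcal{Y})$, I replicate the recipe from Proposition~\ref{prop:functorial}: for $\lambda\in\Lambda$ and a section $(a^+\otimes c,a^-\otimes c')$ of $(\mathcal{A}_0^+\otimes_{\Z} C(L))\times(\mathcal{A}_0^-\otimes_{\Z} C(L))$, set $\lambda\cdot(a^+\otimes c,a^-\otimes c')=(a^+\otimes\lambda c,-a^-\otimes\lambda c')$. Since $L_0$ anticommutes with $\Lambda$ in $C(L)$, this descends to an endomorphism of $\mathcal{A}_0\otimes_{C(L_0)}C(L)\iso\mathcal{A}|_\mathcal{Y}$. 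Properties (i), (ii), (iii) can then be checked at the level of homological realizations: they hold at the crystalline level for $L$ self-dual at $p$ by~\cite[(7.13)]{mp:reg}, and the general case reduces to that by embedding everything into some $L^\diamond$ self-dual at $p$, exactly as in the proof of Proposition~\ref{prop:functorial}.
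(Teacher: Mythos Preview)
Your proposal is correct and follows essentially the same approach as the paper's proof: construct the map prime-by-prime via normalization through an auxiliary $L^\diamond$ self-dual at $p$, verify it lands in $\mathcal{M}_{\Z_{(p)}}$ by analyzing the local model map $\mathrm{M}^{\rm loc}(L_0)\to\mathrm{M}^{\rm loc}(L)$ (using $d_\kk$ odd for $p=2$ and the $\F_p$-rationality of the radical for odd $p$), and then deduce~\eqref{0_abscheme_isom} and assertions (i)--(iii) exactly as in Proposition~\ref{prop:functorial} via \cite[Proposition I.2.7]{FaltingsChai} and \cite[(7.13)]{mp:reg}. The only organizational difference is that the paper first treats the case where $L$ itself is self-dual at $p$ (avoiding $L^\diamond$ entirely) before bootstrapping to the general case, whereas you pass to $L^\diamond$ uniformly; both routes are equivalent.
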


\begin{proof}
Just as in \eqref{eqn:rat tensor}, there is a  $C(L)$-equivariant isomorphism
\begin{align}\label{eqn:0_rat tensor}
\mathcal{A}_0 \vert_Y \otimes_{C(L_0)}C(L) \iso \mathcal{A}\vert_Y
\end{align}
of $\Z/2\Z$-graded abelian schemes over $Y$.

Extending the map of (\ref{CM cycle generic})  of $\kk$-stacks to
a map (\ref{CM integral cycle}) of $\co_\kk$-stacks is equivalent
to extending the map of $\Q$-stacks $Y\to M$ to a map of
$\Z$-stacks $\mathcal{Y}_{\Z}\to\mathcal{M}$. Here we write
$\mathcal{Y}_{\Z}$ for the stack $\mathcal{Y}$ viewed over
$\Spec(\Z)$.

To do this, fix a prime $p>0$, and assume first that $L_{\Z_{(p)}}$ is self-dual. As observed in \S \ref{sec:integral}, an appropriate choice of $\delta\in
C^+(L)\cap C(V)^\times$  determines a degree $d$ polarization of the Kuga-Satake abelian scheme $\mathcal{A}_M\to M$, inducing a finite unramified map
\[
i_{\delta}: M\to\mathcal{X}^\siegel_{2^{n+1},d,\Q}
\]
to a moduli stack of polarized abelian varieties.  The integral model  $\mathcal{M}_{\Z_{(p)}}$ is then simply
the normalization of $\mathcal{X}^\siegel_{2^{n+1},d,\Z_{(p)}}$ in $M$.

The composition
\[
Y\to M\to \mathcal{X}^\siegel_{2^{n+1},d,\Q}
\]
determines a polarized abelian scheme over $Y$, which is precisely (\ref{eqn:0_rat tensor}).   The polarized abelian scheme
$\mathcal{A}_0 \vert_{\mathcal{Y}_{\Z_{(p)}}}  \  \otimes_{C(L_0)}C(L)$ defines an extension of (\ref{eqn:0_rat tensor}) to
$\mathcal{Y}_{\Z_{(p)}}$, which shows that the above composition
extends to a  morphism
\[
\mathcal{Y}_{\Z_{(p)}}\to  \mathcal{X}^\siegel_{2^{n+1},d,\Z_{(p)}}.
\]
%This morphism is necessarily finite, as $\mathcal{Y}_{\Z_{(p)}}$ is finite over $\Z_{(p)}$.
Since $\mathcal{Y}$ is regular, and hence normal, this extension must lift to a map
$
\mathcal{Y}_{\Z_{(p)}}\to\mathcal{M}_{\Z_{(p)}},
$
and by construction \eqref{eqn:0_rat tensor} extends to an isomorphism
\[
 (\mathcal{A}_0\otimes_{C(L_0)} C(L) )\vert_{\mathcal{Y}_{\Z_{(p)}}} \iso \mathcal{A}\vert_{\mathcal{Y}_{\Z_{(p)}}}.
\]

The construction of the embedding
$
 \Lambda\hookrightarrow \bigl(\mathcal{A}\vert_{\mathcal{Y}_{\Z_{(p)}}}\bigr),
$
as well as the proofs of assertions (i), (ii) and (iii) over $\mathcal{Y}_{\Z_{(p)}}$, now proceed exactly as in Proposition~\ref{prop:functorial}.

It remains to show (under our self-duality assumption on $L_{\Z_{(p)}}$) that the map $\mathcal{Y}_{\Z_{(p)}}\to\mathcal{M}_{\Z_{(p)}}$ is finite and unramified. For this, using Proposition~\ref{prop:Mcheck} and the above discussion of local models, we see that the map is \'etale locally on the source isomorphic to an \'etale neighborhood of the closed immersion $\mathrm{M}^{\rm loc}(L_0)\to\mathrm{M}^{\rm loc}(L)$.
This shows that $\mathcal{Y}_{\Z_{(p)}}$ is unramified over $\mathcal{M}_{\Z_{(p)}}$. That it is also finite is immediate from the fact that $\mathcal{Y}$ is finite over $\co_\kk$.

It remains to deal with the primes $p$ where $L_{\Z_{(p)}}$ is not self-dual. For this, embed $L$ as an isometric direct summand of a maximal lattice $ L^\diamond$
of signature $( n^\diamond,2)$ for which  $ L^\diamond_{\Z_{(p)}}$ is self-dual. Let $\mathcal{M}^\diamond$ be the regular integral model over $\Z$ for the Shimura
variety $ M^\diamond$ associated with $ L^\diamond$. As in (\ref{sec:integral}), let $\check{\mathcal{M}}_{\Z_{(p)}}$ be the normalization of
$\mathcal{M}^\diamond_{\Z_{(p)}}$ in $M$.

From what we have shown above, the composition
$
Y\to M\to M^\diamond
$
extends to a map  $\mathcal{Y}_{\Z_{(p)}}\to\mathcal{M}^\diamond_{\Z_{(p)}},$ which, since $\mathcal{Y}$ is a normal stack over $\Z$, must lift to a finite map
$\mathcal{Y}_{\Z_{(p)}}\to\check{\mathcal{M}}_{\Z_{(p)}}$.  As in the proof of Proposition~\ref{prop:functorial}, we find that that this lift must map
$\mathcal{Y}_{\Z_{(p)}}$ to $\mathcal{M}_{\Z_{(p)}}$ and must also be unramified, being, \'etale locally on the source, isomorphic to an \'etale neighborhood of
 \begin{equation}\label{local CM cycle}
 \mathrm{M}^{\rm loc}(L_0)_{\Z}\to\mathrm{M}^{\rm loc}(L).
 \end{equation}
If $p=2$ the assumption that $C^+(L_0)$ is a maximal order of odd
discriminant implies that $L_{0,\F_2}$ is self-dual so that
(\ref{local CM cycle}) must map into the smooth locus of the
target.

For $p>2$ the key point  is that the radical of $L_{0,\F_p}$, if
non-zero,  defines an $\F_p$-valued point of the quadric
$\mathrm{M}^{\rm loc}(L_0)$ associated with $L_0$. Therefore
(\ref{local CM cycle}) must map  into the regular locus (see the
proof of Proposition~\ref{prop:functorial}).

Finally, the extension~\eqref{0_abscheme_isom} of the
isomorphism~\eqref{eqn:0_rat tensor} once again follows from the
property of N\'eron models, and the remaining assertions follow
from Proposition~\ref{prop:functorial} and from what we have
already shown in the self-dual case.
\end{proof}

\begin{remark}
The proof of Proposition \ref{prop:0_functorial} actually proves the stronger claim that the morphism
$
\mathcal{Y} \to \mathcal{M}
$
is finite and unramified.
\end{remark}

Now suppose that $S$ is a scheme over $\mathcal{Y}$. Then
Proposition~\ref{prop:0_functorial} gives us an isometric
embedding $\Lambda\hookrightarrow V( \mathcal{A}_S )$. We obtain
the following analogue of Proposition~\ref{prop:decomistionVmu}.

\begin{proposition}\label{prop:0_decomistionVmu}\mbox{}
\begin{enumerate}
\item There is a canonical isometry
\[
 V(\mathcal{E}_{S})\iso \Lambda^{\perp}\subset V( \mathcal{A}_S ).
\]
\item
The induced map $V(\mathcal{E}_S) \oplus \Lambda \hookrightarrow V(\mathcal{A}_S)$, tensored with $\Q$, restricts to an injection
\[
V_{\mu_1}(\mathcal{E}_{S}) \times ({\mu_2}+\Lambda) \hookrightarrow V_{\mu}(\mathcal{A}_S)
\]
for every $\mu\in L^\vee/L$ and every
\[
(\mu_1,\mu_2)\in \bigl(\mu+L\bigr)/\bigl(L_0\oplus \Lambda\bigr) \subset (L_0^\vee /L_0) \oplus (\Lambda^\vee /\Lambda).
\]
\item
The above injections determine  a decomposition
\[
V_{\mu}(\mathcal{A}_S)
=\bigsqcup_{(\mu_1,\mu_2)\in (\mu+ L)/(L_0\oplus \Lambda)}  V_{\mu_1}(\mathcal{E}_{S}) \times
\bigl({\mu_2}+\Lambda\bigr).
\]
\end{enumerate}
\end{proposition}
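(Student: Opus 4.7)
The proof will follow the template of Proposition~\ref{prop:decomistionVmu}, with $\mathcal{M}_0$ and $V(\mathcal{A}_{0,S})$ replaced throughout by $\mathcal{Y}$ and $V(\mathcal{E}_S)$ via the canonical identification~\eqref{eqn:A0 E special}. The principal input is Proposition~\ref{prop:0_functorial}, which provides the isometric embedding $\Lambda\hookrightarrow V(\mathcal{A}\vert_\mathcal{Y})$ and identifies $\bm{V}_0\vert_\mathcal{Y}$ with the subsheaf of $\bm{V}\vert_\mathcal{Y}$ anticommuting with $\bm{\Lambda}$.

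For part (i), the key observation is that for any $x,y \in V(\mathcal{A}_S)$ the relation $z\circ z = Q(z)\cdot\mathrm{Id}$ applied to $z=x+y$ yields $x\circ y + y\circ x = [x,y]\cdot\mathrm{Id}$, so anticommutation agrees with orthogonality for the quadratic form $Q$. Combining Proposition~\ref{prop:0_functorial}(ii) with~\eqref{eqn:A0 E special} therefore gives the canonical isometry $V(\mathcal{E}_S) \iso V(\mathcal{A}_{0,S}) \iso \Lambda^\perp \subset V(\mathcal{A}_S)$, which after tensoring with $\Q$ induces a unique decomposition $x = (x_0,\nu) \in (V(\mathcal{E}_S)\otimes\Q) \times (\Lambda\otimes\Q)$ for every $x \in V(\mathcal{A}_S)_\Q$.

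For parts (ii) and (iii) I will unwind definitions prime-by-prime using the characterization $V_\mu(\mathcal{A}_S) = \bigcap_p V_\mu(\mathcal{A}_S)_{(p)}$. For each prime $p$, Lemma~\ref{lem:mu def self-dual ind} lets me embed $L$ as a direct summand of a maximal lattice $L^\diamond$ that is self-dual at $p$. Under this embedding $L_0$ and $\Lambda$ remain saturated sublattices, and the inclusion $L^\vee/L \hookrightarrow (L_0^\vee/L_0) \oplus (\Lambda^\vee/\Lambda)$ allows one to pick lifts $\tilde{\mu}_{1,p}\in L_0^\vee$ and $\tilde{\mu}_{2,p}\in \Lambda^\vee$ whose sum $\tilde{\mu}_p = \tilde{\mu}_{1,p}+\tilde{\mu}_{2,p}$ lifts $\mu_p$ into $L^{\diamond,\vee}$. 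Inside $V(\mathcal{A}_S^\diamond)_\Q$ the subspaces coming from $V(\mathcal{E}_S)$ and from $\Lambda$ are mutually orthogonal, so the integrality condition $x + \tilde{\mu}_p \in V(\mathcal{A}_S^\diamond)\otimes\Z_{(p)}$ defining $V_\mu(\mathcal{A}_S)_{(p)}$ decouples into the condition defining $V_{\mu_1}(\mathcal{E}_S)_{(p)}$ for the $V(\mathcal{E}_S)$-component and the condition $\nu \in (\mu_2 + \Lambda)\otimes\Z_{(p)}$ for the $\Lambda$-component. This gives (ii). Disjointness of the injections in (iii) is then immediate from uniqueness of the rational decomposition, and surjectivity follows because, given $x \in V_\mu(\mathcal{A}_S)$ decomposed as $(x_0,\nu)$, the coset pair $(\mu_1,\mu_2)$ extracted from any integral lift of $(x_0,\nu)$ at each prime must land in the image of $(\mu+L)/(L_0\oplus\Lambda)$.

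The main obstacle will be the prime-by-prime coset bookkeeping: one must verify that the cosets $(\mu_1,\mu_2)$ determined by the rational decomposition $x=(x_0,\nu)$ of a given $x\in V_\mu(\mathcal{A}_S)$ are consistent across all primes and land precisely in the image of the diagonal inclusion $(\mu+L)/(L_0\oplus\Lambda)\hookrightarrow (L_0^\vee/L_0)\oplus(\Lambda^\vee/\Lambda)$, rather than in some larger subset. Once this compatibility is checked uniformly over primes, the three claims all follow by direct verification from the definitions.
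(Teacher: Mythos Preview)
Your overall strategy matches the paper's: both reduce via Proposition~\ref{prop:decomistionVmu} and Lemma~\ref{lem:mu def self-dual ind} to a self-dual (in the paper, unimodular) situation and then decouple the integrality condition along the orthogonal splitting $V(\mathcal{E}_S)_\Q \oplus \Lambda_\Q$. Part~(i) is handled identically.

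There is, however, a real step you are skipping. In Proposition~\ref{prop:decomistionVmu} the subsets $V_{\mu_1}(\mathcal{A}_{0,S})$ are defined by the \emph{same} self-dual-embedding recipe as $V_\mu(\mathcal{A}_S)$, so the decoupling is purely formal. Here the subset $V_{\mu_1}(\mathcal{E}_S)$ has an \emph{intrinsic} definition,
\[
V_{\mu_1}(\mathcal{E}_S)=\bigl\{\,x\in \End_{\overline{\co}_\kk}(\mathcal{E}_S)\otimes_{\co_\kk} L_0^\vee : x-\mathrm{id}\otimes\mu_1\in \End(\mathcal{E}_S)\otimes_{\co_\kk} L_0\,\bigr\},
\]
and the identification~\eqref{eqn:A0 E special} only gives $V(\mathcal{A}_{0,S})\iso V(\mathcal{E}_S)$ for the trivial coset. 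Your sentence ``the integrality condition \ldots\ decouples into the condition defining $V_{\mu_1}(\mathcal{E}_S)_{(p)}$'' presupposes exactly the compatibility that needs to be checked: that the $V(\mathcal{E}_S)$-component of the decoupled condition coincides with the intrinsic condition $x-\mathrm{id}\otimes\mu_1\in \End(\mathcal{E}_S)\otimes_{\co_\kk} L_0$. The paper supplies this missing verification by an explicit Clifford-algebra computation in the unimodular case: writing $x=\phi\otimes\ell+\mathrm{id}\otimes\mu_1$ and $\lambda\in\mu_2+\Lambda$, it exhibits the quasi-endomorphism $(x,\lambda)$ of $\mathcal{A}_S\iso\mathcal{E}_S\otimes_{\co_\kk}C(L)$ concretely as $e\otimes c\mapsto \phi(e)\otimes\ell c + e\otimes(\mu_1+\mu_2)c$, and then checks directly (using $\mu_1+\mu_2\in L$ and that $L_0\subset C(L)$ is an $\co_\kk$-direct summand) that integrality of this map is equivalent to the intrinsic condition on $x$. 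You should either carry out this computation or explain why the two a priori different notions of ``$\mu_1$-twisted special endomorphism of $\mathcal{E}_S$'' agree.
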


\begin{proof}
Exactly as in the proof  Proposition \ref{prop:decomistionVmu},  Proposition~\ref{prop:0_functorial} implies that there is a canonical  isometry
\[
V(\mathcal{A}_{0,S})\iso \Lambda^{\perp}\subset V( \mathcal{A}_S ).
\]
Thus assertion (i) is clear from \eqref{eqn:A0 E special}.

In particular, there is a canonical inclusion
$
V(\mathcal{E}_S )  \times  \Lambda  \subset  V( \mathcal{A}_S ),
$
which we can make explicit using the $C(L)$-equivariant isomorphism
$
\mathcal{E}_S \otimes_{\co_\kk} C(L) \iso \mathcal{A}_S
$
of (\ref{0_abscheme_isom}).  Indeed, each  $\lambda \in \Lambda$ induces a special  endomorphism
\[
 \mathcal{A}_S \iso \mathcal{E}_S \otimes_{\co_\kk} C(L) \map{ e \otimes c \mapsto e\otimes \lambda c} \mathcal{E}_S \otimes_{\co_\kk} C(L)\iso  \mathcal{A}_S,
\]
while each 
$
 \phi \otimes \ell \in  \End_{\overline{\co}_\kk }(\mathcal{E}_S) \otimes_{\co_\kk} L_0 =V(\mathcal{E}_S ) 
$ 
induces a special  endomorphism
\[
 \mathcal{A}_S \iso \mathcal{E}_S \otimes_{\co_\kk} C(L) \map{ e \otimes c \mapsto \phi(e)\otimes  \ell c} \mathcal{E}_S \otimes_{\co_\kk} C(L)\iso  \mathcal{A}_S.
\]

With this in mind, assertions (ii) and (iii) amount to unwinding the definitions. By embedding $L$ into a unimodular lattice and using Proposition~\ref{prop:decomistionVmu}, we may reduce to the case where $L$ is itself unimodular. In this case, we have canonical isometries
\[
L_0^\vee/L_0 \xleftarrow{\simeq}L/(L_0\oplus\Lambda)\xrightarrow{\simeq}\Lambda^\vee/\Lambda,
\]
and $\mu_1\in L_0^\vee/L_0$ corresponds to  $\mu_2\in \Lambda^\vee/\Lambda$  if and only if  $(\mu_1,\mu_2) \in L/(L_0 \oplus \Lambda)$.

If $(\mu_1,\mu_2) \in L/(L_0 \oplus \Lambda)$ and 
\[
(x,\lambda) \in V_{\mu_1}(\mathcal{E}_{S}) \times ({\mu_2}+\Lambda) 
\]
then we may write $x=\phi \otimes \ell  + \mathrm{id} \otimes \mu_1$ for some $\phi\in \End_{\overline{\co}_\kk }(\mathcal{E}_S)$ and $\ell \in L_0$.  As $\mu_1+\mu_2\in L$, the quasi-endomorphism  $(x,\lambda) \in V(\mathcal{A}_S)_\Q$ is  equal to 
\begin{equation}\label{integral special sum}
 \mathcal{A}_S \iso \mathcal{E}_S \otimes_{\co_\kk} C(L) \map{ e \otimes c \mapsto \phi(e)\otimes  \ell c  + e\otimes(\mu_1+\mu_2)c} \mathcal{E}_S \otimes_{\co_\kk} C(L)\iso  \mathcal{A}_S,
\end{equation}
and so lies in $V(\mathcal{A}_S)$.

Conversely, any element of $V(\mathcal{A}_S)$ is given by a pair \[(x,\lambda) \in V(\mathcal{E}_S)_\Q \times \Lambda_\Q,\] and it is easy to see that we must have $x\in V(\mathcal{E}_S)^\vee$ and $\lambda \in \Lambda^\vee$.
Let $\mu_2\in \Lambda^\vee /\Lambda$ be the coset of $\lambda$, and let $\mu_1\in L_0^\vee /L_0$ be the unique element for which $(\mu_1,\mu_2)\in L/(L_0 \oplus \Lambda)$.  If we write $x=\phi \otimes \ell$ with $\ell\in V_0$ then, by assumption, the quasi-endomorphism of $\mathcal{A}_S$ determined by $(x,\lambda)$ is the (integral) endomorphism (\ref{integral special sum}), while  $x-\mathrm{id}\otimes \mu_1$ is the quasi-endomorphism
\[
 \mathcal{A}_S \iso \mathcal{E}_S \otimes_{\co_\kk} C(L) \map{ e \otimes c \mapsto \phi(e)\otimes  \ell c  - e\otimes \mu_1c} \mathcal{E}_S \otimes_{\co_\kk} C(L)\iso  \mathcal{A}_S.
\]
These differ by the endomorphism $e\otimes c\mapsto e\otimes(\mu_1+\mu_2)c$, and so $x-\mathrm{id}\otimes \mu_1$ is an (integral) endomorphism of $\mathcal{E}_S \otimes_{\co_\kk} C(L)$.  As $L_0 \subset C(L)$ as an $\co_\kk$-module direct summand, it follows that $x-\mathrm{id}\otimes \mu_1$  restricts to a homomorphism $\mathcal{E}_S \to \mathcal{E}_S \otimes_{\co_\kk}L_0$.   This proves that  $x\in V_{\mu_1}(\mathcal{E}_S)$, and all parts of the claim follow. 
\end{proof}

We immediately obtain from Proposition~\ref{prop:0_decomistionVmu} the following analogue of Proposition~\ref{prop:decomistionZmu}.

\begin{proposition}\label{prop:0_decomistionZmu}
There is an isomorphism of $\mathcal{Y}$-stacks
\begin{equation}\label{eq:0_decompdivisors}
\mathcal{Z}(m,\mu)\times_{\mathcal{M}}\mathcal{Y}\\
\iso \bigsqcup_{ \substack{ m_1+m_2=m \\ (\mu_1,\mu_2)\in (\mu+L)/(L_0\oplus \Lambda)}}
 \mathcal{Z}_0(m_1,\mu_1)_{\co_\kk}  \times \Lambda_{m_2,\mu_2},
\end{equation}
where
\[
\Lambda_{m_2,\mu_2} = \{x\in {\mu_2}+\Lambda : Q(x)=m_2\},
\]
and $ \mathcal{Z}_0(m_1,\mu_1)_{\co_\kk}  \times \Lambda_{m_2,\mu_2}$ denotes the disjoint union of $\# \Lambda_{m_2,\mu_2}$ copies of
$\mathcal{Z}_0(m_1,\mu_1)_{\co_\kk}$.
\end{proposition}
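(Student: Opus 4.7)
The plan is to deduce this proposition directly from Proposition~\ref{prop:0_decomistionVmu}, exactly in parallel to how Proposition~\ref{prop:decomistionZmu} was extracted from Proposition~\ref{prop:decomistionVmu}. The point is that the special cycles are defined functorially in terms of the sets $V_\mu$, and the desired decomposition of cycles is just the norm-$m$ slice of the already-proven decomposition of $V_\mu(\mathcal{A}_S)$.

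Concretely, I would argue as follows. Given an $\mathcal{Y}$-scheme $S \to \mathcal{Y}$, composing with the morphism $\mathcal{Y} \to \mathcal{M}_{\co_\kk}$ of Proposition~\ref{prop:0_functorial} makes $S$ into an $\mathcal{M}$-scheme. The $S$-points of the fiber product $\mathcal{Z}(m,\mu) \times_{\mathcal{M}} \mathcal{Y}$ are then, by the definition~\eqref{special divisor}, the set
\[
\{x \in V_\mu(\mathcal{A}_S) : Q(x) = m\}.
\]
Applying Proposition~\ref{prop:0_decomistionVmu}(iii) decomposes this set as
\[
\bigsqcup_{(\mu_1,\mu_2) \in (\mu+L)/(L_0 \oplus \Lambda)} \bigl\{(x_0,\nu) \in V_{\mu_1}(\mathcal{E}_S) \times (\mu_2 + \Lambda) : Q(x_0) + Q(\nu) = m\bigr\},
\]
where I use that $Q$ is additive along the orthogonal decomposition from Proposition~\ref{prop:0_decomistionVmu}(i). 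Partitioning each summand further by the value $m_2 = Q(\nu)$, and noting that Proposition~\ref{Prop:positive_definite}(iii) ensures $m_1 = Q(x_0) \ge 0$ and $m_2 \ge 0$ (so the double sum is finite), I obtain
\[
\bigsqcup_{\substack{m_1+m_2 = m \\ (\mu_1,\mu_2)}} \bigl\{x_0 \in V_{\mu_1}(\mathcal{E}_S) : Q(x_0) = m_1\bigr\} \times \Lambda_{m_2,\mu_2}.
\]

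The first factor in each summand is, by the very definition of $\mathcal{Z}_0(m_1,\mu_1)$ given just before Theorem~\ref{thm:X degree} (together with the identification of special endomorphisms of $\mathcal{A}_0$ with those of $\mathcal{E}$ via~\eqref{eqn:A0 E special}), precisely the set of $S$-points of $\mathcal{Z}_0(m_1,\mu_1)_{\co_\kk}$. The second factor is a finite constant set, so the disjoint union over its elements is, by convention, $\#\Lambda_{m_2,\mu_2}$ copies of $\mathcal{Z}_0(m_1,\mu_1)_{\co_\kk}$. Assembling these identifications yields the claimed isomorphism of $\mathcal{Y}$-stacks~\eqref{eq:0_decompdivisors}. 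There is no genuine obstacle here: all the work has already been done in establishing the set-theoretic decomposition of Proposition~\ref{prop:0_decomistionVmu}, and the only thing to check is that the functorial bijections above are compatible with base change in $S$, which is automatic from the fact that each of $V_\mu(\mathcal{A}_S)$, $V_{\mu_1}(\mathcal{E}_S)$, and $\Lambda_{m_2,\mu_2}$ is functorial in $S$.
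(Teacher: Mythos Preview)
Your proposal is correct and is exactly the approach the paper takes: the paper simply says that the proposition follows immediately from Proposition~\ref{prop:0_decomistionVmu}, as the analogue of how Proposition~\ref{prop:decomistionZmu} followed from Proposition~\ref{prop:decomistionVmu}. You have merely made explicit the routine unpacking of functors of points that the paper leaves to the reader.
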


\begin{remark}\label{rmk:properandimproper}
The terms  $\mathcal{Z}_0(m_1,\mu_1)_{\co_\kk}$ in the
decomposition (\ref{eq:0_decompdivisors}) define $0$-cycles  on
$\mathcal{Y}$ when $m_1\neq 0$. The remaining terms
\[
\bigsqcup_{{(0,\mu_2)\in (\mu+ L)/(L_0\oplus \Lambda)}}
 \mathcal{Z}_0(0,0)_{\co_\kk} \times \Lambda_{m,\mu_2}
=  \bigsqcup_{{(0,\mu_2)\in (\mu+ L)/(L_0\oplus \Lambda)}}
\mathcal{Y} \times \Lambda_{m,\mu_2}
\]
are those with $(m_1,\mu_1)=(0,0)$, and account for the improper
intersection between  the cycles $\mathcal{Z}(m,\mu)_{\co_\kk}$
and $\mathcal{Y}$.
\end{remark}

%%%%%%%%%%%%%%%%%%%%%%%%%%%%%%%%%%%%%%%%%

\section{Degrees of metrized line bundles}
\label{s:main section}

%%%%%%%%%%%%%%%%%%%%%%%%%%%%%%%%%%%%%%%%%

As in previous sections, fix a maximal quadratic space $L$ over
$\Z$ of signature $(n,2)$, and a $\Z$-module direct summand
$L_0\subset L$ of signature $(0,2)$.  Assume  the even
Clifford algebra $C^+(L_0)$ is isomorphic to the maximal order in
a quadratic imaginary field $\kk\subset \C$ of odd
discriminant. Set
\[
\Lambda = \{x\in L : x\perp L_0\}.
\]

Recall from \S \ref{sec:integral} that we have associated with $L$ an algebraic stack
\begin{equation}\label{base change model}
\mathcal{M} \to \Spec(\co_\kk).
\end{equation}
Similarly, in \S \ref{s:CM Shimura variety integral} we constructed from $L_0$ an algebraic stack
$
\mathcal{Y} \to \Spec(\co_\kk).
$
 The functoriality results of Proposition \ref{prop:0_functorial} provide us with a finite and unramified morphism
\[
i : \mathcal{Y} \to \mathcal{M}.
\]
Of course the  stack $\mathcal{M}$ was originally constructed over $\Z$.  Its restriction to $\co_\kk$ is all that we will use in this section.

%%%%%%%%%%%%%%%%%%%%%%%%%%%%%%%%%%%%%%%%%

\subsection{Metrized line bundles}
\label{ss:metrizedbundles}

%%%%%%%%%%%%%%%%%%%%%%%%%%%%%%%%%%%%%%%%%

We will need some basic notions of arithmetic intersection theory on the stack $\mathcal{M}$.   For more
details see  \cite{GS}, \cite{KRY2}, \cite{KRY3}, or \cite{Vistoli}

Although the integral model over $\Z$ described in \S \ref{sec:integral} is regular, its base change (\ref{base change model}) to $\co_\kk$ need not be. This is not
a serious problem: the first part of the following proposition implies that $\mathcal{M}$  has a perfectly good theory of Cartier divisors. Whenever we speak of
``divisors" on $\mathcal{M}$ we always mean ``Cartier divisors."

\begin{proposition}
The stack $\mathcal{M}$ is locally integral. If $D$ denotes  the greatest common divisor  of $\mathrm{disc}(L_0)$ and  $\mathrm{disc}(L)$, then the restriction of
$\mathcal{M}$ to  $\co_\kk[1/D]$ is regular.
\end{proposition}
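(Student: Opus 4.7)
The plan is to work \'etale-locally on $\mathcal{M}_{\co_\kk} = \mathcal{M}/\Z \times_\Z \co_\kk$, exploiting the regularity of $\mathcal{M}/\Z$ from Proposition \ref{prop:M_model over Z}. At a geometric point of $\mathcal{M}_{\co_\kk}$ lying over $\bar z \in \mathcal{M}/\Z$ and a prime $\mathfrak{p} \mid p$ in $\co_\kk$, the strict henselization is $\hat R \otimes_{\Z_{(p)}} \co_{\kk, \mathfrak{p}}$, where $\hat R$ is a regular local ring and in particular a unique factorization domain.

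For the regularity assertion I would case split, assuming $p \nmid D$. If $p$ is unramified in $\kk$, the extension $\co_{\kk,\mathfrak{p}}/\Z_{(p)}$ is \'etale, so $\hat R \otimes_{\Z_{(p)}} \co_{\kk,\mathfrak{p}}$ is \'etale over $\hat R$ and hence regular. If $p$ ramifies in $\kk$, then $p \mid d_\kk = |\mathrm{disc}(L_0)|$, and $p \nmid D$ forces $p \nmid \mathrm{disc}(L)$, so $L_{(p)}$ is self-dual. By the construction of $\mathcal{M}_{\Z_{(p)}}$ in \S\ref{sec:integral} (see the discussion following Proposition \ref{prop:Mcheck}), $\mathcal{M}$ is then smooth over $\Z_{(p)}$; smoothness is preserved by base change to $\co_{\kk,\mathfrak{p}}$, so regularity follows.

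For local integrality at the remaining primes $p \mid D$ (necessarily odd and ramified in $\kk$, since $d_\kk$ is odd), write $\co_{\kk,\mathfrak{p}} \cong \Z_{(p)}[\pi]/(\pi^2 - pu)$ with $u \in \Z_{(p)}^\times$, so $\hat R \otimes_{\Z_{(p)}} \co_{\kk, \mathfrak{p}} \cong \hat R[\pi]/(\pi^2 - pu)$. Since $\hat R$ is a UFD, this ring is a domain exactly when $pu$ is not a square in $\hat R$, equivalently when the Cartier divisor $\mathcal{M}_{\F_p} = V(p)$ has an irreducible component through $\bar z$ occurring with odd multiplicity. I would verify this via the local model description of Proposition \ref{prop:Mcheck}(i): after diagonalizing $Q$ over $\Z_{(p)}$ (possible since $p$ is odd), one computes the factorization of $p$ explicitly in \'etale local rings of the quadric $\mathrm{M}^{\rm loc}(L) = V(Q)$, and uses the constraint on the diagonalization imposed by the maximality of $L$ (which, via Chevalley--Warning applied to the anisotropy condition on $L^\vee/L \otimes \Z_p$, forces the $p$-scaled block of $Q$ to have rank at most two) together with the structure of sums of squares in a UFD to conclude that at least one prime factor of $p$ appears with an odd exponent.

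The main obstacle will be this final factorization analysis at the non-smooth but regular points of $\mathcal{M}/\Z_{(p)}$ that arise when $p^2 \mid \mathrm{disc}(L)$. At such a point, $p$ necessarily lies in $\mathfrak{m}_{\hat R}^2$, so one cannot reach the conclusion from a dimension count alone; the argument must instead exploit the specific local equation of the quadric singularity together with the arithmetic constraints on the diagonal form $\langle u_i \rangle \oplus p\langle v_j \rangle$ coming from the compatibility with the embedding $L_0 \hookrightarrow L$ and the identification $C^+(L_0) = \co_\kk$.
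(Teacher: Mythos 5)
Your reduction of the regularity claim over $\co_\kk[1/D]$ is correct and coincides with the paper's one-line argument: if $p\nmid\mathrm{disc}(L_0)$ then $\co_{\kk,\mathfrak{p}}/\Z_{(p)}$ is \'etale and the base change of a regular stack along an \'etale morphism is regular; if $p\nmid\mathrm{disc}(L)$ then $L_{(p)}$ is self-dual, $\mathcal{M}_{\Z_{(p)}}$ is smooth over $\Z_{(p)}$, and the base change is smooth over the regular base $\co_{\kk,\mathfrak{p}}$, hence regular. That half is complete.

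The gap is in the local integrality at the primes $p\mid D$. You correctly reduce to showing that, in the (regular local, hence UFD) strict henselization $\hat R$, the element $pu$ is not a square. But you never establish this: you explicitly flag it as ``the main obstacle'' and only outline a program (diagonalize $Q$, bound the $p$-scaled block via maximality, analyze factorizations in $\hat R$) without carrying it out. The input required is precisely the explicit form of the complete local rings at the non-smooth regular points, which the paper invokes directly from \cite[(2.16)]{mp:reg}: either $\hat R$ is formally smooth over $\Z_p$ (so $p$ is a regular parameter, manifestly not a unit times a square), or $\hat R$ is one of
\[
W(\overline{\F}_p)[[u_1,\ldots,u_{n+1}]]\big/\Big(\textstyle\sum_iu_i^2+p\Big), \qquad W(\overline{\F}_p)[[u_1,\ldots,u_n,w]]\big/\Big(\textstyle\sum_iu_i^2+p(w+1)\Big),
\]
and in these cases one verifies directly that the tensor with $\co_\kk$ over $\Z$ is a domain once $n\ge 1$. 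Your proposed detour through Chevalley--Warning and sums-of-squares in a UFD would, if completed, essentially reconstruct this local ring computation rather than bypass it; as written the proof leaves the decisive step unverified.
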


\begin{proof}
Using Proposition~\ref{prop:Mcheck} and the definition of $\mathcal{M}$, to show that $\mathcal{M}$ is locally integral, it suffices to show that the regular locus of the quadric $\mathrm{M}^{\rm loc}(L)$ associated with the quadratic space $L$ is integral after base change to $\co_\kk$. In explicit terms, the complete local rings of this locus at its $\overline{\F}_p$-points are either formally smooth over $\Z_p$, or are isomorphic to one of the two following $W(\overline{\F}_p)$-algebras~\cite[(2.16)]{mp:reg}:
\[
 W(\overline{\F}_p)\frac{[[u_1,\ldots,u_{n+1}]]}{(\sum_iu_i^2+p)}, \;\;\text{or }\; \;W(\overline{\F}_p)\frac{[[u_1,\ldots,u_n,w]]}{(\sum_iu_i^2+p(w+1))}.
\]
Since $n\geq 1$, it is easily checked that, in all three cases, tensoring with $\co_\kk$ over $\Z$ gives us something locally integral.

The claim about regularity over $\co_\kk[1/D]$ follows from the smoothness of the integral model of \S \ref{sec:integral} over $\Z[1/\mathrm{disc}(L)]$ and of $\co_\kk[1/\mathrm{disc}(L_0)]$ over~$\Z$.
\end{proof}

An \emph{arithmetic divisor} on $\mathcal{M}$ is a pair $\widehat{\mathcal{Z}} = (\mathcal{Z} , \Phi)$ consisting of a Cartier divisor $\mathcal{Z}$ on $\mathcal{M}$
and a Green function $\Phi$ for $\mathcal{Z}(\C)$ on $\mathcal{M}(\C)$.  Thus if $\Psi=0$ is a local equation for $\mathcal{Z}(\C)$,
the function $\Phi+ \log |\Psi|^2$, initially defined on the complement of  $\mathcal{Z}(\C)$, is required to extend smoothly across
the singularity $\mathcal{Z}(\C)$.  A \emph{principal arithmetic divisor} is an arithmetic divisor of the form
\[
\widehat{\mathrm{div}}(\Psi) = (\mathrm{div}(\Psi) , - \log|\Psi|^2 )
\]
for a rational function $\Psi$ on $\mathcal{M}$.  The group of all arithmetic divisors is denoted $\widehat{\mathrm{Div}}(\mathcal{M})$,
and its quotient by the subgroup of principal arithmetic divisors is the \emph{arithmetic Chow group} $\widehat{\mathrm{CH}}^1(\mathcal{M})$ of Gillet-Soul\'e.

A \emph{metrized line bundle} on $\mathcal{M}$ is  a line bundle
endowed with  a smoothly varying  Hermitian metric on its complex points.
The isomorphism classes of metrized line bundles form a group $\widehat{\mathrm{Pic}}( \mathcal{M} )$ under tensor product.

There is an isomorphism
\[
\widehat{\mathrm{CH}}^1(\mathcal{M}) \iso \widehat{\mathrm{Pic}}( \mathcal{M} )
\]
defined as follows.  Given an arithmetic divisor $(\mathcal{Z},\Phi)$, the line bundle $\mathcal{L}=\co(\mathcal{Z})$
is endowed with a canonical rational section $s$ with divisor $\mathcal{Z}$.  This section is nothing more than the constant
function $1$ on $\mathcal{M}$, viewed as a section of $\mathcal{L}$.  We endow $\mathcal{L}$ with the metric defined by
$-\log || s||^2 = \Phi.$   For the inverse construction, start with a metrized line bundle $\widehat{\mathcal{L}}$ on $\mathcal{M}$
and let $s$ be any nonzero rational section.  The associated arithmetic divisor, well-defined modulo principal arithmetic divisors,  is
\[
\widehat{\mathrm{div}}(s) = (\mathrm{div}(s) , - \log||s||^2).
\]

Of course there is a similar discussion with $\mathcal{M}$ replaced by $\mathcal{Y}$.  As $\mathcal{Y}$ is smooth of relative dimension $0$ over $\co_\kk$, all
divisors on $\mathcal{Y}$ are supported in nonzero characteristics;  thus, a Green function for a divisor on $\mathcal{Y}$ is \emph{any} complex-valued function on
the $0$-dimensional orbifold $\mathcal{Y}(\C)$.  In particular any arithmetic divisor $(\mathcal{Z},\Phi)$ decomposes as  $(\mathcal{Z},  0 ) + ( 0,\Phi)$, and
$\mathcal{Z}$ can be further decomposed as the difference of two effective Cartier divisors.

To define the  \emph{arithmetic degree} (as in  \cite{KRY2} or \cite{KRY3})
of  an arithmetic divisor $\widehat{\mathcal{Z}}$ on $\mathcal{Y}$ we first assume that $\widehat{\mathcal{Z}} = (\mathcal{Z},0)$
with $\mathcal{Z}$ effective.  Then
\[
\widehat{\deg}(\widehat{\mathcal{Z}}) = \sum_{\mathfrak{p} \subset \co_\kk} \log \mathrm{N}(\mathfrak{p}) \sum_{ z\in \mathcal{Z}(  \overline{\F}_\mathfrak{p} ) }
\frac{  \mathrm{length}( \co_{\mathcal{Z},z} )    }{ \# \Aut(z) }
\]
where  $\co_{\mathcal{Z},z}$ is the \'etale local ring of $\mathcal{Z}$ at $z$.
If $\widehat{\mathcal{Z}} = ( 0 , \Phi)$ is purely archimedean, then
\[
\widehat{\deg}(\widehat{\mathcal{Z}}) = \sum_{ y \in \mathcal{Y}(\C)}  \frac{  \Phi(y)  }{ \# \Aut(y) }.
\]
The arithmetic degree extends linearly to all arithmetic divisors, and defines a homomorphism
\[
\widehat{\deg}  :\widehat{\mathrm{Pic}}( \mathcal{Y} )  \to \R.
\]

We now define a homomorphism
\[
[ \cdot: \mathcal{Y} ] :  \widehat{\mathrm{Pic}}( \mathcal{M} ) \to \R,
\]
the \emph{arithmetic degree along $\mathcal{Y}$},   as the composition
\[
\widehat{\mathrm{Pic}}( \mathcal{M} ) \map{i^*} \widehat{\mathrm{Pic}}( \mathcal{Y} )  \map{\widehat{\deg} } \R .
\]

%%%%%%%%%%%%%%%%%%%%%%%%%%%%%%%%%%%

\subsection{Specialization to the normal bundle}

%%%%%%%%%%%%%%%%%%%%%%%%%%%%%%%%%%%

Fix a positive $m\in \Q$ and a $\mu\in L^\vee /L$, and denote by
\begin{equation}\label{our guy}
\mathcal{Z}(m,\mu) \to \mathcal{M}
\end{equation}
the stack obtained from (\ref{special divisor}) by base change from $\Z$ to $\co_\kk$.
 The fact that (\ref{our guy}) and  $i:\mathcal{Y}\to \mathcal{M}$
are not  closed immersions is a minor nuisance.  The following definition is introduced to address  technical
difficulties arising from this defect.

\begin{definition}
A \emph{sufficiently small \'etale open chart} of $\mathcal{M}$ is a scheme $U$
together with an \'etale morphism $U \to \mathcal{M}$ such that
\begin{enumerate}
\item
 the natural map $\mathcal{Z}(m,\mu)_U \to U$ restricts to  a closed immersion on every
 connected component $Z\subset \mathcal{Z}(m,\mu)_U$,
\item
the natural map $\mathcal{Y}_U \to U$ restricts to  a closed immersion
on every connected component $Y \subset \mathcal{Y}_U$ .
\end{enumerate}
\end{definition}

\begin{remark}
The stack  $\mathcal{M}$ admits a covering by sufficiently small \'etale open charts.
This is a consequence of \cite[Lemma 1.19]{Vistoli} and the fact that  $\mathcal{Z}(m,\mu) \to \mathcal{M}$
and $\mathcal{Y} \to \mathcal{M}$ are finite and unramified.  Note that these two morphisms are relatively representable, and
so for any $U \to \mathcal{M}$ as above  the pull-backs $\mathcal{Z}(m,\mu)_U$ and $\mathcal{Y}_U$   are actually schemes.
\end{remark}

\begin{remark}
When we think of $\mathcal{Z}(m,\mu)$ as a Cartier divisor  on $\mathcal{M}$, its pull-back  to $U$ is
simply  the sum of the connected components of $\mathcal{Z}(m,\mu)_U$,
each viewed as closed subscheme of $U$.
\end{remark}

\begin{remark}
In \S \ref{s:CM section} the notation $Y$ was used for the generic fiber of $\mathcal{Y}$.
Throughout \S \ref{s:main section} the notation $Y$ is only used for a connected component of $\mathcal{Y}_U$, and so no confusion should arise.
\end{remark}

Fix a sufficiently small $U\to \mathcal{M}$ and a connected component $Y\subset \mathcal{Y}_U$.
The smoothness of $\mathcal{Y}$ over $\co_\kk$ implies that $Y$ is a regular integral scheme of dimension one.
Let $I\subset \co_U$ be the ideal sheaf defining the closed immersion $Y\to U$, and recall that the
\emph{normal bundle} of $Y \subset U$ is the $\co_Y$-module
\[
N_Y U = \underline{\Hom}(I/I^2 , \co_Y).
\]
As $Y\to U$ may not be a regular immersion, the $\co_Y$-module $I/I^2$ may not be locally free.  However,
we will see below in Proposition \ref{prop:normal calc} that $N_Y U$ is locally free of rank $n$, and so defines a vector bundle $N_Y U \to Y$.
By taking the disjoint union over all connected components $Y\subset \mathcal{Y}_U$ we obtain a vector
bundle on $\mathcal{Y}_U$.  Letting $U$ vary over a cover of $\mathcal{M}$ by sufficiently small
\'etale open charts and glueing defines the \emph{normal bundle}
\begin{equation}\label{pi bundle}
\pi: N_\mathcal{Y} \mathcal{M} \to \mathcal{Y}.
\end{equation}

Recall the Hejhal-Poincar\'e series
\[
F_{m,\mu}(\tau) \in H_{1-\frac{n}{2}}(\omega_L)
\]
of  \S\ref{ss:green functions}.   The discussion of  \S \ref{ss:green functions}   endows the divisor
$
\mathcal{Z}(m,\mu) =   \mathcal{Z}(F_{m,\mu})
$
with a Green function
\[
\Phi_{m,\mu}=\Phi( F_{m,\mu} ,  \cdot ),
\]
which is defined at every point of $\mathcal{M}(\C)$, but is discontinuous at points of the divisor $\mathcal{Z}(m,\mu)$.
The corresponding arithmetic divisor is denoted
\begin{equation}\label{arithmetic divisor}
\widehat{\mathcal{Z}}(m,\mu) = (\mathcal{Z}(m,\mu), \Phi_{m,\mu}) \in \widehat{\mathrm{Div}}(\mathcal{M}).
\end{equation}

From the arithmetic divisor (\ref{arithmetic divisor}) we will construct a new arithmetic divisor
\begin{equation}\label{arithmetic specialization}
 \sigma(  \widehat{\mathcal{Z}}(m,\mu) ) \in \widehat{\mathrm{Div}}( N_\mathcal{Y} \mathcal{M} )
\end{equation}
called the \emph{specialization to the normal bundle}.  Fix a cover of $\mathcal{M}$
by sufficiently small \'etale open charts.  Given a   chart $U \to \mathcal{M}$ in the cover,
fix a connected component $Y\subset \mathcal{Y}_U$ and decompose $\mathcal{Z}(m,\mu)_U = \bigsqcup Z$
as the union of its connected components.  By refining our cover, we may assume that each
closed subscheme $Z \to U$ is defined by  a single equation $f_Z=0$.

\begin{lemma}\label{lem:first order vanishing}
For every connected component $Z\subset \mathcal{Z}(m,\mu)_U$ the intersection
\[
Y\cap Z := Y\times_U Z
\]
satisfies one of the following (mutually exclusive) properties:
\begin{enumerate}
\item
$Y\cap Z$ has dimension $0$, and $f_Z$ restricts to a nonzero section of  $\co_U /I$,
\item
$Y\cap Z=Y$, and  $f_Z$ defines a section of $I \subset \co_U$  with nonzero image under the natural map
$
I/I^2 \to \underline{\Hom} (N_U Y , \co_Y).
$
\end{enumerate}
\end{lemma}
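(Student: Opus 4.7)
The plan is as follows. Case (i) is immediate from integrality: $Y \not\subset Z$ is equivalent to the image $\bar{f}_Z \in \co_U/I = \co_Y$ being nonzero, and since $Y$, as a connected component of the finite \'etale $\co_\kk$-stack $\mathcal{Y}_U$, is integral of dimension one, $Y \cap Z = V(\bar{f}_Z)$ is a proper closed subscheme of $Y$ and hence zero-dimensional.

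For case (ii), where $f_Z \in I$, the goal is to show that $\bar{f}_Z$ has nonzero image in $\underline{\Hom}(N_Y U, \co_Y)$ under the natural (biduality) map from $I/I^2$. I will work formal-locally at a geometric point $y \in Y$ and repeat the Grothendieck--Messing computation from the proof of Proposition~\ref{prop:localstrZmmu}: after choosing an embedding $L \hookrightarrow L^\diamond$ into a maximal quadratic lattice self-dual at the residue characteristic of $y$, the special endomorphism $x \in V_\mu(\mathcal{A}_Y)$ defining $Z$ lifts to $x^\diamond = x + \widetilde{\mu}_p \in V(\mathcal{A}^\diamond_Y) \otimes \Z_{(p)}$, and $\bar{f}_Z$ modulo $I^2$ is identified (up to a unit) with the pairing $[\omega,\, t_{\crys}(x^\diamond)] \in I/I^2$, where $\omega$ is a local generator of $\bm{V}^{\diamond,1}_{\dR}$ and $t_{\crys}(x^\diamond)$ is the canonical crystalline extension of $t_{\dR}(x^\diamond)$ across the square-zero thickening $Y \hookrightarrow \Spec(\co_U/I^2)$. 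This pairing lies in $I/I^2$ because on $Y$ the class $t_{\dR}(x^\diamond)$ is orthogonal to $\bm{V}^{\diamond,1}_{\dR}$.

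To verify nonvanishing under biduality, I will pair against a normal direction using the local model. Proposition~\ref{prop:Mcheck}(i) provides an \'etale trivialization $\xi\colon L^\diamond \otimes \co_U \iso \bm{V}^\diamond_{\dR}$ making an \'etale neighborhood of $y$ in $U$ \'etale over $\mathrm{M}^{\rm loc}(L)$ and identifying $Y \subset U$, via~(\ref{eqn:ML0_easy}), with the subquadric $\mathrm{M}^{\rm loc}(L_0) \cong \Spec(\co_\kk)$ of $\mathrm{M}^{\rm loc}(L)$. A standard computation for a subquadric gives $N_Y U \cong \underline{\Hom}(\omega|_Y,\, \Lambda \otimes \co_Y)$ and realizes the pairing $\partial(\bar{f}_Z)$, for $\partial$ corresponding to $v \in \Hom(\omega|_y, \Lambda \otimes k(y))$, as $[v(\omega|_y),\, t_{\dR}(x^\diamond)|_y]$. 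Decomposing $x = (x_0, \lambda) \in V(\mathcal{E}_Y) \oplus (\mu_2 + \Lambda)$ via Proposition~\ref{prop:0_decomistionVmu}, and noting that $\End_{\overline{\co}_\kk}(\mathcal{E})_\Q = 0$ on the characteristic-zero generic fiber of $Y$ so that $x_0$ vanishes on all of $Y$ by rigidity of morphisms of abelian schemes, we find $Q(\lambda) = m > 0$ and in particular $\lambda \neq 0$. Using the orthogonal decompositions $L = L_0 \oplus \Lambda$ and $L^\diamond = L \oplus L^\perp$, the vector $v(\omega|_y) \in \Lambda$ is orthogonal to both $t_{\dR}(x_0)|_y \in L_0$ and $\widetilde{\mu}_p \in L^\perp$, so the pairing reduces to $[v(\omega|_y), \lambda]$; the non-degeneracy of the positive definite form on $\Lambda_\Q$, applied at the characteristic-zero generic point of $Y$, then furnishes a $v$ for which this is nonzero.

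The principal technical obstacle lies in reconciling the Grothendieck--Messing identification of $\bar{f}_Z$ modulo $(f_Z)^2$ in Proposition~\ref{prop:localstrZmmu} with its image modulo the coarser ideal $I^2$, together with the usual care required for divided-power structures at $p=2$; both difficulties are managed by transferring the whole computation to the local model of Proposition~\ref{prop:Mcheck}, where the answer reduces to an explicit pairing in $L^\diamond$.
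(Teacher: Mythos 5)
Your proof is correct for case~(i), which matches the paper. For case~(ii), however, you take a very different and much longer route than the paper, and the route you propose contains a gap that you yourself flag but do not close.

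The paper's argument is short: since $Y$ is integral of dimension one and flat over $\co_\kk$, the kernel of the biduality map $I/I^2 \to (I/I^2)^{\vee\vee}$ is exactly the $\co_Y$-torsion, so it suffices to check nonvanishing at the generic point of $Y$, and after base change to $\C$ the sheaf $I$ becomes the maximal ideal of a closed point of the smooth complex variety $U(\C)$. There biduality is an isomorphism, and the condition $f_Z \notin I^2$ reduces to the smoothness at that point of the divisor $Z(\C)$, which one reads off the complex uniformization~\eqref{eqn:Zmmu}: each local branch of $\mathcal{Z}(m,\mu)(\C)$ is a $\mathcal{D}(x)$, a hyperplane section of the Grassmannian, hence smooth. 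No Grothendieck--Messing theory is needed.

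Your plan instead works algebraically: you apply the crystalline deformation theory from the proof of Proposition~\ref{prop:localstrZmmu} to the thickening $Y \hookrightarrow \Spec(\co_U/I^2)$, reduce the pairing to $[v(\omega|_y), \lambda]$ via the decomposition $x=(x_0,\lambda)$ with $x_0=0$ by rigidity, and conclude from nondegeneracy of the form on $\Lambda$. This is essentially the content of Proposition~\ref{prop:taut rep}(1), which the paper proves \emph{after} this lemma using the Lemma~\ref{lem:normal 1}/\ref{lem:normal 2} machinery, and your observation that $x$ reduces to $\lambda\in\Lambda^\vee$ with $Q(\lambda)=m>0$ is a genuinely useful way to see why the normal pairing is nonzero. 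However, the key step — that $\bar{f}_Z \bmod I^2$ equals (up to a unit) the Grothendieck--Messing pairing $[\omega, t_{\crys}(x^\diamond)]$ computed over $\co_U/I^2$ rather than over $\co_U/J_x^2$ — is asserted, not proved, and you correctly identify it as the ``principal technical obstacle.'' Lemma~\ref{lem:localstrZmmualpha} identifies the pairing as a generator of $J_x/J_x^2 \cong \co_U/J_x$, which is \emph{not} the same module as $I/I^2$ (note $J_x\subsetneq I$ in case~(ii)); transporting that identity across the map $J_x/J_x^2 \to I/I^2$ requires an argument you have not supplied. The paper's proof avoids this difficulty entirely by the reduction to the complex fiber.
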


\begin{proof}
As $Y$ is an integral scheme of dimension one, its closed subscheme $Y\cap Z$ is either all of $Y$ or of dimension $0$.
Given this, the only nontrivial thing to check is that when $Y\cap Z=Y$, the image of $f_Z$ under
$I/I^2 \to \underline{\Hom} (N_U Y , \co_Y)$ is nonzero.  This can be checked after restricting to the complex fiber,
where it follows from the smoothness of the divisor  $Z(\C)$ on $U(\C)$.  This smoothness can be checked
using the complex uniformization (\ref{eqn:Zmmu}).
\end{proof}

 If $Y\cap Z$ has dimension $0$,  then restricting $f_Z$ to $Y$ and pulling back
via  $N_YU \to Y$ results in a  function $\sigma(f_Z)$ on $N_YU$, which is  homogeneous of degree zero.
On the other hand, if $Y\cap Z=Y$ then the image of $f_Z$ in $\underline{\Hom} (N_U Y , \co_Y)$ defines a
function $\sigma(f_Z)$   on $N_UY$, which is homogeneous of degree one.
In either case, define  an effective Cartier divisor on $N_Y U$ by
\begin{equation}\label{divisor specialization}
\sigma( \mathcal{Z}(m,\mu) ) = \sum_{Z \subset \mathcal{Z}(m,\mu)_U}  \mathrm{div}\big( \sigma(f_Z) \big).
\end{equation}

The function
\[
 \phi_{m,\mu} =  \Phi_{ m ,\mu} + \sum_{Z \subset \mathcal{Z}(m,\mu)_U}  \log|f_Z|^2,
\]
initially defined on the complex fiber of $U \smallsetminus \mathcal{Z}(m,\mu)_U$, extends smoothly to all of $U(\C)$.
By restricting to $\mathcal{Y}(\C)$ and then pulling back via (\ref{pi bundle}) we obtain a smooth  function
$\pi^* i^* \phi_{m,\mu}  $ on  $(N_YU)(\C)$.  Define $\sigma(\Phi_{ m ,\mu})$ by the relation
\begin{equation}\label{green specialization}
 \pi^* i^* \phi_{m,\mu}    = \sigma( \Phi_{ m ,\mu} ) + \sum_{Z \subset \mathcal{Z}(m,\mu)_U}  \log|  \sigma( f_Z ) |^2.
\end{equation}

The resulting arithmetic divisor
\[
 \big( \sigma(  \mathcal{Z}(m,\mu) )   ,\sigma( \Phi_{m ,\mu} )    \big)
\in \widehat{\mathrm{Div}} (N_YU)
\]
 does not depend on the  choice of $f_Z$'s used in its construction.
Using  \'etale descent these arithmetic divisors glue together to form the desired
arithmetic divisor (\ref{arithmetic specialization}).

\begin{proposition}\label{prop:poor mans Hu}
  The composition
\[
 \widehat{\mathrm{Pic}}(\mathcal{M} ) \map{i^*} \widehat{\mathrm{Pic}}(\mathcal{Y} )
 \map{\pi^*}  \widehat{\mathrm{Pic}}( N_\mathcal{Y} \mathcal{M} )
\]
sends the  metrized line bundle  defined by $\widehat{\mathcal{Z}}(m,\mu)$ to the metrized
line bundle defined by  $\sigma( \widehat{\mathcal{Z}}(m,\mu))$
\end{proposition}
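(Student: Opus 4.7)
The plan is to prove the result \'etale-locally on $\mathcal{M}$ and then glue. Fix a sufficiently small \'etale chart $U\to\mathcal{M}$, a connected component $Y\subset\mathcal{Y}_U$, and, for each connected component $Z\subset\mathcal{Z}(m,\mu)_U$, a local defining equation $f_Z=0$ as in the setup preceding Lemma \ref{lem:first order vanishing}. I will produce a canonical local isometry between the two metrized line bundles restricted to $N_YU$, verify that it is independent of the choice of defining equations, and then conclude by \'etale descent.

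First, trivialize $\co(\mathcal{Z}(m,\mu))|_U$ by a nowhere-vanishing section $\tau$ in which the canonical rational section $s=1$ takes the form $s=(\prod_Z f_Z)\cdot\tau$. The Green-function condition $-\log\|s\|^2=\Phi_{m,\mu}$ then translates to $-\log\|\tau\|^2=\phi_{m,\mu}$, which is smooth. Pulling back by $\pi\circ i\colon N_YU\to U$ yields a trivializing section $\pi^*i^*\tau$ of $\pi^*i^*\co(\mathcal{Z}(m,\mu))$, and the metric identity
\[
-\log\|\pi^*i^*\tau\|^2=\pi^*i^*\phi_{m,\mu}.
\]

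Next, describe $\co(\sigma(\mathcal{Z}(m,\mu)))$ in analogous terms. By the construction \eqref{divisor specialization}, the functions $\sigma(f_Z)$ on $N_YU$---which are degree-$0$ or degree-$1$ homogeneous depending on which case of Lemma \ref{lem:first order vanishing} applies---are local defining equations for $\sigma(\mathcal{Z}(m,\mu))$. Thus $\co(\sigma(\mathcal{Z}(m,\mu)))$ admits a trivialization $\tau'$ in which the canonical section equals $(\prod_Z\sigma(f_Z))\cdot\tau'$, and the defining equation \eqref{green specialization} for $\sigma(\Phi_{m,\mu})$ was chosen precisely so that
\[
-\log\|\tau'\|^2=\sigma(\Phi_{m,\mu})+\sum_Z\log|\sigma(f_Z)|^2=\pi^*i^*\phi_{m,\mu}.
\]

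Since both $\pi^*i^*\tau$ and $\tau'$ are nowhere-vanishing sections of their respective line bundles with the same norm, sending $\pi^*i^*\tau\mapsto\tau'$ defines an isometry of metrized line bundles on $N_YU$. The verification that this isometry is independent of the choice of $f_Z$'s is routine: replacing $f_Z$ by $u_Zf_Z$ for a unit $u_Z\in\co_U^\times$ multiplies $\tau$ by $u_Z^{-1}$ and $\tau'$ by $\sigma(u_Z)^{-1}=\pi^*i^*(u_Z)^{-1}$, and these scalars agree upon pullback to $N_YU$, so the isomorphism is unchanged. Canonicity then allows the local isometries to glue via \'etale descent to a global isometry. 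I do not anticipate any real obstacle; the content is essentially bookkeeping over the two cases of Lemma \ref{lem:first order vanishing}, but the definitions of $\sigma(\mathcal{Z}(m,\mu))$ and $\sigma(\Phi_{m,\mu})$ were arranged so that the same formulas apply uniformly in both cases, making the comparison tautological.
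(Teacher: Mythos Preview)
Your proof is correct and takes essentially the same approach as the paper. The paper packages the argument slightly differently---it defines a section $\sigma(s)=\bigotimes_Z\sigma(f_Z)\cdot\pi^*i^*(f_Z^{-1}s_Z)$ of $\pi^*i^*\co(\mathcal{Z}(m,\mu))$ and checks directly that $\widehat{\mathrm{div}}(\sigma(s))=\sigma(\widehat{\mathcal{Z}}(m,\mu))$---but this is exactly your map $\pi^*i^*\tau\mapsto\tau'$ read in the other direction, since $\sigma(s)=(\prod_Z\sigma(f_Z))\cdot\pi^*i^*\tau$ maps to $(\prod_Z\sigma(f_Z))\cdot\tau'=s'$.
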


\begin{proof}
Let $U\to \mathcal{M}$ be a sufficiently small \'etale open chart, and write
\[\mathcal{Z}(m,\mu)_U = \bigsqcup Z\] as the union of its connected components.
Each connected component determines a line bundle $\co(Z)=f_Z^{-1} \co_U$, and
\[
\mathcal{Z}(m,\mu)_U\iso \bigotimes \co(Z)
\]
as line bundles on $U$.

Let $\pi^*i^*\co(Z)$ be the pullback of $\co(Z)$ via
$
N_YU \map{\pi} Y \map{i} U.
$
If we denote by  $s_Z$ the constant function  $1$ on $U$ viewed as a section of $\co(Z)$,
then
\[
\sigma(s_Z) =  \sigma(f_Z)  \pi^* i^*(f_Z^{-1} s_Z)
\]
is a nonzero section of  $\pi^*i^*\co(Z)$, and does not depend on the choice of $f_Z$.
The tensor product $\sigma(s)=\otimes \sigma(s_Z)$ over all $Z$ is a global section of the metrized  line bundle
$\pi^* i^* \widehat{\mathcal{Z}}(m,\mu)_U$,  and these section glue together over an \'etale cover of
$\mathcal{M}$ to  define a global section $\sigma(s)$ of $\pi^* i^* \widehat{\mathcal{Z}}(m,\mu)$.

Tracing through the definitions  shows that
$
\widehat{\mathrm{div}}(\sigma(s)) =  \sigma( \widehat{\mathcal{Z}}(m,\mu) ),
$
as arithmetic divisors, and so
\[
\pi^* i^* \widehat{\mathcal{Z}}(m,\mu) \iso  \sigma( \widehat{\mathcal{Z}}(m,\mu) )
\]
as metrized line bundles.
\end{proof}

\begin{remark}
The construction (\ref{arithmetic specialization}) and Proposition \ref{prop:poor mans Hu} will be
our main tools for computing improper intersections.  The technique  is  based on the thesis  of  J.~Hu \cite{Hu}, which
  reconstructs the arithmetic intersection theory of Gillet-Soul\'e \cite{GS} using Fulton's method of
deformation to the normal cone.

Letting $C_\mathcal{Y}\mathcal{M}$ denote the normal cone of
$\mathcal{Y}$ in $\mathcal{M}$, Hu\footnote{Hu works in greater generality, and allows for the cycles $\mathcal{Z}$ and $\mathcal{Y}$ to have arbitrary codimension.
} constructs a \emph{specialization to the normal cone} map
\[
\sigma: \widehat{\mathrm{Div}}(\mathcal{M}) \to  \widehat{\mathrm{Div}}(C_\mathcal{Y}\mathcal{M}),
\]
and shows that
\[
[\widehat{\mathcal{Z}} : \mathcal{Y} ] = [ \sigma(\widehat{\mathcal{Z}}) : \mathcal{Y} ]
\]
for any arithmetic divisor $\widehat{\mathcal{Z}}$.  Here the intersection pairing on the right is
defined using the canonical closed immersion $\mathcal{Y} \to C_\mathcal{Y}\mathcal{M}$.

The normal bundle $N_\mathcal{Y}\mathcal{M}$ is essentially a first order approximation to
$C_\mathcal{Y}\mathcal{M}$, and our construction (\ref{arithmetic specialization}) and Proposition \ref{prop:poor mans Hu}
amount to truncating Hu's theory to first order.  In order to use this to compute arithmetic intersections,
it is essential to know that our divisors satisfy Lemma \ref{lem:first order vanishing}, which says, loosely speaking,
that the functions $f_Z$ vanish along $Y$ to at most order $1$.  This guarantees that the specialization (\ref{arithmetic specialization})
to the normal bundle does not lose too much information about the divisor $\mathcal{Z}(m,\mu)$.

To be clear: we are exploiting here a special property of the divisors $\mathcal{Z}(m,\mu)$ and their relative
positions with respect to $\mathcal{Y}$.  It would not be very useful
to define the specialization  (\ref{arithmetic specialization}) for an arbitrary divisor on $\mathcal{M}$,
as truncating Hu's specialization map to first order would lose essential higher order information.
\end{remark}

%%%%%%%%%%%%%%%%%%%%%%%%%%%%%%%%%%%%%%%%%

\subsection{The cotautological bundle}
\label{ss:cotaut}

%%%%%%%%%%%%%%%%%%%%%%%%%%%%%%%%%%%%%%%%%

Recall from (\ref{eq:bmV}) the vector bundle $\bm{V}_{\dR}$ on $\mathcal{M}$ with its isotropic line $\bm{V}^1_{\dR}$,
and from  (\ref{eq:bmVn=0}) the line bundle  $\bm{V}_{0,\dR}$ on $\mathcal{Y}$.   Proposition \ref{prop:0_functorial} implies that  $ \bm{V}_{0, {\dR}}   \subset    i^*\bm{V}_{\dR} $
in such a way that     $ \bm{V}_{0, {\dR}}^1   =  i^*\bm{V}^1_{\dR}$.

Denote by $\bm{T}=( \bm{V}^1_{\dR})^\vee$ and  $\bm{T}_0=( \bm{V}^1_{0,\dR})^\vee$ the
cotautological bundles on $\mathcal{M}$ and $\mathcal{Y}$, respectively, so that $\bm{T}_0=i^*\bm{T}$.
Under the complex uniformization
 $
 \Gamma_g \backslash \mathcal{D} \to \mathcal{M}(\C)
 $
 of (\ref{uniformization})   the fiber of $\bm{V}^1_\dR$  at  $z\in \mathcal{D}$ is identified with the tautological line
$\C z \subset V_\C$, which we endow  with the metric
\begin{equation}\label{taut metric}
|| z ||^2 =\frac{-1}{4\pi e^\gamma} [z , \overline{z}].
\end{equation}
Here, as before,   $\gamma = -\Gamma'(1)$ is the Euler-Mascheroni constant.  Dualizing, we obtain a  metric on the
cotautological bundle, and the resulting metrized line bundle is denoted
\[
\widehat{\bm{T}}   \in \widehat{\mathrm{Pic}}(\mathcal{M}) .
\]
We endow $\bm{T}_0$ with the unique metric for which
\[
\widehat{\bm{T}}_0 \iso i^*\widehat{\bm{T}}.
\]

\begin{proposition} The metrized cotautological bundle satisfies
\[
\frac{w_\kk}{h_\kk} \cdot [ \widehat{\bm{T}} : \mathcal{Y} ]
= 2 \frac{ L'(\chi_\kk,0) }{ L(\chi_\kk , 0)  } +
 \log \left|  \frac{   d_\kk  }{ 4 \pi } \right|  - \gamma .
\]
\end{proposition}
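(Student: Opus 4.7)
The plan is to reduce the identity to the Chowla--Selberg formula, which computes the Faltings height of CM elliptic curves in terms of $L'(\chi_\kk,0)/L(\chi_\kk,0)$. By functoriality of the cotautological bundle under $i\colon\mathcal{Y}\to\mathcal{M}_{\co_\kk}$, we have $\widehat{\bm{T}}_0 = i^*\widehat{\bm{T}}$, so $[\widehat{\bm{T}}:\mathcal{Y}]=\widehat{\deg}(\widehat{\bm{T}}_0)$. The isomorphism~(\ref{cotautonCM}) yields $\bm{T}_0\iso\omega_\mathcal{E}^{\otimes 2}\otimes\bm{L}_0^{-1}$, where $\omega_\mathcal{E}=\Lie(\mathcal{E})^\vee$ is the Hodge bundle of the universal CM elliptic curve and $\bm{L}_0=\co_\mathcal{Y}\otimes_{\co_\kk}L_0$. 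Writing the arithmetic degree as $\widehat{\deg}(\widehat{\bm{T}}_0) = 2\,\widehat{\deg}(\widehat{\omega}_\mathcal{E}) - \widehat{\deg}(\widehat{\bm{L}}_0)$ (with compatible metrics) then reduces matters to (i) a precise metric comparison, (ii) the Faltings height of $\mathcal{E}$, and (iii) a direct calculation of $\widehat{\deg}(\widehat{\bm{L}}_0)$.

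The key step is the metric comparison at a complex point $y\in\mathcal{Y}(\C)$ corresponding to $\mathcal{E}_y=\C/\mathfrak{a}$. Parametrizing $L_{0,\Q}\iso\kk$ with quadratic form $Q(v)=-v\overline{v}/\mathrm{N}(\mathfrak{a})$ and using the natural decomposition $V_{0,\C}=\kk\otimes_\Q\C\iso\C\oplus\C$, one verifies that the isotropic line $\bm{V}^1_{0,\dR,y}=\Fil^1V_{0,\C}$ is the eigenspace on which left multiplication by $\kk$ acts through the conjugate embedding $\kk\to\C$, in accordance with~(\ref{taut simplify}).  A direct calculation of $[z_0,\overline{z}_0]$ for a natural generator $z_0$ yields an explicit value for $\|z_0\|^2 = -[z_0,\overline{z}_0]/(4\pi e^\gamma)$; comparing this with the Faltings norm of a generator of $\omega_{\mathcal{E}_y}$, and using the area formula $\mathrm{vol}(\mathcal{E}_y(\C))=\mathrm{N}(\mathfrak{a})\sqrt{|d_\kk|}/2$, produces the explicit isometry between $\widehat{\bm{T}}_0$ and $\widehat{\omega}_\mathcal{E}^{\otimes 2}\otimes\widehat{\bm{L}}_0^{-1}$ (with $\widehat{\omega}_\mathcal{E}$ carrying the Faltings metric and $\bm{L}_0$ a natural Hermitian structure determined by this comparison).

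We then apply the Chowla--Selberg formula in the form, due to Colmez, that $h_{\rm Falt}(\mathcal{E}_\kk)=-\tfrac{1}{2}\Lambda'(\chi_\kk,0)/\Lambda(\chi_\kk,0)$, where $\Lambda(\chi_\kk,s)=(|d_\kk|/\pi)^{s/2}\Gamma(\tfrac{s+1}{2})L(\chi_\kk,s)$. Taking logarithmic derivatives at $s=0$ and using $\Gamma'(1/2)/\Gamma(1/2)=-\gamma-2\log 2$ produces the combination $\tfrac{L'(\chi_\kk,0)}{L(\chi_\kk,0)}+\tfrac12\log(|d_\kk|/4\pi)-\tfrac{\gamma}{2}$. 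Using $\sum_{y\in\mathcal{Y}(\C)}1/\#\Aut(y) = h_\kk/w_\kk$ and that the Faltings height is constant on CM elliptic curves by $\co_\kk$, we obtain $\tfrac{w_\kk}{h_\kk}\widehat{\deg}(\widehat{\omega}_\mathcal{E}) = 2h_{\rm Falt}(\mathcal{E}_\kk)$. The term $\widehat{\deg}(\widehat{\bm{L}}_0)$ is elementary, since $\bm{L}_0$ is a twist of a ``constant'' line bundle and its arithmetic degree comes only from the chosen archimedean normalization; assembling the pieces produces the stated formula.

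The main obstacle is the metric comparison in the second paragraph. One must carefully track how the polarization $\psi_\delta$ on the Kuga--Satake abelian variety $\mathcal{A}_0$ restricts to the natural polarization on $\mathcal{E}=\mathcal{A}_0^+$ under the CM structure, and how the symmetric bilinear form on $V_0$ (used to define the cotautological metric via~(\ref{taut metric})) is translated via~(\ref{taut simplify}) into the Faltings norm on $\omega_\mathcal{E}$ (defined via integration on $\mathcal{E}(\C)$). The normalization constant $(4\pi e^\gamma)^{-1}$ appearing in~(\ref{taut metric}) is chosen precisely so that the $-\gamma$ on the right-hand side of the proposition matches the Euler--Mascheroni constant coming from the logarithmic derivative of $\Gamma$ at $1/2$ in the Chowla--Selberg expansion; the remaining $\log|d_\kk|$ and $-\log(4\pi)$ contributions then line up after the explicit computation of $\widehat{\deg}(\widehat{\bm{L}}_0)$.
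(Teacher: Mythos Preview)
Your approach is essentially the paper's: decompose $\widehat{\bm{T}}_0$ via the isomorphism~(\ref{cotautonCM}), invoke Chowla--Selberg for the elliptic-curve factor, compute the degree of the constant factor $\widehat{\bm{L}}_0$ directly, and check that the metrics match. The paper's proof is exactly this outline, citing \cite[Theorem~6.4]{BHY} for the metric comparison and quoting the Chowla--Selberg formula in the form
\[
\frac{w_\kk}{h_\kk}\,\widehat{\deg}\,\widehat{\Lie}(\mathcal{E})=\frac{L'(\chi_\kk,0)}{L(\chi_\kk,0)}+\log(2\pi)+\tfrac{1}{2}\log|d_\kk|,
\]
then endowing $\bm{L}_0$ with the metric $\|x\|^2=-16\pi^3 e^\gamma\,Q(x)$ so that (\ref{cotautonCM}) becomes an \emph{isometry} $\widehat{\bm{T}}_0\iso\widehat{\Lie}(\mathcal{E})^{\otimes 2}\otimes\widehat{\bm{L}}_0$.

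Two corrections to your write-up. First, you have inverted~(\ref{cotautonCM}): it says $\bm{T}_0\iso\Lie(\mathcal{E})^{\otimes 2}\otimes\bm{L}_0$, which in your notation is $\omega_\mathcal{E}^{\otimes(-2)}\otimes\bm{L}_0$, not $\omega_\mathcal{E}^{\otimes 2}\otimes\bm{L}_0^{-1}$. With your sign convention the computation would produce the negative of the desired quantity; in particular the $2L'(\chi_\kk,0)/L(\chi_\kk,0)$ term would come out with the wrong sign. Second, the polarization $\psi_\delta$ plays no role in the metric comparison: the metric on $\bm{T}_0$ is defined directly by~(\ref{taut metric}) on the tautological line $\C y\subset V_{0,\C}$, and the Faltings metric on $\Lie(\mathcal{E})$ uses only the intrinsic principal polarization on~$\mathcal{E}$. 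The link between them is the explicit isomorphism~(\ref{taut simplify}); tracing $\psi_\delta$ through the Kuga--Satake construction is an unnecessary detour.
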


\begin{proof}
This is virtually identical to the proof of \cite[Theorem 6.4]{BHY}, and so we only give an outline.
Let $\mathcal{E}\to \mathcal{Y}$ be the universal $\co_\kk$-elliptic curve.  If we endow the $\co_{\mathcal{Y}}$-module
$\Lie(\mathcal{E})$ with the Faltings metric, the Chowla-Selberg formula implies
\[
\frac{w_\kk}{h_\kk} \cdot \widehat{\deg} \ \widehat{\Lie}(\mathcal{E})
=    \frac{ L'(\chi_\kk,0) }{ L(\chi_\kk , 0)  }  +   \log(2\pi)+  \frac{ 1}{2 } \log |d_\kk|  .
\]

Now view $L_0$ as an $\co_\kk$-module through left multiplication,
and endow the corresponding line bundle $\bm{L}_0$ on $\Spec(\co_\kk)$ with the metric 
\[
|| x ||^2 = -16\pi^3 e^\gamma \cdot Q(x).
\]
 Pulling back $\bm{L}_0$ to $\mathcal{Y}$ via the
structure morphism   yields a metrized line bundle $\widehat{\bm{L}}_0$ on  $\mathcal{Y}$ satisfying
\[
\frac{w_\kk}{h_\kk} \cdot \widehat{\deg}\ \widehat{\bm{L}}_0 = - \log(16 \pi^3e^\gamma) .
\]

By keeping track of the metrics, the isomorphism (\ref{cotautonCM}) defines an isomorphism of metrized line bundles
\[
\widehat{\bm{T}}_0 \iso \widehat{\Lie}( \mathcal{E} )^{\otimes 2}
\otimes  \widehat{\bm{L}}_0
\]
on $\mathcal{Y}$, and the claim follows.
\end{proof}

Comparing with  Proposition \ref{prop:eisenstein coeff} proves:

\begin{corollary}\label{cor:cotaut degree}
The coefficient $a^+_{L_0}(0) \in \mathfrak{S}_{L_0}^\vee$ satisfies
\[
a^+_{L_0}(0,\mu) = -  \frac{w_\kk} {h_\kk}  \cdot
\begin{cases}
 [ \widehat{\bm{T}} : \mathcal{Y} ]  & \text{if $\mu=0$} \\
 0 & \text{otherwise}
 \end{cases}
\]
for every $\mu\in L_0^\vee/L_0$.
\end{corollary}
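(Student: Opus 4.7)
The plan is to derive the corollary by direct comparison of two explicit formulas already in hand, namely Schofer's expression for $a^+_{L_0}(0)$ (Proposition \ref{prop:eisenstein coeff}(i)) and the formula for $[\widehat{\bm{T}}:\mathcal{Y}]$ just established in the preceding proposition. There is no new geometric or analytic input required; the content is purely bookkeeping.

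First I would evaluate Schofer's formula on the characteristic function $\varphi_\mu$ of the coset $\mu\in L_0^\vee/L_0$. Since $\varphi_\mu(0)$ equals $1$ when $\mu=0$ and $0$ otherwise, the vanishing statement for $\mu\neq 0$ is immediate, and in the case $\mu=0$ we obtain
\[
a^+_{L_0}(0,0)=\gamma+\log\left|\frac{4\pi}{d_\kk}\right|-2\frac{L'(\chi_\kk,0)}{L(\chi_\kk,0)}.
\]

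Next I would compare this with the formula from the preceding proposition, which reads
\[
\frac{w_\kk}{h_\kk}\cdot[\widehat{\bm{T}}:\mathcal{Y}]=2\frac{L'(\chi_\kk,0)}{L(\chi_\kk,0)}+\log\left|\frac{d_\kk}{4\pi}\right|-\gamma.
\]
Since $\log|d_\kk/(4\pi)|=-\log|4\pi/d_\kk|$, the right-hand side of this equality is the negative of $a^+_{L_0}(0,0)$, which yields the identity
\[
a^+_{L_0}(0,0)=-\frac{w_\kk}{h_\kk}\cdot[\widehat{\bm{T}}:\mathcal{Y}],
\]
completing the $\mu=0$ case and hence the corollary.

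There is no substantial obstacle; the only thing to be careful about is that one matches conventions correctly, in particular the sign of $\log|d_\kk/(4\pi)|$ versus $\log|4\pi/d_\kk|$ and the fact that $\gamma=-\Gamma'(1)$ is used with a consistent sign throughout. Given that Schofer's formula here is the \emph{corrected} one (as emphasized in the footnote preceding Proposition \ref{prop:eisenstein coeff}), no further adjustment is needed.
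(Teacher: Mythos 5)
Your proposal is correct and matches the paper's proof exactly: the paper likewise deduces the corollary by a direct comparison of Schofer's formula in Proposition~\ref{prop:eisenstein coeff}(i) with the formula for $\frac{w_\kk}{h_\kk}\cdot[\widehat{\bm{T}}:\mathcal{Y}]$ established in the preceding proposition, noting the sign flip $\log|d_\kk/(4\pi)|=-\log|4\pi/d_\kk|$ and the vanishing of $\varphi_\mu(0)$ for $\mu\neq 0$.
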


%%%%%%%%%%%%%%%%%%%%%%%%%%%%%%%%%%%%%%%%

\subsection{The normal bundle at  CM points}\label{ss:normal bundle}

%%%%%%%%%%%%%%%%%%%%%%%%%%%%%%%%%%%%%%%%

Consider the orthogonal complement $(\bm{V}^1_{0,\dR})^{\perp}\subset i^*\bm{V}_{\dR}$.  Since $\mathcal{Y}$ is regular of dimension $1$, this complement is torsion-free and is therefore a vector sub-bundle of $i^*\bm{V}_{\dR}$ of co-rank $1$. (Note that the corresponding assertion over $\mathcal{M}$ may not be true; the orthogonal complement of $\bm{V}^1_{\dR}$ in $\bm{V}_{\dR}$ will in general not be a vector bundle unless $L$ is unimodular.)

Set $\Fil^0i^*\bm{V}_{\dR}=(\bm{V}^1_{0,\dR})^{\perp}$ and $\mathrm{gr}^0_{\Fil}i^*\bm{V}_{\dR}=\Fil^0i^*\bm{V}_{\dR}/\bm{V}^1_{0,\dR}$.

\begin{proposition}\label{prop:normal calc}
There is a canonical isomorphism of $\co_\mathcal{Y}$-modules
\[
N_\mathcal{Y}\mathcal{M}  \iso  \bm{T}_0 \otimes  \mathrm{gr}^0_{\Fil}i^*\bm{V}_{\dR}  .
\]
In particular, the normal bundle  is locally free of rank $n$, and is therefore relatively representable over $\mathcal{Y}$.
\end{proposition}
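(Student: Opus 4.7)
The plan is to combine the Kodaira--Spencer isomorphism for $\mathcal{M}$ with the \'etaleness of $\mathcal{Y}\to\Spec\co_\kk$.

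The key input is that $\mathcal{Y}\to\Spec\co_\kk$ is \'etale (cf.\ \S\ref{s:CM Shimura variety integral}), so $\Omega_{\mathcal{Y}/\co_\kk}=0$ and $T_{\mathcal{Y}/\co_\kk}=0$.  Working \'etale-locally, where $\mathcal{Y}\to\mathcal{M}_{\co_\kk}$ becomes a closed immersion, the conormal exact sequence reduces to $I/I^{2}\iso i^{*}\Omega_{\mathcal{M}_{\co_\kk}/\co_\kk}$, which dualizes to a canonical identification $N_{\mathcal{Y}}\mathcal{M}\iso i^{*}T_{\mathcal{M}_{\co_\kk}/\co_\kk}$.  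The Gauss--Manin connection on $\bm{V}_{\dR}$ over $\mathcal{M}_{\co_\kk}$, together with Griffiths transversality, yields a Kodaira--Spencer map
\[
\mathrm{KS}\colon T_{\mathcal{M}_{\co_\kk}/\co_\kk}\longrightarrow\underline{\Hom}\bigl(\bm{V}^{1}_{\dR},(\bm{V}^{1}_{\dR})^{\perp}/\bm{V}^{1}_{\dR}\bigr),
\]
which is an isomorphism wherever $\mathcal{M}$ is smooth: the local-model theorem (Proposition~\ref{prop:Mcheck}) identifies $\mathcal{M}$ \'etale-locally with $\mathrm{M}^{\rm loc}(L)$, whose relative tangent sheaf is precisely this $\underline{\Hom}$.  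Pulling back along $i$ and invoking Proposition~\ref{prop:0_functorial}(iii) to identify $i^{*}\bm{V}^{1}_{\dR}=\bm{V}^{1}_{0,\dR}$ and $i^{*}(\bm{V}^{1}_{\dR})^{\perp}=(\bm{V}^{1}_{0,\dR})^{\perp}$, the composition $i^{*}\mathrm{KS}\circ(N_{\mathcal{Y}}\mathcal{M}\iso i^{*}T_{\mathcal{M}_{\co_\kk}/\co_\kk})$ gives the desired canonical isomorphism
\[
N_{\mathcal{Y}}\mathcal{M}\;\iso\;\underline{\Hom}\bigl(\bm{V}^{1}_{0,\dR},(\bm{V}^{1}_{0,\dR})^{\perp}/\bm{V}^{1}_{0,\dR}\bigr)\;=\;\bm{T}_{0}\otimes\mathrm{gr}^{0}_{\Fil}i^{*}\bm{V}_{\dR}.
\]
The right-hand side is locally free of rank $n$ because, as recorded in the preamble to the proposition, $(\bm{V}^{1}_{0,\dR})^{\perp}$ is a vector sub-bundle of $i^{*}\bm{V}_{\dR}$ of co-rank $1$.

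The chief obstacle is to handle primes $p\mid\mathrm{disc}(L)$ where $\mathcal{M}$ may fail to be smooth over $\Z$; there both the exactness of the conormal sequence and the bijectivity of $\mathrm{KS}$ as stated above become more delicate.  However, both sides of the desired iso are torsion-free coherent sheaves on the regular $1$-dimensional scheme $\mathcal{Y}$, hence locally free of rank $n$, and the canonical map above is already an isomorphism at the generic point.  Extending the iso across closed points sitting over ramified primes of $\kk$ reduces, via the local-model description of $\mathcal{M}$, to an explicit coordinate computation (of the type carried out in Proposition~\ref{prop:localstrZmmu}) showing that the torsion summand that $I/I^{2}$ acquires there--morally arising from the non-smoothness of $\Spec\co_\kk\otimes_{\Z}\co_\kk\to\Spec\co_\kk$--is annihilated upon dualizing into the torsion-free ring $\co_{\mathcal{Y}}$, and that the resulting rank-$n$ quotient is canonically identified with $\bm{T}_{0}\otimes\mathrm{gr}^{0}_{\Fil}i^{*}\bm{V}_{\dR}$.
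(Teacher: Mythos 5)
Your route is close in spirit to the paper's: you identify $N_\mathcal{Y}\mathcal{M}$ with $i^*T_{\mathcal{M}_{\co_\kk}/\co_\kk}$ via formal \'etaleness of $\mathcal{Y}/\co_\kk$ (this is exactly the content of the paper's Lemma~\ref{lem:normal 1}, since $\mathrm{Def}_i$ is canonically $i^*T_{\mathcal{M}_{\co_\kk}/\co_\kk}$), and then compute that tangent sheaf via Kodaira--Spencer, which is what the paper's Lemma~\ref{lem:normal 2} does by hand; both proofs ultimately reduce to the local model $\mathrm{M}^{\mathrm{loc}}(L_0)\hookrightarrow\mathrm{M}^{\mathrm{loc}}(L)$.

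The genuine gap is in your last paragraph. A morphism of locally free $\co_\mathcal{Y}$-modules of the same rank that is an isomorphism at the generic point need \emph{not} be an isomorphism: multiplication by $p$ on $\co_\kk^{\oplus n}$ is the basic counterexample. So ``both sides are torsion-free of rank $n$ and the map is generically an iso'' does not let you extend the isomorphism across the closed points lying over $\mathrm{disc}(L)$. You defer the verification at those points to an unwritten ``explicit coordinate computation,'' but that computation \emph{is} the proposition there, and cannot be absorbed by the torsion-freeness remark. The paper closes the gap by checking directly, over the local model, that the natural map to $\underline{\Hom}(\bm{L}_0^1,\bm{L}^0/\bm{L}_0^1)$ is an isomorphism uniformly at all primes, without any appeal to the generic fiber. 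A secondary issue: the target of your KS map on $\mathcal{M}_{\co_\kk}$, namely $\underline{\Hom}\bigl(\bm{V}^1_{\dR},(\bm{V}^1_{\dR})^\perp/\bm{V}^1_{\dR}\bigr)$, is not a bundle over $\mathcal{M}_{\co_\kk}$ when $L$ is not self-dual (the paper flags this in the preamble); $(\bm{V}^1_{0,\dR})^\perp$ is only guaranteed to be a subbundle after restricting to the regular one-dimensional $\mathcal{Y}$, and the identity $i^*\bigl((\bm{V}^1_{\dR})^\perp\bigr)=(\bm{V}^1_{0,\dR})^\perp$ you invoke is not part of Proposition~\ref{prop:0_functorial}(iii) and needs its own justification. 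This is why the paper formulates and proves the isomorphism intrinsically on $\mathcal{Y}$ (via $\mathrm{Def}_i$), rather than descending a KS isomorphism from $\mathcal{M}$.
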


\begin{proof}
Fix a sufficiently small \'etale open chart $U\to \mathcal{M}$, and let $Y\subset \mathcal{Y}_U$
be a connected component.
Let $Y[\varepsilon]=Y\times_\Z \Spec (\Z[\varepsilon] )$ be the scheme of dual numbers relative to $Y$,
and let  ${\rm Def}_i$ be the Zariski sheaf of infinitesimal deformations of $i:Y\to U$.
By this we mean  the sheaf associating to an open subscheme $S\subset Y$ the set
${\rm Def}_i(S)$ of morphisms $j_S : S[\varepsilon] \to U$ such that $j_S \vert_S= i_S$.
Here we are writing $i_S$ for the inclusion $S\hookrightarrow Y\hookrightarrow U$.

 \begin{lemma}\label{lem:normal 1}
There is a canonical isomorphism of functors
\begin{equation}\label{eq:tau}
 {\rm Def}_i \iso N_{Y} U.
 \end{equation}
 \end{lemma}

 \begin{proof}
The morphism (\ref{eq:tau}) is defined in the usual way: every $j_S \in {\rm Def}_i(S)$    defines a map
 $ \co_{U}/I^2\to \co_{S}[\varepsilon]$, whose restriction
 \[
 I/I^2 \to \co_{S}\cdot \varepsilon\iso \co_S
 \]
 defines an element of $(N_Y U)(S)$.

To construct an inverse to (\ref{eq:tau}), first  recall that $\mathcal{Y}$ is formally \'etale over $\co_\kk$.  This implies that the surjection   $\co_U/I^2 \to
\co_Y$ admits a canonical section, and so
\[
\co_U / I^2 \iso \co_Y \oplus I/I^2
\]
as sheaves of rings on $\co_Y$.  Any $S$-point $s\in (N_YU) (S)$ therefore defines a homomorphism
\[
\co_U / I^2  \iso \co_Y \oplus I/I^2 \map{1\oplus s \cdot \epsilon} \co_S [\epsilon] ,
\]
which in turn determines a deformation $j_S:S[\epsilon] \to U$ of $i_S$.
\end{proof}

\begin{lemma}\label{lem:normal 2}
There is a canonical isomorphism 
\[
 {\rm Def}_i  \iso  ( \bm{V}^1_{0,\dR} )^\vee\otimes_{\co_{Y}}\mathrm{gr}^0_{\Fil}\left( i^* \bm{V}_{\dR}  \right)
\]
of sheaves of sets on $Y$.
\end{lemma}

\begin{proof}
Let $S\subset Y$ be an open subscheme, and let $j_S \in {\rm Def}_i(S)$.
The pull-back $ j_S^*\bm{V}_{\dR}$  is a locally free sheaf of $\co_{S[\varepsilon]}$-modules of rank $n+2$,
endowed with an $\co_{S[\varepsilon]}$-submodule  $j_S^* \bm{V}_{{\dR}}^{1}$ of rank one,
locally a  direct summand.    As $\bm{V}_{\dR}$ is a vector bundle with integrable connection,  the  retraction
  $S[\varepsilon]\to S$ induces  a canonical isomorphism
 \[
j_S^*  \bm{V}_{\dR}\iso  i_S^* \bm{V}_{\dR}  \otimes_{\co_{S}} \co_{S[\varepsilon]}.
 \]

 We claim that, under this isomorphism, $j_S^*\bm{V}^1_{\dR}$ maps into $\Fil^0 i_S^*\bm{V}_{\dR}\otimes_{\co_S}\co_{S[\varepsilon]}$;
 in other words, the image of $j_S^*\bm{V}^1_{\dR}$ is orthogonal to $i_S^*\bm{V}^1_{\dR}\otimes_{\co_S}\co_{S[\varepsilon]}$. This is easily deduced from the fact that both this image and $i_S^*\bm{V}^1_{\dR}\otimes_{\co_S}\co_{S[\varepsilon]}$ are isotropic lines lifting $i_S^*\bm{V}^1_{\dR}$.

Consider the composition
\[
 j_S^*\bm{V}^1_{\dR}\hookrightarrow \Fil^0i_S^* \bm{V}_{\dR}  \otimes_{\co_{S}} \co_{S[\varepsilon]}\to \mathrm{gr}^0_{\Fil}\bigl(i_S^* \bm{V}_{\dR} \bigr) \otimes_{\co_{S}}\co_{S[\varepsilon]}.
\]
The reduction of this map modulo $(\varepsilon)$ is $0$, and therefore  it must factor through a map
\[
 i_S^*\bm{V}^1_{\dR}\to\varepsilon\cdot\biggl(\mathrm{gr}^0_{\Fil}\bigl(i_S^* \bm{V}_{\dR} \bigr) \otimes_{\co_{S}}\co_{S[\varepsilon]}\biggr)\xrightarrow{\simeq}\mathrm{gr}^0_{\Fil}\bigl(i_S^* \bm{V}_{\dR} \bigr).
\]
Thus, we have produced a canonical morphism
\[
 {\rm Def}_i \rightarrow ( \bm{V}^1_{0,\dR} )^\vee\otimes_{\co_{Y}}\mathrm{gr}^0_{\Fil}\left( i^* \bm{V}_{\dR}  \right).
\]

To show that this is an isomorphism, we can, by~\eqref{prop:Mcheck} and the above discussion of local models,
assume that the immersion $Y\subset U$ is isomorphic to an \'etale neighborhood of the closed immersion
$\mathrm{M}^{\mathrm{loc}}(L_0)\hookrightarrow\mathrm{M}^{\mathrm{loc}}(L)$.
Moreover, we can choose this isomorphism in such a way that it identifies $\bm{V}_{0,\dR}$ and $\bm{V}_{\dR}$ with the \emph{trivial} vector bundles $\bm{1}\otimes L_0$ and $\bm{1}\otimes L$, respectively, and such that $\bm{V}^1_{0,\dR}$ and $\bm{V}^1_{\dR}$ get identified with the tautological isotropic lines in $\bm{1}\otimes L_0$ and $\bm{1}\otimes L$, respectively.

So, if $\bm{L}_0^1\subset\bm{1}\otimes L_0$ is the tautological isotropic line over $\mathrm{M}^{\mathrm{loc}}(L_0)$, and $\bm{L}^0\subset\bm{1}\otimes L$ is its orthogonal complement, we are reduced to showing the elementary fact that there is a canonical isomorphism
\[
 N_{\mathrm{M}^{\mathrm{loc}}(L_0)}(\mathrm{M}^{\mathrm{loc}}(L))\xrightarrow{\simeq}\underline{\Hom}(\bm{L}_0^1,\bm{L}^0/\bm{L}_0^1)
\]
of Zariski sheaves over $\mathrm{M}^{\mathrm{loc}}(L_0)$.
We leave this as an exercise to the reader.
\end{proof}

The proof of Proposition \ref{prop:normal calc} now follows by combining Lemmas (\ref{lem:normal 1}) and (\ref{lem:normal 2})
and glueing over an \'etale cover of $\mathcal{M}$.

\end{proof}

We will have use for the following composition:
\begin{equation}\label{eqn:AlmostNormal}
N_\mathcal{Y}\mathcal{M}  \xrightarrow[\eqref{prop:normal calc}]{\simeq}  \bm{T}_0 \otimes  \mathrm{gr}^0_{\Fil}i^*\bm{V}_{\dR} \to \bm{T}_0\otimes \bigl(i^*\bm{V}_{\dR}/\bm{V}_{0,\dR}\bigr).
\end{equation}
Here, the map on the right is induced by the natural map
\[
\mathrm{gr}^0_{\Fil}i^*\bm{V}_{\dR}\to i^*\bm{V}_{\dR}/\bm{V}^0_{\dR}.
\]

Note that this last map is an isomorphism over the generic fiber of $\mathcal{Y}$. Indeed, it amounts to knowing that the inclusion $\bm{V}_{0,\dR}+\Fil^0 i^*\bm{V}_{\dR}\subset i^*\bm{V}_{\dR}$ is an isomorphism over the generic fiber. This follows from the self-duality of the quadratic form on $\bm{V}_{\dR,M}$.

On the other hand, over the generic fiber, the map $\bm{\Lambda}_{\dR}\to i^*\bm{V}_{\dR}/\bm{V}^0_{\dR}$ is also an isomorphism. Therefore, for any point $y\in\mathcal{Y}(\C)$,~\eqref{eqn:AlmostNormal} induces a canonical isomorphism
\begin{equation}\label{eqn:CompNormalExplicit}
 (N_{\mathcal{Y}}\mathcal{M})_y\xrightarrow{\simeq}\bm{T}_{0,y}\otimes\Lambda_{\C}.
\end{equation}

We will now give an explicit description of this isomorphism. To begin, $y$ determines an isotropic line $\C y\subset L_{\C}$, whose dual can be naturally identified with the fiber $\bm{T}_{0,y}$ of $\bm{T}_0$ at $y$. The construction of the isomorphism~\eqref{eqn:CompNormalExplicit} proceeded by choosing an appropriate local trivialization of $\bm{V}_{\dR}$. In an analytic neighborhood of $y$, this simply means that we choose a local section $U\to\mathcal{D}$ of the complex analytic uniformization carrying $y$ to $\C y$.

Note that $\mathcal{D}$ is contained in the complex quadric $\mathrm{M}^{\mathrm{loc}}(L_{\C})$. We therefore see that it is enough to describe the induced isomorphism:
\begin{equation}\label{eqn:TangentSpace}
 T_{\C y}\mathcal{D}=T_{\C y}\mathrm{M}^{\mathrm{loc}}(L_{\C})\xrightarrow{\simeq}(\C y)^\vee\otimes\Lambda_{\C}.
\end{equation}
Here, $T_{\C y}\mathcal{D}$ is the tangent space of $\mathcal{D}$ at $\C y$.

We consider the local immersion
\begin{align*}
\Hom(\C y,\Lambda_{\C})=(\C y)^\vee\otimes\Lambda_{\C}\to\mathrm{M}^{\mathrm{loc}}(L_{\C}),
\end{align*}
which carries a section $\xi$ to the isotropic line spanned by
\[
y+\xi( y )-\frac{Q(\xi( y ))}{[ y ,\overline{ y }]}\overline{ y }.
\]
This carries $0$ to $\C y$, and the derivative at $0$ is exactly the inverse to~\eqref{eqn:TangentSpace}.
%essentially, if we are only concerned with the first order approximation of this local immersion, then we do not require the `correction term' $-\frac{Q(\xi(\bm{v}))}{[ y ,\overline{ y  }]}\overline{ y }$.

%%%%%%%%%%%%%%%%%%%%%%%%%%%%%%%%%%%%%%%%%

\subsection{Specialization of the Green function}

%%%%%%%%%%%%%%%%%%%%%%%%%%%%%%%%%%%%%%%%%

Suppose we have a complex point $y\in \mathcal{Y}(\C)$.  Under the uniformization
\[
\mathcal{Y}(\C) \iso T(\Q) \backslash \{ \bm{h}_0 \} \times T(\A_f) / \widehat{\co}_\kk^\times
\]
of \S \ref{s:CM Shimura variety}, the point $y$ is represented by a pair $(\bm{h}_0 , g)$ with
\[
g\in T(\A_f) \subset G(\A_f),
\]
and so its image in $\mathcal{M}(\C)$ lies on the component $\Gamma_g \backslash \mathcal{D}$ of
(\ref{uniformization}).  Here $G=\GSpin(L)$.  By mild abuse of notation
we also let  $\C y \in\mathcal{D}$ denote the isotropic line corresponding to the
oriented negative plane  $\bm{h}_0=L_{0\R}$, so that  $\C y$ is a lift of $y$ under
$
\mathcal{D} \to  \mathcal{M}(\C) ,
$
and $L_\C = \C y\oplus \C \overline{y} \oplus \Lambda_\C$.

From (\ref{eqn:Zmmu}), we find that a neighborhood of $y$ admits a complex analytic uniformization
\[
 \bigsqcup_{ \substack{ x\in \mu_g + L_g \\  Q(x)=m  }}\mathcal{D}(x)  \to \mathcal{Z}(m,\mu)(\C).
\]

The components $\mathcal{D}(x)$ passing through $\C y$ are precisely those for which $x$ is orthogonal to $y$. But, since $x$ is rational, this is equivalent to requiring that $x$ be orthogonal to both $y$ and $\overline{y}$; or, in other words, that $x$ lies in $\Lambda^\vee$.

As
the orthogonal transformation $g \in \SO(L \otimes \A_f)$ acts trivially on the direct summand $\Lambda \otimes \A_f$,
we see that  such $x$ in fact lie in $\mu+L$.
This shows that in a small enough analytic neighborhood  $U$ of $\C y$ there is an isomorphism of  divisors
\begin{equation}\label{small analytic divisor}
U \cap  \sum_{ \lambda \in \Lambda_{m,\mu} }\mathcal{D}(\lambda)   \iso  U \cap  \mathcal{Z}(m,\mu)(\C) ,
\end{equation}
  where
\[
\Lambda_{m,\mu} = \left\{ \lambda  \in \Lambda^\vee :
\begin{array}{c}
Q(\lambda) =m \\   \lambda \in \mu + L  \end{array}
 \right\}.
\]

Recall that each isotropic line $\C z \in \mathcal{D}$ corresponds to a negative plane $\bm{h}_z \subset V_\R$.
For each  $\lambda \in \Lambda_{m,\mu}$  let $\lambda_z \in V_\R$ be the orthogonal projection of $\lambda$ to
$\bm{h}_z$. Simple linear algebra gives
\[
Q(\lambda_{z}) =  \frac{[\lambda, \bar z]\cdot [\lambda, z]}{[z, \bar z]}.
\]
Define a function on $U\smallsetminus \mathcal{Z}(m,\mu)(\C)$ by
\[
\Phi^\mathrm{reg} _{ m,\mu } =
\Phi _{ m,\mu }  +  \sum_{\lambda \in \Lambda_{m,\mu}} \log \big| 4\pi e^\gamma \cdot Q(\lambda_z )\big|.
\]

Recall that the function $\Phi_{m,\mu}(z)$ is defined, but  discontinuous,  at $z=y$.  The following proposition
gives us a method of computing the value of this function at the discontinuity $z=y$.

\begin{proposition}
The function $\Phi^\mathrm{reg} _{ m,\mu } $ extends smoothly to all of $U$, and this extension satisfies
\[
 \Phi^{\mathrm{reg}} _{ m,\mu } (y) =  \Phi _{ m,\mu }  (y).
\]
\end{proposition}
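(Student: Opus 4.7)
The plan is to prove both claims at once by establishing, for $z$ in a punctured neighborhood of $y$, a local expansion of the form
\[
\Phi_{m,\mu}(z) = \Phi_{m,\mu}(y) - \sum_{\lambda \in \Lambda_{m,\mu}} \log\bigl| 4\pi e^\gamma \cdot Q(\lambda_z) \bigr| + \epsilon(z),
\]
where $\epsilon$ extends smoothly to all of $U$ with $\epsilon(y) = 0$. Adding $\sum_\lambda \log|4\pi e^\gamma Q(\lambda_z)|$ to both sides then identifies $\Phi^{\mathrm{reg}}_{m,\mu}(z)$ with $\Phi_{m,\mu}(y) + \epsilon(z)$, from which both the smoothness of the extension and the equality of values at $y$ follow immediately.

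For the smoothness part of the expansion, the computation is elementary. Since $\mathcal{D}$ sits inside the open subset of $\mathbb{P}(V_\C)$ on which $[z,\bar z] < 0$, the function $\log|[z,\bar z]|$ is smooth. Combined with the identity
\[
\log|Q(\lambda_z)| = \log|[\lambda, z]|^2 - \log|[z, \bar z]|,
\]
this shows that the singular part of $\sum_\lambda \log|4\pi e^\gamma Q(\lambda_z)|$ is precisely $\sum_\lambda \log|[\lambda, z]|^2$. Since each $\mathcal{D}(\lambda)$ is cut out near $\C y$ by the single holomorphic equation $[\lambda, z] = 0$, the analytic uniformization \eqref{small analytic divisor} together with Bruinier's Green function property of $\Phi_{m,\mu}$ gives that $\Phi_{m,\mu} + \sum_\lambda \log|[\lambda,z]|^2$ extends smoothly across $\mathcal{Z}(m,\mu)(\C) \cap U$, establishing the smoothness of $\Phi^{\mathrm{reg}}_{m,\mu}$ without yet identifying its value at $y$.

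The heart of the argument is then to pin down the additive constant correctly, namely to verify that the smooth extension evaluates to $\Phi_{m,\mu}(y)$ rather than to some a priori different quantity. The plan is to analyze the regularized theta integral
\[
\Phi_{m,\mu}(z,g) = \int^{\mathrm{reg}}_{\mathcal{F}} \bigl\{ F_{m,\mu}(\tau), \theta_L(\tau,z,g) \bigr\} \frac{du\,dv}{v^2}
\]
directly, splitting the contribution of the Siegel theta kernel according to whether or not a given lattice vector $x \in \mu + L$ of norm $m$ lies in $\Lambda_{m,\mu}$. The vectors in $\Lambda_{m,\mu}$ are precisely those satisfying $x_y = 0$ (since $\Lambda \perp L_{0,\R} = \bm{h}_y$), and their Gaussian contribution to $\theta_L$ becomes concentrated as $z \to y$, producing the logarithmic singularities. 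For each such $\lambda$ the local asymptotics of the regularized integral can be computed via a Mellin transform argument as in \cite[Chapter 2]{Bruinier}; the Euler-Mascheroni constant $\gamma$ emerges from the constant term of the Laurent expansion of the integral at $s=0$, and the factor $4\pi$ comes from the standard normalization of the Gaussian Schwartz function in the Weil representation. The sum of contributions from vectors $x \not\in \Lambda_{m,\mu}$ remains uniformly bounded and continuous on $U$, and its value at $z=y$ combined with the finite-part values from $\Lambda_{m,\mu}$ assemble to $\Phi_{m,\mu}(y)$.

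The principal obstacle is the bookkeeping of the numerical constants: one needs the factor $4\pi e^\gamma$ to appear with exactly the right sign inside the logarithm, matching the normalization chosen in \eqref{taut metric} for the metric on the tautological bundle. This is not merely cosmetic, because the resulting identity feeds into the improper-intersection calculation of $[\widehat{\mathcal{Z}}(f) : \mathcal{Y}]$, where discrepancies in this constant would obstruct the appearance of the cotautological bundle contribution. Thus the technical work amounts to carefully tracing through the regularization in \cite[\S3.2]{BY} and \cite{Sch} and checking that the constant matches, rather than to any conceptual difficulty.
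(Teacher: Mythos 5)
Your argument for the smoothness of $\Phi^{\mathrm{reg}}_{m,\mu}$ is correct and in fact somewhat more direct than the paper's: you derive it from the defining property of a Green function (together with the identity $\log|Q(\lambda_z)| = \log|[\lambda,z]|^2 - \log|[z,\bar z]|$ and the fact that $\log|[z,\bar z]|$ is smooth on $\mathcal{D}$), whereas the paper extracts smoothness from an explicit decomposition of the regularized theta integral. This is a genuine simplification of that half of the claim.

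The value claim $\Phi^{\mathrm{reg}}_{m,\mu}(y)=\Phi_{m,\mu}(y)$ is where the proposal has a gap. You describe the right strategy — split the Siegel theta kernel according to whether a lattice vector lies in $\Lambda_{m,\mu}$, Mellin-transform the contribution of each $\lambda\in\Lambda_{m,\mu}$ — but then you assert rather than prove the conclusion: ``its value at $z=y$ combined with the finite-part values from $\Lambda_{m,\mu}$ assemble to $\Phi_{m,\mu}(y)$,'' and ``the technical work amounts to carefully tracing through the regularization \ldots and checking that the constant matches.'' The constant is not a cosmetic normalization to be chased down later; it is the substance of the proposition, and the mechanism you have not made explicit is precisely what makes it true. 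Concretely, after Borcherds' argument one has
\[
\Phi_{m,\mu}(z) = \varphi_{m,\mu}(z) + \sum_{\lambda\in\Lambda_{m,\mu}} \mathrm{CT}_{s=0}\Bigl[\int_1^\infty e^{4\pi v Q(\lambda_z)}\,\frac{dv}{v^{s+1}}\Bigr]
\]
for a smooth $\varphi_{m,\mu}$. The crucial point is that at $z=y$ one has $Q(\lambda_y)=0$ for every $\lambda\in\Lambda_{m,\mu}$, so each inner integral is $\int_1^\infty dv/v^{s+1}=1/s$, whose constant term at $s=0$ is \emph{zero} — not merely finite. Hence $\Phi_{m,\mu}(y)=\varphi_{m,\mu}(y)$ on the nose. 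For $z$ off the divisor, each $\mathrm{CT}$ equals $\Gamma(0,-4\pi Q(\lambda_z))$, and the expansion $\Gamma(0,x)=-\gamma-\log x -\sum_{k\ge 1}(-x)^k/(k\cdot k!)$ shows that adding $\log|4\pi e^\gamma\, Q(\lambda_z)|$ cancels the singular and constant terms exactly, leaving a quantity that tends to $0$ as $z\to y$. Without identifying this exact cancellation — the vanishing of the $\Lambda_{m,\mu}$-contributions at $z=y$ and the matching of $-\gamma-\log(4\pi|Q(\lambda_z)|)$ against $\log|4\pi e^\gamma Q(\lambda_z)|$ — your argument does not establish the value identity, only its plausibility.
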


\begin{proof}
Let $\tau=u+iv \in \mathcal{H}$ be the variable on the upper half-plane.
The function $\Phi_{m,\mu}( z )$ is defined, as in \cite[(4.8)]{BY}, as the constant term of the Laurent expansion  at $s=0$ of
\begin{equation}\label{reg theta}
\lim_{T\to \infty}  \int_{\mathcal{F}_T} \big\{ F_{m,\mu}(\tau), \theta_L(\tau, z )\big\} \, \frac{du\, dv}{v^{s+2}},
\end{equation}
where
\[
\mathcal{F}_T = \{ \tau=u+iv \in \mathcal{H}: \vert u \vert\leq 1/2, \ \vert \tau\vert  \geq 1 ,\   v\leq T \}.
\]

Now  substitute   the definition \cite[(2.4)]{BY} of $\theta_L(\tau,z)$ and the Fourier expansion of $F_{m,\mu}$ into (\ref{reg theta}), and
argue as in \cite[Theorem 6.2]{Borcherds}.   Using the decomposition $F_{m,\mu} = F^+_{m,\mu} + F^-_{m,\mu}$
of  \cite[(3.4a) and (3.4b)]{BY}  and the rapid decay as $v\to \infty$ of the Fourier coefficients of
$F^-_{m,\mu}$,  we find
\begin{equation}\label{magic integral}
\Phi_{ m ,\mu}(z) = \varphi_{m,\mu}(z)   + \sum_{  \lambda\in \Lambda_{m,\mu} }
 \mathrm{CT}_{s=0}
\left[
\int_1^\infty e^{4\pi v Q(\lambda_z) } \, \frac{dv}{v^{s+1}}
\right]
\end{equation}
for some smooth function $\varphi_{m,\mu}$ on $U$, where $\mathrm{CT}_{s=0}$ means take the  constant term at $s=0$.

Each $\lambda\in \Lambda_{m,\mu}$ is orthogonal to  $y$, and so $Q(\lambda_y)=0$.
Thus setting $z=y$ in (\ref{magic integral}) and computing the constant term at $s=0$ shows that
\[
\Phi_{m,\mu}(y) = \varphi_{m,\mu}(y).
\]

On the other hand, for any $z\in U \smallsetminus \mathcal{Z}(m,\mu)(\C)$ we have  $Q(\lambda_z)<0$ for every
$\lambda\in \Lambda_{m,\mu}$, and so
\begin{align}\label{CT}
\sum_{  \lambda\in \Lambda_{m,\mu} }
\mathrm{CT}_{s=0}
\left[
\int_1^\infty e^{4\pi v Q(\lambda_z) } \, \frac{dv}{v^{s+1}}
\right]
&  =
\sum_{  \lambda\in \Lambda_{m,\mu} }  \int_1^\infty e^{4\pi v Q(\lambda_z) } \, \frac{dv}{v}  \\
& =
\sum_{  \lambda\in \Lambda_{m,\mu} } \Gamma(0 , -4\pi Q(\lambda_z)), \nonumber
\end{align}
where
\begin{equation}\label{gamma}
\Gamma(0,x) = \int_x^\infty  e^{-t} \, \frac{dt}{t} =  -\gamma-\log(x) -  \sum_{k=1}^\infty  \frac{ (-x)^k}{k\cdot k!}.
\end{equation}

Comparing (\ref{CT}) with (\ref{gamma}) shows that
\[
 \sum_{  \lambda\in \Lambda_{m,\mu} } \log| 4\pi e^\gamma \cdot Q(\lambda_z) | =
- \sum_{  \lambda\in \Lambda_{m,\mu} }
\mathrm{CT}_{s=0}
\left[
\int_1^\infty e^{4\pi v Q(\lambda_z) } \, \frac{dv}{v^{s+1}}
\right]
\]
up  to a  smooth function on $U$, vanishing at $z=y$.  Adding this equality to (\ref{magic integral})  proves that
\[
\Phi^\mathrm{reg}_{m,\mu} (z) = \varphi_{m,\mu} (z)
\]
up to a smooth function vanishing at $z=y$, and completes the proof.
\end{proof}

For any $y\in\mathcal{Y}(\C)$, we have a canonical isomorphism (see ~\ref{eqn:CompNormalExplicit})
\begin{equation}\label{normal fiber}
( N_\mathcal{Y} \mathcal{M} )_y \iso (\C y)^\vee \otimes_\C \Lambda_\C=\Hom(\C y,\Lambda_\C).
\end{equation}
For each $\lambda\in \Lambda_{m,\mu}$, define a function $\Phi_\lambda$ on $\Hom(\C y,\Lambda_\C)$ by
\[
\Phi_\lambda ( \xi) =  -  \log  \left|  \frac{ 4\pi e^\gamma  [\lambda, \xi( y )] ^2}{  [ y  , \bar{ y } ]} \right|  .
\]
This function has a logarithmic singularity along the hyperplane $(\C y)^\vee \otimes \lambda^\perp$.
Letting $y$ vary over  $\mathcal{Y}(\C)$ yields  a function $\Phi_\lambda$ on the vector bundle
$(N_\mathcal{Y} \mathcal{M})(\C)$ having   a logarithmic singularity along a sub-bundle of hyperplanes.

\begin{proposition}\label{prop:green normalize}
The  Green function $\sigma(\Phi_{m,\mu})$ defined by (\ref{green specialization})  satisfies
\[
\sigma\big(\Phi_{m,\mu}\big) =
\pi^*i^*\Phi_{m,\mu}  + \sum_{\lambda\in \Lambda_{m,\mu}}\Phi_\lambda,
\]
where $\pi^*i^*\Phi_{m,\mu}$ denotes the pullback of  $\Phi_{m,\mu}$ via
\[
(N_\mathcal{Y} \mathcal{M})(\C)  \map{\pi} \mathcal{Y}(\C) \map{i} \mathcal{M}(\C).
\]
\end{proposition}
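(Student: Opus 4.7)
The plan is to verify the identity fiberwise over each complex point $y\in\mathcal{Y}(\C)$, using the explicit analytic uniformization of a neighborhood of $y$ in $\mathcal{M}(\C)$ and the explicit description of the normal bundle given by~\eqref{eqn:CompNormalExplicit}. Fix $y\in\mathcal{Y}(\C)$ and lift it to the isotropic line $\C y\in\mathcal{D}$ corresponding to $\bm{h}_0=L_{0,\R}$, so that $L_\C=\C y\oplus\C\overline y\oplus\Lambda_\C$. Using the local immersion $\xi\mapsto z(\xi):=y+\xi(y)-\frac{Q(\xi(y))}{[y,\overline y]}\overline y$ from $\Hom(\C y,\Lambda_\C)$ into $\mathcal{D}$ discussed after~\eqref{eqn:CompNormalExplicit}, we obtain an analytic chart on $\mathcal{M}$ around the image of $y$ whose tangent directions are canonically identified with the normal bundle fiber $(N_\mathcal{Y}\mathcal{M})_y$.

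By~\eqref{small analytic divisor}, in this chart the local branches of $\mathcal{Z}(m,\mu)(\C)$ through $y$ are precisely the hypersurfaces $\mathcal{D}(\lambda)$ for $\lambda\in\Lambda_{m,\mu}$. Since each such $\lambda$ is orthogonal to both $y$ and $\overline y$, the identity $[\lambda,z(\xi)]=[\lambda,\xi(y)]$ shows that $f_\lambda(z):=[\lambda,z]$ is a holomorphic defining equation for the branch labelled by $\lambda$, and that the associated specialization to the normal bundle (in the sense of the second case of Lemma~\ref{lem:first order vanishing}) is the linear function
\[
\sigma(f_\lambda)(\xi)=[\lambda,\xi(y)].
\]
Connected components $Z'$ of $\mathcal{Z}(m,\mu)_U$ that do not pass through $y$ satisfy $f_{Z'}(y)\neq 0$; their contributions $\log|f_{Z'}|^2$ are smooth at $y$ and the corresponding specializations $\sigma(f_{Z'})$ are the pullbacks of $f_{Z'}|_Y$, constant along the fiber at~$y$. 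Hence they cancel identically when one forms $\pi^{*}i^{*}\log|f_{Z'}|^{2}-\log|\sigma(f_{Z'})|^{2}$ in the definition~\eqref{green specialization}, and do not affect the final formula.

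It remains to combine these inputs. Using $Q(\lambda_z)=|[\lambda,z]|^{2}/[z,\overline z]$, one computes
\[
\log|f_\lambda(z)|^{2}-\log\bigl|4\pi e^{\gamma}Q(\lambda_z)\bigr|
=-\log(4\pi e^{\gamma})+\log|[z,\overline z]|,
\]
which is smooth at $z=y$. Substituting this into the decomposition
\[
\phi_{m,\mu}=\Phi^{\mathrm{reg}}_{m,\mu}+\sum_{\lambda\in\Lambda_{m,\mu}}\bigl(\log|f_\lambda|^{2}-\log|4\pi e^{\gamma}Q(\lambda_z)|\bigr)+\sum_{Z'}\log|f_{Z'}|^{2},
\]
evaluating at $z=y$ (where $\Phi^{\mathrm{reg}}_{m,\mu}(y)=\Phi_{m,\mu}(y)$ by the preceding proposition), and then plugging into~\eqref{green specialization} yields
\[
\sigma(\Phi_{m,\mu})(\xi)=\Phi_{m,\mu}(y)+\sum_{\lambda\in\Lambda_{m,\mu}}\Bigl(-\log(4\pi e^{\gamma})+\log|[y,\overline y]|-\log|[\lambda,\xi(y)]|^{2}\Bigr),
\]
which is exactly $\pi^{*}i^{*}\Phi_{m,\mu}+\sum_{\lambda}\Phi_\lambda$ under the identification~\eqref{eqn:CompNormalExplicit}.

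The only subtlety is book-keeping: one must check that the components of $\mathcal{Z}(m,\mu)_U$ passing through a given $y$ are in bijection with $\Lambda_{m,\mu}$ in a sufficiently small étale chart (which is ensured by the hypothesis on $U$ and by~\eqref{small analytic divisor}), and that $[\lambda,z]$ really descends to a well-defined local equation on the chart $U\to\mathcal{M}$ rather than just on the universal cover $\mathcal{D}$. The latter follows because $\Lambda_{m,\mu}$ is preserved by the automorphy factors on the relevant component (the $g\in T(\A_f)$ representing $y$ acts trivially on $\Lambda\otimes\A_f$), so the analytic local equation extends to $U$ up to a nonvanishing factor that does not affect the specialization. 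This is the only step where care is required; the rest is unwinding definitions.
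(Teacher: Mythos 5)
Your proof is correct and follows essentially the same route as the paper: identify an analytic neighborhood of $\C y$ in $\mathcal{D}$ with a neighborhood of the origin in $\Hom(\C y,\Lambda_\C)$ via the local immersion described after~\eqref{eqn:CompNormalExplicit}, use the linearity of $[\lambda,\cdot]$ to compute the specializations $\sigma(f_\lambda)$ explicitly, and invoke the identity $\Phi^{\mathrm{reg}}_{m,\mu}(y)=\Phi_{m,\mu}(y)$ from the preceding proposition. The only cosmetic difference is that you choose $f_\lambda=[\lambda,z]$ without the normalization $|4\pi e^\gamma/[y,\overline y]|^{1/2}$ that the paper builds into $f_\lambda$, and compensate by tracking the resulting constants through the computation; the paper's normalization makes $-\log|\sigma(f_\lambda)|^2=\Phi_\lambda$ hold on the nose. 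You are also a touch more explicit than the paper about the components $Z'$ not passing through $y$, which is a minor but harmless elaboration.
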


\begin{proof}
As above, fix a point $y\in \mathcal{Y}(\C)$ and let $U\subset \mathcal{D}$ be a neighborhood
of the isotropic line $\C y \subset L_\C$. As in~\eqref{ss:normal bundle}, we can define a holomorphic local immersion
\[
 \Hom(\C y,\Lambda_\C)=(\C y)^\vee \otimes  \Lambda_\C \to\mathrm{M}^{\mathrm{loc}}(L_{\C})
\]
by sending $\xi$ to the $\C$-span of
\[
 y +\xi( y  )- \frac{ Q(\xi( y )) \overline{ y  }} { [ y ,\overline{ y }] } .
 \]
As we saw at the end of~\eqref{ss:normal bundle}, the induced isomorphism on tangent spaces
\[
(\C y)^\vee \otimes \Lambda_\C  \to T_y\mathcal{D} \iso (N_\mathcal{Y}\mathcal{M})_y
\]
agrees with the isomorphism of (\ref{normal fiber}).

From now on we use the above map to identify $U$ with an open  neighborhood of the origin of
$ (\C y)^\vee \otimes \Lambda_\C$.  Each divisor $\mathcal{D}(\lambda)\cap U$ appearing in
(\ref{small analytic divisor})  is  identified  with the zero locus of
\[
f_\lambda( \xi )=  [\lambda ,\xi( y  ) ]  \cdot  \left| \frac{4\pi e^\gamma}{ [ y  ,\overline{ y  }] }  \right|^{1/2}   .
\]
This function is already linear, and so when we apply the construction $f_\lambda\mapsto \sigma(f_\lambda)$
of the discussion preceding (\ref{green specialization}) and identify $ (\C y)^\vee \otimes \Lambda_\C$
with its own tangent space at the origin, we obtain
\[
\sigma(f_\lambda)( \xi)=   [  \lambda ,  \xi(\bm{v})] \cdot  \left| \frac{4\pi e^\gamma}{ [\bm{v},\overline{\bm{v}}] }  \right|^{1/2}   .
\]
Thus  $-\log |\sigma(f_\lambda) |^2  = \Phi_\lambda.$

If we define a smooth function on $U$ by
\[
\phi_{m,\mu}= \Phi_{m,\mu} + \sum_{\lambda\in \Lambda_{m,\mu} } \log |f_\lambda|^2,
\]
then directly from the definition (\ref{green specialization}) and the paragraph above we see that
\[
\sigma(\Phi_{m,\mu}) = \pi^* i^* \phi_{m,\mu} + \sum_{\lambda\in \Lambda_{m,\mu} }  \Phi_\lambda.
\]
Now use  $\phi_{m,\mu} (y) = \Phi^\mathrm{reg}_{m,\mu} (y) =  \Phi_{m,\mu}(y)$
to complete the proof.
\end{proof}

%%%%%%%%%%%%%%%%%%%%%%%%%%%%%%%%%%%%%%%%%

\subsection{The calculation of the pullback}

%%%%%%%%%%%%%%%%%%%%%%%%%%%%%%%%%%%%%%%%%

Recall  that the Kuga-Satake abelian scheme $\mathcal{A} \to \mathcal{M}$ restricts to the abelian scheme
\[
\mathcal{A}_\mathcal{Y}=\mathcal{E} \otimes_{C^+(L_0)} C(L)
\]
over $\mathcal{Y}$, where $\mathcal{E} \to \mathcal{Y}$ is the universal elliptic curve.

By Proposition \ref{prop:0_decomistionVmu}, every
$
\lambda\in \Lambda_{m,\mu}
$
determines a canonical special endomorphism $\bm{\ell}_\lambda \in V_\mu(\mathcal{A}_\mathcal{Y})$, and the pair
 $(\mathcal{A}_\mathcal{Y} , \bm{\ell}_\lambda)$ is a $\mathcal{Y}$-valued point of $\mathcal{Z}(m,\mu)$.  The corresponding map
$\mathcal{Y} \to  \mathcal{Z}(m,\mu)$ is finite and unramified, and so    we may define an  $\co_\mathcal{Y}$-submodule
\begin{equation}\label{Z-lambda}
\mathcal{Z}_\lambda =N_\mathcal{Y} \mathcal{Z}(m,\mu)
\end{equation}
of $N_\mathcal{Y} \mathcal{M}$ in exactly the same way that  $N_\mathcal{Y} \mathcal{M}$ was defined.

Proposition \ref{prop:0_functorial} provides us with a canonical morphism
$\Lambda \to i^*\bm{V}_{\dR}$ such that $\Lambda^{\perp}=\bm{V}_{0,\dR}\subset i^*\bm{V}_{\dR}$. Thus, the bilinear pairing on $i^*\bm{V}_{\dR}$ gives us a map
\[
 i^*\bm{V}_{\dR}/\bm{V}_{0,\dR} \to \co_\mathcal{Y}\otimes\Lambda^{\vee}
\]
of vector bundles over $\mathcal{Y}$.
This map is injective, and using local trivializations, one sees that its image is identified precisely with the inclusion
\[
 \co_{\mathcal{Y}}\otimes(L/L_0)\hookrightarrow\co_\mathcal{Y}\otimes\Lambda^\vee.
\]
This latter inclusion is the natural one induced by the bilinear pairing on $L$.

Now, by definition, $\Lambda_{m,\mu}$ is contained in $(L/L_0)^{\vee} \subset L^\vee$. Therefore, we obtain a morphism:

\[
[\, \cdot\, , \lambda ] :  i^*\bm{V}_{\dR}  / \bm{V}_{0 , \dR}   \to \co_\mathcal{Y}\otimes(L/L_0) \xrightarrow{1\otimes\lambda}\co_{\mathcal{Y}}.
\]
of $\co_\mathcal{Y}$-modules.

Here is another way to describe this morphism. Recall from \S \ref{ss:CM to GSpin} that  we have a canonical $T$-torsor over $\mathcal{Y}$ parameterizing $\co_\kk$-equivariant trivializations of $\bm{H}^+_{0,\dR}$. Using this torsor of local trivializations, we get a canonical functor from representations of $T$ to vector bundles over $\mathcal{Y}$ with integrable connection.

The map $[\, \cdot\, ,\lambda]$ is now simply obtained by applying this functor to the $T$-invariant functional $L/L_0\to \Z$ given by pairing with $\lambda$. In particular, if $t_{\dR}(\bm{\ell}_{\lambda})$ is the de Rham realization of $\bm{\ell}_{\lambda}$ (it is a section of $i_S^*\bm{V}^\vee_{\dR}$), we find that  functionals
\[
i_S^*\bm{V}_{\dR} \to i_S^*\bm{V}_{\dR}/\bm{V}_{0,\dR}\xrightarrow{[\cdot,\lambda]}\co_S
\]
and
\[
[\, \cdot\, , t_{\dR}(\bm{\ell}_{\lambda})]:i_S^*\bm{V}_{\dR} \to\co_S
\]
are identical.

\begin{proposition}\label{prop:taut rep}
Fix  any  $\lambda\in \Lambda_{m,\mu}$.
\begin{enumerate}
\item
The divisor $\mathcal{Z}_\lambda$ is canonically identified with the kernel of
\[
N_\mathcal{Y}\mathcal{M}   \xrightarrow{\eqref{eqn:AlmostNormal}} \bm{T}_0 \otimes  ( i^*\bm{V}_{\dR}  / \bm{V}_{0 , \dR} )
  \map{  [\cdot, \lambda ]} \bm{T}_0.
\]
\item
The metrized line bundle
$
\pi^* \widehat{\bm{T}}_0
$
is represented by the arithmetic divisor
\[
 ( \mathcal{Z}_\lambda , \Phi_\lambda ) \in  \widehat{\mathrm{Div}} ( N_\mathcal{Y} \mathcal{M} ).
\]
\end{enumerate}
\end{proposition}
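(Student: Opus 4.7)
The plan is to derive part (i) from Lemma \ref{lem:localstrZmmualpha} by unwinding the identifications used in constructing the normal bundle, and then to deduce part (ii) by a direct computation of the norm of the tautological section this produces.

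For part (i), I work étale locally. Fix a sufficiently small étale chart $U \to \mathcal{M}$ and a connected component $Y \subset \mathcal{Y}_U$ with defining ideal $I \subset \co_U$. The morphism $\mathcal{Y} \to \mathcal{Z}(m,\mu)$ determined by the pair $(\mathcal{A}_\mathcal{Y}, \bm{\ell}_\lambda)$ factors $Y$ through a unique connected component $Z \subset \mathcal{Z}(m,\mu)_U$; let $f_Z \in \co_U$ be its defining equation. Applying Lemma \ref{lem:localstrZmmualpha} pointwise along $Y$ and spreading out by étaleness, the image of $f_Z$ in $I/I^2$ agrees with the reduction of $[\omega, t_{\crys}(\bm{\ell}_\lambda)]$, where $\omega$ is a local generator of $\bm{V}^1_\dR$. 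Viewing the specialization $\sigma(f_Z)$ as a linear functional $N_Y U \to \co_Y$ via $I/I^2 \iso \underline{\Hom}(N_Y U, \co_Y)$, it corresponds to pairing $\omega$ with the de Rham realization $t_\dR(\bm{\ell}_\lambda) \in i^*\bm{V}_\dR^\vee$. Since $\bm{\ell}_\lambda \in \Lambda$ is perpendicular to $V(\mathcal{E})$ by Proposition \ref{prop:0_decomistionVmu}, hence annihilates $\bm{V}_{0,\dR}$, the functional $t_\dR(\bm{\ell}_\lambda)$ descends to $i^*\bm{V}_\dR/\bm{V}_{0,\dR}$, where by the discussion preceding the proposition it coincides with $[\cdot,\lambda]$. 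Combined with the identification of $N_\mathcal{Y}\mathcal{M}$ as $\bm{T}_0 \otimes \mathrm{gr}^0_\Fil i^*\bm{V}_\dR$ from Proposition \ref{prop:normal calc}, this realizes $\sigma(f_Z)$ as the composition stated in (i), whose zero locus is precisely $\mathcal{Z}_\lambda$.

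For part (ii), the map of vector bundles $[\cdot,\lambda]: N_\mathcal{Y}\mathcal{M} \to \bm{T}_0$ over $\mathcal{Y}$ corresponds to a global section $s_\lambda$ of $\pi^*\bm{T}_0$ over $N_\mathcal{Y}\mathcal{M}$ whose divisor is $\mathcal{Z}_\lambda$ by part (i). It remains to verify $-\log\|s_\lambda\|^2 = \Phi_\lambda$ pointwise on the complex fiber. Fix $y \in \mathcal{Y}(\C)$ and use the explicit isomorphism $(N_\mathcal{Y}\mathcal{M})_y \iso \Hom(\C y,\Lambda_\C)$ of~\eqref{eqn:CompNormalExplicit}. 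A point $\xi: \C y \to \Lambda_\C$ is sent by $[\cdot,\lambda]$ to the functional $y \mapsto [\xi(y),\lambda]$ in $(\C y)^\vee = \bm{T}_{0,y}$. The metric~\eqref{taut metric} on $\bm{V}^1_{0,\dR}$ gives $\|y\|^2 = -[y,\bar y]/(4\pi e^\gamma)$, so the dual generator $y^\vee \in \bm{T}_{0,y}$ satisfies $\|y^\vee\|^2 = -4\pi e^\gamma/[y,\bar y]$; hence $\|s_\lambda(\xi)\|^2 = -4\pi e^\gamma|[\xi(y),\lambda]|^2/[y,\bar y]$, and taking negative logarithms recovers the formula defining $\Phi_\lambda(\xi)$.

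The main obstacle will be the identification in part (i), which demands the simultaneous alignment of three separate constructions: the crystalline description of $I/I^2$ from Lemma \ref{lem:localstrZmmualpha}, the local-model description of $N_\mathcal{Y}\mathcal{M}$ in Proposition \ref{prop:normal calc}, and the factorization of $t_\dR(\bm{\ell}_\lambda)$ through pairing with $\lambda$ modulo $\bm{V}_{0,\dR}$. Once these are reconciled, part (ii) is a mechanical computation with the dual of the metric~\eqref{taut metric}.
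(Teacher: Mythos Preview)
Your proposal is correct and follows essentially the same route as the paper: for (i) both arguments reduce to Lemma~\ref{lem:localstrZmmualpha} together with the identification of the functional $[\,\cdot\,,\lambda]$ with pairing against $t_{\dR}(\bm{\ell}_\lambda)$, and for (ii) both produce the section $s_\lambda$ of $\pi^*\bm{T}_0$ and compute its norm via the metric~\eqref{taut metric}. The only cosmetic difference is that the paper phrases (i) through the deformation functor $\mathrm{Def}_i$ (asking when $\bm{\ell}_\lambda$ lifts to $V_\mu(j_S^*\mathcal{A})$), whereas you phrase it through the image of the defining equation $f_Z$ in $I/I^2$; these are dual descriptions of the same obstruction.
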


\begin{proof}
We work over a sufficiently small \'etale neighborhood $Y\subset U$ of $\mathcal{Y}\to\mathcal{M}$. Suppose that we are given an open subspace $S\subset Y$ and $j_S\in\mathrm{Def}_i(S)$. By the construction of the map~\eqref{eqn:AlmostNormal}, to prove (1), we have to show that $\bm{\ell}_{\lambda}$ lifts to an element in $V_{\mu}(j_S^*\mathcal{A})$ if and only if the composition
\[
 j_S^*\bm{V}^1_{\dR}\to i_S^*\bm{V}_{\dR} / \bm{V}_{0,\dR}\otimes_{\co_S}\co_{S[\varepsilon]}\xrightarrow{[\, \cdot\,  ,\lambda]}\co_{S[\varepsilon]}
\]
is identically $0$.  This follows from Lemma~\ref{lem:localstrZmmualpha} and the identification we made above of $[\, \cdot\, ,\lambda]$ with pairing against $t_{\dR}(\bm{\ell}_{\lambda})$.

The second claim follows from the first, as the map
$
[\, \cdot\,  ,\lambda] : N_\mathcal{Y}\mathcal{M}  \to  \bm{T}_0
$
defines a section $s_\lambda$ of $\pi^*\bm{T}_0$ with
$
\widehat{\mathrm{div}} (s_\lambda) =  ( \mathcal{Z}_\lambda , \Phi_\lambda ).
$
 Indeed, using the notation of (\ref{normal fiber}),
 the norm of this section  at a  complex point $\xi \in    (N_\mathcal{Y}\mathcal{M})_y$
in the fiber over $y\in\mathcal{Y}(\C)$ is
\[
-\log || s_\lambda    ||^2   = -\log \left|  \frac{ [ \lambda , \xi( y  )] }{ || y ||}  \right|^2   = \Phi_\lambda(\xi),
\]
where the second equality is by definition (\ref{taut metric}) of  the metric on $\widehat{\bm{T}}_0$.
\end{proof}

  As in \cite[\S 7.3]{BHY}, we define a new metrized line bundle
\[
\widehat{\mathcal{Z}}^\heartsuit(m,\mu)
= \widehat{\mathcal{Z}}(m,\mu) \otimes \widehat{\bm{T}}^{ \otimes - \# \Lambda_{m,\mu}}
\in \widehat{\mathrm{Pic}}(\mathcal{M}).
\]

\begin{proposition}\label{prop:cutimproper}
The image  of $\widehat{\mathcal{Z}}^\heartsuit(m,\mu)$  under the pullback
\[
 \widehat{\mathrm{Pic}}  ( \mathcal{M} )  \map{i^*} \widehat{\mathrm{Pic}}   ( \mathcal{Y} )
\]
 is represented by the  arithmetic divisor
\[
\Big(\sum_{
\substack{ m_1>0 \\ m_2\geq 0 \\ m_1+m_2=m }}
\sum_{\substack{ \mu_1\in L_0^\vee/L_0 \\ \mu_2\in \Lambda^\vee/\Lambda\\
 (\mu_1,\mu_2) \in   (\mu + L)/(L_0 \oplus \Lambda) }
}
 R_\Lambda(m_2,\mu_2)  \cdot \mathcal{Z}_0(m_1 , \mu_1 ) \ , \  i^* \Phi_{m,\mu}
  \Big) \in \widehat{\mathrm{Div}}(\mathcal{Y})
\]
\end{proposition}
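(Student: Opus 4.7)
The plan is to combine the decomposition from Proposition \ref{prop:0_decomistionZmu} with the specialization-to-normal-bundle machinery developed in this section, using Proposition \ref{prop:poor mans Hu} as the key device to convert the computation of the pullback $i^*\widehat{\mathcal{Z}}(m,\mu)$ into a computation on the normal bundle $N_{\mathcal{Y}}\mathcal{M}$, where the contributions of proper and improper components can be cleanly separated. To do this, I would work on a sufficiently small \'etale chart $U\to\mathcal{M}$ and a connected component $Y\subset \mathcal{Y}_U$, so that Proposition \ref{prop:0_decomistionZmu} identifies the components $Z$ of $\mathcal{Z}(m,\mu)_U$ with the summands in the decomposition: those with $m_1>0$ meet $Y$ in dimension zero (``proper components''), while those with $(m_1,\mu_1)=(0,0)$, one for each $\lambda\in\Lambda_{m,\mu_2}$ with $(0,\mu_2)\in(\mu+L)/(L_0\oplus\Lambda)$, contain $Y$ entirely (``improper components''). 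The total number of improper components over all $\mu_2$ is exactly $\#\Lambda_{m,\mu}$.

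Next I would compute the specialization $\sigma(\widehat{\mathcal{Z}}(m,\mu))\in\widehat{\mathrm{Div}}(N_{\mathcal{Y}}\mathcal{M})$ piece by piece. For a proper component $Z$, the local equation $f_Z$ is invertible along $Y$, so its specialization $\sigma(f_Z)$ is the degree-$0$ homogeneous pullback of $i^*f_Z$; contributions of this type sum to $\pi^*i^*$(proper divisor part), which is precisely the effective divisor
\[
\sum_{\substack{m_1>0,\, m_2\geq 0\\ m_1+m_2=m}}\sum_{(\mu_1,\mu_2)}R_\Lambda(m_2,\mu_2)\cdot\mathcal{Z}_0(m_1,\mu_1)
\]
pulled back via $\pi\circ i$. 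For the improper components, Proposition \ref{prop:taut rep}(i) identifies each $\mathcal{Z}_\lambda$ as the zero locus of the composition $N_{\mathcal{Y}}\mathcal{M}\to\bm{T}_0\otimes(i^*\bm{V}_{\dR}/\bm{V}_{0,\dR})\xrightarrow{[\,\cdot\,,\lambda]}\bm{T}_0$, and part (ii) shows that the arithmetic divisor $(\mathcal{Z}_\lambda,\Phi_\lambda)$ represents the metrized line bundle $\pi^*\widehat{\bm{T}}_0$. Summing over $\lambda\in\Lambda_{m,\mu}$, these improper contributions together represent $\#\Lambda_{m,\mu}\cdot\pi^*\widehat{\bm{T}}_0$. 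The Green function bookkeeping matches up exactly: Proposition \ref{prop:green normalize} gives $\sigma(\Phi_{m,\mu})=\pi^*i^*\Phi_{m,\mu}+\sum_\lambda\Phi_\lambda$, so the proper and improper pieces of the divisor pair up with the two pieces of $\sigma(\Phi_{m,\mu})$ as required.

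Combining Propositions \ref{prop:poor mans Hu}, \ref{prop:taut rep}, and \ref{prop:green normalize} then yields the identity
\[
\pi^*i^*\widehat{\mathcal{Z}}(m,\mu)=\pi^*i^*\bigl(\text{proper part},\,i^*\Phi_{m,\mu}\bigr)+\#\Lambda_{m,\mu}\cdot\pi^*i^*\widehat{\bm{T}}
\]
in $\widehat{\mathrm{Pic}}(N_{\mathcal{Y}}\mathcal{M})$. Since $\pi:N_{\mathcal{Y}}\mathcal{M}\to\mathcal{Y}$ is a vector bundle, $\pi^*$ is injective on metrized Picard groups (the zero section provides a left inverse), so this identity descends to one on $\mathcal{Y}$. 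Tensoring both sides with $(i^*\widehat{\bm{T}})^{\otimes -\#\Lambda_{m,\mu}}$ and using $i^*\widehat{\bm{T}}=\widehat{\bm{T}}_0$ then cancels the tautological contribution and yields the claimed formula for $i^*\widehat{\mathcal{Z}}^\heartsuit(m,\mu)$.

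The main obstacle is purely bookkeeping: one must check that the enumeration of improper components of $\mathcal{Z}(m,\mu)_U$ meeting a fixed component $Y\subset\mathcal{Y}_U$ really corresponds bijectively to $\Lambda_{m,\mu}=\bigsqcup_{\mu_2}\Lambda_{m,\mu_2}$ (where $\mu_2$ runs over cosets with $(0,\mu_2)\in(\mu+L)/(L_0\oplus\Lambda)$), and that the identification of $\mathcal{Z}_\lambda$ with the zero section of the linear map in Proposition \ref{prop:taut rep}(i) is precisely what the first-order specialization $\sigma$ produces. Both of these are already encoded in the earlier propositions, so the argument is really an assembly of pieces rather than the introduction of new ideas.
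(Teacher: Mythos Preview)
Your proposal is correct and follows essentially the same route as the paper's proof: decompose $\mathcal{Z}(m,\mu)\times_{\mathcal{M}}\mathcal{Y}$ via Proposition \ref{prop:0_decomistionZmu} into a proper part and copies of $\mathcal{Y}$ indexed by $\Lambda_{m,\mu}$, compute the specialization $\sigma(\widehat{\mathcal{Z}}(m,\mu))$ term by term using Propositions \ref{prop:green normalize} and \ref{prop:taut rep}, apply Proposition \ref{prop:poor mans Hu}, and then descend from $N_{\mathcal{Y}}\mathcal{M}$ to $\mathcal{Y}$ by pulling back along the zero section. The only wrinkle is a notational slip---in your displayed identity the proper part already lives on $\mathcal{Y}$, so it should read $\pi^*(\text{proper part},\, i^*\Phi_{m,\mu})$ rather than $\pi^*i^*(\cdots)$---but the argument is otherwise the paper's own.
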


\begin{proof}
By  Proposition \ref{prop:0_decomistionZmu}  there is a decomposition of $\mathcal{Y}$-stacks
\[
 \mathcal{Z}( m , \mu ) \times_\mathcal{M}   \mathcal{Y}
  = \bigsqcup_{
\substack{ m_1,m_2\in \Q_{\ge 0} \\ m_1+m_2=m }}
\bigsqcup_{\substack{ \mu_1\in L_0^\vee/L_0 \\ \mu_2\in \Lambda^\vee/\Lambda\\
 (\mu_1,\mu_2) \in   (\mu + L)/(L_0 \oplus \Lambda) }
}
 \mathcal{Z}_0(m_1 , \mu_1 ) \times \Lambda_{m_2,\mu_2}.
\]
  Using Remark \ref{rmk:properandimproper} we may therefore decompose
\[
\mathcal{Z}( m , \mu ) \times_\mathcal{M}   \mathcal{Y} = \mathcal{Z}_0^\mathrm{prop}\,
\sqcup  ( \mathcal{Y} \times \Lambda_{m,\mu} )  ,
\]
where
\[
\mathcal{Z}_0^\mathrm{prop} =
\bigsqcup_{
\substack{ m_1 > 0 \\ m_2 \ge 0 \\ m_1+m_2=m }}
\bigsqcup_{\substack{ \mu_1\in L_0^\vee/L_0 \\ \mu_2\in \Lambda^\vee/\Lambda\\
 (\mu_1,\mu_2) \in   (\mu + L)/(L_0 \oplus \Lambda) }
}
 \mathcal{Z}_0(m_1 , \mu_1 ) \times \Lambda_{m_2,\mu_2}
\]
is a $\mathcal{Y}$-stack of dimension $0$.   Moreover, the induced map
\[
 \mathcal{Y} \times \Lambda_{m,\mu}  \to \mathcal{Z}( m , \mu )
\]
determines, for each $\lambda\in \Lambda_{m,\mu}$, a map $\mathcal{Y} \to \mathcal{Z}(m,\mu)$,
which is none other than the map used in the definition (\ref{Z-lambda}) of $\mathcal{Z}_\lambda$.

Over a sufficiently small \'etale open chart $U\to \mathcal{M}$ we may fix a connected component
$Y\subset \mathcal{Y}_U$, and decompose
\[
\mathcal{Z}(m,\mu)_U = Z^\mathrm{prop}\, \sqcup (Y \times \Lambda_{m,\mu})
\]
where $Z^\mathrm{prop}$ is the union of all connected components $Z\subset  \mathcal{Z}(m,\mu)_U$ for which $Z\cap Y$ has dimension
$0$.  If we apply the specialization to the normal bundle construction of (\ref{divisor specialization})
separately to $Z^\mathrm{prop}$ and to each of the $\# \Lambda_{m,\mu}$ copies of $Y$,
and then glue over an \'etale cover, we find the  equality
\begin{equation}\label{specialization decomp}
\sigma(\mathcal{Z}(m,\mu) \big)
= \pi^* \mathcal{Z}_0^\mathrm{prop}  + \sum_{\lambda\in \Lambda_{m,\mu}} \mathcal{Z}_\lambda
\end{equation}
of  divisors on $N_\mathcal{Y}\mathcal{M}$.

By Propositions \ref{prop:poor mans Hu} and \ref{prop:taut rep}
the image of $\widehat{\mathcal{Z}}^\heartsuit(m,\mu)$ under
\[
 \widehat{\mathrm{Pic}}  ( \mathcal{M} )  \map{i^*} \widehat{\mathrm{Pic}}   ( \mathcal{Y} ) \map{\pi^*}
  \widehat{\mathrm{Pic}}   ( N_\mathcal{Y} \mathcal{M} )
\]
is
\[
\pi^* i^* \widehat{\mathcal{Z}}^\heartsuit(m,\mu)=
\sigma\big( \widehat{\mathcal{Z}} (m,\mu) \big) - \sum_{ \lambda \in \Lambda_{m,\mu} } ( \mathcal{Z}_\lambda, \Phi_\lambda) ,
\]
and Proposition \ref{prop:green normalize} and (\ref{specialization decomp}) allow us to rewrite this
as an equality of metrized line bundles
\[
\pi^* i^* \widehat{\mathcal{Z}}^\heartsuit(m,\mu)=
\pi^* \big(
 \mathcal{Z}_0^\mathrm{prop}    ,  i^* \Phi_{m,\mu}
\big)
\]
on $N_\mathcal{Y}\mathcal{M}$.  Pulling back by the zero section $\mathcal{Y} \to N_\mathcal{Y}\mathcal{M}$
yields  an isomorphism
\[
 i^* \widehat{\mathcal{Z}}^\heartsuit(m,\mu)= (  \mathcal{Z}_0^\mathrm{prop}   ,  i^* \Phi_{m,\mu} )
\]
of metrized line bundles on $\mathcal{Y}$.
\end{proof}

%%%%%%%%%%%%%%%%%%%%%%%%%%%%%%%%%%%%%%%%%

\subsection{The main result}

%%%%%%%%%%%%%%%%%%%%%%%%%%%%%%%%%%%%%%%%%

We are now ready to prove our main result.  For the reader's convenience, we  review the \emph{dramatis personae}.

The lattices $L_0\oplus \Lambda \subset L$ determine a finite and unramified morphism of stacks $\mathcal{Y} \to \mathcal{M}$ over $\co_\kk$, and hence a linear
functional
\[
[\cdot: \mathcal{Y}] : \widehat{\mathrm{CH}}^1(\mathcal{M}) \to \R
\]
defined in \S \ref{ss:metrizedbundles}.

Fix a  weak harmonic Maass form $f(\tau)\in H_{1-n/2}(\omega_L)$.  The  holomorphic part
\[
f^+(\tau) = \sum_{m\gg -\infty} c^+_f(m)  \cdot q^m
\]
is a formal
$q$-expansion valued in the finite-dimensional  vector space $\mathfrak{S}_L$ of complex functions on $L^\vee/L$.
We assume that $f^+$ has integral principal part, so that the constructions
of \S \ref{ss:green functions}  provide us with an arithmetic divisor
\[
\widehat{\mathcal{Z}}(f) = (\mathcal{Z}(f) , \Phi(f) ) \in \widehat{\mathrm{CH}}^1(\mathcal{M}).
\]
We also have, from  \S \ref{ss:cotaut},  the metrized cotautological bundle
\[
\widehat{\bm{T}} \in  \widehat{\mathrm{CH}}^1(\mathcal{M}).
\]

Recalling the Bruinier-Funke differential operator
\[
\xi: H_{1 - \frac{n}{2}}(\omega_L) \to  S_{1+\frac{n}{2}}(\overline{\omega}_L)
\]
from (\ref{xi op}), we may form the convolution $L$-function $L(  \xi(f) ,\Theta_\Lambda,s)$ of $\xi(f)$ with the theta series
$
\Theta_\Lambda(\tau) \in M_{\frac{n}{2}}(\omega_\Lambda^\vee)
$
of \S \ref{ss:analytic}.
The convolution $L$-function vanishes at $s=0$, the center of the functional equation.

Recall  the $\mathfrak{S}_{L_0\oplus \Lambda}$-valued formal $q$-expansion
$\mathcal{E}_{L_0} \otimes \Theta_\Lambda$ of \S \ref{ss:analytic}.
Using (\ref{transfer}) and (\ref{pairing}),  we obtain a  scalar-valued $q$-expansion
\[
 \{ f^+ , \mathcal{E}_{L_0} \otimes \Theta_\Lambda \}
 =\sum_{m_1,m_2,m_3 \in \Q} \big\{ c_f^+(m_1) , a_{L_0}^+(m_2) \otimes R_\Lambda(m_3) \big\} \cdot q^{m_1+m_2+m_3}
 \]
  with constant term
\begin{equation}\label{const term}
\mathrm{CT} \{ f^+ , \mathcal{E}_{L_0} \otimes \Theta_\Lambda \}
=
\sum_{m_1+m_2+m_3=0} \big\{ c_f^+(m_1) , a_{L_0}^+(m_2) \otimes R_\Lambda(m_3) \big\}.
\end{equation}

The following  \emph{CM value formula} was proved by Schofer \cite{Sch} for weakly holomorphic modular forms, and then generalized  to harmonic weak Maass forms
by  Bruinier-Yang \cite{BY}.  We repeat here the statement of \cite[Theorem 4.7]{BY}, with a sign error corrected.
See also \cite[Theorem 5.3.6]{BHY}.

\begin{theorem}[Schofer, Bruinier-Yang]
\label{thm:green calc}
The Green function $\Phi(f)$ satisfies
\[
\frac{w_\kk}{h_\kk} \sum_{ y\in \mathcal{Y}(\C)} \frac{\Phi(f,y) }{ \#\Aut(y) }
=
- L'( \xi (f)  ,  \Theta_\Lambda,0) +  \mathrm{CT} \{ f^+ , \mathcal{E}_{L_0} \otimes \Theta_\Lambda \} ,
\]
where \[\xi: H_{1 - \frac{n}{2}}(\omega_L) \to  S_{1+\frac{n}{2}}(\overline{\omega}_L)\] is the Bruinier-Funke
differential operator of (\ref{xi op}).
\end{theorem}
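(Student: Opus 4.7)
The plan is to reduce the CM-averaged sum $\frac{w_\kk}{h_\kk} \sum_y \Phi(f,y)/\#\mathrm{Aut}(y)$ to a regularized inner product involving the derivative $E'_{L_0}(\tau,0)$ of the incoherent Eisenstein series, and then unfold by Rankin-Selberg to extract the $L$-function. First I would substitute the theta-integral definition of $\Phi(f,y)$ from \S\ref{ss:green functions} and exploit the orthogonal splitting $V = V_0 \oplus \Lambda_\Q$; by construction the canonical negative plane at every $y \in \mathcal{Y}(\C)$ is $\bm{h}_0 = V_{0,\R}$, so under the extension-by-zero map $\mathfrak{S}_L \hookrightarrow \mathfrak{S}_{L_0 \oplus \Lambda}$ the Siegel theta kernel factors as
\[
\theta_L(\tau, \bm{h}_0, g) \;=\; \theta_{L_0}(\tau, \bm{h}_0, g_0) \otimes \theta_\Lambda(\tau, g_\Lambda).
\]
The $\Lambda$-factor is the holomorphic theta series of \S\ref{ss:analytic}, which is insensitive to $y$ since $K_0$ acts trivially on $\widehat{\Lambda}^\vee/\widehat{\Lambda}$; the $L_0$-factor is a weight-one (antiholomorphic in $\tau$) kernel attached to the negative definite plane, and it varies over the class group $T(\Q)\backslash T(\A_f)/K_0 \iso \mathcal{Y}(\C)$, where $T = \mathrm{Res}_{\kk/\Q}\mathbb{G}_m$.

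Second, I would average the $L_0$-factor by a Siegel--Weil identity. The Eisenstein series $E_{L_0}(\tau,s)$ of \S\ref{ss:analytic} is incoherent because it corresponds to the archimedean Schwartz function of the negative-definite rather than positive-definite majorant; hence $E_{L_0}(\tau,0)=0$, and the CM-average of the theta kernel vanishes at $s=0$. To access the nontrivial information, introduce a spectral parameter by inserting $v^{s/2}$ in the integrand, match the averaged kernel $\frac{w_\kk}{h_\kk}\sum_y \theta_{L_0}(\tau,\bm{h}_0,g_0)\cdot v^{s/2}$ to $E_{L_0}(\tau,s)$ via comparison of local Whittaker integrals factor-by-factor, and analytically continue. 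The averaged sum then becomes the constant term at $s=0$ of
\[
\int_\mathcal{F}^{\mathrm{reg}} \big\{f(\tau),\, E_{L_0}(\tau,s)\otimes\Theta_\Lambda(\tau)\big\}\, \frac{du\,dv}{v^{2}},
\]
so that only the derivative $E'_{L_0}(\tau,0)$ contributes.

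Third, I would unfold. The Rankin--Selberg integral representation recalled in \S\ref{ss:analytic} rewrites the regularized pairing of $\xi(f)$ against $E_{L_0}(s)\otimes\Theta_\Lambda$ as $L(\xi(f),\Theta_\Lambda,s)$; Stokes' theorem on the truncated fundamental domain $\mathcal{F}_T$ together with the Bruinier--Funke relation $\xi(f) = 2iv^{1-n/2}\overline{\partial f/\partial\bar\tau}$ then converts the $E'$-contribution into $-L'(\xi(f),\Theta_\Lambda,0)$. The remaining piece comes from the holomorphic part $f^+$ paired against the holomorphic part $\mathcal{E}_{L_0}(\tau)\otimes\Theta_\Lambda(\tau)$ of the derivative; sending $T\to\infty$ leaves only the constant Fourier coefficient and yields $\mathrm{CT}\{f^+,\mathcal{E}_{L_0}\otimes\Theta_\Lambda\}$. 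The hard part will be matching normalizations and signs at $s=0$: Schofer's original formulas contain a slip explicitly noted in the excerpt above, and reconciling the competing boundary contributions from the theta regularization, the Rankin--Selberg unfolding, and the truncation procedure demands precise control of the local Whittaker functions at ramified and non-split primes (as encoded in Proposition \ref{prop:eisenstein coeff}). A secondary subtlety is justifying the exchange of the regularization limit with the $s$-expansion, which requires the analytic continuation of a Borcherds-type theta integral in an auxiliary complex parameter.
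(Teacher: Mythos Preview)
The paper does not actually prove this theorem: it is stated as a quotation of \cite[Theorem 4.7]{BY} (with a sign correction), and no argument is given beyond the citation. Your outline is a faithful sketch of the method used in the cited references \cite{Sch} and \cite{BY}: factor the Siegel theta kernel along the splitting $V_0\oplus\Lambda_{\Q}$, average the $L_0$-factor over the CM points to obtain the incoherent Eisenstein series $E_{L_0}(\tau,s)$ via Siegel--Weil, differentiate at $s=0$, and then apply the Bruinier--Funke Stokes-theorem argument to separate the $L'(\xi(f),\Theta_\Lambda,0)$ term from the boundary constant term $\mathrm{CT}\{f^+,\mathcal{E}_{L_0}\otimes\Theta_\Lambda\}$. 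There is nothing to compare against in the present paper, but your proposal is essentially the proof of \cite{Sch,BY} and is correct in outline; the caveats you flag about normalization and the exchange of limits are exactly the points where those references do the real work.
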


\begin{remark}
Recalling Remark \ref{rem:miracle value}, Theorem \ref{thm:green calc}
 holds even when some points of $\mathcal{Y}(\C)$ lie on $\mathcal{Z}(f)(\C)$,
the divisor along which  $\Phi(f)$ has its logarithmic singularities.
\end{remark}

Inspired by Theorem \ref{thm:green calc}, the following formula  was conjectured by Bruinier-Yang \cite{BY}.

\begin{theorem}\label{thm:mainthm}
Assume $d_\kk$ is odd.  Every  weak harmonic Maass form $f\in H_{1-n/2}(\omega_L)$ with integral principal part  satisfies
\[
[ \widehat{\mathcal{Z}} (f) : \mathcal{Y} ]  + c_f^+(0,0)  \cdot [ \widehat{\bm{T}}  : \mathcal{Y} ]
= - \frac{h_\kk}{w_\kk}  \cdot L'(  \xi(f) ,\Theta_\Lambda,0) ,
\]
where  $c_f^+(0,0)$ is the value of   $c_f^+(0) \in \mathfrak{S}_L$ at the trivial coset in $L^\vee/L$.
\end{theorem}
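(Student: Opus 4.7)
The plan is to combine Proposition \ref{prop:cutimproper}, Theorem \ref{thm:X degree}, Theorem \ref{thm:green calc}, and Corollary \ref{cor:cotaut degree} via a linearity argument on the coefficients of $f^+$. Write $\widehat{\mathcal{Z}}(f)=\sum_{m>0,\mu}c_f^+(-m,\mu)\widehat{\mathcal{Z}}(m,\mu)$ and form the auxiliary class
$$
\widehat{\mathcal{Z}}^\heartsuit(f)=\sum_{m>0,\mu}c_f^+(-m,\mu)\widehat{\mathcal{Z}}^\heartsuit(m,\mu)\in\widehat{\mathrm{Pic}}(\mathcal{M}).
$$
Since $\widehat{\mathcal{Z}}^\heartsuit(m,\mu)=\widehat{\mathcal{Z}}(m,\mu)-\#\Lambda_{m,\mu}\cdot\widehat{\bm{T}}$, one gets
$$
[\widehat{\mathcal{Z}}(f):\mathcal{Y}]=[\widehat{\mathcal{Z}}^\heartsuit(f):\mathcal{Y}]+C\cdot[\widehat{\bm{T}}:\mathcal{Y}],
\qquad C:=\sum_{m>0,\mu}c_f^+(-m,\mu)\#\Lambda_{m,\mu}.
$$
Elementary coset bookkeeping, using that $\Lambda_{m,\mu}$ is nonempty only when $\mu$ has trivial image in $L_0^\vee/L_0$, identifies $C=\sum_{n_3>0}\{c_f^+(-n_3),\varphi_0\otimes R_\Lambda(n_3)\}$, where $\varphi_0$ is the delta function at the trivial coset in $L_0^\vee/L_0$.

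Next I apply Proposition \ref{prop:cutimproper} termwise to $\widehat{\mathcal{Z}}^\heartsuit(f)$. Theorem \ref{thm:X degree} converts each $\widehat{\deg}\,\mathcal{Z}_0(m_1,\mu_1)$ appearing in the finite part into $-(h_\kk/w_\kk)a_{L_0}^+(m_1,\mu_1)$, so the finite contribution equals $-(h_\kk/w_\kk)$ times the $n_2>0$ slice of $\mathrm{CT}\{f^+,\mathcal{E}_{L_0}\otimes\Theta_\Lambda\}$, indexed by $n_1+n_2+n_3=0$ with $n_1<0$, $n_2>0$, $n_3\geq 0$. Simultaneously, the archimedean piece collapses into $\sum_{y\in\mathcal{Y}(\C)}\Phi(f,y)/\#\Aut(y)$, which Theorem \ref{thm:green calc} evaluates as $(h_\kk/w_\kk)\bigl(-L'(\xi(f),\Theta_\Lambda,0)+\mathrm{CT}\{f^+,\mathcal{E}_{L_0}\otimes\Theta_\Lambda\}\bigr)$. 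The $n_2>0$ pieces cancel, leaving
$$
[\widehat{\mathcal{Z}}^\heartsuit(f):\mathcal{Y}]=\frac{h_\kk}{w_\kk}\Bigl(-L'(\xi(f),\Theta_\Lambda,0)+\text{CT restricted to }n_2=0\Bigr).
$$

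Finally, Corollary \ref{cor:cotaut degree} gives $a_{L_0}^+(0,\mu_1)=-(w_\kk/h_\kk)[\widehat{\bm{T}}:\mathcal{Y}]$ when $\mu_1=0$ and zero otherwise, so the $n_2=0$ slice of the constant term evaluates to $-(w_\kk/h_\kk)(c_f^+(0,0)+C)[\widehat{\bm{T}}:\mathcal{Y}]$: the $c_f^+(0,0)$ term comes from $n_1=n_3=0$ (using $R_\Lambda(0,0)=1$), and the $C$ term from $n_1<0$, $n_3>0$. Substituting and combining with the first display yields the identity
$$
[\widehat{\mathcal{Z}}(f):\mathcal{Y}]+c_f^+(0,0)[\widehat{\bm{T}}:\mathcal{Y}]=-\frac{h_\kk}{w_\kk}L'(\xi(f),\Theta_\Lambda,0).
$$
The main obstacle will be the combinatorial bookkeeping: one must carefully track the zero-extension $\mathfrak{S}_L\to\mathfrak{S}_{L_0\oplus\Lambda}$ and verify that the $C$-correction built into $\widehat{\mathcal{Z}}^\heartsuit(f)$ matches the $n_2=0$, $n_1<0$ piece of the Rankin-Selberg constant term, via the identification of $\#\Lambda_{m,\mu}$ with the appropriate sum of $R_\Lambda(m,\mu_2)$ over $(0,\mu_2)\equiv\mu\bmod L_0\oplus\Lambda$.
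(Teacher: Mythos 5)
Your proposal contains a genuine gap in the opening step, even though the two errors it introduces happen to cancel at the end.

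The identity $\widehat{\mathcal{Z}}(f)=\sum_{m>0,\mu}c_f^+(-m,\mu)\widehat{\mathcal{Z}}(m,\mu)$ is false as an equality in $\widehat{\mathrm{Pic}}(\mathcal{M})$. It is true at the level of Cartier divisors, since $\mathcal{Z}(f)$ is defined as that sum, but the Green function $\Phi(f)$ is the regularized theta lift of $f$ itself, not of $\sum c_f^+(-m,\mu)F_{m,\mu}$. Writing $g=f-\sum_{m>0,\mu}c_f^+(-m,\mu)F_{m,\mu}$, this $g$ lies in $H_{1-n/2}(\omega_L)$ with $g^+=O(1)$, and $\Phi(g)$ is a nonzero smooth function on $\mathcal{M}(\C)$. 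Consequently the arithmetic divisor $\widehat{\mathcal{Z}}(f)$ differs from $\sum c_f^+(-m,\mu)\widehat{\mathcal{Z}}(m,\mu)$ by $(0,\Phi(g))$, and your first display is off by $\sum_{y}\Phi(g,y)/\#\Aut(y)$.

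You then compound the error: the archimedean piece of $[\widehat{\mathcal{Z}}^\heartsuit(f):\mathcal{Y}]$, for your definition $\widehat{\mathcal{Z}}^\heartsuit(f)=\sum c_f^+(-m,\mu)\widehat{\mathcal{Z}}^\heartsuit(m,\mu)$, is $\sum_y\bigl(\sum c_f^+(-m,\mu)\Phi_{m,\mu}(y)\bigr)/\#\Aut(y)=\sum_y\bigl(\Phi(f,y)-\Phi(g,y)\bigr)/\#\Aut(y)$, not $\sum_y\Phi(f,y)/\#\Aut(y)$ as you claim. The two errors are exactly opposite in sign and cancel when you combine them, so the final formula comes out right, but the intermediate assertions are both wrong and neither is justified. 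The paper avoids this by proving the formula separately for $f=F_{m,\mu}$ (where $\widehat{\mathcal{Z}}(f)=\widehat{\mathcal{Z}}(m,\mu)$ exactly) and for $f$ with $f^+=O(1)$ (where $\widehat{\mathcal{Z}}(f)=(0,\Phi(f))$ is purely archimedean), then invoking the linearity of both sides of the theorem in $f$. Your outline can be repaired by explicitly introducing $g$, tracking the extra $\sum_y\Phi(g,y)/\#\Aut(y)$ term in the first display and the compensating defect in the archimedean evaluation, and observing that they cancel — but as written the steps do not hold. The coset bookkeeping identifying $C$ with the $n_2=0$, $n_3>0$ slice of the constant term, and the application of Theorem \ref{thm:X degree} to convert the finite part, are fine.
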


\begin{proof}
First  assume $f(\tau)=   F_{m,\mu}(\tau)$ is the Hejhal-Poincar\'e series of \S \ref{ss:green functions}, so that
\[
\widehat{\mathcal{Z}}(f)   = \big(\mathcal{Z}( m ,\mu ) , \Phi_{m,\mu} \big)
 = \widehat{\mathcal{Z}}^\heartsuit (m,\mu )\otimes \widehat{\bm{T}} ^{ \otimes R_\Lambda(m,\mu) }.
\]

By  Proposition   \ref{prop:cutimproper}  there is a decomposition
\begin{eqnarray*}
[ \widehat{\mathcal{Z}}^\heartsuit(m,\mu)  : \mathcal{Y} ]
&  = &
\sum_{  \substack{ m_1+m_2=m \\ m_1>0   \\    (\mu_1,\mu_2) \in   (\mu + L)/(L_0 \oplus \Lambda) }  }
 R_\Lambda(m_2,\mu_2) \cdot \widehat{\deg}\, \mathcal{Z}_0(m_1 , \mu_1 ) \\
& & +   \sum_{ y\in \mathcal{Y}(\C) } \frac{ \Phi_{m,\mu}(y) } { \#\Aut(y) } ,
\end{eqnarray*}
where we  view
$
 \mathcal{Z}_0(m_1 , \mu_1 ) \in \widehat{\mathrm{Div}}(\mathcal{Y})
$
as an arithmetic divisor with trivial Green function.
Theorem \ref{thm:X degree} shows that the first term is
\begin{eqnarray*}\lefteqn{
 \sum_{  \substack{ m_1+m_2=m \\ m_1>0   \\    (\mu_1,\mu_2) \in   (\mu + L)/(L_0 \oplus \Lambda) }  }
 R_\Lambda(m_2,\mu_2) \cdot \widehat{\deg}\, \mathcal{Z}_0(m_1 , \mu_1 )   } \\
& = &
-\frac{h_\kk}{w_\kk}  \sum_{  \substack{ m_1+m_2=m \\ m_1>0   \\    (\mu_1,\mu_2) \in   (\mu + L)/(L_0 \oplus \Lambda) }  }
  a^+_{L_0}(m_1,\mu_1) R_\Lambda(m_2,\mu_2) \\
  & = &
  -\frac{h_\kk}{w_\kk}  \sum_{ \substack{ m_1+m_2=m  \\ m_1>0 }}  \big\{ c_f^+(-m) , a^+_{L_0}(m_1) \otimes R_\Lambda(m_2) \big\}.
\end{eqnarray*}
For our special choice of $f(\tau)$  the constant term (\ref{const term}) simplifies to
\begin{eqnarray*}
\mathrm{CT} \big\{ f^+ , \mathcal{E}_{L_0} \otimes \Theta_\Lambda \big\}
& = &
\big\{ c_f^+(0) , a_{L_0}^+(0) \otimes R_\Lambda(0) \big\}  \\
& &   + \sum_{  m_1+m_2 = m    } \big\{ c_f^+(-m) ,  a_{L_0}^+(m_1) \otimes R_\Lambda(m_2)\big\}
\end{eqnarray*}
and so Theorem \ref{thm:green calc}  becomes
\begin{align*}
 \sum_{ y\in \mathcal{Y}(\C) } \frac{ \Phi_{m,\mu}(y) } { \#\Aut(y) }
 &  =
- \frac{h_\kk}{w_\kk}  \cdot L'(  \xi(f) , \Theta_\Lambda,0)
+  \frac{h_\kk}{w_\kk} \big\{ c_f^+(0) , a_{L_0}^+(0) \otimes R_\Lambda(0) \big\}  \\
& \quad   + \frac{h_\kk}{w_\kk}   \sum_{  m_1+m_2 = m    } \big\{ c_f^+(-m) ,  a_{L_0}^+(m_1) \otimes R_\Lambda(m_2)\big\} .
\end{align*}
We have now proved
\begin{align*}
[ \widehat{\mathcal{Z}}^\heartsuit(m,\mu)  : \mathcal{Y} ]
 & =  - \frac{h_\kk}{w_\kk}  \cdot L'(  \xi(f) , \Theta_\Lambda,0)
   +    \frac{h_\kk}{w_\kk} \cdot   \big\{ c_f^+(0) , a_{L_0}^+(0) \otimes R_\Lambda(0) \big\}   \\
&\quad +  \frac{h_\kk}{w_\kk}   \big\{ c_f^+(-m) , a^+_{L_0}( 0 ) \otimes R_\Lambda(m)  \big\}  .
\end{align*}

Corollary \ref{cor:cotaut degree}  implies both
\[
 c_f ^+(0,0) \cdot [ \widehat{\bm{T}} : \mathcal{Y} ]
 = - \frac{h_\kk}{w_\kk} \cdot  \big\{ c_f ^+(0) , a^+_{L_0}(0) \otimes R_\Lambda(0) \big\},
\]
and
\[
 R_\Lambda(m,  \mu) \cdot [ \widehat{\bm{T}} : \mathcal{Y} ]
 = - \frac{h_\kk}{w_\kk} \cdot   \big\{ c_f^+(-m) ,  a^+_{L_0}(0) \otimes R_\Lambda(m) \big\} ,
\]
leaving
\[
[\widehat{\mathcal{Z}}^\heartsuit (m,\mu) : \mathcal{Y} ]  +
  [ \widehat{\bm{T}}^{\otimes R_\Lambda(m,  \mu) } : \mathcal{Y} ]
+ c_f^+(0,0)  [ \widehat{\bm{T}} : \mathcal{Y} ]
= - \frac{h_\kk}{w_\kk} \cdot L'(  \xi(f) ,\Theta_\Lambda,0),
\]
and proving the desired formula in the special case of $f=F_{m,\mu}$.

Now assume that $f$ satisfies $f^+(\tau)=O(1)$.  In particular, $\mathcal{Z}(f) = \emptyset$
and the Green function $\Phi(f)$ is a smooth function on
$\mathcal{M}(\C)$.  The arithmetic intersection is purely  archimedean, and Theorem \ref{thm:green calc} shows that
\begin{align*}
[ \widehat{\mathcal{Z}}(f)  : \mathcal{Y} ]  &=
 \sum_{ y\in \mathcal{Y}(\C) } \frac{ \Phi(f,y) } { \#\Aut(y) } \\
& =
- \frac{h_\kk}{w_\kk}  \cdot L'( \xi( f ) ,\Theta_\Lambda,0)
+  \frac{h_\kk}{w_\kk} \cdot \mathrm{CT} \big\{ f^+ , \mathcal{E}_{L_0} \otimes \Theta_\Lambda \big\}  \\
& =
- \frac{h_\kk}{w_\kk}  \cdot L'( \xi( f ) ,\Theta_\Lambda,0)
+  \frac{h_\kk}{w_\kk} \cdot  \big\{ c_f^+(0) , a^+_{L_0}(0) \otimes R_\Lambda(0) \big\} .
\end{align*}
Corollary \ref{cor:cotaut degree} implies that
\[
\frac{h_\kk}{w_\kk} \cdot   \big\{  c_f^+(0) , a^+_{L_0}(0) \otimes R_\Lambda(0)  \big\}
 =  \frac{h_\kk}{w_\kk}  \cdot   c_f^+(0,0) \cdot a^+_{L_0} (0,0)
 = - c_f^+(0,0) \cdot [ \widehat{\bm{T}}  : \mathcal{Y}]  ,
\]
completing the proof in  this  case.

Finally, every weak harmonic Maass form can be written as a linear combination of the Hejhal-Poincar\'e series, and a form
with holomorphic part $f^+(\tau) = O(1)$. Thus the claim follows from the linearity in $f$ of both sides of the
desired equality.
\end{proof}

%\bibliography{GSpin}

\begin{thebibliography}{AAAAAA}

\bibitem[AGHM15]{AGHMP}
Fabrizio~Andreatta, Eyal Z.~Goren, Benjamin~Howard, and Keerthi~{Madapusi Pera}, \emph{Faltings heights of abelian varieties with complex multiplication}, Preprint (2015).

\bibitem[Bas74]{bass:quadratic}
Hyman Bass, \emph{Clifford algebras and spinor norms over a commutative ring},
  Amer. J. Math. \textbf{96} (1974), no.~1, 156--206.

\bibitem[Bor98]{Borcherds}
Richard~E. Borcherds, \emph{Automorphic forms with singularities on
  {G}rassmannians}, Invent. Math. \textbf{132} (1998), no.~3, 491--562.


\bibitem[BF04]{BruinierFunke}
Jan~Hendrik Bruinier and Jens Funke, \emph{On two geometric theta lifts}, Duke
  Math. J. \textbf{125} (2004), no.~1, 45--90.

\bibitem[BHY15]{BHY}
Jan~Hendrik Bruinier, Benjamin Howard, and Tonghai Yang, \emph{Heights of
  {Kudla-Rapoport} divisors and derivatives of $L$-functions}, Invent. Math.
  (2015), in press.

\bibitem[BKY12]{BKY}
Jan~Hendrik Bruinier, Steven~S. Kudla, and Tonghai Yang, \emph{Special values
  of {G}reen functions at big {CM} points}, Int. Math. Res. Not. (2012), no.~9,
  1917--1967.



\bibitem[Bru02]{Bruinier}
Jan~Hendrik Bruinier, \emph{Borcherds products on $O(2,l)$ and {C}hern classes
  of {H}eegner divisors}, Lecture Notes in Mathematics, Springer, 2002.

\bibitem[BY09]{BY}
Jan~Hendrik Bruinier and Tonghai Yang, \emph{Faltings heights of {CM} cycles
  and derivatives of $L$-functions}, Invent. Math. \textbf{177} (2009), no.~3,
  631--681.

\bibitem[Del72]{DeligneShimura}
Pierre Deligne, \emph{Travaux de {S}himura}, S\'{e}minaire Bourbaki vol.
  1970/71 Expos\'{e}s 382-399, Lecture Notes in Mathematics, Springer Berlin
  Heidelberg, 1972, pp.~123--165.

\bibitem[FC90]{FaltingsChai}
Gerd Faltings and Ching-Li Chai, \emph{Degeneration of abelian varieties},
  Ergebnisse der Mathematik und ihrer Grenzgebiete (3) [Results in Mathematics
  and Related Areas (3)], vol.~22, Springer-Verlag, Berlin, 1990.

\bibitem[FGA05]{FGA} 
Barbara Fantechi, Lothar G{\"o}ttsche,  Luc Illusie, Steven L. Kleiman, Steven L., Nitin Nitsure, Angelo Vistoli, \emph{Fundamental algebraic geometry}, {Mathematical Surveys and Monographs}, \textbf{123} (2005), {American Mathematical Society, Providence, RI},
      

\bibitem[GS90]{GS}
Henri Gillet and Christophe Soul\'{e}, \emph{Arithmetic intersection theory},
  Publications Math\'{e}matiques de l'Institut des Hautes Scientifiques
  \textbf{72} (1990), no.~1, 94--174.



\bibitem[Gro86]{Gr}
Benedict~H. Gross, \emph{On canonical and quasi-canonical liftings}, Invent.
  Math. \textbf{84} (1986), no.~2, 321--326.

\bibitem[Gro03]{sga1}
Alexander Grothendieck, \emph{Rev\^{e}tements \'{e}tales et groupe fondamental
  ({SGA} 1)}, Documents Math\'{e}matiques (Paris), vol.~3, Soci\'{e}t\'{e}
  Math\'{e}matique de France, Paris, 2003.

\bibitem[Hida04]{Hida}
Haruzo Hida, \emph{$p$-adic automorphic forms on {S}himura varieties}, Springer Monographs in Mathematics, Springer Verlag, Berlin, Heidelberg, 2004.

\bibitem[How12]{Ho}
Benjamin Howard, \emph{Complex multiplication cycles and {Kudla-Rapoport}
  divisors}, Ann. Math. \textbf{176} (2012), no.~2, 1097--1171.

\bibitem[HY12]{HY}
Benjamin Howard and Tonghai Yang, \emph{Intersections of {Hirzebruch-Zagier}
  divisors and {CM} cycles}, Lecture Notes in Mathematics, vol. 2041, Springer
  Verlag, Berlin, Heidelberg, 2012.


\bibitem[Hu99]{Hu}
Junda Hu, \emph{Specialization of {G}reen forms and arithmetic intersection
  theory}, Ph.D. thesis, University of Illinois Chicago, 1999.

\bibitem[KM15]{mp:2adic}
Wansu~Kim and Keerthi~{Madapusi Pera}, \emph{{$2$}-adic integral canonical models and the Tate conjecture in characteristic $2$}, Preprint. arXiv:1512.02540.


\bibitem[Kis10]{kisin}
Mark Kisin, \emph{Integral models for {S}himura varieties of abelian type}, J.
  Amer. Math. Soc. \textbf{23} (2010), no.~4, 967--1012.


\bibitem[Kud04]{Ku}
Stephen~S. Kudla, \emph{Special cycles and derivatives of {E}isenstein series},
  Heegner Points and Rankin {L-Series}, Math. Sci. Res. Inst. Publ., vol.~49,
  Cambridge University Press, Cambridge, 2004, pp.~243--270.


\bibitem[KR99]{KR1}
Stephen~S. Kudla and Michael Rapoport, \emph{Arithmetic {Hirzebruch-Zagier}
  cycles}, J. Reine Angew. Math. \textbf{515} (1999), 155--244.

\bibitem[KR00]{KR2}
\bysame, \emph{Cycles on {S}iegel threefolds and derivatives of {E}isenstein
  series}, Ann. Sci. \'Ecole Norm. Sup. (4) \textbf{33} (2000), no.~5,
  695--756.

\bibitem[KRY99]{KRY1}
Stephen~S. Kudla, Michael Rapoport, and Tonghai Yang, \emph{On the derivative
  of an {E}isenstein series of weight one}, Internat. Math. Res. Notices
  \textbf{7} (1999), 347--385.

\bibitem[KRY04]{KRY2}
\bysame, \emph{Derivatives of {E}isenstein series and {F}altings heights},
  Compos. Math. \textbf{140} (2004), no.~04, 887--951.

\bibitem[KRY06]{KRY3}
\bysame, \emph{Modular forms and special cycles on {S}himura curves.
  ({AM-161})}, Annals of Mathematics Studies, Princeton University Press, 2006.

\bibitem[KY13]{KY}
Stephen~S. Kudla and Tonghai Yang, \emph{On the pullback of an arithmetic theta
  function}, Manuscripta Math. \textbf{140} (2013), no.~3-4, 393--440.

\bibitem[MP16]{mp:reg}
Keerthi Madapusi~Pera, \emph{Integral canonical models for spin {S}himura
  varieties}, Compos. Math. \textbf{152} (2016), no. 4, 769- 824.

\bibitem[MFK94]{MFK}
David Mumford, John Fogarty, and Frances Kirwan, \emph{Geometric Invariant Theory},
Ergebnisse der Mathematik  und ihrer Grenzgebiete (2) [Results in Mathematics and Related Areas (2)],
 vol.~34, Springer-Verlag, Berlin, 1994.


    

\bibitem[Sch09]{Sch}
Jarad Schofer, \emph{Borcherds forms and generalizations of singular moduli},
  J. Reine Angew. Math. \textbf{629} (2009), 1--36.

  \bibitem[Tsi15]{Tsimerman}
Jacob~Tsimerman, \emph{A proof of the {A}ndr\'e-{O}ort conjecture for {$A_g$}},
  Preprint (2015).

\bibitem[vdG88]{VDG}
Gerard van~der Geer, \emph{Hilbert modular surfaces}, Ergebnisse der Mathematik
  und ihrer Grenzgebiete (3) [Results in Mathematics and Related Areas (3)],
  vol.~16, Springer-Verlag, Berlin, 1988.

\bibitem[Vis89]{Vistoli}
Angelo Vistoli, \emph{Intersection theory on algebraic stacks and on their
  moduli spaces}, Invent. Math. \textbf{97} (1989), no.~3, 613--670.

\bibitem[Yan10]{Yang-colmez}
Tonghai Yang, \emph{The {C}howla-{S}elberg formula and the {C}olmez
  conjecture}, Can. J. Math. \textbf{62} (2010), no.~2, 456--472.

\end{thebibliography}
\bibliographystyle{amsalpha}
\providecommand{\bysame}{\leavevmode\hbox to3em{\hrulefill}\thinspace}

\end{document}